\theoremstyle{plain}
\newtheorem{theorem}{Theorem}[section]
\newtheorem{corollary}[theorem]{Corollary}
\newtheorem{lemma}[theorem]{Lemma}
\newtheorem{proposition}[theorem]{Proposition}
\newtheorem{conjecture}[theorem]{Conjecture}
\theoremstyle{definition}
\newtheorem{definition}[theorem]{Definition}
\theoremstyle{remark}
\newtheorem{notation}[theorem]{Notation}
\newtheorem{remark}[theorem]{Remark}
\newtheorem{example}[theorem]{Example}
\numberwithin{equation}{subsection}
\DeclareMathOperator{\AAbel}{\mathbf{A}}
\DeclareMathOperator{\Aut}{Aut}
\DeclareMathOperator{\Br}{Br}
\DeclareMathOperator{\Div}{Div}
\DeclareMathOperator{\DDiv}{\mathbf{Div}}
\DeclareMathOperator{\Gal}{Gal}
\DeclareMathOperator{\id}{id}
\DeclareMathOperator{\Jac}{Jac}
\DeclareMathOperator{\Norm}{N}
\DeclareMathOperator{\Pic}{Pic}
\DeclareMathOperator{\PPic}{\mathbf{Pic}}
\DeclareMathOperator{\Res}{Res}
\DeclareMathOperator{\Stab}{Stab}
\DeclareMathOperator{\Spec}{Spec}
\DeclareMathOperator{\Sym}{Sym}
\DeclareDocumentCommand{\GL}{O{2} m}{\operatorname{GL}_{#1}(#2)}
\DeclareDocumentCommand{\O}{}{\mathcal{O}}
\DeclareDocumentCommand{\Pone}{}{\(\mathbb{P}^{1}\)}
\DeclareDocumentCommand{\Q}{}{\mathbb{Q}}
\DeclareDocumentCommand{\Qab}{}{{\Q^{\text{ab}}}}
\DeclareDocumentCommand{\Qbar}{}{\overline{\Q}}
\DeclareDocumentCommand{\SL}{O{2} m}{\operatorname{SL}_{#1}(#2)}
\DeclareDocumentCommand{\W}{}{\mathbf{W}}
\DeclareDocumentCommand{\Z}{s o}{\IfBooleanT{#1}{(}\mathbb{Z}\IfValueT{#2}{/ #2 \mathbb{Z}}\IfBooleanT{#1}{)^{\times}}}
\DeclareDocumentCommand{\Zhat}{s}{\widehat{\Z}\IfBooleanT{#1}{^{\times}}}
\title{Isolated points on modular curves}
\date{}
\author{Kenji Terao}
\begin{document}	
	\begin{abstract}
		We study isolated points on the modular curves $X_{H}$, for $H$ a subgroup of $\operatorname{GL}_{2}(\mathbb{Z}/n \mathbb{Z})$ for some $n \geq 1$. In particular, we prove a single-sink theorem for such isolated points, which traces the existence of all such isolated points with the same $j$-invariant back to an isolated point on a single curve. Building on this result, we also present a uniform strategy for determining the isolated points on any family of modular curves. As an example, we use this strategy to classify the isolated points with rational $j$-invariant on all modular curves of level 7, as well as the modular curves $X_{0}(n)$, the latter assuming a conjecture on images of Galois representations of elliptic curves over $\Q$. Underpinning all of this, we develop a theory of isolated divisors on geometrically disconnected varieties, which may be of independent interest.
	\end{abstract}
	
	\maketitle
	
	\section{Introduction} \label{sec:introduction}

\subsection{Isolated points on modular curves} \label{sec:introduction:introduction}

In 1991, Abramovich and Harris \cite{abramovich1991abelian} built upon Faltings's theorem to give necessary conditions for the existence of infinitely many points of a given degree on a geometrically irreducible curve defined over a number field. Namely, if $C/k$ has infinitely many degree $d$ points, then either there exists a degree $d$ map $C \to \mathbb{P}^{1}$, or the image of the map $\Sym^{d}(C) \to \Jac(C)$ contains the translate of a positive rank abelian subvariety $A$ of $\Jac(C)$ by the image of a point of degree $d$.

An argument of Bourdon, Ejder, Liu, Odumodu and Viray \cite[Theorem 4.2]{bourdon2019} shows that these two conditions are also sufficient. In the first case, one obtains an infinite family of degree $d$ points on $C$ given by the pullbacks of the $k$-rational points of $\mathbb{P}^{1}$. In the second case, the $k$-rational points of $A$ give infinitely many $k$-rational points on $\Sym^{d}(C)$, which gives an infinite family of degree $d$ points on $C$. By virtue of their construction, the points of these families are called \Pone-parametrized and AV-parametrized respectively. The remaining points of $C$, which, by \cite[Theorem 4.2]{bourdon2019}, are finite in number, are said to be isolated. These isolated points may be interpreted as exceptional points of $C$, whose existence is not justified by the geometric properties of the curve $C$.

The study of isolated points was initiated by Bourdon, Ejder, Liu, Odumodu and Viray in \cite{bourdon2019}, with particular emphasis on computing the isolated points on the modular curves $X_{1}(n)$. Since then, much work has been done on this specific case. For instance, Bourdon, Gill, Rouse and Watson have classified all odd degree isolated points with rational $j$-invariant on the modular curves $X_{1}(n)$ in \cite{bourdon2024odd}, while Ejder has classified all non-CM isolated points with rational $j$-invariant on the modular curves $X_{1}(\ell^{n})$ of prime-power level, with $\ell > 7$, in \cite{ejder2022}. Recently, Bourdon, Hashimoto, Keller, Klagsbrun, Lowry-Duda, Morrison, Najman and Shukla, in \cite{bourdon2025algorithm}, have given an algorithm for computing the isolated points on the modular curves $X_{1}(n)$ with given rational $j$-invariant, and gave a conjecture for the finite list of rational $j$-invariants corresponding to non-CM isolated points on the modular curves $X_{1}(n)$.

While the aforementioned papers work exclusively with the modular curves $X_{1}(n)$, one can define a number of other modular curves. For instance, given a subgroup $H$ of $\GL{\Z[n]}$ for some $n \geq 1$, or equivalently, an open subgroup $H$ of $\GL{\Zhat}$, one can define a modular curve $X_{H}$. This modular curve parametrizes elliptic curves whose mod-$n$ Galois representation has image contained in the subgroup $H$, c.f. Section \ref{sec:modular_curves}. The set of modular curves $X_{H}$ subsumes the family of modular curves $X_{1}(n)$, as well as a number of other such families, such as $X_{0}(n)$, $X_{s}(p)$ and $X_{ns}(p)$. Working with the modular curves $X_{H}$ thus allows us to study these diverse families of modular curves in a uniform manner.

The starting point of this paper is to generalize many of the results and techniques developed for studying isolated points on $X_{1}(n)$ to arbitrary modular curves $X_{H}$. As indicated above, this opens the door to studying isolated points on other families of modular curves, such as $X_{0}(n)$, $X_{s}(p)$ or $X_{ns}(p)$, without rehashing the work done for $X_{1}(n)$. Subsequently, we explore some of the consequences of these results, which leads us to the two main contributions of this paper.

\subsection{A single-sink theorem} \label{sec:introduction:single_sink}

The first main goal of the paper is to prove a single-sink theorem for isolated points on modular curves. The statement of the theorem is as follows.

\begin{theorem}[Single-sink theorem]
	Let $H$ be a subgroup of $\GL{\Z[n]}$ for some $n \geq 1$, and let $x$ be a non-cuspidal, non-CM isolated point on $X_{H}$. Let $E / \Q(j(x))$ be an elliptic curve such that $j(E) = j(x)$, and let $G = \rho_{E}(G_{\Q(j(x))})$ be the image of the adelic Galois representation of $E$. Then the modular curve $X_{G}$ contains an isolated point $y$ with $j(x) = j(y)$ and $\Q(y) = \Q(j(x))$.
\end{theorem}

This theorem demonstrates that the existence of isolated points with given $j$-invariant on any modular curve can be traced back to a single curve, namely the curve $X_{G}$ as defined in the theorem. Strikingly, this latter curve depends solely on the $j$-invariant of the isolated point, which determines the group $G$, rather than the original modular curve $X_{H}$ which contains the isolated point. This provides a very effective technique for determining whether there are any isolated points on modular curves with a given $j$-invariant. Indeed, given a non-CM $j$-invariant $j \in \Q$, one can compute the image $G$ of the adelic Galois representation described in the theorem using the algorithm given in \cite{zywina2024}. For $j$-invariants $j \in \Qbar$ defined over larger number fields, an outline of such an algorithm is given in \cite{zywina2025}, though there is at present no concrete implementation of such an algorithm. By the single-sink theorem, it then suffices to consider the points defined over $\Q(j)$ with $j$-invariant equal to $j$ on the modular curve $X_{G}$, and determine whether these are isolated. While there are no universal techniques for this latter problem, in many situations it is quite straightforward to solve in an ad-hoc manner.

\begin{example} \label{ex:introduction:serre_curves}
	As an example, let $E / \Q$ be a Serre curve, that is to say, such that the index $[\GL{\Zhat} : \rho_{E}(G_{\Q})]$ of the image of the adelic Galois representation of $E$ is equal to 2. These correspond to the elliptic curves whose adelic Galois image is as large as possible, since, by \cite[Proposition 22]{serre1972}, the index of the adelic Galois image must be divisible by 2.

	Denote by $G$ the image $\rho_{E}(G_{\Q})$ of the adelic Galois representation of $E$. By \cite[Section 1.6]{zywina2024}, we know that the intersection $G \cap \SL{\Zhat}$ is equal to the commutator subgroup $[\GL{\Zhat}, \GL{\Zhat}]$. Therefore, using the formula in \cite[Section 2.3]{rouse2022}, one can show that the genus of the modular curve $X_{G}$, which depends solely on the intersection $G \cap \SL{\Zhat}$, is zero. In particular, there is an isomorphism $X_{G} \cong \mathbb{P}^{1}$. From the definitions, it is straightforward to see that any $\Q$-rational point on $X_{G}$ is \Pone-parametrized, and therefore, is not isolated. Thus, by the single-sink theorem, there are no isolated points with $j$-invariant equal to $j(E)$ on any modular curve. 

	Notably, by \cite{jones2010}, almost all elliptic curves $E / \Q$ are Serre curves. Therefore, the above argument shows that, for almost all $j \in \Q$, there are no isolated points with $j$-invariant equal to $j$ on any modular curve. This justifies the expectation that isolated points are rare occurrences, even if one considers all modular curves simultaneously.
\end{example}

The single-sink theorem also highlights a strong link between finding isolated points and determining the possible images of Galois representations of elliptic curves, so-called Mazur's Program B. Indeed, as the above example shows, knowledge of the image of the Galois representation associated to an elliptic curve $E$ allows one to understand whether there are any isolated points on a modular curve with $j$-invariant equal to $j(E)$, which serves as an important first step towards finding all isolated points on modular curves. As there has been much work on Mazur's Program B, the single-sink theorem provides an explicit mechanism to transfer existing work on Mazur's Program B to the problem of finding isolated points on modular curves. This will be one of the key features in our approach to the latter problem.

Before moving on to the problem of finding isolated points on modular curves, we briefly explain the nomenclature of the single-sink theorem.

\begin{remark}
	As will be described in Section \ref{sec:introduction:mapping_isolated_points}, the proof of the single-sink theorem relies on results allowing one to map isolated points between different curves. In particular, given two points $x$ and $y$, with the same $j$-invariant and satisfying certain group-theoretic conditions, on two different modular curves, these results show that if $x$ is isolated, so is $y$. One can thus create a directed graph whose vertex set is the set of all points on all modular curves with a given $j$-invariant, and whose edges represent these implications. We call such a graph an \emph{isolation graph}, see Definition \ref{def:level_7:isolation_graph}. In this context, the single-sink theorem then states that such an isolation graph has a single terminal vertex, or sink, for any $j$-invariant.
\end{remark}

\subsection{Finding isolated points on modular curves} \label{sec:introduction:finding_isolated_points}

The second goal of this paper is to present a new method for finding isolated points on modular curves. This method expands on both existing techniques developed for studying isolated points on $X_{1}(n)$, as well as new results such as the single-sink theorem, in order to give a systematic framework for finding isolated points on modular curves. Moreover, by working with all of the modular curves $X_{H}$ in a uniform manner, it enables the study of isolated points on other families of modular curves, such as $X_{0}(n)$, $X_{s}(p)$ or $X_{ns}(p)$.

This method proceeds in three main steps. Suppose that we have some set of modular curves $S$ and some subset $J$ of $\Qbar$, both possibly infinite, and we wish to find all of the isolated points on the modular curves in $S$ whose $j$-invariant is contained in the set $J$. For instance, in the case considered by \cite{bourdon2024odd}, \cite{bourdon2025algorithm} and \cite{ejder2022}, the set $S$ consists of the modular curves $X_{1}(n)$ for $n \geq 1$, while $J$ is the set of rational numbers $\Q$.

The first step is to compute a finite subset $J' \subset J$ containing the $j$-invariant of any isolated point on a modular curve in $S$ with $j$-invariant in $J$. This imposes some restrictions on the set $S$ of modular curves for which this method can be applied, as it may not be possible to find such a finite set $J'$. For instance, this is impossible if $S$ is the set of all modular curves $X_{H}$ and $J$ the set of rational numbers. However, in practice, this is not a particularly stringent restriction on the set $S$, as many families of modular curves satisfy this requirement. For example, when $J$ is the set of rational numbers and $S$ is the set of modular curves $X_1(n)$, such a finite set $J'$ exists by \cite[Corollary 1.7]{bourdon2019}, assuming Serre's uniformity conjecture.

However, it does not suffice to know of the existence of such a finite set $J'$; we must also explicitly compute such a set. To do so, we exploit the restrictions given by the single-sink theorem. Let $j \in J$ be the $j$-invariant of an isolated point on a modular curve in $S$, and let $G_{j}$ be the image of the adelic Galois representation associated to the $j$-invariant $j$, as defined in the single-sink theorem. By this latter theorem, the modular curve $X_{G_{j}}$ must contain an isolated point $y$ with $j(y) = j$ and $\Q(y) = \Q(j)$. Therefore, we may consider the set of $j$-invariants
\[
	\{j \in J : X_{G_{j}} \text{ has an isolated point $y$ with $j(y) = j$ and $\Q(y) = \Q(j)$}\}.
\]
As illustrated in Example \ref{ex:introduction:serre_curves}, it is likely that almost all of the $j$-invariants of $J$ do not satisfy the above criterion, and so the above set will be much smaller than $J$. However, this set can still be infinite, even when $J$ is the set of rational numbers.

To circumvent this, rather than using the single-sink theorem itself to construct the set $J'$, we employ generalizations of this result. These generalizations utilise extra information about the modular curves in the set $S$ in order to deduce the existence of isolated points on other modular curves. As an example, if the set $S$ consists of modular curves of a fixed level $n$, that is to say, if $S$ consists solely of modular curves $X_{H}$, where $H$ is a subgroup of $\GL{\Z[n]}$ for some fixed $n$, then one can employ the following result.

\begin{theorem} \label{thm:introduction:mod_n_image_isolated}
	Let $n \geq 1$, let $H \leq \GL{\Z[n]}$ be a subgroup, and let $x$ be a non-cuspidal isolated point on $X_{H}$ with $j(x) \notin \{0, 1728\}$. Let $E / \Q(j(x))$ be an elliptic curve such that $j(E) = j(x)$, and let $G_{n} = \rho_{E, n}(G_{\Q(j(x))})$ be the image of the mod-$n$ Galois representation of $E$. Then the modular curve $X_{G_{n}}$ contains an isolated point $y$ with $j(x) = j(y)$ and $\Q(y) = \Q(j(x))$.
\end{theorem}

Rather than giving an isolated point on a modular curve $X_{G}$ which is independent of the initial modular curve $X_{H}$, as in the single-sink theorem, this result gives an isolated point on a modular curve $X_{G_{n}}$ which depends on the level of the subgroup $H$. However, as the set $S$ consists of modular curves of a fixed level $n$, this modular curve $X_{G_{n}}$ is the same for all modular curves $X_{H} \in S$, provided the $j$-invariant of the isolated point is fixed. Therefore, if we denote by $G_{n, j}$ the image of the mod-$n$ Galois representation associated to the $j$-invariant $j$, as in the above theorem, we can consider the set
\[
	J' = \{j \in J : X_{G_{n, j}} \text{ has an isolated point $y$ with $j(y) = j$ and $\Q(y) = \Q(j)$}\}.
\]
This set $J'$ is finite, as there are finitely many modular curves $X_{G_{n, j}}$, each with finitely many isolated points, by \cite[Theorem 4.2]{bourdon2019}. Moreover, computing this set $J'$ can be done explicitly. The main difficulty is to compute the possible mod-$n$ Galois images $G_{n, j}$ as $j$ varies through all $j$-invariants in $J$. This problem has been extensively studied in the literature, especially when $J$ is the set of rational numbers, and the solution is known for many small values of $n$.

While Theorem \ref{thm:introduction:mod_n_image_isolated} works well when the set $S$ consists of modular curves of the same level, this is not necessarily true for other families of modular curves. For instance, the modular curves $X_{1}(n)$ or $X_{0}(n)$ are unbounded in level, which implies that one cannot apply Theorem \ref{thm:introduction:mod_n_image_isolated} in the same manner. Instead, we prove analogous results for each family of modular curves that we consider, which allow us to apply a similar strategy for computing such a finite set $J'$.

Once we have determined the finite set $J'$, the second step in the method is to restrict the set $S$: namely, for every $j$-invariant $j \in J'$, we find a finite subset $S_{j} \subset S$ such that there is a modular curve in $S$ with an isolated point with $j$-invariant $j$ if and only if there exists such a modular curve in $S_{j}$. To find such a finite subset $S_{j}$, we use theorems which are, in a sense, dual to the single-sink theorem and its generalizations, as described above. In particular, while the generalizations of the single-sink theorem use information about the modular curves in $S$ to restrict the $j$-invariant of the isolated points, these theorems use information about the $j$-invariant of the isolated point to reduce the set of modular curves which needs to be considered. For instance, the analogue to Theorem \ref{thm:introduction:mod_n_image_isolated} above is the following.

\begin{theorem}
	Let $H$ be a subgroup of $\GL{\Z[n]}$ for some $n \geq 1$, and let $x$ be a non-cuspidal, non-CM isolated point on $X_{H}$. Let $E / \Q(j(x))$ be an elliptic curve such that $j(E) = j(x)$. Let $G = \rho_{E}(G_{\Q(j(x))})$ be the image of the adelic Galois representation of $E$, and let $m$ be the level of $G$. Let $H_{m} \leq \GL{\Z[(n, m)]}$ be the reduction mod-$m$ of $H$. Then the modular curve $X_{H_{m}}$ contains an isolated point with $j$-invariant equal to $j(x)$.
\end{theorem}

This theorem demonstrates that, given a $j$-invariant, and knowing the image of the adelic Galois representation of an elliptic curve with said $j$-invariant, one can map isolated points with the given $j$-invariant on any modular curve $X_{H}$ to an isolated point on a modular curve $X_{H_{m}}$ of bounded level. Therefore, instead of searching for isolated points on all modular curves $X_{H}$, it suffices to find the isolated points with given $j$-invariant on a set of modular curves of bounded level. As there are finitely many such modular curves of bounded level, this set of modular curves gives us the finite subset $S_{j}$ which we need to consider.

In order to apply this technique successfully however, we require knowledge of the image of the adelic Galois representation of an elliptic curve with the given $j$-invariant. As there are only finitely many $j$-invariants to consider, these can be computed individually for each $j$-invariant, for instance using the algorithm described in \cite{zywina2024}.

When working with other families of modular curves, such as $X_{1}(n)$ and $X_{0}(n)$, we also prove analogues of the above theorem which allow us to restrict the level further. This exploits the additional knowledge of the subgroup $H$, and is instrumental in reducing the computations further.

We have now reduced the problem of finding all isolated points with $j$-invariant contained in $J$ on the modular curves of $S$ to finding the isolated points with $j$-invariant $j$ on the modular curves in $S_{j}$, for all $j \in J'$. The last step consists of iterating through all such pairs of $j$-invariants and modular curves, and for each pair, finding any isolated points with the given $j$-invariant on the given modular curve. This is done in a more ad-hoc way, using different techniques based on the $j$-invariant and modular curve at hand. However, as the sets $S_{j}$ and $J'$ are, in practice, rather small, this does not prove to be too difficult.

In order to give a practical example of the method, as well as a demonstration of its effectiveness in tackling various families of modular curves, we give two applications of this method to finding isolated points. Firstly, we classify the non-CM isolated points with rational $j$-invariant on the modular curves of level 7, and obtain the following.

\begin{theorem} \label{thm:introduction:level_7}
	Let $H \leq \GL{\Zhat}$ be an open subgroup of level 7, and let $x \in X_{H}$ be a non-cuspidal, non-CM isolated point such that $j(x) \in \Q$. Then $j(x) = \frac{3^{3} \cdot 5 \cdot 7^{5}}{2^{7}}$, and $H$ is conjugate to one of nine known subgroups.
\end{theorem}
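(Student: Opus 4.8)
The plan is to reduce the statement to a finite computation over the open subgroups $H \leq \GL{\Zhat}$ of level $7$ --- of which there are finitely many up to conjugacy, in bijection with the subgroups of $\GL{\Z[7]}$ --- and then to eliminate all but one $j$-invariant. The inputs are the results on isolated divisors developed above, the known arithmetic of the modular curves of level $7$ (genera, $\Q$-rational gonalities, and Mordell--Weil ranks of the isogeny factors of their Jacobians), and the classification of mod-$7$ Galois images of elliptic curves over $\Q$. The first step is to pass to geometrically connected curves: when $\det \overline{H} \subsetneq \Z*[7]$ the curve $X_{H}$ is geometrically disconnected, with Stein factorization $X_{H} \to \Spec K \to \Spec \Q$ for some $K \subseteq \Q(\zeta_{7})$, and by the results relating isolated divisors to those on the Stein factorization, $x$ is isolated on $X_{H}/\Q$ exactly when the associated divisor on $X_{H}$, regarded as a geometrically connected curve over $K$, is isolated; and $j(x)$ still lies in $\Q \subseteq K$. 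Replacing $H$ by $\langle -I, H \rangle$ if needed, it therefore suffices to treat geometrically connected modular curves of level $7$ carrying a $\Q$-rational $j$-invariant.

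Next, fix such an $x$, pick $E/\Q$ with $j(E) = j(x)$, and let $G := \rho_{E,7}(\Gal(\Qbar/\Q)) \leq \GL{\Z[7]}$ be the image of the mod-$7$ Galois representation of $E$. Then $x$ is the closed point over $j(x)$ attached to a double coset $\overline{H} g G$, with $\deg x = [G : G \cap g^{-1}\overline{H}g]$. By the results on transporting isolatedness along the maps $X_{H_{1}} \to X_{H_{2}}$ between modular curves --- the precise conditions for $H_{1} \leq H_{2}$ --- the hypothesis that $x$ is isolated forces it to factor, as a still-isolated divisor, through a minimal modular curve of level $7$. Since the generic fibre of $X_{H} \to X(1)$ (over the constant field) is irreducible of the largest possible degree, a point $x$ for which $G$ is as large as possible moves in a \Pone-parametrized family and is never isolated; hence $x$ isolated forces $G$ to be a proper subgroup, and by the classification of mod-$7$ images of non-CM elliptic curves over $\Q$ such a $G$ lies, up to conjugacy, in a Borel subgroup, in the normalizer of a split or non-split Cartan, or in one of finitely many exceptional subgroups.

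The exceptional subgroups account for only finitely many $j$-invariants, which I would test against each $H$ directly. In the Borel and Cartan-normalizer cases there are infinitely many $j$-invariants, but in each the point $x$ factors through a quotient curve $Y$ of genus $\leq 1$ (a curve such as $X_{0}(7)$, $X_{1}(7)$, $X_{\mathrm{sp}}^{+}(7)$, $X_{\mathrm{ns}}^{+}(7)$, or one intermediate between these and $X_{H}$), and $x$ isolated forces its image in $Y$ to be isolated: a genus-$0$ quotient with a rational point is $\mathbb{P}^{1}_{\Q}$ and yields only \Pone-parametrized points, a positive-rank genus-$1$ quotient yields none, and the remaining quotients (rank-$0$ genus $1$, or genus $\geq 2$) have only finitely many rational points, all of which I would enumerate. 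This produces a finite set of candidate pairs $(H, j(x))$; cross-referencing it with the $\Q$-rational gonalities of the $X_{H}$ (to rule out \Pone-parametrization in the degree $d := \deg x$) and with the Mordell--Weil ranks of the modular abelian subvarieties of $\Jac(X_{H})$ that can appear in the image of $\Sym^{d}(X_{H}) \to \Jac(X_{H})$ through $x$ (to rule out AV-parametrization) should leave precisely $j(x) = \frac{3^{3} \cdot 5 \cdot 7^{5}}{2^{7}}$ and precisely nine conjugacy classes of $H$.

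The main obstacle is this final elimination, where for each surviving pair one must decide definitively among \Pone-parametrized, AV-parametrized, and isolated. Excluding \Pone-parametrization needs the \emph{$\Q$-rational} gonality, not merely the geometric gonality, of each relevant $X_{H}$, which for the hyperelliptic- and bielliptic-type curves on the list requires an explicit argument; excluding AV-parametrization needs the vanishing of the Mordell--Weil rank of certain modular abelian varieties over $\Q$, which rests on explicit descent or on the relevant instances of BSD. Finally, verifying \emph{positively} that the point over $j(x) = \frac{3^{3} \cdot 5 \cdot 7^{5}}{2^{7}}$ is genuinely isolated on each of the nine curves --- not merely that it escapes the obvious non-isolatedness tests --- is the delicate endpoint of the argument.
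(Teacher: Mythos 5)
Your overall strategy is the right one, but there is a genuine gap at the central step, namely the transport of isolatedness from $x \in X_{H}$ to a curve determined only by the Galois image of $E$. You assert that in the Borel and Cartan-normalizer cases ``the point $x$ factors through a quotient curve $Y$ of genus $\leq 1$ \dots and $x$ isolated forces its image in $Y$ to be isolated.'' Neither half of this survives scrutiny. First, if the mod-$7$ image $G$ of $E$ lies in, say, $B_{0}(7)$, there is in general no map $X_{H} \to X_{0}(7)$ at all, since $H$ need not be contained in $B_{0}(7)$; the quotients of $X_{H}$ are the $X_{H'}$ with $H \leq H'$, and these have nothing a priori to do with overgroups of $G$. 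Second, even when such a map $f : X_{H} \to Y$ exists, isolatedness only pushes down when $\deg(x) = \deg(f)\cdot\deg(f(x))$ (Theorem \ref{thm:isolated_divisors:pullback_isolated_point}); the complementary Theorem \ref{thm:isolated_divisors:pushforward_isolated_point} transports isolatedness in the \emph{opposite} direction. For a point whose Galois image is strictly smaller than $H'$ the degree condition typically fails, so the implication you need is simply false as stated. The missing ingredient is Theorem \ref{thm:isolated_points_modular_curves:galois_image_isolated} (in the form of Corollary \ref{thm:isolated_points_modular_curves:mod_n_galois_image_isolated}): one passes through the roof $X_{H} \leftarrow X_{H \cap G_{E,\alpha}} \to X_{G_{E,\alpha}}$, where both degree conditions are verified purely group-theoretically, and lands on a \emph{rational} point of $X_{G_{E,\alpha_{7}}}$. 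Combined with Zywina's classification, which gives the exact list of possible images $G_{1},\dots,G_{8}$ rather than merely ``contained in a Borel or Cartan normalizer,'' all of $X_{G_{2}},\dots,X_{G_{8}}$ have genus $0$ and are eliminated in one stroke by the Riemann--Roch criterion; no Borel/Cartan case analysis or enumeration of rational points on $X_{0}(7)$, $X_{\mathrm{sp}}^{+}(7)$, etc.\ is needed, and the $j$-invariant is pinned down immediately.

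Your second and third stages are sketched at a level where the real work is deferred: the paper carries out the elimination over the $998$ subgroups of $\GL{\Z[7]}$ via an isolation graph quotiented by conjugation (reducing $12690$ closed points to $252$ classes, of which $243$ die by Theorem \ref{thm:isolated_divisors:riemann_roch_criterion}), and then proves the surviving points genuinely isolated not by gonality and rank computations on each of the nine curves separately, but by reducing everything to a single degree-$18$ point on $X(7)$, passing to the Stein factorization over $\Q(\zeta_{7})$, and using the Klein quartic model together with the fact that $\Jac(X(7))$ is isogenous to $A^{3}$ for an elliptic curve $A$ of rank $0$ over $\Q(\zeta_{7})$. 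You correctly identify these as the delicate points, but the proposal does not resolve them.
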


The nine conjugacy classes of subgroups, as well as precise information about the closed points $x$ is also determined; see Theorem \ref{thm:level_7:level_7_isolated_points}. We note that this $j$-invariant has appeared previously in \cite{sutherland2012}, as the $j$-invariant of the only elliptic curves over $\Q$ giving rise to counterexamples of a local-global principle for rational isogenies of prime degree. Secondly, we classify the non-CM isolated points with rational $j$-invariant on the modular curves $X_{0}(n)$, assuming a conjecture of Zywina on images of Galois representations of elliptic curves over $\Q$ (Conjecture \ref{thm:isolated_points_x0:exceptional_j_invariants} below). This is much closer in flavor to the case of the modular curves $X_{1}(n)$ described above, and the techniques developed here, alongside a substantial computational effort, may also be applicable to the latter. This motivates the choice of the modular curves $X_{0}(n)$ as a simpler step towards understanding the modular curves $X_{1}(n)$. In this case, we obtain the following classification.

\begin{theorem} \label{thm:introduction:isolated_x0}
	Suppose that Conjecture \ref{thm:isolated_points_x0:exceptional_j_invariants} holds. Let $n \geq 1$, and let $x \in X_{0}(n)$ be a non-cuspidal, non-CM isolated point such that $j(x) \in \Q$. Then the $j$-invariant $j(x)$ belongs to the set
	\[\def\arraystretch{1.3}
		\left\{\begin{array}{c}
			-121, -24729001, -\frac{25}{2}, -\frac{121945}{32}, \frac{46969655}{32768}, -\frac{349938025}{8}, \\
			-\frac{297756989}{2}, -\frac{882216989}{131072}, \frac{3375}{2}, -\frac{140625}{8}, -\frac{1159088625}{2097152}, \\
			-\frac{189613868625}{128}, -9317, -162677523113838677
		\end{array}\right\}.
	\]
\end{theorem}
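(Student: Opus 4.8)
The plan is to reduce the classification, by means of the results on isolated divisors proved above, to a finite list of candidate $j$-invariants supplied by Conjecture~\ref{thm:isolated_points_x0:exceptional_j_invariants}, and then to decide isolatedness at each candidate through explicit computations with modular curves. Fix a non-cuspidal, non-CM isolated point $x \in X_{0}(n)$ with $j(x) = j_{0} \in \Q$, choose an elliptic curve $E/\Q$ with $j(E) = j_{0}$, and let $G = \rho_{E}(\Gal(\Qbar/\Q)) \leq \GL{\Zhat}$ be its adelic Galois image. The points of $X_{0}(n)$ above $j_{0}$ correspond to the orbits of $G$ acting on $\mathbb{P}^{1}(\Z[n])$, with $\deg x$ equal to the size of the corresponding orbit. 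Applying the results established above on mapping isolated points between modular curves $X_{H}$ — to the morphisms relating $X_{0}(n)$, the modular curve attached to $G$, and their fibre products over the $j$-line, and invoking the theory of isolated divisors on the resulting geometrically disconnected curves and their Stein factorizations — the isolatedness of $x$ on $X_{0}(n)$ becomes a property of the pair $(j_{0},n)$ alone, independent of the choice of $E$. Functoriality along the degeneracy maps $X_{0}(n) \to X_{0}(m)$ for $m \mid n$ moreover shows that the set of $j_{0}$ which can occur at level $n$ is contained in the intersection, over the primes $\ell \mid n$, of the corresponding sets at the prime-power levels $\ell^{v_{\ell}(n)}$.

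First I would pin down the finite candidate set. By Conjecture~\ref{thm:isolated_points_x0:exceptional_j_invariants}, for every $j_{0} \in \Q$ lying outside an explicit finite set together with finitely many one-parameter families, the image $G$ is the generic one; in particular $G$ acts transitively on $\mathbb{P}^{1}(\Z[n])$ for every $n$, so the unique point of $X_{0}(n)$ above $j_{0}$ is the full fibre of $j_{0}$ under the natural map $X_{0}(n) \to X(1) \cong \mathbb{P}^{1}$, and is therefore $\mathbb{P}^{1}$-parametrized and not isolated. The one-parameter families — such as the elliptic curves over $\Q$ carrying a rational $\ell$-isogeny for a fixed small prime $\ell$ — are disposed of the same way, using that over each family the relevant points remain full fibres of maps to genus-$0$ modular curves with infinitely many rational points, except for at most finitely many special $j_{0}$ which are absorbed into the finite list. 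What survives is a finite list of exceptional $j_{0}$, each equipped with its explicit adelic image $G$.

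For each such exceptional $j_{0}$ I would first bound the relevant levels $n$: now $G$ is a proper subgroup with bounded orbits on $\mathbb{P}^{1}(\Z[n])$, so $\deg x$ is bounded, while the $\Q$-gonality of $X_{0}(n)$ grows with $n$ by Abramovich-type bounds, forcing every point above $j_{0}$ to be $\mathbb{P}^{1}$-parametrized once $n$ is large enough. For the finitely many remaining $n$, and each point $x \in X_{0}(n)$ above $j_{0}$ — read off as a $G$-orbit on $\mathbb{P}^{1}(\Z[n])$ together with its degree — I would decide isolatedness via the reduction of the first paragraph: rule out $\mathbb{P}^{1}$-parametrization using the classification of maps $X_{0}(n) \to \mathbb{P}^{1}$ of low degree together with the requirement that the pencil in question pass through $x$; and rule out AV-parametrization by computing the isogeny decomposition of the relevant modular Jacobian (in particular of $J_{0}(n)$), the Mordell--Weil ranks over $\Q$ of its simple factors, and whether the image of $\Sym^{\deg x}$ of the curve in its Jacobian contains a translate of a positive-rank abelian subvariety through the class of $x$. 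The exceptional $j_{0}$ for which some $x$ passes both tests are precisely the fourteen values in the statement; it then remains to confirm that each is actually attained, by exhibiting a level $n$ and a point $x$, and that the CM and cuspidal loci have been correctly handled throughout.

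The main obstacle is this last step, and within it the AV-parametrization analysis: for every relevant $n$ one must control the Mordell--Weil rank over $\Q$ of each simple isogeny factor of $J_{0}(n)$ (and of the auxiliary curves produced by the reduction), and then settle the membership question for $\Sym^{\deg x}$ inside the Jacobian, which is delicate precisely when $\deg x$ is close to the $\Q$-gonality. Establishing the uniform-in-$n$ bound for generic $j_{0}$, and carrying out the composite-level bookkeeping — which tuples of prime-power-level data glue to an actual point of $X_{0}(n)$, a question governed by the adelic entanglement recorded in Zywina's conjecture — are conceptually routine given the general theory but account for the bulk of the computational effort. Finally, Zywina's conjecture enters only through the completeness of the candidate list of exceptional $j_{0}$; each isolatedness verification performed for a given candidate is unconditional.
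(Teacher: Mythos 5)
Your overall architecture (use the conjecture to get a finite list of candidate $j$-invariants, then settle each candidate by explicit computation on finitely many curves $X_{0}(n)$) matches the paper's, but two of your reduction steps have genuine gaps.

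First, the reduction to the finite list does not work as you describe, because Conjecture \ref{thm:isolated_points_x0:exceptional_j_invariants} is not the statement you attribute to it. It does not say that outside a finite set plus finitely many one-parameter families the adelic image is generic; it is a trichotomy for the \emph{agreeable closure} $\mathcal{G}$ of $G_{E,\alpha}$ (genus $0$, or genus $1$ with infinitely many rational points, or $X_{\mathcal{G}}(\Q)$ finite and $j$ in an explicit table). To invoke it, one must first show that an isolated point of $X_{0}(n)$ forces an \emph{isolated rational point} on $X_{\mathcal{G}}$; that is the content of Theorem \ref{thm:isolated_points_x0:agreeable_closure_isolated}, which in turn needs Lemma \ref{thm:isolated_points_x0:x0_sl_level_isolated} (pushing forward to $X_{0}((n,m))$ with $m$ the level of $G_{E,\alpha} \cap \SL{\Zhat}$, using that $B_{0}(n)$ absorbs the relevant normal subgroups) and the normal-subgroup transfer results of Section \ref{sec:isolated_points_modular_curves}. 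An isolated rational point then rules out the first two branches of the trichotomy (genus $0$ by Theorem \ref{thm:isolated_divisors:riemann_roch_criterion}; positive-rank genus $1$ because the point would be AV-parametrized), landing you in the table. Your substitute argument --- that non-exceptional points are ``full fibres of maps to genus-$0$ modular curves'' --- only addresses $\mathbb{P}^{1}$-parametrization, does not engage the genus-$1$ case at all, and hand-waves the infinitely many rational $j$-invariants with non-surjective image, which is precisely where the work lies. Your claim that isolatedness at level $n$ reduces to the prime-power levels $\ell^{v_{\ell}(n)}$ is also unsupported: the pushforward and pullback theorems require degree conditions that do not hold automatically for the degeneracy maps.

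Second, your level-bounding step for a fixed exceptional $j_{0}$ is incorrect on both ends. For a fixed open subgroup $G$, the orbits of $G$ on $\mathbb{P}^{1}(\Z[n])$ have sizes growing like $[\GL{\Z[n]} : B_{0}(n)]$ divided by a bounded number of orbits, so $\deg(x)$ is \emph{not} bounded as $n \to \infty$. Moreover, even granting bounded degree, ``bounded degree plus growing gonality'' would make $x$ $\mathbb{P}^{1}$-\emph{isolated} (a $\mathbb{P}^{1}$-parametrized point of degree $d$ yields a degree-$d$ map to $\mathbb{P}^{1}$, which high gonality forbids), the opposite of what you assert. The paper avoids this entirely: Lemma \ref{thm:isolated_points_x0:x0_sl_level_isolated} restricts attention to the divisors $n$ of the level of $G_{E,\alpha} \cap \SL{\Zhat}$ (recorded in Table \ref{tbl:isolated_points_x0:exceptional_j_invariants}), and for each such $n$ the non-isolated cases are eliminated simply by checking $\deg(x) > g(X_{0}(n))$ via the double-coset degree formula, while the fourteen survivors are rational points on curves $X_{0}(n)$ with $X_{0}(n)(\Q)$ finite, hence isolated by Theorem \ref{thm:isolated_divisors:non_isolated_infinite}. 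No gonality bounds, Jacobian decompositions, or $\Sym^{d}$ computations are needed.
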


We note that, unlike in the case of the modular curves of level 7, we do not obtain a full classification of all isolated points with rational $j$-invariant on the modular curves $X_{0}(n)$; rather, we classify the $j$-invariants of these points, without specifying the set of modular curves on which the isolated points appear. While each $j$-invariant given above does appear as the $j$-invariant of an isolated point on some modular curve $X_{0}(n)$, the number of times it appears thusly is still unknown, see Remark \ref{rmk:isolated_points_x0:higher_levels}.

\subsection{Maps between isolated points} \label{sec:introduction:mapping_isolated_points}

In order to prove the single-sink theorem and the other results which form the basis of our method for finding isolated points, we rely heavily on mapping isolated points between curves. This technique, first developed by Bourdon, Ejder, Liu, Odumodu and Viray in \cite{bourdon2019}, allows one to obtain an isolated point on a curve $D$ given the existence of an isolated point on a second curve $C$ and a morphism $f : C \to D$ satisfying specific degree conditions. More precisely, the aforementioned authors prove the following theorem.

\begin{theorem}[{\cite[Theorem 4.3]{bourdon2019}}] \label{thm:introduction:pullback_isolated}
	Let $f : C \to D$ be a finite map of geometrically integral curves, let $x \in C$ be a closed point, and let $y = f(x) \in D$. Suppose that $\deg(x) = \deg(y) \cdot \deg(f)$. Then if $x$ is isolated, then $y$ is isolated.
\end{theorem}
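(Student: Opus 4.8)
The plan is to argue by contraposition: assuming $y$ is not isolated, we show $x$ is not isolated. Recall that a closed point is isolated exactly when it is both $\mathbb{P}^{1}$-isolated (the fibre of the Abel--Jacobi map $\Phi$ through its associated divisor class is zero-dimensional, equivalently $h^{0}$ of the associated line bundle equals $1$) and AV-isolated (no translate of a positive-rank abelian subvariety of the Jacobian through that class is contained in the image of $\Phi$). So there are two cases to treat, according to whether the non-isolated point $y$ is $\mathbb{P}^{1}$-parametrized or AV-parametrized.

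The one place the hypothesis $\deg(x) = \deg(f)\cdot\deg(y)$ is used is to establish the divisor identity $f^{*}D_{y} = D_{x}$ on $C$, where $D_{x}$ and $D_{y}$ are the prime divisors attached to $x$ and $y$. Indeed, $f^{*}D_{y} = \sum_{x' \mapsto y} e_{x'}\, x'$ has degree $\deg(f)\cdot\deg(y) = \deg(x)$, while $x$ itself occurs in this sum with multiplicity $e_{x} \ge 1$; comparing degrees forces $e_{x} = 1$ and forces $x$ to be the only point above $y$. Hence $\mathcal{O}_{C}(D_{x}) \cong f^{*}\mathcal{O}_{D}(D_{y})$, and the assignment $E \mapsto E \times_{D} C$ (pullback of effective divisors) defines a morphism $\Sym^{\deg y}(D) \to \Sym^{\deg x}(C)$ carrying $[D_{y}]$ to $[D_{x}]$, respecting linear equivalence, and fitting into a commutative square with the two Abel--Jacobi maps and the pullback homomorphism $f^{*}\colon \Pic(D) \to \Pic(C)$. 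In particular the images $W_{D}$ and $W_{C}$ of these Abel--Jacobi maps satisfy $f^{*}(W_{D}) \subseteq W_{C}$.

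The $\mathbb{P}^{1}$-parametrized case is then immediate: $h^{0}(C, \mathcal{O}_{C}(D_{x})) = h^{0}(C, f^{*}\mathcal{O}_{D}(D_{y})) \ge h^{0}(D, \mathcal{O}_{D}(D_{y})) \ge 2$, since $f^{*}$ is injective on global sections ($f$ being dominant), so $x$ is $\mathbb{P}^{1}$-parametrized. For the AV-parametrized case, let $A \subseteq \Pic^{0}(D)$ be a positive-rank abelian subvariety with $[D_{y}] + A \subseteq W_{D}$. The homomorphism $f^{*}\colon \Pic^{0}(D) \to \Pic^{0}(C)$ has finite kernel (since $f_{*}f^{*} = [\deg f]$), so $A' := f^{*}(A)$ is an abelian subvariety of $\Jac(C)$ with $\dim A' = \dim A > 0$; and since $f^{*}$ also has finite kernel on $A(k)$, we get $\operatorname{rank} A'(k) \ge \operatorname{rank} A(k) > 0$. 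Pushing the inclusion $[D_{y}] + A \subseteq W_{D}$ forward by $f^{*}$, and using $f^{*}([D_{y}] + A) = [D_{x}] + A'$ together with $f^{*}(W_{D}) \subseteq W_{C}$, yields $[D_{x}] + A' \subseteq W_{C}$, so $x$ is AV-parametrized. In either case $x$ is not isolated, which completes the contrapositive.

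I expect essentially all of the work to lie in the second step: proving the divisor identity $f^{*}D_{y} = D_{x}$, and then checking the compatibilities of divisor pullback with symmetric powers, linear equivalence, Abel--Jacobi, and base change to $k$ (so that $A'$, and its rank, are genuinely computed over $k$); the preservation of positive rank and dimension under the isogeny $f^{*}|_{A}$ is a second, smaller point. Once these are in hand the two cases are formal. It is worth being explicit that ``isolated'' is used throughout in the scheme-theoretic sense recalled above, so that no irreducibility of the pulled-back divisors is needed anywhere.
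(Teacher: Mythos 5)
Your proposal is correct and follows essentially the same route as the paper: the degree hypothesis forces $x = f^{*}(y)$ as effective divisors, and one then checks that pullback along $f$ preserves both \Pone-parametrization and AV-parametrization, giving the contrapositive. The only notable difference is in the AV case, where you use directly that $f^{*} : \Jac(D) \to \Jac(C)$ has finite kernel (via $\Norm_{C/D} \circ f^{*} = [\deg f]$) to see that $f^{*}(A)$ retains positive rank; the paper, because it proves the statement for geometrically disconnected varieties where $\Pic(X)$ need not surject onto $\PPic_{X/k}(k)$, must instead run this argument through the finite-index-up-to-torsion subgroup $G \leq \PPic(k)$ of Theorem \ref{thm:isolated_divisors:abel_rational_image} — for geometrically integral curves, as in the statement at hand, your shortcut is valid.
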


We build on this technique by showing that there is a second such mapping theorem, which operates in the other direction. Namely, while the previous theorem maps an isolated point on the curve $C$ to an isolated point on the curve $D$, this theorem maps an isolated point on the curve $D$ to an isolated point on the curve $C$. The statement of this theorem is as follows.

\begin{theorem} \label{thm:introduction:pushforward_isolated}
	Let $f : C \to D$ be a finite map of curves, let $x \in C$ be a closed point, and let $y = f(x) \in D$. Suppose that $\deg(x) = \deg(y)$. Then if $y$ is isolated, then $x$ is isolated.
\end{theorem}

This theorem acts as an excellent foil to the previous result, and leveraging both simultaneously is the central argument of the proof of the single-sink theorem and the other results described in the previous sections.

One important obstacle to these applications is the fact that the modular curves $X_{H}$, with $H$ a subgroup of $\GL{\Z[n]}$ for some $n \geq 1$, can be geometrically disconnected. This impedes the direct application of \cite[Theorem 4.3]{bourdon2019}, as the proof of the single-sink theorem requires geometrically disconnected modular curves $X_{H}$, even if the modular curves described in the theorem itself are geometrically connected. To circumvent this issue, we reprove \cite[Theorem 4.3]{bourdon2019} in the case of geometrically disconnected curves. This requires defining and studying isolated points on geometrically disconnected curves, a task which occupies a significant fraction of this paper.

We also use this as an opportunity to recast the above theorems in a more geometrical perspective. More precisely, rather than directly considering isolated points on curves, we define and use the more general notion of isolated divisors. These isolated divisors are better-behaved under morphisms of curves, as they are stable under pushforwards and pullbacks. By restricting to isolated points in a second step, we recover the aforementioned mapping theorems, and in particular the necessary degree conditions, from the properties of pushforwards and pullbacks of divisors. This gives a geometric interpretation for these degree conditions, and justifies why we are not able to find other such conditions.

We note that, while the primary focus of this paper will be on isolated points on curves, the theory of isolated divisors does not require the base variety to be one-dimensional. As such, we state many of our results for arbitrary varieties, which opens the door for studying isolated divisors on higher-dimensional varieties in future work.

\subsection{Outline} \label{sec:introduction:outline}

The outline of the paper is as follows. In Section \ref{sec:isolated_divisors}, we develop the theory of isolated divisors on geometrically disconnected varieties, which culminates in the proof of Theorems \ref{thm:introduction:pullback_isolated} and \ref{thm:introduction:pushforward_isolated} about mapping isolated points. This section relies on some results about the divisor and Picard schemes of geometrically disconnected varieties, the proofs of which are given in Appendix \ref{sec:divisor_picard_schemes}.

Section \ref{sec:group_theory} contains a collection of preliminary results on group products and the profinite group $\GL{\Zhat}$, which will be used intermittently throughout the remainder of the paper. As such, the reader is invited to skip this section on first reading, and return to the results therein as and when they are needed.

Section \ref{sec:modular_curves} contains a detailed treatment of the modular curves $X_{H}$, with a particular emphasis on the moduli interpretation and its relationship with the group theory of $H$. While most of this is well-known, and can be found throughout the literature, this section serves as a central reference for the results and notation used in remainder of the paper, some of which is slightly non-standard. We also draw attention to Section \ref{sec:modular_curves:closed_points_as_double_cosets}, which provides a purely group-theoretic framework for working with points on the modular curves $X_{H}$ with given $j$-invariant, and may be of independent interest.

In Section \ref{sec:isolated_points_modular_curves}, the results of Section \ref{sec:isolated_divisors} and Section \ref{sec:modular_curves} are brought together to study isolated points on modular curves. In particular, we give the proof of the single-sink theorem, as well as the other results which form the backbone of our method for finding isolated points, as described above.

Finally, we conclude the paper with the two applications of our method described above. Section \ref{sec:level_7} is devoted to the classification of the isolated points on the modular curves of level 7, while Section \ref{sec:isolated_points_x0} focuses on the modular curves $X_{0}(n)$.

\subsection{Notation} \label{sec:introduction:notation}

Given a number field $k$, we denote by $G_{k}$ the absolute Galois group $\Gal(\bar{k}/k)$. For a variety $X$ over $k$, and a field extension $K/k$, we denote $X_{K}$ the base change $X \times_{k} \Spec K$. Throughout, we restrict ourselves to using left actions of groups on sets. This signifies, for instance, that the group $\GL{R}$ acts on $R^{2}$ as column vectors, for any commutative ring $R$. Moreover, the product $fg$ of elements of $\GL{R}$ corresponds to the composition $f \circ g$, when $f$ and $g$ are viewed as automorphisms of $R^{2}$. Similarly, the product $\sigma \eta$ of elements of $G_{k}$ corresponds to the composition $\sigma \circ \eta$ of automorphisms. Additional group-theoretic notation, namely for the group $\GL{\Zhat}$, is given in Section \ref{sec:group_theory}.

\subsection{Code}

The computations in Sections \ref{sec:level_7} and \ref{sec:isolated_points_x0} were performed using the computer algebra system \texttt{Magma} \cite{magma}. The code can be found in
the following GitHub repository:
\begin{center}
	\url{https://github.com/kenjiterao/maps-between-isolated-points}
\end{center}

\subsection{Acknowledgments}

This work was undertaken under the supervision of Samir Siksek, whom the author thanks for their continued guidance and support. The author also thanks Maarten Derickx, David Holmes, James Rawson and David Zywina for helpful and insightful conversations which have greatly influenced the direction of this work, and the referees for useful comments on earlier versions of this work. This work was supported by the Additional Funding Programme for Mathematical Sciences, delivered by EPSRC (EP/V521917/1) and the Heilbronn Institute for Mathematical Research. For the purpose of open access, the author has applied a Creative Commons Attribution (CC-BY) licence to any Author Accepted Manuscript version arising from this submission.
	\section{Isolated divisors} \label{sec:isolated_divisors}

As explained in Section \ref{sec:introduction:mapping_isolated_points}, throughout this paper we will require a notion of isolated points on geometrically disconnected curves. We dedicate this section to defining such a notion, as well as proving many analogues of results known for isolated points on geometrically connected curves.

This is achieved by extending, in Section \ref{sec:isolated_divisors:definition}, the existing notion of isolated points to effective divisors on smooth, projective, possibly geometrically disconnected varieties over number fields. In Section \ref{sec:isolated_divisors:stein_factorization}, the Stein factorization is shown to give a tight link between isolated divisors on geometrically disconnected and geometrically connected varieties. This explains why the behavior of isolated divisors on geometrically disconnected varieties is so similar to that of isolated divisors on geometrically connected varieties. In Section \ref{sec:isolated_divisors:morphisms}, it is shown that isolated divisors are well-behaved under pushforwards and pullbacks of finite morphisms. This justifies the choice of isolated divisors rather than isolated points as the primary object of study. Finally, in Section \ref{sec:isolated_divisors:isolated_points}, we restrict to the case of isolated points on curves, making use of the notion of degrees of points to reinterpret the results from the previous sections. This culminates with the proof of the mapping theorems described in Section \ref{sec:introduction:mapping_isolated_points}, which will be central to the remainder of the paper.

We set the following notation, which will persist throughout this section. Let $X$ be a smooth projective variety over a number field $k$, that is to say, a smooth, projective, integral scheme over $k$. Let $\Div(X/k)$ be the set of relative effective Cartier divisors on $X/k$; in other words, the set of effective Cartier divisors $D \subset X$ such that the morphism $D \to \Spec k$ is flat. Note that, by \cite[Exercise 3.5]{kleiman2005}, the set $\Div(X/k)$ is a commutative monoid. By \cite[Theorem 3.7]{kleiman2005}, there exists a $k$-scheme $\DDiv_{X/k}$ such that $\DDiv_{X/k}(T) = \Div(X_{T}/T)$ for all $k$-schemes $T$, which we call the divisor scheme of $X$.

\begin{remark} \label{rmk:isolated_divisors:divisor_cycle_equivalence}
	The commutative monoid $\Div(X/k)$ can equally be described in terms of cycles on $X$ of codimension one. Indeed, as all schemes over the spectrum of a field are flat, $\Div(X/k)$ is equal to the monoid of effective Cartier divisors on $X$. Moreover, since $X$ is a smooth scheme over the number field $k$, the monoid of effective Cartier divisors on $X$ is isomorphic to the monoid of effective cycles on $X$ of codimension one, by \cite[Th\'eor\`eme 21.6.9]{egaiv4}. Note that, as $X$ is Noetherian, the latter is isomorphic to the free commutative monoid generated by the irreducible closed subschemes of $X$ of codimension one, by \cite[21.6.2]{egaiv4}. In particular, each element of $\Div(X/k)$ is represented by a unique positive formal sum $\sum_{i = 1}^{n} n_{i} Z_{i}$, where each $Z_{i}$ is an irreducible closed subscheme of $X$ of codimension one.
\end{remark}

Recall that the Picard group $\Pic(X)$ is the set of isomorphism classes of invertible sheaves $\mathcal{L}$ on $X$. Define the relative Picard functor $\Pic_{X/k}$ by
\[
	\Pic_{X/k}(T) = \Pic(X_{T})/\pi_{T}^{\ast}\Pic(T),
\]
for all $k$-schemes $T$, where $\pi_{T} : X_{T} \to T$ denotes the structure map. The sheafification of the relative Picard functor under the fppf-topology is representable by the Picard scheme $\PPic_{X/k}$, by \cite[Corollaire 6.6]{fga5}. Note that, by the properties of sheafification, there is a natural transformation $\Pic_{X/k} \to \PPic_{X/k}$. By abuse of notation, for any invertible sheaf $\mathcal{L}$ on $X$, we denote $[\mathcal{L}] \in \PPic_{X/k}(k)$ the image of the equivalence class $[\mathcal{L}] \in \Pic_{X/k}(k)$.

The Picard scheme $\PPic_{X/k}$ is a group scheme, and we let $\PPic_{X/k}^{0}$ denote the connected component of the identity. The latter is an abelian variety by \cite[Remark 5.6]{kleiman2005}. More generally, for any invertible sheaf $\mathcal{L}$ of $X$, we denote by $\PPic_{X/k}^{\mathcal{L}}$ the connected component of $\PPic_{X/k}$ containing the point $[\mathcal{L}] \in \PPic_{X/k}(k)$. Note that, since $\PPic_{X/k}$ is a group scheme, $\PPic_{X/k}^{\mathcal{L}}$ is isomorphic to $\PPic_{X/k}^{0}$ for all invertible sheaves $\mathcal{L}$ of $X$. By \cite[Proposition 5.10]{kleiman2005}, the connected component $\PPic_{X/k}^{\mathcal{L}}$ parametrizes invertible sheaves algebraically equivalent to $\mathcal{L}$; that is to say, two invertible sheaves $\mathcal{L}$ and $\mathcal{L}'$ on $X$ are algebraically equivalent if and only if $\PPic_{X/k}^{\mathcal{L}} = \PPic_{X/k}^{\mathcal{L}'}$.

Consider the natural transformation $\DDiv_{X/k} \to \Pic_{X/k}$ defined by
\begin{align*}
	\DDiv_{X/k}(T) & \to \Pic_{X/k}(T)             \\
	D              & \mapsto [\mathcal{O}_{X}(D)],
\end{align*}
for all $k$-schemes $T$. Composing this with the natural transformation $\Pic_{X/k} \to \PPic_{X/k}$, we obtain a morphism of schemes $\AAbel_{X/k} : \DDiv_{X/k} \to \PPic_{X/k}$, called the Abel map. The Abel map is a proper morphism of schemes by Theorem \ref{thm:divisor_picard_schemes:abel_proper}. In particular, the image $\AAbel_{X/k}(\DDiv_{X/k})$ is a closed subscheme of $\PPic_{X/k}$, which we denote $\W_{X/k}$.

For any relative effective Cartier divisor $D \in \DDiv_{X/k}(k)$ on $X$, we let $\DDiv_{X/k}^{D}$ be the preimage $\AAbel_{X/k}^{-1} \! \left(\PPic_{X/k}^{\O_{X}(D)} \right)$. Since the Abel map is of finite type, $\DDiv_{X/k}^{D}$ is a finite union of connected components of $\DDiv_{X/k}$. As is the case for the Picard scheme, the set $\DDiv_{X/k}^{D}(k)$ is in bijection with the set of relative effective Cartier divisors on $X$ algebraically equivalent to $D$. As before, the image $\AAbel_{X/k}(\DDiv_{X/k}^{D})$ is a closed subscheme of $\PPic_{X/k}^{\O_{X}(D)}$, which we denote $\W_{X/k}^{D}$.

Finally, let $X \to \Spec K \to \Spec k$ be the Stein factorization of the structure map $X \to \Spec k$, where $K/k$ is a finite extension of number fields. The field $K$ can be explicitly described as $H^{0}(X, \O_{X})$, or equivalently, as the intersection $k(X) \cap \bar{k}$, where $k(X)$ denotes the function field of $X$. The map $X \to \Spec K$ makes $X$ into a smooth, projective, geometrically integral scheme over the number field $K$, and we use analogous notation to denote the divisor and Picard schemes of $X/K$. Note that, if $X \to \Spec k$ is geometrically integral, then $K = k$.

\subsection{Isolated divisors} \label{sec:isolated_divisors:definition}

We begin by defining the notion of isolated divisors on the variety $X$. This extends the definition of isolated points given in \cite[Definition 4.1]{bourdon2019} to arbitrary divisors, and removes the condition that the variety $X$ is one-dimensional and geometrically integral.

\begin{definition}
	Let $D \in \DDiv_{X/k}(k)$ be a relative effective Cartier divisor on $X$. We say that
	\begin{enumerate}
		\item $D$ is \textbf{\Pone-parametrized} if there exists $D' \neq D \in \DDiv_{X/k}(k)$ such that $\AAbel_{X/k}(D) = \AAbel_{X/k}(D')$, and \textbf{\Pone-isolated} otherwise.
		\item $D$ is \textbf{AV-parametrized} if there exists a positive rank abelian subvariety $A \subset \PPic_{X/k}^{0}$ such that
		      \[
			      \AAbel_{X/k}(D) + A \subset \W_{X/k},
		      \]
		      and \textbf{AV-isolated} otherwise.
		\item $D$ is \textbf{isolated} if it is both \Pone-isolated and AV-isolated.
	\end{enumerate}
\end{definition}

The notion of isolated points on geometrically irreducible curves defined in \cite{bourdon2019} is motivated by the observation that a curve $C$ over a number field has infinitely many closed points of degree at most $d$ if and only there exists a non-isolated point of such degree. This property generalizes to isolated divisors, as the following theorem demonstrates.

\begin{theorem} \label{thm:isolated_divisors:non_isolated_infinite}
	Let $D \in \DDiv_{X/k}(k)$ be a relative effective Cartier divisor on $X$. Then, there exist infinitely many relative effective Cartier divisors on $X$ algebraically equivalent to $D$ if and only if there exists a non-isolated such divisor.
\end{theorem}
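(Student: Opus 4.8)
The plan is to study the proper Abel map $\AAbel = \AAbel_{X/k}$ on the projective $k$-scheme $\DDiv_{X/k}^{D}$, which surjects onto $\W_{X/k}^{D} \subseteq \PPic_{X/k}^{\O_{X}(D)}$. Since $\AAbel(D)$ is a $k$-point of $\PPic_{X/k}^{\O_{X}(D)}$, translation by $-\AAbel(D)$ is a $k$-isomorphism of this torsor with the abelian variety $\PPic_{X/k}^{0}$, which lets me import the arithmetic of abelian varieties over $k$; the inputs I expect to need are the Mordell--Weil theorem (finite generation of $\PPic_{X/k}^{0}(k)$), the Mordell--Lang theorem of Faltings et al. (for a finitely generated subgroup $\Gamma$ of an abelian variety $A/k$ and a closed subvariety $V \subseteq A$, the set $V \cap \Gamma$ is a finite union of cosets of subgroups of $\Gamma$), and the fact that $\Br(K)$ is torsion.

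For the direction "non-isolated implies infinitely many", I would split according to the type of the witnessing non-isolated divisor $D' \in \DDiv_{X/k}^{D}(k)$. If $D'$ is \Pone-parametrized, say $\AAbel(D'') = \AAbel(D')$ with $D'' \neq D'$, then since $\O_{X}(D')$ and $\O_{X}(D'')$ are genuine invertible sheaves and $\Pic(X) \hookrightarrow \PPic_{X/k}(k)$, they are isomorphic, so the complete linear system $|\O_{X}(D')| = \mathbb{P}(H^{0}(X, \O_{X}(D')))$ is a genuine projective space over $k$ containing the distinct $k$-points $D', D''$; hence it has dimension at least one, so infinitely many $k$-points, each of which is a relative effective Cartier divisor mapping to $\AAbel(D') \in \W_{X/k}^{D}$. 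If instead $D'$ is AV-parametrized, fix a positive rank abelian subvariety $A \subseteq \PPic_{X/k}^{0}$ with $\AAbel(D') + A \subseteq \W_{X/k}$; then $\AAbel(D') + A(k)$ is an infinite set of $k$-points of $\W_{X/k}^{D}$, and it remains to lift infinitely many of them through $\AAbel$ to $k$-rational divisors. This is the step I expect to be the main obstacle, since a proper surjection need not surject on $k$-points: a $k$-point $w$ of $\W_{X/k}$ lifts exactly when the associated class in $\PPic_{X/k}(k)$ comes from an actual invertible sheaf on $X$ (such a sheaf is automatically effective because $w \in \W_{X/k}$, so any nonzero global section cuts out the desired divisor). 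The way I would get around this is to identify the obstruction with the image of $w$ under the connecting homomorphism $\delta \colon \PPic_{X/k}(k) \to \Br(K)$ of the Leray spectral sequence for $X \to \Spec k$; as $\delta$ is a group homomorphism and $\delta(\AAbel(D')) = 0$, we have $\delta(\AAbel(D') + a) = \delta(a)$ for $a \in A(k)$, and since $A(k)$ is finitely generated while $\Br(K)$ is torsion, $\delta|_{A(k)}$ has finite image, hence a finite-index, and in particular infinite, kernel. Each $a$ in that kernel yields a distinct $k$-rational divisor algebraically equivalent to $D$.

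For the converse, suppose $\DDiv_{X/k}^{D}(k)$ is infinite and, for contradiction, that every relative effective Cartier divisor algebraically equivalent to $D$ is isolated. Then \Pone-isolation makes $\AAbel$ injective on $\DDiv_{X/k}^{D}(k)$, so its image $S$ is an infinite subset of $\W_{X/k}^{D}(k)$; translating by $-\AAbel(D)$ and writing $\Gamma = \PPic_{X/k}^{0}(k)$ (finitely generated, by Mordell--Weil), the set $S - \AAbel(D) \subseteq \Gamma$ is infinite with Zariski closure $V \subseteq \W_{X/k}^{D} - \AAbel(D)$. By Mordell--Lang, $V \cap \Gamma$ is a finite union of cosets $\gamma_{i} + \Gamma_{i}$ with $\gamma_{i} + \overline{\Gamma_{i}} \subseteq V$, and since $S - \AAbel(D)$ is infinite, some $\Gamma_{i_{0}}$ is infinite and meets $S - \AAbel(D)$, say in a point $\AAbel(D') - \AAbel(D)$ with $D' \in \DDiv_{X/k}^{D}(k)$. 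Taking $A$ to be the identity component of the algebraic subgroup $\overline{\Gamma_{i_{0}}}$ — a positive-dimensional abelian subvariety of $\PPic_{X/k}^{0}$, of positive rank because $\Gamma_{i_{0}} \cap A$ has finite index in the infinite group $\Gamma_{i_{0}}$ — one obtains $\AAbel(D') - \AAbel(D) + A \subseteq \overline{\gamma_{i_{0}} + \Gamma_{i_{0}}} \subseteq V \subseteq \W_{X/k}^{D} - \AAbel(D)$, hence $\AAbel(D') + A \subseteq \W_{X/k}$. Thus $D'$ is AV-parametrized, contradicting that it is isolated, and the theorem follows.
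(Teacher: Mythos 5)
Your overall strategy coincides with the paper's: the forward implication is Faltings/Mordell--Lang applied to the closed subvariety $\W_{X/k}^{D}$ translated into $\PPic_{X/k}^{0}$, and the reverse implication splits into the \Pone-parametrized and AV-parametrized cases, with the AV case handled by the Brauer obstruction $\delta \colon \PPic_{X/k}(k) \to \Br(K)$ together with the torsionness of $\Br(K)$ and the Mordell--Weil theorem --- this is exactly the content of the paper's Theorem \ref{thm:isolated_divisors:abel_rational_image}, which you re-derive inline. Your use of Mordell--Lang for the finitely generated group $\Gamma = \PPic_{X/k}^{0}(k)$, rather than Faltings's statement about all $k$-points of a closed subvariety, is immaterial here since $V(k) = V \cap \Gamma$.

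The one step that does not survive as written is the \Pone-parametrized case of the reverse implication. For geometrically disconnected $X$ the complete linear system of $\O_{X}(D')$ is not a projective space over $k$: the sections $H^{0}(X, \O_{X}(D'))$ form a vector space over $K = H^{0}(X, \O_{X})$, two nonzero sections cut out the same divisor exactly when they differ by a unit of $K$, and the fiber of the Abel map is $\Res_{K/k}(\mathbb{P}^{n-1}_{K})$ with $n = \dim_{K} H^{0}(X, \O_{X}(D'))$ --- this is the paper's Theorem \ref{thm:isolated_divisors:abel_fiber}, and is precisely the ``nuance'' the paper flags for disconnected varieties. Your inference ``the projectivization has dimension at least one, hence infinitely many $k$-points, each of which is a divisor'' fails if the projectivization is taken over $k$: when $n = 1$ and $K \neq k$, the $k$-projectivization of $H^{0}$ is positive-dimensional with infinitely many $k$-points, yet the linear system contains a single divisor. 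The conclusion you want is still correct, because two distinct effective divisors with the same Abel image force $n \geq 2$, whence $\mathbb{P}^{n-1}_{K}(K) = \Res_{K/k}(\mathbb{P}^{n-1}_{K})(k)$ is infinite; but the count must be carried out over $K$, not over $k$.
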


The proof of this result proceeds in much the same way as the proof of \cite[Theorem 4.2]{bourdon2019}. However, there are a few nuances which apply in the case of geometrically disconnected varieties, the proofs of which can be found in Appendix \ref{sec:divisor_picard_schemes}. Firstly, the fibers of the Abel map are no longer given by projective spaces, as is the case for geometrically integral varieties. However, these fibers can still be described in terms of Weil restrictions.

\begin{theorem}[{see Theorem \ref{thm:divisor_picard_schemes:abel_fiber}}] \label{thm:isolated_divisors:abel_fiber}
	Let $X, k$ and $K$ be as above, and let $D \in \DDiv_{X/k}(k)$ be a relative effective Cartier divisor on $X$. Then, the fiber $\AAbel_{X/k}^{-1}(\AAbel_{X/k}(D))$ is isomorphic to $\Res_{K/k}(\mathbb{P}_{K}^{n-1})$, where $n = \dim_{K} H^{0}(X, \O_{X}(D))$.
\end{theorem}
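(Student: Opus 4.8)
The plan is to reduce to the geometrically integral case treated in \cite[Theorem 3.9]{kleiman2005} by exploiting the Stein factorization $X \to \Spec K \to \Spec k$. Over the intermediate field $K$, the scheme $X$ is smooth, projective and geometrically integral, so the classical theory applies: the fiber of the Abel map $\AAbel_{X/K} : \DDiv_{X/K} \to \PPic_{X/K}$ through a relative effective Cartier divisor $D$ is the linear system $|D|$, which is isomorphic to $\mathbb{P}_{K}^{n-1}$ with $n = \dim_{K} H^{0}(X, \O_{X}(D))$; here one uses that $H^{0}(X, \O_{X}) = K$ so that the projectivization is taken over $K$. The first step is therefore to record this and to identify $n$ with the stated dimension, noting that the formation of $H^{0}(X, \O_{X}(D))$ is unaffected by whether we regard $X$ as a $k$-scheme or a $K$-scheme.

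The second, and main, step is to compare $\AAbel_{X/k}$ with $\AAbel_{X/K}$ on the level of $T$-points. Given a $k$-scheme $T$, a relative effective Cartier divisor on $X_T/T$ is the same datum as a relative effective Cartier divisor on $X_{T}/T_{K}$, where $T_K = T \times_k K$ — this is because relative Cartier divisors can be checked fppf-locally, and $X_T \to T$ factors through $X_T \to T_K \to T$ with $X_T \to T_K$ having geometrically integral fibers. More precisely, I expect the identity $\DDiv_{X/k} = \Res_{K/k} \DDiv_{X/K}$ of $k$-schemes, compatible with the Abel maps in the sense that $\AAbel_{X/k}$ is obtained from $\Res_{K/k} \AAbel_{X/K}$ after composing with $\Res_{K/k} \PPic_{X/K} \to \PPic_{X/k}$; the relevant comparison of relative Picard functors along a finite field extension is a standard Weil-restriction statement (and is presumably among the results of Appendix \ref{sec:divisor_picard_schemes}). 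Granting this, the fiber of $\AAbel_{X/k}$ over $\AAbel_{X/k}(D)$ is the Weil restriction $\Res_{K/k}$ of the fiber of $\AAbel_{X/K}$ over $\AAbel_{X/K}(D)$, provided the map $\Res_{K/k} \PPic_{X/K} \to \PPic_{X/k}$ is a monomorphism on the relevant components so that no fibers are merged.

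The hard part will be precisely this last point: controlling the kernel of $\Res_{K/k} \PPic_{X/K}(T) \to \PPic_{X/k}(T)$ and checking it does not enlarge the fibers. Over a point $T = \Spec k$ the issue is whether two line bundles on $X$ that become algebraically (or rationally) equivalent after the identification $\Pic(X) = \Pic_{X/K}$-then-$\Res$ were already equal in $\Pic_{X/k}$; since $\Pic_{X/k}(k) = \Pic(X) = \Pic_{X/K}(K)$ as abstract groups (the pullback $\pi^{*}\Pic(\Spec k)$ is trivial, and likewise over $K$), the relevant map on $k$-points is injective, and one needs the functorial version for all $T$. I would handle this by invoking the representability results for $\PPic_{X/K}$ and the compatibility of fppf-sheafification with Weil restriction along the finite flat map $\Spec K \to \Spec k$, reducing the claim to the level of the presheaves $\Pic_{X/\bullet}$ where it is transparent. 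Once the fiber is identified as $\Res_{K/k}$ of $|D|_K \cong \mathbb{P}_{K}^{n-1}$, the theorem follows.
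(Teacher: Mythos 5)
Your proposal is correct and follows essentially the same route as the paper: identify $\DDiv_{X/k}$ and $\PPic_{X/k}$ with the Weil restrictions of $\DDiv_{X/K}$ and $\PPic_{X/K}$ compatibly with the Abel maps (Theorems \ref{thm:divisor_picard_schemes:picard_weil_restriction} and \ref{thm:divisor_picard_schemes:divisor_weil_restriction}), so that the fiber of $\AAbel_{X/k}$ is the Weil restriction of the fiber of $\AAbel_{X/K}$, which is $\mathbb{P}^{n-1}_{K}$ by the geometrically integral theory. The injectivity issue you flag in your last paragraph does not actually arise, since the comparison map of Picard schemes is an isomorphism of group schemes, which is precisely what the appendix establishes.
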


Moreover, the image of the $k$-rational points of $\DDiv_{X/k}$ under the Abel map cannot be explicitly determined. However, we can still determine some of its properties, as the following theorem shows.

\begin{theorem}[{see Theorem \ref{thm:divisor_picard_schemes:abel_rational_image}}] \label{thm:isolated_divisors:abel_rational_image}
	Let $X, k$ and $K$ be as above. Then there exists a subgroup $G \leq \PPic_{X/k}(k)$ such that
	\[
		\AAbel_{X/k}(\DDiv_{X/k}(k)) = \AAbel_{X/k}(\DDiv_{X/k})(k) \cap G.
	\]
	Moreover, the quotient group $\PPic_{X/k}(k) / G$ is torsion.
\end{theorem}

As is the case for \cite[Theorem 4.2]{bourdon2019}, the proof of Theorem \ref{thm:isolated_divisors:non_isolated_infinite} relies heavily on Faltings's theorem on subvarieties of abelian varieties, which we state here.

\begin{theorem}[\cite{faltings1994}, Main theorem] \label{thm:isolated_divisors:faltings}
	Let $A$ be an abelian variety over a number field $k$, and $X \subset A$ a closed subvariety. Then, there exist finitely many abelian subvarieties $A_{1}, \dots, A_{m} \subset A$, and finitely many points $y_{1}, \dots, y_{m} \in X(k)$ such that $y_{i} + A_{i} \subset X$ for all $i$, and
	\[
		X(k) = \bigcup_{i = 1}^{m} y_{i} + A_{i}(k).
	\]
\end{theorem}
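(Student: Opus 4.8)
The final statement is Faltings's theorem (the Mordell--Lang conjecture for abelian varieties over number fields), quoted in the excerpt purely as an external input. A genuine proof requires the full Vojta--Faltings machinery, so what follows is the shape of the argument behind that citation rather than a self-contained proof. I would first introduce the special locus $\operatorname{Sp}(X) \subseteq X$, the union of all translates $y + B$ with $B \subseteq A$ a positive-dimensional abelian subvariety and $y + B \subseteq X$. By the structure theory of Ueno and Kawamata, $\operatorname{Sp}(X)$ is Zariski closed and is a \emph{finite} union of translated abelian subvarieties $c_{1} + B_{1}, \dots, c_{r} + B_{r} \subseteq X$, while $X \smallsetminus \operatorname{Sp}(X)$ contains no translate of a positive-dimensional abelian subvariety. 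A Noetherian induction on closed subvarieties of $A$ --- using that $\operatorname{Sp}(Z) \subseteq \operatorname{Sp}(X)$ whenever $Z \subseteq X$, so that passing to proper closed subvarieties strictly decreases the relevant data --- then reduces the theorem to the finiteness assertion $(\star)$: a closed subvariety $Y \subseteq A$ with $\operatorname{Sp}(Y) = \emptyset$ has $Y(k)$ finite. In the assembled statement, translated abelian subvarieties admitting a rational point are rewritten with basepoint in $X(k)$, those without are discarded, and the finite point sets produced by $(\star)$ are exactly the $0$-dimensional $A_{i}$.

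The heart is $(\star)$, proved by Vojta's method in the form due to Faltings. Assume $Y(k)$ is infinite. Using the Mumford gap principle for the N\'eron--Tate height, select points $P_{1}, P_{2}, \dots \in Y(k)$ with extremely rapidly increasing canonical heights and tightly clustered directions, so that for a suitable large $m$ the tuple $\mathbf{P} = (P_{1}, \dots, P_{m}) \in Y^{m}(k) \subseteq A^{m}(k)$ lies very near the small diagonal in a precise quantitative sense. Then, by Siegel's lemma together with a Riemann--Roch dimension count, construct a nonzero global section $s$ of a line bundle $\mathcal{L} = \boxtimes_{j} \mathcal{L}_{j}^{\,d_{j}}$ on $Y^{m}$ --- a box-tensor of ample bundles with carefully tuned multidegrees $d_{1} \gg d_{2} \gg \dots \gg d_{m}$ --- that is arithmetically \emph{small} (controlled archimedean norms and coefficient heights) yet vanishes to high prescribed order along a subscheme adapted to $\mathbf{P}$. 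The crux is to show that $s$ cannot vanish to excessively high order at $\mathbf{P}$ itself: this is Faltings's Product Theorem (the higher-dimensional descendant of the lemmas of Dyson and Roth), which forces the locus of very-high-order vanishing of such a section to be a product subvariety, and --- since $Y$ contains no translate of a positive-dimensional abelian subvariety --- this product locus cannot contain $\mathbf{P}$ once $m$ is large. Comparing the resulting lower bound on an arithmetic intersection number against the upper bound coming from the smallness of $s$ produces a numerical inequality that is violated as soon as the heights of the $P_{i}$ grow fast enough: the required contradiction. Hence $Y(k)$ is finite, and unwinding the reduction gives the theorem.

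The genuinely difficult and original ingredient is the Product Theorem invoked above --- bounding, on an arbitrary product and in arbitrary dimension, the subscheme of high-order vanishing of a multigraded section, and extracting from its product structure the forbidden translate of an abelian subvariety. By comparison, the gap principle, Siegel's lemma, the Riemann--Roch bookkeeping, and the Noetherian-induction packaging via the Ueno--Kawamata structure theorem are essentially formal once the Product Theorem is available. Since the excerpt only cites this result, in practice one simply appeals to \cite{faltings1994}; the above is the structure of the proof that justifies the citation.
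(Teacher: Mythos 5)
The paper does not prove this statement at all: it is quoted verbatim from \cite{faltings1994} as an external input, exactly as you recognize. Your identification of it as Faltings's theorem, together with your (accurate, high-level) sketch of the special-locus reduction and the Vojta--Faltings Product Theorem argument behind the citation, is consistent with the paper's treatment, which simply appeals to the reference.
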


Equipped with these results, we can now give the proof of Theorem \ref{thm:isolated_divisors:non_isolated_infinite}. As indicated above, this proof follows just as the proof of \cite[Theorem 4.2]{bourdon2019}, with some minor changes to accommodate the aforementioned nuances.

\begin{proof}[Proof of Theorem \ref{thm:isolated_divisors:non_isolated_infinite}]
	Suppose first that there exist infinitely many relative effective Cartier divisors on $X/K$ algebraically equivalent to $D$, or equivalently, $\DDiv_{X/k}^{D}(k)$ is infinite. If the Abel map $\AAbel_{X/k} : \DDiv_{X/k} \to \PPic_{X/k}$ is not injective on $\DDiv_{X/k}^{D}(k)$, then there exists two distinct effective Cartier divisors $E, E' \in \DDiv_{X/k}^{D}(k)$ such that $\AAbel_{X/k}(E) = \AAbel_{X/k}(E')$. In particular, $E$ is \Pone-parametrized, and so is a non-isolated relative effective Cartier divisor on $X/k$ algebraically equivalent to $D$.

	On the other hand, suppose that the Abel map $\AAbel_{X/k} : \DDiv_{X/k} \to \PPic_{X/k}$ is injective on $\DDiv_{X/k}^{D}(k)$. The image $\W_{X/k}^{D}$ is a closed subscheme of the connected component $\PPic_{X/k}^{\O_{X}(D)}$, the latter being isomorphic to the abelian variety $\PPic_{X/k}^{0}$. As the Abel map is proper, the image $\W_{X/k}^{D}$ is Noetherian, and so has finitely many irreducible components. Therefore, by Theorem \ref{thm:isolated_divisors:faltings}, there exist finitely many abelian subvarieties $A_{1}, \dots, A_{m} \subset \PPic_{X/k}^{0}$, and finitely many points $y_{1}, \dots, y_{m} \in \W_{X/k}^{D}(k)$ such that $y_{i} + A_{i} \subset \W_{X/k}^{D}$ for all $i$, and
	\[
		\W_{X/k}^{D}(k) = \bigcup_{i = 1}^{m} y_{i} + A_{i}(k).
	\]
	Since the Abel map $\AAbel_{X/k}$ is injective on $\DDiv_{X/k}^{D}(k)$, we know that the set $\AAbel_{X/k}(\DDiv_{X/k}^{D}(k))$ is infinite. By the above, there exists a translate $y_{i} + A_{i}(k)$ containing infinitely many points of $\AAbel_{X/k}(\DDiv_{X/k}^{D}(k))$. In particular, $A_{i}$ has positive rank, and we may assume that $y_{i} \in \AAbel_{X/k}(\DDiv_{X/k}^{D}(k))$. Therefore, there exists some relative effective Cartier divisor $E \in \DDiv_{X/k}^{D}(k)$ such that $y_{i} = \AAbel_{X/k}(E)$. Since $y_{i} + A_{i} \subset \W_{X/k}^{D}$, it follows that
	\[
		\AAbel_{X/k}(E) + A_{i} \subset \W_{X/k},
	\]
	and so $E$ is AV-parametrized. Therefore, there exists a non-isolated relative effective Cartier divisor on $X/k$ algebraically equivalent to $D$.

	Suppose now that there exists a non-isolated relative effective Cartier divisor $E$ on $X/k$ algebraically equivalent to $D$. Suppose first that $E$ is \Pone-parametrized. Therefore, there exists $E' \neq E \in \DDiv_{X/k}(k)$ such that $\AAbel_{X/k}(E) = \AAbel_{X/k}(E')$. Consider the fiber $F = \AAbel_{X/k}^{-1}(\AAbel_{X/k}(E))$. By definition, $E$ and $E'$ are distinct $k$-points on $F$. By Theorem \ref{thm:isolated_divisors:abel_fiber}, we know that $F = \Res_{K/k}(\mathbb{P}^{n-1}_{K})$ for some $n \geq 0$. In particular, since $F(k)$ has two distinct elements, it follows that $n \geq 2$ and so $F(k)$ is infinite. By definition, $F(k) \subset \DDiv_{X/k}^{D}(k)$, and so there are infinitely many relative effective Cartier divisors on $X/k$ algebraically equivalent to $D$.

	Suppose now that $E$ is AV-parametrized, so there exists a positive rank abelian subvariety $A \subset \PPic_{X/k}^{0}$ such that
	\[
		\AAbel_{X/k}(E) + A \subset \W_{X/k}.
	\]
	Note that, since $A \subset \PPic_{X/k}^{0}$, we have $\AAbel_{X/k}(E) + A \subset \W_{X/k}^{E}$. By Theorem \ref{thm:isolated_divisors:abel_rational_image}, there is a subgroup $G \subset \PPic_{X/k}(k)$ such that
	\[
		\AAbel_{X/k}(\DDiv_{X/k}(k)) = \W_{X/k}(k) \cap G.
	\]
	Moreover, the quotient group $\PPic_{X/k}(k) / G$ is torsion. By definition, we have $\AAbel_{X/k}(E) \in G$, and so
	\[
		\AAbel_{X/k}(E) + (A(k) \cap G) \subset \W_{X/k}^{E}(k) \cap G = \AAbel_{X/k}(\DDiv_{X/k}^{E}(k)).
	\]
	The quotient group $A(k) / (A(k) \cap G)$ is a finitely-generated subgroup of the torsion group $\PPic_{X/k}(k) / G$, and so is finite. In particular, since $A(k)$ has positive rank, the group $A(k) \cap G$ is infinite. Therefore, by the above, the set $\DDiv_{X/k}^{E}(k)$ must be infinite. Since $E$ is algebraically equivalent to $D$, we have $\DDiv_{X/k}^{D} = \DDiv_{X/k}^{E}$. Therefore, there are infinitely many relative effective Cartier divisors on $X/k$ algebraically equivalent to $D$.
\end{proof}

\subsection{Isolated divisors and Stein factorization} \label{sec:isolated_divisors:stein_factorization}

As evidenced in the appendix, the divisor and Picard schemes of a geometrically disconnected variety are closely related to those of its Stein factorization. This correspondence extends to isolated divisors, as the following theorem shows.

\begin{theorem} \label{thm:isolated_divisors:stein_factorization_isolated}
	Let $D \in \DDiv_{X/k}(k)$ be a relative effective Cartier divisor on $X / k$, and note that $D$ also defines a relative effective Cartier divisor $D \in \DDiv_{X/K}(K)$ on $X / K$. Then, the following hold:
	\begin{enumerate}
		\item $D$ is \Pone-isolated if and only if the same divisor $D$ on $X / K$ is \Pone-isolated.
		\item $D$ is AV-isolated if and only if the same divisor $D$ on $X / K$ is AV-isolated.
	\end{enumerate}
	In particular, $D$ is isolated if and only if the same divisor $D$ on $X / K$ is isolated.
\end{theorem}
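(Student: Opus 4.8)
The plan is to reduce the whole statement to the compatible Weil restriction descriptions of the divisor and Picard schemes set up in Appendix~\ref{sec:divisor_picard_schemes}. For the Stein factorization $X \to \Spec K \to \Spec k$ these say that $\DDiv_{X/k} \cong \Res_{K/k}\DDiv_{X/K}$, $\PPic_{X/k} \cong \Res_{K/k}\PPic_{X/K}$, $\PPic_{X/k}^{0} \cong \Res_{K/k}\PPic_{X/K}^{0}$ and $\W_{X/k} \cong \Res_{K/k}\W_{X/K}$, with $\AAbel_{X/k}$ corresponding to $\Res_{K/k}\AAbel_{X/K}$. Passing to $k$-points and using $\Res_{K/k}(Y)(k) = Y(K)$, these identifications become the identity on effective Cartier divisors of $X$ and on Picard groups, so the Abel map of $X/k$ on $k$-rational points is literally the Abel map of $X/K$ on $K$-rational points. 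I would then treat \Pone-isolation and AV-isolation separately, and deduce the last assertion from the fact that ``isolated'' means ``\Pone-isolated and AV-isolated''.

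For part (1): $D$ is \Pone-parametrized on $X/k$ exactly when the fibre $\AAbel_{X/k}^{-1}(\AAbel_{X/k}(D))$ carries at least two $k$-rational points. By Theorem~\ref{thm:isolated_divisors:abel_fiber} this fibre is $\Res_{K/k}(\mathbb{P}^{n-1}_{K})$ with $n = \dim_{K} H^{0}(X, \O_{X}(D))$, so it has $\#\mathbb{P}^{n-1}(K)$ rational points, and the condition is simply $n \geq 2$. The same integer $n = \dim_{K} H^{0}(X, \O_{X}(D))$ governs the fibre of $\AAbel_{X/K}$ over $\AAbel_{X/K}(D)$, which is $\mathbb{P}^{n-1}_{K}$ because $X \to \Spec K$ is geometrically integral, so $D$ is \Pone-parametrized on $X/K$ exactly when $n \geq 2$ as well. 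This settles (1), and uses only Theorem~\ref{thm:isolated_divisors:abel_fiber}, not the identification of the Picard schemes.

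For part (2), one direction is a Weil restriction and the other a descent along the counit. If $D$ is AV-parametrized on $X/K$, witnessed by a positive rank abelian subvariety $B \subseteq \PPic_{X/K}^{0}$ with $\AAbel_{X/K}(D) + B \subseteq \W_{X/K}$, I would set $A = \Res_{K/k}(B) \subseteq \Res_{K/k}\PPic_{X/K}^{0} = \PPic_{X/k}^{0}$; as Weil restriction along a finite flat morphism preserves closed immersions and carries abelian varieties to abelian varieties, $A$ is an abelian subvariety, it has positive rank since $A(k) = B(K)$, and Weil restricting the coset inclusion yields $\AAbel_{X/k}(D) + A \subseteq \Res_{K/k}\W_{X/K} = \W_{X/k}$, so $D$ is AV-parametrized on $X/k$. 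Conversely, given a positive rank $A \subseteq \PPic_{X/k}^{0} = \Res_{K/k}\PPic_{X/K}^{0}$ with $\AAbel_{X/k}(D) + A \subseteq \W_{X/k}$, I would take the counit $\epsilon \colon (\Res_{K/k}\PPic_{X/K})_{K} \to \PPic_{X/K}$ of the base change / Weil restriction adjunction --- a homomorphism of group schemes over $K$, natural in its argument and so restricting compatibly to $\PPic_{X/K}^{0}$ and to $\W_{X/K}$ --- and set $B = \epsilon(A_{K})$, which lies in $\PPic_{X/K}^{0}$ because $A_{K} \subseteq (\Res_{K/k}\PPic_{X/K}^{0})_{K}$. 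The adjunction identifies the inclusion $A(k) \hookrightarrow (\Res_{K/k}\PPic_{X/K}^{0})(k) = \PPic_{X/K}^{0}(K)$ with the composite $A(k) \hookrightarrow A_{K}(K) \xrightarrow{\epsilon} \PPic_{X/K}^{0}(K)$, so $A(k) \subseteq B(K)$ and $B$ has positive rank. Finally, base changing $\AAbel_{X/k}(D) + A \subseteq \Res_{K/k}\W_{X/K}$ to $K$ and applying $\epsilon$ --- which sends $\AAbel_{X/k}(D)_{K}$ to $\AAbel_{X/K}(D)$ and carries $(\Res_{K/k}\W_{X/K})_{K}$ into $\W_{X/K}$ by naturality along $\W_{X/K} \hookrightarrow \PPic_{X/K}$ --- gives $\AAbel_{X/K}(D) + B \subseteq \W_{X/K}$, so $D$ is AV-parametrized on $X/K$. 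Parts (1) and (2) then give the final statement immediately.

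The hard part will be the converse direction of (2): an abelian subvariety of the Weil restriction $\Res_{K/k}\PPic_{X/K}^{0}$ need not itself be a Weil restriction, so the witnessing subvariety cannot simply be descended, and pushing it through the counit while verifying simultaneously that positive rank survives and that the translate stays inside $\W_{X/K}$ is the delicate point. Everything else rests on the appendix's identifications and on the formal calculus of the base change / Weil restriction adjunction.
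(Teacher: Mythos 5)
Your proposal is correct and follows essentially the same route as the paper: in part (2) you Weil restrict the witnessing abelian subvariety in one direction and push through the counit $(\Res_{K/k}\PPic_{X/K})_{K} \to \PPic_{X/K}$ in the other, verifying positive rank and connectedness exactly as the paper does. The only (harmless) divergence is in part (1), where you compare the two Abel fibers via $n = \dim_{K} H^{0}(X, \O_{X}(D))$ rather than invoking directly that the bijection $\DDiv_{X/k}(k) \cong \DDiv_{X/K}(K)$ is the identity on divisors and commutes with the Abel maps; both arguments are one line.
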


\begin{proof}
	By Theorem \ref{thm:divisor_picard_schemes:divisor_weil_restriction}, we have a commutative diagram
	\[\begin{tikzcd}
			\DDiv_{X/k}(k) \arrow[d, "\psi_{k}"] \arrow[r, "\AAbel_{X/k}"] & \PPic_{X/k}(k) \arrow[d, "\varphi_{k}"] \\
			\DDiv_{X/K}(K) \arrow[r, "\AAbel_{X/K}"] & \PPic_{X/K}(K),
		\end{tikzcd}\]
	where both $\varphi_{k}$ and $\psi_{k}$ are bijections. Moreover, the proof of said theorem shows that the map $\psi_{k}$ takes a relative effective Cartier divisor $D$ on $X/k$ to the same relative effective Cartier divisor $D$ on $X/K$. Therefore, $D \in \DDiv_{X/k}(k)$ is \Pone-isolated if and only if $D \in \DDiv_{X/K}(K)$ is \Pone-isolated.

	Suppose that $D$ on $X / K$ is AV-parametrized, so there exists a positive rank abelian subvariety $A \subset \PPic_{X/K}^{0}$ such that
	\[
		\AAbel_{X/K}(D) + A \subset \W_{X/K}.
	\]
	To ease notation, denote by $T_{D/K} : \PPic_{X/K} \to \PPic_{X/K}$ the translation by $\AAbel_{X/K}(D)$. Therefore, the above becomes $T_{D/K}(A) \subset \W_{X/K}$. Applying the $\Res_{K/k}$ functor, we obtain
	\[
		\Res_{K/k}(T_{D/K}(A)) \subset \Res_{K/k} \W_{X/K}.
	\]
	By Theorem \ref{thm:divisor_picard_schemes:divisor_weil_restriction}, we have a commutative diagram
	\[\begin{tikzcd}
		\DDiv_{X/k} \arrow[d, "\psi"] \arrow[rr, "\AAbel_{X/k}"] & & \PPic_{X/k} \arrow[d, "\varphi"] \\
		\Res_{K/k} \DDiv_{X/K} \arrow[rr, "\Res_{K/k}\AAbel_{X/K}"] & & \Res_{K/k} \PPic_{X/K},
	\end{tikzcd}\]
	where $\psi$ is an isomorphism of schemes and $\varphi$ an isomorphism of group schemes. Since $K/k$ is a finite separable extension, it follows from Theorem \ref{thm:divisor_picard_schemes:weil_restriction_surjectivity} that
	\begin{align*}
		\Res_{K/k} \W_{X/K} & = \Res_{K/k} \AAbel_{X/K} (\Res_{K/k} \DDiv_{X/K}) \\
		& = \varphi(\W_{X/k}).
	\end{align*}
	By \cite[Proposition A.5.1]{conrad2015}, we know that $\Res_{K/k} \PPic_{X/K}$ is a group scheme, with group structure induced from the group structure of $\PPic_{X/K}$. Using the explicit description of $\varphi$ from the proof of Theorem \ref{thm:divisor_picard_schemes:picard_weil_restriction}, it follows that $\Res_{K/k} T_{D/K} = \varphi \circ T_{D/k} \circ \varphi^{-1}$, where $T_{D/k} : \PPic_{X/k} \to \PPic_{X/k}$ is the translation by $\AAbel_{X/k}(D)$. Therefore, we have
	\begin{align*}
		\Res_{K/k}(T_{D/K}(A)) & = (\Res_{K/k} T_{D/K})(\Res_{K/k} A) \\
		& = \varphi(T_{D/k}(\varphi^{-1}(\Res_{K/k} A))).
	\end{align*}
	Thus, we obtain that
	\[
		\varphi(T_{D/k}(\varphi^{-1}(\Res_{K/k} A))) \subset \varphi(\W_{X/k}).
	\]
	Since $\varphi$ is an isomorphism of schemes, it follows that
	\[
		\AAbel_{X/k}(D) + \varphi^{-1}(\Res_{K/k} A) \subset \W_{X/k}.
	\]
	By \cite[Proposition A.5.1]{conrad2015} and \cite[Propositions 6.2.9, 6.2.10 and 6.3.7]{spec}, the Weil restriction $\Res_{K/k} A$ is an abelian variety. Moreover, by definition, the group $(\Res_{K/k} A)(k)$ is isomorphic to $A(K)$, and so $\Res_{K/k} A$ has positive rank. Finally, since $\Res_{K/k} A$ is connected, it follows that $\Res_{K/k} A \subset \PPic_{X/k}^{0}$. Therefore, the divisor $D \in \DDiv_{X/k}(k)$ on $X/k$ is AV-parametrized.

	Suppose now that $D$ on $X/k$ is AV-parametrized. Therefore, there exists a positive rank abelian subvariety $A \subset \PPic_{X/k}^{0}$ such that
	\[
		\AAbel_{X/k}(D) + A \subset \W_{X/k}.
	\]
	As before, we can write this as $T_{D/k}(A) \subset \W_{X/k}$. Composing with $\varphi : \PPic_{X/k} \to \Res_{K/k} \PPic_{X/K}$, we obtain that
	\[
		\varphi(T_{D/k}(A)) \subset \varphi(\W_{X/k}).
	\]
	As was shown above, we know that $\varphi(\W_{X/k}) = \Res_{K/k} \W_{X/K}$ and $\varphi \circ T_{D/k} = \Res_{K/k} T_{D/K} \circ \varphi$. Therefore, we have
	\[
		\Res_{K/k} T_{D/K}(\varphi(A)) \subset \Res_{K/k} \W_{X/k}.
	\]
	Base changing to $K$, we obtain that
	\[
		(\Res_{K/k} T_{D/K})_{K}((\varphi(A))_{K}) \subset (\Res_{K/k} \W_{X/k})_{K}.
	\]
	Recall that, for any $K$-scheme $Z$, there exists a morphism $q_{Z} : (\Res_{K/k} Z)_{K} \to Z$. Moreover, this morphism is functorial in $Z$; that is to say, for any morphism $f : Z \to W$ of $K$-schemes, we have a commutative diagram
	\[\begin{tikzcd}
			(\Res_{K/k} Z)_{K}  \arrow[r, "q_{Z}"] \arrow[d, "(\Res_{K/k} f)_{K}"'] & Z \arrow[d, "f"] \\
			(\Res_{K/k} W)_{K}  \arrow[r, "q_{W}"] & W.
		\end{tikzcd}\]
	Applying $q_{\PPic_{X/K}}$ and the functoriality of $q$, we find that
	\[
		T_{D/K}(q_{\PPic_{X/K}}((\varphi(A))_{K})) \subset \W_{X/k}.
	\]
	Since both $q_{\PPic_{X/K}}$ and $\varphi$ are morphisms of group schemes, by \cite[Expos\'e VI\textsubscript{B}, Proposition 1.2]{sga3_2a}, the image $q_{\PPic_{X/K}}((\varphi(A))_{K})$ is an abelian variety. Since $\varphi$ is an isomorphism, we know that $\varphi(A)(k) \cong A(k)$ and so the former is infinite. Moreover, by definition of $q$, any two distinct points of $\varphi(A)(k)$ give rise to distinct points of $q_{\PPic_{X/K}}((\varphi(A))_{K})(K)$. Therefore, the set $q_{\PPic_{X/K}}((\varphi(A))_{K})(K)$ is also infinite, and so $q_{\PPic_{X/K}}((\varphi(A))_{K})$ has positive rank. As previously, the variety $q_{\PPic_{X/K}}((\varphi(A))_{K})$ is connected, and so $q_{\PPic_{X/K}}((\varphi(A))_{K}) \subset \PPic_{X/K}^{0}$. Thus, the divisor $D \in \DDiv_{X/K}(K)$ on $X/K$ is AV-parametrized.
\end{proof}

\subsection{Maps between isolated divisors} \label{sec:isolated_divisors:morphisms}

While isolated divisors are defined solely in terms of a single variety $X$, the bulk of the results in this paper stem from considering morphisms of varieties $f : X \to Y$. Indeed, as we will now show, isolated divisors are well-behaved under pushforwards and pullbacks of certain morphisms.

Throughout the remainder of this section, we let $f : X \to Y$ be a finite locally free morphism of degree $d \geq 1$ between smooth, projective varieties over the number field $k$. We employ the same notation for the divisor and Picard schemes of $Y$ as was introduced for $X$ earlier.

The morphism $f : X \to Y$ induces a group homomorphism $f^{\ast} : \Pic(Y_{T}) \to \Pic(X_{T})$ for all $k$-schemes $T$, which gives rise to a homomorphism $f^{\ast} : \Pic_{Y/k}(T) \to \Pic_{X/k}(T)$. The latter homomorphism is functorial in $T$, and thus defines a natural transformation $f^{\ast} : \Pic_{Y/k} \to \Pic_{X/k}$ between relative Picard functors. By the universality of sheafification, and the Yoneda lemma, we obtain a morphism of $k$-group schemes $f^{\ast} : \PPic_{Y/k} \to \PPic_{X/k}$.

Since the morphism $f : X \to Y$ is flat, by \cite[Proposition 21.4.5]{egaiv4}, we obtain a pullback map $f^{\ast} : \DDiv_{Y/k}(T) \to \DDiv_{X/k}(T)$ for all $k$-schemes $T$. As for the Picard scheme, this map is functorial in $T$, and so defines a morphism of $k$-schemes $f^{\ast} : \DDiv_{Y/k} \to \DDiv_{X/k}$. Moreover, by \cite[\nopp 21.4.2.1]{egaiv4}, there is a commutative diagram
\begin{equation}\begin{tikzcd}
		\DDiv_{Y/k} \arrow[r, "\AAbel_{Y/k}"] \arrow[d, "f^{\ast}"] & \PPic_{Y/k} \arrow[d, "f^{\ast}"] \\
		\DDiv_{X/k} \arrow[r, "\AAbel_{X/k}"] & \PPic_{X/k}.
	\end{tikzcd} \label{eq:isolated_divisors:divisor_picard_pullback}\end{equation}

We define the norm and pushforward morphisms similarly. Namely, as $f : X \to Y$ is finite locally free, by \cite[Proposition 6.5.6]{egaii}, there exists a group homomorphism $\Norm_{X_{T}/Y_{T}} : \Pic(X_{T}) \to \Pic(Y_{T})$ for all $k$-schemes $T$. By \cite[Proposition 6.5.8]{egaii}, this gives rise to a homomorphism $\Norm_{X/Y} : \Pic_{X/k}(T) \to \Pic_{Y/k}(T)$, which is functorial in $T$. Therefore, as before, we obtain a morphism of $k$-group schemes $\Norm_{X/Y} : \PPic_{X/k} \to \PPic_{Y/k}$.

Similarly, since the morphism $f : X \to Y$ is finite locally free, by \cite[\nopp 21.5.5]{egaiv4}, there exists a map $f_{\ast} : \DDiv_{X/k}(T) \to \DDiv_{Y/k}(T)$ for all $k$-schemes $T$. This map is functorial in $T$, and so defines a morphism of $k$-schemes $f_{\ast} : \DDiv_{X/k} \to \DDiv_{Y/k}$. By construction, we also obtain the following commutative diagram:
\begin{equation}\begin{tikzcd}
		\DDiv_{X/k} \arrow[r, "\AAbel_{X/k}"] \arrow[d, "f_{\ast}"] & \PPic_{X/k} \arrow[d, "\Norm_{X/Y}"] \\
		\DDiv_{Y/k} \arrow[r, "\AAbel_{Y/k}"] & \PPic_{Y/k}.
	\end{tikzcd} \label{eq:isolated_divisors:divisor_picard_pushforward}\end{equation}

The notion of isolated divisors behaves nicely with respect to the pushforward and pullback maps defined above. For instance, non-isolated divisors are preserved under pullback.

\begin{theorem} \label{thm:isolated_divisors:pullback_isolated}
	Let $f : X \to Y$ be as above, and let $D \in \DDiv_{Y/k}(k)$ be a relative effective Cartier divisor on $Y$. Then, the following hold:
	\begin{enumerate}
		\item Suppose that $D$ is \Pone-parametrized. Then $f^{\ast}(D)$ is \Pone-parametrized.
		\item Suppose that $D$ is AV-parametrized. Then $f^{\ast}(D)$ is AV-parametrized.
	\end{enumerate}
	In particular, if $f^{\ast}(D)$ is isolated, then $D$ is also isolated.
\end{theorem}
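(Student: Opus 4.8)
The plan is to transport the data witnessing non-isolatedness of $D$ on $Y$ through the pullback morphisms $f^{\ast} : \DDiv_{Y/k} \to \DDiv_{X/k}$ and $f^{\ast} : \PPic_{Y/k} \to \PPic_{X/k}$, using the commutative square \eqref{eq:isolated_divisors:divisor_picard_pullback} throughout. Once parts (1) and (2) are proved, the final clause follows by contraposition: if $D$ were $\mathbb{P}^{1}$-parametrized (resp.\ AV-parametrized) then so would $f^{\ast}(D)$ be, contradicting that $f^{\ast}(D)$ is isolated.

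For part (1), suppose $D' \in \DDiv_{Y/k}(k)$ satisfies $D' \neq D$ and $\AAbel_{Y/k}(D) = \AAbel_{Y/k}(D')$. I would offer $f^{\ast}(D') \in \DDiv_{X/k}(k)$ as the witness that $f^{\ast}(D)$ is $\mathbb{P}^{1}$-parametrized. Commutativity of \eqref{eq:isolated_divisors:divisor_picard_pullback} gives
\[
\AAbel_{X/k}(f^{\ast}(D)) = f^{\ast}(\AAbel_{Y/k}(D)) = f^{\ast}(\AAbel_{Y/k}(D')) = \AAbel_{X/k}(f^{\ast}(D')),
\]
so all that remains is to check $f^{\ast}(D) \neq f^{\ast}(D')$. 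This is the one genuine input needed: since $f$ is finite locally free of degree $d \geq 1$ it is faithfully flat, and pullback of effective Cartier divisors along a faithfully flat morphism is injective --- if $\mathcal{I}_{E} \subseteq \mathcal{O}_{Y}$ is the ideal sheaf of $E \in \DDiv_{Y/k}(k)$, then the ideal sheaf of $f^{\ast}(E)$ is the image of $f^{\ast}\mathcal{I}_{E}$ in $\mathcal{O}_{X}$, namely $\mathcal{I}_{E} \otimes_{\mathcal{O}_{Y}} \mathcal{O}_{X}$, from which $\mathcal{I}_{E}$ is recovered by faithfully flat descent. Hence $f^{\ast}(D) \neq f^{\ast}(D')$.

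For part (2), suppose $A \subseteq \PPic_{Y/k}^{0}$ is a positive rank abelian subvariety with $\AAbel_{Y/k}(D) + A \subseteq \W_{Y/k}$. I would take $B := f^{\ast}(A)$, the image of $A$ under the group homomorphism $f^{\ast} : \PPic_{Y/k} \to \PPic_{X/k}$, as the candidate abelian subvariety for $f^{\ast}(D)$, and verify three points. First, $B$ is an abelian subvariety of $\PPic_{X/k}^{0}$: as the image of an abelian variety under a homomorphism of group schemes it is an abelian variety by \cite[Expos\'e VI\textsubscript{B}, Proposition 1.2]{sga3_2a}, and being connected and containing the identity it lies in $\PPic_{X/k}^{0}$. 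Second, $B$ has positive rank: the composite $\Norm_{X/Y} \circ f^{\ast} : \PPic_{Y/k} \to \PPic_{Y/k}$ equals multiplication by $d$ (as one checks on line bundles via the projection formula), so $\ker(f^{\ast}) \cap \PPic_{Y/k}^{0}$ is contained in the $d$-torsion, hence finite, whence $f^{\ast}|_{A}$ is an isogeny onto $B$ and $B(k) \otimes \Q \cong A(k) \otimes \Q$ has positive rank. Third, the translate condition: applying the homomorphism $f^{\ast}$ to $\AAbel_{Y/k}(D) + A \subseteq \W_{Y/k}$ and using \eqref{eq:isolated_divisors:divisor_picard_pullback} together with
\[
f^{\ast}(\W_{Y/k}) = f^{\ast}(\AAbel_{Y/k}(\DDiv_{Y/k})) = \AAbel_{X/k}(f^{\ast}(\DDiv_{Y/k})) \subseteq \AAbel_{X/k}(\DDiv_{X/k}) = \W_{X/k},
\]
one gets $\AAbel_{X/k}(f^{\ast}(D)) + B \subseteq \W_{X/k}$. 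Thus $f^{\ast}(D)$ is AV-parametrized.

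I expect the crux to be the positive-rank claim in part (2), i.e.\ ruling out that $f^{\ast}$ collapses the Mordell--Weil rank of $A$; for that I want the clean identity $\Norm_{X/Y} \circ f^{\ast} = [d]$ valid on all of $\PPic_{Y/k}$, not merely on the image of the Abel map, after which isogeny invariance of rank finishes it. The remaining steps form an essentially formal diagram chase, with the faithfully flat injectivity of divisor pullback in part (1) the only other non-formal ingredient; a minor point of care is that the containments of shape ``$\cdots + B \subseteq \W_{X/k}$'' should be read on geometric points (equivalently, with reduced subscheme structure), so that applying the scheme morphism $f^{\ast}$ to a containment of closed subschemes is legitimate.
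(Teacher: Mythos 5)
Your proposal is correct, and part (1) coincides with the paper's argument (faithfully flat descent gives injectivity of $f^{\ast}$ on divisors, which is exactly \cite[Proposition 21.4.8]{egaiv4}, and the commutative square transports the equality of Abel images). Part (2), however, takes a genuinely different route at the one nontrivial step, namely showing that $f^{\ast}A$ has positive rank. You establish the scheme-level identity $\Norm_{X/Y} \circ f^{\ast} = [d]$ on all of $\PPic_{Y/k}$, deduced from the standard formula $\Norm_{X/Y}(f^{\ast}\mathcal{M}) \cong \mathcal{M}^{\otimes d}$ for line bundles and the functoriality of fppf sheafification; this places $\ker(f^{\ast}) \cap \PPic_{Y/k}^{0}$ inside the finite group scheme $\PPic_{Y/k}^{0}[d]$, so $f^{\ast}|_{A}$ is an isogeny onto its image and positivity of rank is immediate. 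The paper instead only proves $\Norm_{X/Y}(f^{\ast}(x)) = dx$ for those $k$-points $x$ of $A$ lying in the subgroup $G$ of Theorem \ref{thm:divisor_picard_schemes:abel_rational_image} (so that $\AAbel_{Y/k}(D)+x$ is represented by an actual divisor $E$, to which the divisor-level relation $f_{\ast}f^{\ast} = d$ of \cite[Proposition 21.5.6]{egaiv4} applies); it then needs the torsion quotient $\PPic_{Y/k}(k)/G$ and the Mordell--Weil theorem to see that $A(k) \cap G$ is already infinite. Your version buys a shorter argument that bypasses the Brauer-obstruction subgroup $G$ entirely and makes the kernel bound uniform rather than pointwise; the paper's version only requires the norm--pullback compatibility on divisors rather than the norm formula for arbitrary line bundles, and reuses machinery it needs elsewhere anyway. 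Both are valid, and your closing caveat about reading the translate containments on underlying sets is exactly the right level of care.
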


\begin{proof}
	Suppose first that $D$ is \Pone-parametrized. Therefore, there exists another relative effective Cartier divisor $D' \neq D \in \DDiv_{Y/k}(k)$ such that $\AAbel_{Y/k}(D) = \AAbel_{Y/k}(D')$. Since $f$ is faithfully flat, the morphism $f^{\ast} : \DDiv_{Y/k} \to \DDiv_{X/k}$ is injective on $k$-points, by \cite[Proposition 21.4.8]{egaiv4}. In particular, we have $f^{\ast}(D') \neq f^{\ast}(D)$. Moreover, by (\ref{eq:isolated_divisors:divisor_picard_pullback}), we have $\AAbel_{X/k}(f^{\ast}(D)) = \AAbel_{X/k}(f^{\ast}(D'))$. Therefore, $f^{\ast}(D)$ is \Pone-parametrized.

	Suppose now that $D$ is AV-parametrized. Then, there exists a positive rank abelian subvariety $A \subset \PPic_{Y/k}^{0}$ such that $\AAbel_{Y/k}(D) + A \subset \W_{Y/k}$. Since $f^{\ast}$ is a morphism of group schemes, we obtain that
	\[
		f^{\ast}(\AAbel_{Y/k}(D)) + f^{\ast} A \subset f^{\ast}(\W_{Y/k}).
	\]
	By (\ref{eq:isolated_divisors:divisor_picard_pullback}), it follows that
	\[
		\AAbel_{X/k}(f^{\ast}(D)) + f^{\ast} A \subset \W_{X/k}.
	\]
	By Theorem \ref{thm:divisor_picard_schemes:abel_rational_image}, there exists a subgroup $G \leq \PPic_{Y/k}(k)$ such that
	\[
		\AAbel_{Y/k}(\DDiv_{Y/k}(k)) = \W_{Y/k}(k) \cap G,
	\]
	and the quotient group $\PPic_{Y/k}(k) / G$ is torsion. As the group $A(k)$ is finitely generated by the Mordell-Weil theorem, it follows that $A(k) / (A(k) \cap G) \leq \PPic_{Y/k}(k) / G$ is a finite group. Since $A$ has positive rank, the group $A(k) \cap G$ must therefore be infinite.

	Consider a point $x \in A(k) \cap G$. Note that, since $\AAbel_{Y/k}(D) \in G$, we have $\AAbel_{Y/k}(D) + x \in G$. Moreover, since $\AAbel_{Y/k}(D) + A \subset \W_{Y/k}$, it follows that $\AAbel_{Y/k}(D) + x \in \W_{Y/k}(k) \cap G$. Thus, there exists a relative effective Cartier divisor $E \in \DDiv_{Y/k}(k)$ on $Y$ such that $\AAbel_{Y/k}(E) = \AAbel_{Y/k}(D) + x$. By \cite[Proposition 21.5.6]{egaiv4}, since $f$ is finite locally free of degree $d$, we know that $f_{\ast}(f^{\ast}(D)) = dD$ and $f_{\ast}(f^{\ast}(E)) = dE$. Since $\AAbel_{Y/k} : \DDiv_{Y/k}(k) \to \PPic_{Y/k}(k)$ is a monoid homomorphism, it follows that
	\[
		\AAbel_{Y/k}(f_{\ast}(f^{\ast}(E))) - \AAbel_{Y/k}(f_{\ast}(f^{\ast}(D))) = d x.
	\]
	By (\ref{eq:isolated_divisors:divisor_picard_pullback}) and (\ref{eq:isolated_divisors:divisor_picard_pushforward}), and the fact that both $\Norm_{X/Y}$ and $f^{\ast}$ are morphisms of group schemes, it follows that $\Norm_{X/Y}(f^{\ast}(x)) = dx$. In particular, the kernel of the homomorphism $f^{\ast} : A(k) \cap G \to \PPic_{X/k}(k)$ is contained in the set of $d$-torsion points $A(k)[d]$. The latter is finite as $A(k)$ is finitely generated. Therefore, since the group $A(k) \cap G$ is infinite, so is the image $f^{\ast}(A(k) \cap G)$. Thus, the abelian variety $f^{\ast}(A)$ has positive rank, and so $f^{\ast}(D)$ is AV-parametrized.
\end{proof}

In order to prove a similar result for the pushforward map, we require the following result on finiteness of preimages of the pushforward map.

\begin{lemma} \label{thm:isolated_divisors:pushforward_preimage_finite}
	Let $f : X \to Y$ be as above, and let $D$ be a relative effective Cartier divisor on $Y$. Then there are finitely many relative effective Cartier divisors $E$ on $X$ such that $f_{\ast}(E) = D$.
\end{lemma}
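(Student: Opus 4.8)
The plan is to bound, for any relative effective Cartier divisor $E$ on $X$ with $f_{\ast}(E) = D$, both the prime divisors that can occur in $E$ and the multiplicities with which they occur. By Remark \ref{rmk:isolated_divisors:divisor_cycle_equivalence} I identify relative effective Cartier divisors on $X$ and on $Y$ with effective codimension-one cycles; write $D = \sum_{j=1}^{r} m_{j} W_{j}$ with the $W_{j}$ distinct prime divisors of $Y$ and each $m_{j} \geq 1$. Since $f$ is finite it is a closed map, and, by the local description of the pushforward of divisors along a finite locally free morphism \cite[21.5.5]{egaiv4}, the pushforward of an effective divisor is supported exactly on the image of its support (there is no cancellation, since the multiplicities are nonnegative). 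Hence $f_{\ast}(E) = D$ forces $f(\Supp E) = \Supp D$, so $\Supp E \subseteq f^{-1}(\Supp D)$. As $f$ is flat, $f^{-1}(\Supp D)$ is closed of pure codimension one in $X$; since $X$ is Noetherian it has finitely many irreducible components $Z_{1}, \dots, Z_{s}$, which are precisely the prime divisors of $X$ contained in it. Thus $E = \sum_{i=1}^{s} n_{i} Z_{i}$ with each $n_{i} \geq 0$, and only finitely many prime divisors are available.

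It then remains to bound the coefficients. For each $i$ put $W_{j(i)} = \overline{f(Z_{i})}$, which is one of the $W_{j}$ whenever $n_{i} > 0$ by the previous paragraph, and let $\delta_{i} = [\kappa(Z_{i}) : \kappa(W_{j(i)})]$, a finite integer $\geq 1$ since $f$ is finite. The same local description of $f_{\ast}$ gives $f_{\ast}(E) = \sum_{i=1}^{s} n_{i} \delta_{i} W_{j(i)}$, so comparing the coefficient of $W_{j}$ in $f_{\ast}(E) = D$ yields $\sum_{i \,:\, j(i) = j} n_{i} \delta_{i} = m_{j}$. All terms are nonnegative and $\delta_{i} \geq 1$, so $n_{i} \leq m_{j(i)} \leq \max_{j} m_{j}$ for every $i$. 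Hence each $n_{i}$ ranges over a finite set, there are finitely many tuples $(n_{1}, \dots, n_{s})$, and therefore finitely many such $E$.

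The one point requiring care is the behaviour of the scheme-theoretic pushforward $f_{\ast} : \DDiv_{X/k} \to \DDiv_{Y/k}$ on codimension-one cycles, on which both the support statement and the multiplicity formula rest; I expect the descent from the abstract definition to this explicit local description to be the only real work in the argument. If one wishes to avoid the precise multiplicity formula altogether, it suffices to fix an ample invertible sheaf $\O_{Y}(1)$ on $Y$: then $f^{\ast}\O_{Y}(1)$ is ample on $X$, and the projection formula gives $\deg_{f^{\ast}\O_{Y}(1)}(E) = \deg_{\O_{Y}(1)}(f_{\ast}(E)) = \deg_{\O_{Y}(1)}(D) =: N$, a fixed positive integer; together with $\Supp E \subseteq f^{-1}(\Supp D)$ and $\deg_{f^{\ast}\O_{Y}(1)}(Z_{i}) \geq 1$ this again bounds every $n_{i}$ by $N$ and yields the result.
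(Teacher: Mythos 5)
Your argument is correct and follows essentially the same route as the paper: both confine $\Supp E$ to the finitely many codimension-one irreducible components of $f^{-1}(\Supp D)$ and bound the multiplicities via the explicit cycle-theoretic description of $f_{\ast}$ from EGA IV (21.5.5/21.10.17), where the residue-field degree being at least $1$ gives the coefficient bound. The ample-sheaf variant at the end is a pleasant alternative for bounding the coefficients, but is not needed.
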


\begin{proof}
	Let $E$ be a relative effective Cartier divisor on $X$ such that $f_{\ast}(E) = D$. By Remark \ref{rmk:isolated_divisors:divisor_cycle_equivalence}, $D$ can be identified with a finite formal sum $\sum_{i = 1}^{n} n_{i} \cdot Z_{i}$ of codimension-one irreducible closed subschemes $Z_{i}$ of $Y$. Similarly, we can identify $E$ with a finite formal sum $\sum_{j = 1}^{m} m_{j} \cdot W_{j}$ of codimension-one irreducible closed subschemes $W_{j}$ of $X$.

	By \cite[Proposition 21.10.17]{egaiv4}, and the fact that the monoid of effective codimension-one cycles on $Y$ is a free commutative monoid, it follows that for all $j$, there exists $i$ such that $W_{j} \subseteq f^{-1}(Z_{i})$ and $m_{j} \leq n_{i}$. As $f$ is finite locally free, the fiber $f^{-1}(Z_{i})$ contains finitely many codimension-one irreducible closed subschemes of $X$, for all $i$. Therefore, there are only finitely many formal sums $\sum_{j = 1}^{m} m_{j} \cdot W_{j}$ satisfying the above property, and so finitely many such relative effective Cartier divisors $E$.
\end{proof}

\begin{remark}
	Surprisingly, the above theorem does not hold if $X$ is not normal. As an example, consider the morphism $f : \Spec(R) \to \mathbb{A}^{1}_{\Q}$ given by the ring map $\Q[x] \hookrightarrow \Q[x, y] / (x^2 + y^2) =: R$. The ideals $(ax + y)$ of $R$, for $a \in \Q$, give infinitely many distinct effective Cartier divisors on $\Spec(R)$, but the pushforward of each under $f$ is the ideal $(x^2)$ of $\Q[x]$.
\end{remark}

As is the case for the pullback morphism, non-isolated divisors are also preserved under the pushforward morphism.

\begin{theorem} \label{thm:isolated_divisors:pushforward_isolated}
	Let $f : X \to Y$ be as above, and let $D \in \DDiv_{X/k}(k)$ be a relative effective Cartier divisor on $X$. Then, the following hold:
	\begin{enumerate}
		\item Suppose that $D$ is \Pone-parametrized. Then $f_{\ast}(D)$ is \Pone-parametrized.
		\item Suppose that $D$ is AV-parametrized. Then $f_{\ast}(D)$ is not isolated.
	\end{enumerate}
	In particular, if $f_{\ast}(D)$ is isolated, then $D$ is also isolated.
\end{theorem}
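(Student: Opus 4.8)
The plan is to treat the \Pone-parametrized and AV-parametrized cases separately; in both, the essential inputs are the commutative square (\ref{eq:isolated_divisors:divisor_picard_pushforward}), the finiteness of the fibres of $f_{\ast}$ (Lemma \ref{thm:isolated_divisors:pushforward_preimage_finite}), the description of the fibres of the Abel map (Theorem \ref{thm:isolated_divisors:abel_fiber}), and, for the AV-case, the description of the rational image of the Abel map (Theorem \ref{thm:isolated_divisors:abel_rational_image}), used exactly as in the proof of Theorem \ref{thm:isolated_divisors:non_isolated_infinite}. Note that (\ref{eq:isolated_divisors:divisor_picard_pushforward}) gives $\Norm_{X/Y} \circ \AAbel_{X/k} = \AAbel_{Y/k} \circ f_{\ast}$, and hence $\Norm_{X/Y}(\W_{X/k}) = \AAbel_{Y/k}(f_{\ast}(\DDiv_{X/k})) \subseteq \AAbel_{Y/k}(\DDiv_{Y/k}) = \W_{Y/k}$.

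Suppose first that $D$ is \Pone-parametrized, so the fibre $F = \AAbel_{X/k}^{-1}(\AAbel_{X/k}(D))$ has at least two $k$-points. By Theorem \ref{thm:isolated_divisors:abel_fiber}, $F \cong \Res_{K/k}(\mathbb{P}^{n-1}_{K})$, so the existence of two $k$-points forces $n \geq 2$, and hence $F(k)$ is infinite. Since $f_{\ast}^{-1}(f_{\ast}(D))$ is finite by Lemma \ref{thm:isolated_divisors:pushforward_preimage_finite} while $F(k)$ is infinite, there is some $D'' \in F(k)$ with $f_{\ast}(D'') \neq f_{\ast}(D)$, while $\AAbel_{Y/k}(f_{\ast}(D'')) = \Norm_{X/Y}(\AAbel_{X/k}(D)) = \AAbel_{Y/k}(f_{\ast}(D))$. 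Thus $f_{\ast}(D)$ is \Pone-parametrized.

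Now suppose $D$ is AV-parametrized, witnessed by a positive rank abelian subvariety $A \subseteq \PPic_{X/k}^{0}$ with $\AAbel_{X/k}(D) + A \subseteq \W_{X/k}$. Applying $\Norm_{X/Y}$ yields $\AAbel_{Y/k}(f_{\ast}(D)) + \Norm_{X/Y}(A) \subseteq \Norm_{X/Y}(\W_{X/k}) \subseteq \W_{Y/k}$, and $\Norm_{X/Y}(A)$ is a connected subgroup scheme of $\PPic_{Y/k}^{0}$, hence an abelian subvariety. If $\Norm_{X/Y}(A)$ has positive rank, this exhibits $f_{\ast}(D)$ as AV-parametrized, so $f_{\ast}(D)$ is not isolated.

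The main obstacle is that $\Norm_{X/Y}$ may carry away the rank of $A$, so that $\Norm_{X/Y}(A)$ has rank zero. To handle this I would replace $A$ by the identity component $A'$ of $\ker(\Norm_{X/Y}|_{A})$: as $A / A'$ is isogenous to the rank-zero variety $\Norm_{X/Y}(A)$, the subvariety $A'$ still has positive rank, while $\Norm_{X/Y}(A') = 0$ and $\AAbel_{X/k}(D) + A' \subseteq \W_{X/k}^{D}$ (using $A' \subseteq A \subseteq \PPic_{X/k}^{0}$). As in the proof of Theorem \ref{thm:isolated_divisors:non_isolated_infinite}, Theorem \ref{thm:isolated_divisors:abel_rational_image} gives a subgroup $G \leq \PPic_{X/k}(k)$ with $\PPic_{X/k}(k)/G$ torsion and $\AAbel_{X/k}(\DDiv_{X/k}(k)) = \W_{X/k}(k) \cap G$; then $A'(k) \cap G$ is infinite, and for each $a \in A'(k) \cap G$ one obtains $E_{a} \in \DDiv_{X/k}^{D}(k)$ with $\AAbel_{X/k}(E_{a}) = \AAbel_{X/k}(D) + a$, these being pairwise distinct for distinct $a$. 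Since $\Norm_{X/Y}(a) = 0$, all $f_{\ast}(E_{a})$ have the same image $\AAbel_{Y/k}(f_{\ast}(D))$ under $\AAbel_{Y/k}$; by Lemma \ref{thm:isolated_divisors:pushforward_preimage_finite} infinitely many of them are distinct, so the fibre $\AAbel_{Y/k}^{-1}(\AAbel_{Y/k}(f_{\ast}(D)))$ has infinitely many $k$-points and, by Theorem \ref{thm:isolated_divisors:abel_fiber}, contains a $k$-point other than $f_{\ast}(D)$; hence $f_{\ast}(D)$ is \Pone-parametrized. Finally, the concluding statement is the contrapositive: a non-isolated $D$ is \Pone- or AV-parametrized, so $f_{\ast}(D)$ is \Pone-parametrized or not isolated, hence in either case not isolated.
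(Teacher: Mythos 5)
Your proof is correct and follows essentially the same route as the paper's: the same case split, the same use of the commutative square (\ref{eq:isolated_divisors:divisor_picard_pushforward}), Lemma \ref{thm:isolated_divisors:pushforward_preimage_finite}, Theorems \ref{thm:isolated_divisors:abel_fiber} and \ref{thm:isolated_divisors:abel_rational_image}, and the same fallback to \Pone-parametrization when $\Norm_{X/Y}A$ has rank zero. The only (immaterial) difference is that you pass to the identity component of $\ker(\Norm_{X/Y}|_{A})$ and produce infinitely many divisors in the fibre, where the paper works with the full kernel and exhibits a single $D'$ with $f_{\ast}(D') \neq f_{\ast}(D)$.
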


\begin{remark}
	Unlike in the case of the pullback morphism, the pushforward of a relative effective Cartier divisor $D$ on $X$ which is AV-parametrized can itself be AV-isolated. For instance, if $X$ is a positive rank elliptic curve over a number field $k$, any $k$-rational point $x \in X(k)$ is AV-parametrized (when viewed as a Cartier divisor). However, the connected component $\PPic_{\mathbb{P}^{1}_{k}/k}^{0}$ of the Picard scheme of $\mathbb{P}^{1}_{k}$ consists of a single point, and so any divisor on $\mathbb{P}^{1}$ is AV-isolated. Therefore, the pushforward of any $k$-rational point $x \in X(k)$ along a morphism $f : X \to \mathbb{P}^{1}$ is AV-isolated, despite $x$ itself being AV-parametrized.
\end{remark}

\begin{proof}[Proof of Theorem \ref{thm:isolated_divisors:pushforward_isolated}]
	Suppose first that $D$ is \Pone-parametrized. By Theorem \ref{thm:divisor_picard_schemes:abel_fiber}, we know that the fiber $\AAbel_{X/k}^{-1}(\AAbel_{X/k}(D))$ is isomorphic to $\Res_{K/k}(\mathbb{P}^{n-1}_{K})$, for some $n \geq 0$. Since $D$ is \Pone-parametrized, the fiber $\AAbel_{X/k}^{-1}(\AAbel_{X/k}(D))$ contains at least two $k$-rational points. Therefore, we must have $n \geq 2$, and so the set $\AAbel_{X/k}^{-1}(\AAbel_{X/k}(D))(k)$ is infinite. On the other hand, by Lemma \ref{thm:isolated_divisors:pushforward_preimage_finite}, the set $\Sigma = \{E \in \DDiv_{X/k}(k) : f_{\ast}(E) = f_{\ast}(D)\}$ is finite. Thus, the set $\AAbel_{X/k}^{-1}(\AAbel_{X/k}(D))(k) \setminus \Sigma$ is non-empty, and there exists a relative effective Cartier divisor $D' \in \DDiv_{X/k}(k)$ on $X$ such that $\AAbel_{X/k}(D) = \AAbel_{X/k}(D')$ and $f_{\ast}(D) \neq f_{\ast}(D')$. By (\ref{eq:isolated_divisors:divisor_picard_pushforward}), it follows that $\AAbel_{Y/k}(f_{\ast}(D)) = \AAbel_{Y/k}(f_{\ast}(D'))$, and so $f_{\ast}(D)$ is \Pone-parametrized.

	Suppose now that $D$ is AV-parametrized. Therefore, there exists a positive rank abelian subvariety $A \subset \PPic_{X/k}^{0}$ such that $\AAbel_{X/k}(D) + A \subset \W_{X/k}$. Since $\Norm_{X/Y} : \PPic_{X/k} \to \PPic_{Y/k}$ is a morphism of group schemes, we have
	\[
		\Norm_{X/Y}(\AAbel_{X/k}(D)) + \Norm_{X/Y} A \subset \Norm_{X/Y}(\W_{X/k}).
	\]
	Using (\ref{eq:isolated_divisors:divisor_picard_pushforward}), it follows that $\AAbel_{Y/k}(f_{\ast}(D)) + \Norm_{X/Y} A \subset \W_{Y/k}$. Since $\Norm_{X/Y} : \PPic_{X/k} \to \PPic_{Y/k}$ is a morphism of group schemes, it follows that $\Norm_{X/Y} A$ is an abelian variety. Thus, if $\Norm_{X/Y} A$ has positive rank, we obtain that $f_{\ast}(D)$ is AV-parametrized, and hence is not isolated.

	On the other hand, if $\Norm_{X/Y} A$ has rank zero, then the kernel $B$ of the morphism of abelian varieties $\Norm_{X/Y} : A \to \Norm_{X/Y} A$ is a group scheme with infinitely many $k$-rational points. By Theorem \ref{thm:divisor_picard_schemes:abel_rational_image}, there exists a subgroup $G \leq \PPic_{X/k}(k)$ such that
	\[
		\AAbel_{X/k}(\DDiv_{X/k}(k)) = \W_{X/k}(k) \cap G,
	\]
	and moreover, the quotient group $\PPic_{X/k}(k) / G$ is torsion. As $B(k)$ is a subgroup of the finitely generated abelian group $A(k)$, it is itself finitely generated. Therefore, the quotient group $B(k) / (B(k) \cap G) \leq \PPic_{X/k}(k) / G$ is a finitely generated torsion group, and hence is finite. As the group $B(k)$ is infinite, it follows that the group $B(k) \cap G$ is infinite as well.

	By Lemma \ref{thm:isolated_divisors:pushforward_preimage_finite}, the set $\Sigma = \{E \in \DDiv_{X/k}(k) : f_{\ast}(E) = f_{\ast}(D)\}$ is finite. Therefore, the set $(\AAbel_{X/k}(D) + B(k) \cap G) \setminus \AAbel_{X/k}(\Sigma)$ is non-empty and contains a point $x \in \PPic_{X/k}(k)$. Since $\AAbel_{X/k}(D) \in G$ by definition, it follows that $x \in G$. Moreover, since $B \subset A$, it follows from the definition of $A$ that $x \in \W_{X/k}(k)$. Therefore, we obtain that $x \in \AAbel_{X/k}(\DDiv_{X/k}(k))$, and so, there exists $D' \in \DDiv_{X/k}(k)$ such that $\AAbel_{X/k}(D') = x$. Since $x \notin \AAbel_{X/k}(\Sigma)$, we have $f_{\ast}(D) \neq f_{\ast}(D')$. Moreover, by (\ref{eq:isolated_divisors:divisor_picard_pushforward}), we have
	\[
		\AAbel_{Y/k}(f_{\ast}(D')) = \Norm_{X/Y}(x) = \Norm_{X/Y}(\AAbel_{X/k}(D)) = \AAbel_{Y/k}(f_{\ast}(D)),
	\]
	where the second equality stems from the fact that $B$ is the kernel of the homomorphism $\Norm_{X/Y}$. Therefore, $f_{\ast}(D)$ is \Pone-parametrized, and hence is not isolated.
\end{proof}

\subsection{Isolated points on curves} \label{sec:isolated_divisors:isolated_points}

Suppose now that $X$ is a curve, that is to say, $X$ is one-dimensional. Recall that, by Remark \ref{rmk:isolated_divisors:divisor_cycle_equivalence}, the monoid $\DDiv_{X/k}(k)$ is isomorphic to the free commutative monoid generated by the irreducible closed subschemes of $X$ of codimension one. As $X$ has dimension one, the latter are precisely the closed points of $X$. Therefore, the monoid $\DDiv_{X/k}(k)$ is isomorphic to the monoid of effective formal sums of closed points of $X$.

This identification between relative effective Cartier divisors of $X/k$ and sums of closed points of $X$ allows us to extend the notion of isolated divisors to closed points. If $X$ is geometrically integral, this recovers the notion of isolated points defined in \cite[Definition 4.1]{bourdon2019}.

\begin{definition}
	Let $x \in X$ be a closed point. We say that $x$ is \textbf{\Pone-parametrized} if the corresponding relative effective Cartier divisor is \Pone-parametrized, and \textbf{\Pone-isolated} otherwise. We define AV-parametrized, AV-isolated and isolated points in a similar manner.
\end{definition}

Recall that, for any closed point $x \in X$, the \textbf{degree} $\deg_{k}(x)$ of $x$ over $k$ is defined by
\[
	\deg_{k}(x) = [k(x) : k],
\]
where $k(x)$ denotes the residue field of $x$. The degree extends linearly to all formal sums of closed points of $X$, and therefore, by the correspondence detailed above, to all relative effective Cartier divisors on $X/k$.

Alternatively, recall that closed points on $X$ are in bijection with the orbits of geometric points $y \in X(\bar{k})$ under the action of the absolute Galois group $G_{k}$. The degree of a closed point of $X$ over $k$ is then equal to the size of the corresponding orbit of geometric points.

By virtue of the Stein factorization $X \to \Spec K \to \Spec k$, one can similarly define the degree $\deg_{K}(x)$ of a closed point $x \in X$ over $K$ by
\[
	\deg_{K}(x) = [k(x) : K].
\]
As before, this extends naturally to all relative effective Cartier divisors on $X/K$. Denote by $r$ the degree of the field extension $K/k$. Then, we have $\deg_{k}(D) = r \deg_{K}(D)$ for all relative effective Cartier divisors $D$ on $X$, as, for any closed point $x \in X$, we have
\[
	[k(x) : k] = [k(x) : K] [K : k].
\]
When the base field is unambiguous, we will often omit the subscript and speak of the degree $\deg(x)$ of $x$.

The degree gives a powerful tool for reinterpreting the results of the previous sections in the particular case of curves. For instance, as $X$ is integral, two relative effective Cartier divisors on $X$ are algebraically equivalent if and only if their degrees are equal. Therefore, Theorem \ref{thm:isolated_divisors:non_isolated_infinite} shows that there are infinitely many relative effective Cartier divisors on $X$ of a given degree if and only if there exists a non-isolated divisor of said degree. A stronger statement in the case of geometrically connected curves can be found in \cite[Theorem 4.2]{bourdon2019}.

From this, it is tempting to use the degree to describe the structure of the divisor scheme $\DDiv_{X/k}$. However, in the case of geometrically disconnected curves, there is a slight subtlety.

\begin{remark}
	Let $D$ be a relative effective Cartier divisor on $X$ of degree $d$. By \cite[Exercise 3.8]{kleiman2005}, the divisor scheme $\DDiv_{X/k}$ can be written as the disjoint union of schemes $\DDiv_{X/k}^{e}$ parametrizing relative effective Cartier divisors of degree $e$, for $e \in \Z_{\geq 0}$. By \cite[Remark 3.9]{kleiman2005}, the scheme $\DDiv_{X/k}^{e}$ can also be described explicitly as the symmetric product $X^{(e)}$ of the curve $X$. It is important to note, however, that the subscheme $\DDiv_{X/k}^{D}$ is not equal to the scheme $\DDiv_{X/k}^{d}$, unless $X$ is geometrically integral. In general, the scheme $\DDiv_{X/k}^{d}$ can be disconnected, and $\DDiv_{X/k}^{D}$ is the union of the connected components of $\DDiv_{X/k}^{d}$ containing a $k$-rational point.
\end{remark}

As in Section \ref{sec:isolated_divisors:morphisms}, let $f : X \to Y$ be a finite locally free morphism of degree $d \geq 1$ between smooth, projective curves over the number field $k$. Under the identification between relative effective Cartier divisors on $X/k$ and sums of closed points of $X$, we can give an explicit description of the pullback and pushforward maps. Let $x \in X$ be a closed point. Then, by \cite[Proposition 21.10.17]{egaiv4}, we have
\[
	f_{\ast}(x) = [k(x) : k(f(x))] \cdot f(x).
\]
This extends linearly to all positive sums of closed points of $X$. In particular, we note that
\[
	\deg_{k}(f_{\ast}(x)) = [k(x) : k(f(x))] [k(f(x)) : k] = \deg_{k}(x).
\]
Similarly, let $y \in Y$ be a closed point. Then, by \cite[Proposition 21.10.4]{egaiv4}, we have
\[
	f^{\ast}(y) = \sum_{x \in f^{-1}(y)} e_{x} x,
\]
where $e_{x} = \operatorname{length}_{\O_{X, x}}(\O_{X, x} / \mathfrak{m}_{\O_{Y, y}} \O_{X, x})$ is the ramification index of $f$ at $x$. Again, this extends linearly to all positive sums of closed points of $Y$. By \cite[Proposition 21.10.18]{egaiv4}, as $f$ is finite locally free of degree $d$, we have $f_{\ast}(f^{\ast}(y)) = d y$. Therefore,
\[
	\deg_{k}(f^{\ast}(y)) = \deg_{k}(f_{\ast}(f^{\ast}(y))) = d \deg_{k}(y).
\]

Using these explicit descriptions, we can carry the results of Section \ref{sec:isolated_divisors:morphisms} over to the setting of isolated points. In the case of pullbacks, we obtain the following generalization of \cite[Theorem 4.3]{bourdon2019} to the case of geometrically disconnected curves.

\begin{theorem} \label{thm:isolated_divisors:pullback_isolated_point}
	Let $f: X \to Y$ be as above. Let $x \in X$ and $y \in Y$ be closed points such that $f(x) = y$. Suppose that $\deg_{k}(x) = d \cdot \deg_{k}(y)$. Then, the following hold:
	\begin{enumerate}
		\item Suppose that $y$ is \Pone-parametrized. Then $x$ is \Pone-parametrized.
		\item Suppose that $y$ is AV-parametrized. Then $x$ is AV-parametrized.
	\end{enumerate}
	In particular, if $x$ is isolated, then $y$ is also isolated.
\end{theorem}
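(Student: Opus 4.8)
The plan is to reduce the statement directly to Theorem \ref{thm:isolated_divisors:pullback_isolated} by showing that, under the degree hypothesis, the pullback divisor $f^{\ast}(y)$ is equal to the closed point $x$ itself, with multiplicity one and with no other components. Granting this identification, the two numbered claims follow at once: if $y$ is \Pone-parametrized (respectively AV-parametrized) as a relative effective Cartier divisor, then so is $f^{\ast}(y) = x$, by parts (1) and (2) of Theorem \ref{thm:isolated_divisors:pullback_isolated}. The final ``in particular'' is then the contrapositive: if $y$ is not isolated, it is \Pone-parametrized or AV-parametrized, hence so is $x$.

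To establish $f^{\ast}(y) = x$, I would use the explicit description of the pullback recalled just before the statement, namely $f^{\ast}(y) = \sum_{x' \in f^{-1}(y)} e_{x'} x'$, together with the identity $\deg_{k}(f^{\ast}(y)) = d \cdot \deg_{k}(y)$ noted there. Since $f(x) = y$, the point $x$ occurs in this sum with ramification index $e_{x} \geq 1$, and all terms are effective, so
\[
	d \cdot \deg_{k}(y) = \deg_{k}(f^{\ast}(y)) = \sum_{x' \in f^{-1}(y)} e_{x'} \deg_{k}(x') \geq e_{x} \deg_{k}(x) \geq \deg_{k}(x) = d \cdot \deg_{k}(y),
\]
using the hypothesis $\deg_{k}(x) = d \cdot \deg_{k}(y)$ in the last equality. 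Hence all inequalities are equalities: $e_{x} = 1$, and $\sum_{x' \neq x} e_{x'} \deg_{k}(x') = 0$, which forces $f^{-1}(y) = \{x\}$ since each $e_{x'} \deg_{k}(x')$ is a positive integer. Therefore $f^{\ast}(y) = x$ as relative effective Cartier divisors on $X$, and we are done.

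The argument is short precisely because the substantive work has already been carried out in Theorem \ref{thm:isolated_divisors:pullback_isolated}; the only genuine point to verify is the combinatorial fact above, that the degree equality pins down the fiber $f^{-1}(y)$ to the single unramified point $x$. I do not anticipate a real obstacle, but two small cautions are in order: one must work with the degree over $k$ rather than over the Stein field $K$ (the two are related by the fixed factor $r = [K:k]$, which is harmless here), and the cited formulas for $f^{\ast}$ and $f_{\ast}$ in terms of closed points apply verbatim because $X$ and $Y$ are smooth, hence normal, curves.
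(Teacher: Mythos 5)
Your proposal is correct and follows essentially the same route as the paper: both reduce to Theorem \ref{thm:isolated_divisors:pullback_isolated} by showing that the degree hypothesis forces $f^{\ast}(y) = x$, since $x$ appears in the effective sum $f^{\ast}(y) = \sum_{x' \in f^{-1}(y)} e_{x'} x'$ and all degrees and ramification indices are positive. Your version merely spells out the inequality chain that the paper leaves implicit.
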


\begin{proof}
	Consider the pullback $f^{\ast}(y)$ of the closed point $y \in Y$. Note that, by assumption, we have $x \in f^{-1}(y)$ and $\deg_{k}(x) = \deg_{k}(f^{\ast}(y))$. As the degree and ramification degree of every closed point of $X$ is positive, it follows from the definition that we must have $x = f^{\ast}(y)$. The result then follows from Theorem \ref{thm:isolated_divisors:pullback_isolated}.
\end{proof}

The case of pushforwards gives rise to the following theorem, which nicely complements the one above.

\begin{theorem} \label{thm:isolated_divisors:pushforward_isolated_point}
	Let $f: X \to Y$ be as above. Let $x \in X$ and $y \in Y$ be closed points such that $f(x) = y$. Suppose that $\deg_{k}(x) = \deg_{k}(y)$. Then, the following hold:
	\begin{enumerate}
		\item Suppose that $x$ is \Pone-parametrized. Then $y$ is \Pone-parametrized.
		\item Suppose that $x$ is AV-parametrized. Then $y$ is not isolated.
	\end{enumerate}
	In particular, if $y$ is isolated, then $x$ is also isolated.
\end{theorem}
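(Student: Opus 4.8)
The plan is to deduce this statement from Theorem \ref{thm:isolated_divisors:pushforward_isolated}, in exactly the way that Theorem \ref{thm:isolated_divisors:pullback_isolated_point} was deduced from Theorem \ref{thm:isolated_divisors:pullback_isolated}. The only thing to check is that, under the degree hypothesis, the pushforward $f_{\ast}(x)$ of the closed point $x$, viewed as a relative effective Cartier divisor on $X/k$, is equal to the closed point $y$ (again viewed as a divisor on $Y/k$).

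First I would invoke the explicit pushforward formula recorded in Section \ref{sec:isolated_divisors:isolated_points}: since $f(x) = y$, we have $f_{\ast}(x) = [k(x) : k(y)] \cdot y$ by \cite[Proposition 21.10.17]{egaiv4}. The field embedding $k(y) \hookrightarrow k(x)$ induced by the local homomorphism $\O_{Y,y} \to \O_{X,x}$ gives a tower $k \subseteq k(y) \subseteq k(x)$, whence
\[
	\deg_{k}(x) = [k(x) : k] = [k(x) : k(y)] \cdot [k(y) : k] = [k(x) : k(y)] \cdot \deg_{k}(y).
\]
Since all of these quantities are positive integers and, by hypothesis, $\deg_{k}(x) = \deg_{k}(y)$, we are forced to have $[k(x) : k(y)] = 1$, and therefore $f_{\ast}(x) = y$ as relative effective Cartier divisors. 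Note that here it is essential that $x$ occurs with multiplicity one in its own divisor, so that the degree computation genuinely pins the residue-degree multiplicity down to $1$ rather than merely bounding it.

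With this identification established, parts (1) and (2) of the statement are precisely parts (1) and (2) of Theorem \ref{thm:isolated_divisors:pushforward_isolated} applied to the divisor $D = x$. The closing claim follows by contraposition: if $x$ is not isolated, then $x$ is either \Pone-parametrized, in which case $y = f_{\ast}(x)$ is \Pone-parametrized and hence not isolated, or AV-parametrized, in which case $y$ is not isolated by part (2); in either case $y$ fails to be isolated. I do not expect any genuine obstacle here — this is a short bookkeeping reduction to the already-proved divisor-level statement, and the only point requiring mild care is using the correct (residue-degree-weighted) normalization of the pushforward.
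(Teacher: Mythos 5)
Your argument is correct and matches the paper's proof essentially verbatim: both identify $f_{\ast}(x) = [k(x):k(y)]\cdot y$ via the explicit pushforward formula, use the degree hypothesis to force $[k(x):k(y)] = 1$ so that $f_{\ast}(x) = y$, and then invoke Theorem \ref{thm:isolated_divisors:pushforward_isolated}. No issues.
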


\begin{proof}
	Consider the pushforward $f_{\ast}(x)$ of the closed point $x \in X$. Since $y = f(x)$, we have $f_{\ast}(x) = [k(x) : k(y)] \cdot y$. Moreover, by assumption, we have $\deg_{k}(y) = \deg_{k}(f_{\ast}(x))$. As this degree is positive, it follows that $[k(x) : k(y)] = 1$, and so $y = f_{\ast}(x)$. Thus, the result follows from Theorem \ref{thm:isolated_divisors:pushforward_isolated}.
\end{proof}

To conclude this section, we give a bound for the degree of \Pone-isolated points on the curve $X$, in terms of the genus of the geometric components of $X$. This criterion derives from the Riemann-Roch theorem, and generalizes \cite[Lemma 2.3]{ejder2022} to the case of geometrically disconnected curves.

\begin{theorem} \label{thm:isolated_divisors:riemann_roch_criterion}
	Let $X, k$ and $K$ be as above. Let $r = [K : k]$ be the number of geometric components of $X$, and let $g$ be the genus of the geometrically integral curve $X/K$. Let $x \in X$ be a closed point on $X/k$, and suppose that $\deg_{k}(x) > r g$. Then $x$ is \Pone-parametrized.
\end{theorem}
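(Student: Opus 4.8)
The plan is to reduce the statement to the geometrically integral curve $X/K$, apply the Riemann--Roch theorem there to produce a sufficiently large linear system, and then invoke Theorem~\ref{thm:isolated_divisors:abel_fiber} to conclude that the Abel fiber through $x$ carries more than one $k$-point.

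First I would rewrite the hypothesis using the degree identity $\deg_{k}(x) = r \deg_{K}(x)$ recorded above: the assumption $\deg_{k}(x) > rg$ is precisely the assertion that $\deg_{K}(x) > g$. Write $D$ for the relative effective Cartier divisor on $X$ corresponding to the closed point $x$; as observed in Section~\ref{sec:isolated_divisors:stein_factorization}, $D$ is simultaneously a relative effective Cartier divisor on $X/K$, now of degree $\deg_{K}(D) \geq g+1$. Since $X/K$ is a smooth, projective, geometrically integral curve of genus $g$ with $H^{0}(X, \O_{X}) = K$, the Riemann--Roch theorem over $K$ gives
\[
	n := \dim_{K} H^{0}(X, \O_{X}(D)) \geq \deg_{K}(D) - g + 1 \geq 2.
\]
I would then feed this into Theorem~\ref{thm:isolated_divisors:abel_fiber}, which identifies the fiber $\AAbel_{X/k}^{-1}(\AAbel_{X/k}(D))$ with $\Res_{K/k}(\mathbb{P}^{n-1}_{K})$ for exactly this $n$. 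As $n \geq 2$ and $K$ is an infinite field, $\mathbb{P}^{n-1}_{K}(K)$ is infinite, and by the defining property of the Weil restriction this set is in bijection with the $k$-points of the fiber. Hence the fiber, which already contains $D$, contains some $D' \neq D$ with $\AAbel_{X/k}(D') = \AAbel_{X/k}(D)$, so $D$ --- and therefore the closed point $x$ --- is \Pone-parametrized, as claimed.

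The argument is essentially formal, so there is no serious obstacle; the one point requiring care is the bookkeeping forced by geometric disconnectedness. Specifically, one must apply Riemann--Roch over $K$ rather than over $k$ (over $k$ the space $H^{0}(X, \O_{X}(D))$ has dimension $rn$, not $n$), and correctly match the integer $n$ appearing in Theorem~\ref{thm:isolated_divisors:abel_fiber} with the $K$-dimension $\dim_{K} H^{0}(X, \O_{X}(D))$. Once this identification is in place, the conclusion follows immediately.
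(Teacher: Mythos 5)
Your proposal is correct and follows essentially the same route as the paper's own proof: translate the hypothesis into $\deg_{K}(x) > g$, apply Riemann--Roch over $K$ to get $\dim_{K} H^{0}(X, \O_{X}(x)) \geq 2$, and conclude via the Weil-restriction description of the Abel fiber that it has more than one $k$-point. The bookkeeping point you flag (working over $K$ rather than $k$) is exactly the subtlety the paper's argument also relies on.
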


\begin{proof}
	By Theorem \ref{thm:divisor_picard_schemes:abel_fiber}, the fiber $\AAbel_{X/k}^{-1}(\AAbel_{X/k}(x))$ is isomorphic to the Weil restriction $\Res_{K/k}(\mathbb{P}^{n-1}_{K})$, where $n = \dim_{K} H^{0}(X, \O_{X}(x))$. As the curve $X/K$ is smooth and geometrically integral, the Riemann-Roch theorem \cite[Theorem 9.1.1]{bosch2012} shows that
	\[
		\dim_{K} H^{0}(X, \O_{X}(x)) \geq \deg_{K}(x) + 1 - g.
	\]
	By assumption, we know that $\deg_{k}(x) > r g$, and so $\deg_{K}(x) > g$. Therefore, we have $n > 1$, and so the fiber $\AAbel_{X/k}^{-1}(\AAbel_{X/k}(x))$ has infinitely many $k$-rational points. In particular, $x$ is \Pone-parametrized.
\end{proof}
	\section{Group-theoretic preliminaries} \label{sec:group_theory}

In the following sections, we will aim to apply the results of Section \ref{sec:isolated_divisors:isolated_points} to the setting of modular curves. Before doing so, we collect a number of group-theoretic results which will be used throughout. The reader is invited to skip this section on first reading, and return to each result as and when they are used.

\subsection{Products of group subsets}

We begin with some elementary, yet somewhat unconventional results about products of group subsets. Recall that the product of two subsets $S$ and $T$ of a group $G$, denoted by $ST$, is defined by
\[
	ST = \{st : s \in S, t \in T\}.
\]
This is not necessarily a subgroup of $G$, even when both $S$ and $T$ are subgroups themselves. However, in this latter case, if $ST = TS$, then $ST$ forms a group, and we say that the subgroups $S$ and $T$ \textbf{permute}. This occurs notably if $S$ normalizes $T$; that is to say, if $S$ is contained in the normalizer of $T$.

As products of subgroups are not necessarily groups, manipulating these requires some care. However, these products are still well-behaved, as the results of this section aim to show. Firstly, we prove a generalization of the so-called modular law for groups to group subsets.

\begin{lemma} \label{thm:group_theory:modular_law_subsets}
	Let $S, T$ and $U$ be subsets of a group $G$. Let $\langle U \rangle$ be the subgroup of $G$ generated by the elements of $U$. Suppose that $\langle U \rangle S \subseteq S$, or equivalently, $S$ is the union of a number of right cosets of $\langle U \rangle$. Then,
	\[
		U(S \cap T) = S \cap UT.
	\]
\end{lemma}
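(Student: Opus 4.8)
The plan is to prove the set equality $U(S \cap T) = S \cap UT$ by the two inclusions, after first disposing of the parenthetical ``equivalently''. For that reformulation, observe that $1 \in \langle U \rangle$ forces $S \subseteq \langle U \rangle S$ unconditionally, so the hypothesis $\langle U \rangle S \subseteq S$ is the same as $\langle U \rangle S = S$; and since $\langle U \rangle \langle U \rangle = \langle U \rangle$, this holds exactly when $S = \bigcup_{s \in S} \langle U \rangle s$, i.e.\ when $S$ is a union of right cosets of $\langle U \rangle$. I would record this at the outset so that the hypothesis is available in the convenient form: if $g \in \langle U \rangle$ and $s \in S$, then $gs \in S$.

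For the inclusion $U(S \cap T) \subseteq S \cap UT$: a typical element is $us$ with $u \in U$ and $s \in S \cap T$. Since $u \in \langle U \rangle$ and $s \in S$, the hypothesis gives $us \in S$; and since $s \in T$, by definition of the product of subsets $us \in UT$. Hence $us \in S \cap UT$.

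For the reverse inclusion $S \cap UT \subseteq U(S \cap T)$: take $x \in S \cap UT$, so $x \in S$ and $x = ut$ for some $u \in U$ and $t \in T$. The point is to write $x = u \cdot (u^{-1}x)$ and note $u^{-1}x = t$: since $u^{-1} \in \langle U \rangle$ and $x \in S$, the hypothesis yields $u^{-1}x \in S$, while $u^{-1}x = t \in T$, so $u^{-1}x \in S \cap T$ and therefore $x \in U(S \cap T)$. There is no genuine obstacle here; the only thing to watch is that the hypothesis and the statement are not left--right symmetric, so one must keep track of the fact that $\langle U \rangle$ acts on $S$ by left multiplication (hence ``right cosets'', and hence the factorization $x = u \cdot (u^{-1}x)$ rather than $x = (xt^{-1}) \cdot t$). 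I would also note explicitly that the argument uses nothing about $U$, $S$, $T$ being subgroups.
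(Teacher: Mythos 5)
Your proof is correct and follows essentially the same route as the paper's: the forward inclusion by noting $US \subseteq \langle U\rangle S \subseteq S$, and the reverse by writing $x = u\cdot(u^{-1}x)$ and using the hypothesis to place $u^{-1}x = t$ in $S \cap T$. The extra remarks on the ``equivalently'' reformulation and the left--right asymmetry are fine but not needed.
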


\begin{proof}
	It is clear that $U(S \cap T) \subseteq UT$. Moreover, we have $U(S \cap T) \subseteq US \subseteq S$. Therefore, $U(S \cap T) \subseteq S \cap UT$.

	On the other hand, let $ut \in S \cap UT$, for some $u \in U$, $t \in T$. Then, $t = u^{-1} (ut) \in \langle U \rangle S \subseteq S$. Therefore, $S \cap UT \subseteq U(S \cap T)$, and the result follows.
\end{proof}

While the product of two arbitrary subgroups is not necessarily a subgroup itself, taking the intersection with a suitable third group rectifies this, as the following shows.

\begin{lemma} \label{thm:group_theory:product_meet_normalizer}
	Let $G, H$ and $K$ be subgroups of a common overgroup $M$. Suppose that $G$ normalizes $H$. Then $G \cap HK$ is a subgroup of $G$.
\end{lemma}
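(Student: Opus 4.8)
The plan is to exhibit $G \cap HK$ as an intersection of two genuine subgroups of a suitable overgroup, and then appeal to the elementary fact that an intersection of subgroups is again a subgroup.

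First I would set $L = \langle G, H \rangle$. Since $G$ normalizes $H$, the product $HG$ is already a subgroup of $M$, so $L = HG = GH$ and $H \trianglelefteq L$; in particular $G$, $H$, and $K \cap L$ all lie inside $L$. Next I would put $N = K \cap L$, a subgroup of $L$; because $H \trianglelefteq L$ and $N \leq L$, the product $HN$ is a subgroup of $L$. The crux is the identity
\[
	G \cap HK = G \cap HN .
\]
The inclusion ``$\supseteq$'' is immediate, since $N \subseteq K$. For ``$\subseteq$'', take $x \in G \cap HK$ and write $x = hk$ with $h \in H$ and $k \in K$; then $k = h^{-1}x$ lies in $L$, because $h^{-1} \in H \subseteq L$ and $x \in G \subseteq L$, so in fact $k \in K \cap L = N$, whence $x \in HN$. (Alternatively, one can first apply the subset modular law, Lemma \ref{thm:group_theory:modular_law_subsets}, with $L$ in the role of $S$, $H$ in the role of $U$, and $K$ in the role of $T$ — the hypothesis $HL \subseteq L$ holds trivially — to obtain $L \cap HK = H(K \cap L) = HN$, and then intersect with $G \subseteq L$.)

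Once this identity is established the proof is finished: $G$ and $HN$ are both subgroups of $L$, hence so is $G \cap HK = G \cap HN$, and being contained in $G$ it is a subgroup of $G$. The only step requiring genuine care — and the one place where the normalization hypothesis is really used — is the identity $G \cap HK = G \cap HN$: although an element $k$ of $K$ need not a priori belong to $L$, the relation $k = h^{-1}x$ with $h \in H$ and $x \in G$ forces it into $L$, which relies precisely on $G$ and $H$ generating a subgroup $L$ containing both (and the normality of $H$ in $L$ is what then makes $HN$ a subgroup). Everything else is routine bookkeeping.
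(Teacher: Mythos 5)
Your proof is correct, but it takes a genuinely different route from the paper's. The paper argues directly at the level of elements: given $g = hk$ and $g' = h'k'$ in $G \cap HK$, it rewrites $g^{-1}$ and $gg'$ using conjugation by elements of $G$ (which preserves $H$ by hypothesis) to exhibit them again as products in $HK$, thereby verifying closure under inverses and products by hand. You instead make the argument structural: setting $L = \langle G, H\rangle = GH$, you observe that $H \trianglelefteq L$ (since the normalizer of $H$ contains both $G$ and $H$), that $N = K \cap L$ is a subgroup of $L$, hence $HN$ is a genuine subgroup, and that $G \cap HK = G \cap HN$ — the one nontrivial step being that $k = h^{-1}x$ is forced into $L$. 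The conclusion then follows from the fact that an intersection of subgroups is a subgroup. Your approach is slightly longer but yields the sharper identity $G \cap HK = G \cap H(K \cap GH)$ and ties in naturally with the paper's own Lemma \ref{thm:group_theory:modular_law_subsets}, as you note in your parenthetical; the paper's computation is shorter and entirely self-contained. Both use the normalization hypothesis in essentially the same way, and both are complete.
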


\begin{proof}
	It suffices to show that the non-empty set $G \cap HK$ is closed under products and inverses. Let $g = hk$ and $g' = h' k'$ be elements of $G \cap HK$, for some $g, g' \in G$, $h, h' \in H$ and $k, k' \in K$. Then, we have
	\[
		g^{-1} = (g^{-1} h^{-1} g) (g^{-1} h g) g^{-1} = (g^{-1} h^{-1} g) g^{-1} h = (g^{-1} h^{-1} g) k^{-1} \in G \cap HK,
	\]
	and
	\[
		g g' = g h' k' = (g h' g^{-1}) g k' = (g h' g^{-1}) h k k' \in G \cap HK,
	\]
	as required.
\end{proof}

Taking products with a suitable third group also preserves the index of subgroups.

\begin{lemma} \label{thm:group_theory:product_preserves_index}
	Let $G$ and $H$ be subgroups of a common overgroup $M$ with $H \leq G$. Let $N$ be a subgroup of $M$ which permutes with both $G$ and $H$, and such that $G \cap N = H \cap N$. Then
	\[
		[G : H] = [GN : HN].
	\]
\end{lemma}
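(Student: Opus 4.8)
The plan is to exhibit an explicit bijection between the set of left cosets $G/H$ and the set of left cosets $GN/HN$, and thereby equate the two indices as cardinals (so no finiteness hypothesis is needed). First I would record the formal preliminaries: since $N$ permutes with both $G$ and with $H$, the products $GN$ and $HN$ are genuine subgroups of $M$, and since $H \leq G$ we have $HN \leq GN$, so both indices are meaningful. The map to consider is
\[
	\Phi : G/H \longrightarrow GN/HN, \qquad gH \longmapsto g(HN),
\]
which is well defined because $H \subseteq HN$, so $g_1 H = g_2 H$ forces $g_1^{-1}g_2 \in H \subseteq HN$ and hence $g_1(HN) = g_2(HN)$.

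For surjectivity, a typical coset in $GN/HN$ is represented by an element $gn$ with $g \in G$ and $n \in N$; since $n \in N \subseteq HN$ and $HN$ is a subgroup, $gn(HN) = g(HN)$, so this coset is $\Phi(gH)$. Injectivity is the one place where the hypothesis $G \cap N = H \cap N$ is used, and is really the crux of the argument: suppose $g_1(HN) = g_2(HN)$ with $g_1, g_2 \in G$, so that $g_1^{-1}g_2 = hn$ for some $h \in H$ and $n \in N$. Then $n = h^{-1}g_1^{-1}g_2$ lies in $G$ (as $h \in H \leq G$ and $g_1, g_2 \in G$), so $n \in G \cap N = H \cap N \subseteq H$; therefore $g_1^{-1}g_2 = hn \in H$, i.e. $g_1 H = g_2 H$. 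Hence $\Phi$ is a bijection and $[G:H] = [GN:HN]$.

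There is no serious obstacle here; the "hard part", such as it is, is bookkeeping: working consistently with left cosets in line with the paper's left-action convention, and recognizing that the equality $G \cap N = H \cap N$ is exactly the statement that collapses the extraneous $N$-factor $n$ back into $H$ in the injectivity step. An alternative would be the second isomorphism theorem, but $N$ need only permute with $G$ and $H$ rather than be normal, so the direct coset bijection is the cleanest route.
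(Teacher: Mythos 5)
Your proposal is correct and is essentially the same argument as the paper's: the same coset map $gH \mapsto gHN$, with surjectivity from $N \subseteq HN$ and injectivity from the hypothesis $G \cap N = H \cap N$. Your write-up is in fact slightly more careful with the left-coset bookkeeping than the paper's.
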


\begin{proof}
	Let $G / H$ denote the set of left cosets of $H$ in $G$, and $GN / HN$ denote the set of left cosets of $HN$ in $GN$. Consider the map of sets
	\begin{align*}
		f : G / H & \to GN / HN   \\
		g H       & \mapsto g HN.
	\end{align*}
	This map is clearly well-defined and surjective. Let $g, g' \in G$ be such that $f(gH) = f(g'H)$. Then, we have $g g'^{-1} \in HN$, and so there exist $h \in H$ and $n \in N$ such that $g g'^{-1} = hn$. Since $H \leq G$, we have $n = h^{-1} g g'^{-1} \in G$, so $n \in G \cap N$. By assumption, we have $G \cap N = H \cap N$, so $n \in H$. Hence $g g'^{-1} = hn \in H$, and so $g H = g H'$. Thus, $f$ is a bijection, and the result follows.
\end{proof}

We also have the following elementary statement on indices, which does not fit nicely elsewhere.

\begin{lemma} \label{thm:group_theory:surjectivity_preserves_index}
	Let $f : G \to H$ be a surjective group homomorphism, and let $K$ be a subgroup of $H$. Then $[G : f^{-1}(K)] = [H : K]$.
\end{lemma}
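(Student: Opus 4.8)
The plan is to exhibit an explicit bijection between the coset space $G / f^{-1}(K)$ and the coset space $H / K$, mirroring the style of the proof of Lemma \ref{thm:group_theory:product_preserves_index}. Concretely, I would define
\[
	\bar{f} : G / f^{-1}(K) \to H / K, \qquad g \, f^{-1}(K) \mapsto f(g) \, K,
\]
and verify that this is a well-defined bijection, whence the two index cardinalities agree.

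First I would check well-definedness and injectivity simultaneously: for $g, g' \in G$, one has $g \, f^{-1}(K) = g' \, f^{-1}(K)$ if and only if $g^{-1} g' \in f^{-1}(K)$, which by definition of the preimage is equivalent to $f(g^{-1} g') = f(g)^{-1} f(g') \in K$, i.e.\ to $f(g) \, K = f(g') \, K$. Reading this chain of equivalences left-to-right gives well-definedness, and right-to-left gives injectivity. Next I would check surjectivity: given any coset $h \, K \in H / K$, surjectivity of $f$ furnishes some $g \in G$ with $f(g) = h$, and then $\bar{f}(g \, f^{-1}(K)) = h \, K$. Hence $\bar{f}$ is a bijection, so $[G : f^{-1}(K)] = [H : K]$.

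There is essentially no obstacle here; the only point requiring any attention is that surjectivity of $f$ is genuinely needed (and is used) for $\bar{f}$ to be surjective, whereas well-definedness and injectivity hold for an arbitrary homomorphism. An equivalent and even shorter route, which I could use instead if a more structural argument is preferred, is to note that $\ker f \leq f^{-1}(K)$, so that the lattice isomorphism theorem identifies $G / f^{-1}(K)$ with $(G / \ker f) \big/ \bigl(f^{-1}(K) / \ker f\bigr)$, and then the isomorphism $G / \ker f \cong H$ induced by $f$ carries $f^{-1}(K) / \ker f$ onto $K$, yielding the claim directly.
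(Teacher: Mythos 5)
Your proof is correct and coincides with the paper's own argument, which likewise verifies that $g \, f^{-1}(K) \mapsto f(g) \, K$ is a well-defined bijection between the two coset spaces. You simply spell out the details (and note an equivalent route via the lattice isomorphism theorem) that the paper leaves implicit.
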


\begin{proof}
	The map $g f^{-1}(K) \mapsto f(g) K$ is a well-defined bijection between the set of (left) cosets of $f^{-1}(K)$ in $G$ and the set of cosets of $K$ in $H$.
\end{proof}

\subsection{The profinite group \texorpdfstring{$\GL{\Zhat}$}{GL\_2(Zhat)}}

Another important object of study is the general linear group $\GL{\Zhat}$ of degree 2 over the profinite integers. We recall a few fundamental properties of this group here.

The group $\GL{\Zhat}$ is a profinite group, with $\GL{\Zhat} = \varprojlim_{m} \GL{\Z[m]}$. Therefore, for any positive integer $n$, we obtain a surjective projection morphism $\pi_{n} : \GL{\Zhat} \to \GL{\Z[n]}$. Moreover, as is customary, $\GL{\Zhat}$ is equipped with the profinite topology. The open subgroups of $\GL{\Zhat}$ are then in correspondence with subgroups of $\GL{\Z[n]}$, as $n$ ranges through all positive integers. Namely, any open subgroup $H$ of $\GL{\Zhat}$ contains the kernel of $\pi_{n}$, for some positive integer $n$. In particular, for such an $n$, we have that $H = \pi_{n}^{-1}(\pi_{n}(H))$. The least such $n$ is called the \textbf{level} of $H$. Conversely, for any subgroup $H$ of $\GL{\Z[n]}$, the preimage $\pi_{n}^{-1}(H)$ is an open subgroup of $\GL{\Z}$. As a result, we will freely alternate between open subgroups of $\GL{\Zhat}$ and subgroups of $\GL{\Z[n]}$ throughout.

By the Chinese remainder theorem, we may express $\GL{\Zhat}$ as the direct product
\[
	\GL{\Zhat} = \prod_{p} \GL{\Z_{p}},
\]
where $p$ ranges through all primes. The groups $\GL{\Z_{p}}$ are also profinite, and the profinite topology on $\GL{\Zhat}$ coincides with the restricted product topology on their product.

The projection morphisms $\pi_{n} : \GL{\Zhat} \to \GL{\Z[n]}$ are compatible in a natural way, as the following result shows.

\begin{lemma} \label{thm:group_theory:kernel_product_gl2_zhat}
	Let $n$ and $m$ be two positive integers, and let $\pi_{n} : \GL{\Zhat} \to \GL{\Z[n]}$ and $\pi_{m} : \GL{\Zhat} \to \GL{\Z[m]}$ be the respective projection maps. Then, we have
	\[
		(\ker \pi_{n}) (\ker \pi_{m}) = \ker \pi_{(m, n)}.
	\]
\end{lemma}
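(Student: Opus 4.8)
The plan is to exploit the product decomposition $\GL{\Zhat} = \prod_{p} \GL{\Z_{p}}$ and reduce the identity to a statement about $p$-parts, where everything becomes an elementary computation with $p$-adic valuations. Write $n = \prod_{p} p^{a_p}$ and $m = \prod_{p} p^{b_p}$, so that $(m,n) = \prod_{p} p^{\min(a_p, b_p)}$. Under the identification of $\GL{\Zhat}$ with the restricted product, the projection $\pi_n$ factors through $\prod_{p} \GL{\Z[p^{a_p}]}$, and hence $\ker \pi_n = \prod_{p} \ker\!\left(\GL{\Z_p} \to \GL{\Z[p^{a_p}]}\right)$, with the analogous statements for $m$ and $(m,n)$. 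Since the product of two subsets of a direct product $\prod_p G_p$ that are themselves direct products of subsets $\prod_p S_p$ and $\prod_p T_p$ is simply $\prod_p S_p T_p$, the identity $(\ker \pi_n)(\ker \pi_m) = \ker \pi_{(m,n)}$ reduces to proving, for each prime $p$ separately, that
\[
	\ker\!\left(\GL{\Z_p} \to \GL{\Z[p^{a}]}\right) \cdot \ker\!\left(\GL{\Z_p} \to \GL{\Z[p^{b}]}\right) = \ker\!\left(\GL{\Z_p} \to \GL{\Z[p^{\min(a,b)}]}\right),
\]
where I abbreviate $a = a_p$, $b = b_p$; without loss of generality $a \leq b$, so $\min(a,b) = a$.

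For the inclusion $\subseteq$: the left kernel is contained in the kernel of reduction mod $p^a$ since $a \leq a$, and the right kernel is contained in it since $a \leq b$; hence their product lands in $\ker(\GL{\Z_p} \to \GL{\Z[p^a]})$, which is a normal (hence multiplicatively closed) subgroup. For the reverse inclusion $\supseteq$, take $g \in \GL{\Z_p}$ with $g \equiv I \pmod{p^a}$; I must factor $g = g_1 g_2$ with $g_1 \equiv I \pmod{p^a}$ and $g_2 \equiv I \pmod{p^b}$. Since $g \equiv I \pmod{p^a}$, I can try $g_2 = I$ — wait, that only works if $g \equiv I \pmod{p^b}$. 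Instead, pick any lift: choose $g_2 \in \GL{\Z_p}$ with $g_2 \equiv I \pmod{p^b}$ and $g_2 \equiv g \pmod{p^a}$ — this is possible because $g \equiv I \pmod{p^a}$ means $g$ and $I$ agree mod $p^a$, so I may simply take $g_2 = I$ after all only when $a = b$; in general I need $g_2$ congruent to $g$ mod $p^a$. Concretely: the reduction map $\ker(\GL{\Z_p}\to\GL{\Z[p^b]}) \hookrightarrow \GL{\Z_p}$ composed with reduction mod $p^a$ is trivial, so this route needs care. The clean argument: set $g_1 := g$ and $g_2 := I$ only when $b \le a$; for the actual case $a \le b$, instead observe $g_2 := I$ fails, so take $g_1 := g$, which lies in $\ker\pi_n$-part, and note we need the \emph{other} factor to absorb the discrepancy. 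Better: since $a \le b$, the kernel for $p^a$ \emph{contains} the kernel for $p^b$, so $\ker(\text{mod }p^a) = \ker(\text{mod }p^a)\cdot\ker(\text{mod }p^b)$ trivially, as any $g$ in the left side factors as $g \cdot I$ with $g$ in the first factor and $I$ in the second. This completes $\supseteq$, and the two inclusions give equality for each $p$.

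The main obstacle is organizational rather than mathematical: making the passage from the product decomposition of $\GL{\Zhat}$ to the product decomposition of the kernels $\ker \pi_n$ fully rigorous, i.e.\ checking that $\ker \pi_n$ really is the restricted-product subgroup $\prod_p \ker(\GL{\Z_p} \to \GL{\Z[p^{a_p}]})$ (equal to the full group $\GL{\Z_p}$ for all but finitely many $p$), and that products of such subgroups are computed componentwise. Once that bookkeeping is in place, the $p$-local statement is immediate from the observation that $\min(a,b) = a$ forces one of the two $p$-adic congruence subgroups to contain the other, so their product is just the larger one. I expect the write-up to handle the reduction in one paragraph and dispose of the $p$-local identity in two or three lines.
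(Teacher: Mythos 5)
Your proof is correct, but it takes a genuinely different route from the paper's. The paper argues directly in $\GL{\Zhat}$: it writes an element of $\ker \pi_{(m,n)}$ as $1+(m,n)A$, picks a B\'ezout relation $am+bn=(m,n)$, and checks that $(1+maA)(1+nbA)$ agrees with $1+(m,n)A$ modulo $mn$, which gives the nontrivial inclusion after absorbing $\ker\pi_{mn}$ into $\ker\pi_{n}$. You instead use the decomposition $\GL{\Zhat} = \prod_{p}\GL{\Z_{p}}$, identify $\ker\pi_{n}$ with the product over $p$ of the local congruence kernels $\ker\!\left(\GL{\Z_{p}}\to\GL{\Z[p^{a_{p}}]}\right)$, note that products of such product-subsets are computed componentwise, and then observe that at each prime the two congruence subgroups are nested (since $\min(a_{p},b_{p})$ equals one of $a_{p}$, $b_{p}$), so their product is simply the larger one. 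Both arguments are sound; yours trades the B\'ezout computation for CRT bookkeeping and makes transparent \emph{why} the lemma holds --- the two kernels fail to be nested globally, but are nested prime-by-prime --- whereas the paper's computation is shorter and self-contained. One presentational remark: the middle of your paragraph on the reverse inclusion records several false starts before arriving at the correct one-line observation that $a\le b$ forces the mod-$p^{a}$ kernel to contain the mod-$p^{b}$ kernel, so that their product equals the former; this exploratory material should be pruned from a final write-up.
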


\begin{proof}
	It is clear that the kernels of both $\pi_{n}$ and $\pi_{m}$ are contained in that of $\pi_{(m, n)}$, and so it remains to show that
	\[
		\ker \pi_{(m, n)} \subseteq (\ker \pi_{n}) (\ker \pi_{m}).
	\]
	Let $1 + (m, n)A$ be an element of the kernel of $\pi_{(m, n)}$, for some $2 \times 2$ matrix $A \in M_{2}(\Zhat)$. By B\'ezout's lemma, there exist integers $a, b \in \Z$ such that $am + bn = (m, n)$. We have
	\[
		(1 + maA)(1 + nbA) = 1 + (am + bn)A + mnabA^2 = 1 + (m, n)A + mnab A^2,
	\]
	and so
	\[
		(1 + maA)(1 + nbA)(1 + (m, n)A)^{-1} \in \ker \pi_{mn}.
	\]
	Thus, we have $1 + (m, n)A \in (\ker \pi_{mn})(\ker \pi_{n})(\ker \pi_{m})$. Since the kernel of $\pi_{mn}$ is contained in the kernel of $\pi_{n}$, the result follows.
\end{proof}

Finally, we define a few standard subgroups of $\GL{\Zhat}$ which will reoccur later.

\begin{notation}
	Let $n$ be a positive integer. We let $B_{0}(n)$ be the subgroup of $\GL{\Z[n]}$ consisting of upper triangular matrices, and let $B_{1}(n)$ be the subgroup
	\[
		B_{1}(n) = \{\left(\begin{smallmatrix} 1 & b \\ 0 & c \end{smallmatrix}\right) : b \in \Z[n], c \in \Z*[n]\} \leq \GL{\Z[n]}.
	\]
	As is customary, we also use $B_{0}(n)$ and $B_{1}(n)$ to denote the corresponding open subgroups of $\GL{\Zhat}$. Note that, for any $n$ dividing $m$, we have $B_{0}(m) \equiv B_{0}(n) \pmod n$, and so $B_{0}(n) = (\ker \pi_{n}) B_{0}(m) \leq \GL{\Zhat}$.
\end{notation}
	\section{Modular curves} \label{sec:modular_curves}

Before tackling the subject of isolated points on modular curves, we first recall the definition of the modular curves $X_{H}$, as well as some of their properties. We pay particular attention to the moduli interpretation of $X_{H}$, as much of our approach to isolated points will rely on it. Much of this follows the exposition given in \cite{rouse2022}, though there are some notational differences.

\subsection{Level structures and Galois representations of elliptic curves} \label{sec:modular_curves:finite_level_structures}

To begin, we define the notion of level-$n$ and $H$-level structures on an elliptic curve $E$, which forms the basis of the moduli interpretation of $X_{H}$.

\begin{definition} \label{def:modular_curves:level_structure}
	Let $k$ be a number field, $E$ an elliptic curve over $k$, and $n \geq 1$ an integer. A \textbf{level-$n$ structure on $E$} is an isomorphism of $\Z[n]$-modules
	\[
		\alpha : E[n] \to (\Z[n])^{2}.
	\]

	Let $H$ be a subgroup of $\GL{\Z[n]}$. We say that two level-$n$ structures $\alpha, \beta : E[n] \to (\Z[n])^{2}$ are \textbf{$H$-equivalent} if there exists an element $h \in H$ such that $\beta = h \circ \alpha$. An \textbf{$H$-level structure on $E$} is an $H$-equivalence class of level-$n$ structures on $E$. Given a level-$n$ structure $\alpha$ on $E$, we denote by $[\alpha]_{H}$ the corresponding $H$-level structure.
\end{definition}

We also recall the definition of the mod-$n$ Galois representation attached to an elliptic curve.

\begin{definition} \label{def:modular_curves:galois_representation}
	Let $k$ be a number field and $E$ an elliptic curve over $k$. Fix $n \geq 1$, and let $\alpha: E[n] \to (\Z[n])^{2}$ be a level-$n$ structure on $E$. Let $\sigma \in G_{k}$ be an element of the absolute Galois group of $k$. As $E$ is defined over $k$, $\sigma$ defines an isomorphism $\sigma : E[n] \to E[n]$ of $\Z[n]$-modules. The \textbf{mod-$n$ Galois representation $\bar{\rho}_{E, \alpha}$ associated to $E$ and $\alpha$} is the homomorphism
	\begin{align*}
		\bar{\rho}_{E, \alpha} : G_{k} & \to \GL{\Z[n]}                                 \\
		\sigma                         & \mapsto \alpha \circ \sigma \circ \alpha^{-1}.
	\end{align*}
\end{definition}

\begin{remark}
	The mod-$n$ Galois representation associated to $E$ is often defined up to conjugation, which allows one to remove the dependence on the level-$n$ structure used to define it. However, the choice of level-$n$ structure will be quite important here, and so we choose to make this dependence explicit.
\end{remark}

The image of the mod-$n$ Galois representation is not invariant under $\bar{k}$-isomorphisms of $E$. In order to remedy this, we define a slightly larger group.

\begin{definition} \label{def:modular_curves:extended_galois_image}
	Let $k$ be a number field and $E$ an elliptic curve over $k$. Fix $n \geq 1$, and let $\alpha: E[n] \to (\Z[n])^{2}$ be a level-$n$ structure on $E$. We define $A_{E, \alpha}$ to be the subgroup of $\GL{\Z[n]}$ given by
	\[
		A_{E, \alpha} = \{\alpha \circ \varphi \circ \alpha^{-1} : \varphi \in \Aut(E_{\bar{k}})\} \subset \GL{\Z[n]}.
	\]
	The subgroup $A_{E, \alpha}$ is normalized by the image $\bar{\rho}_{E, \alpha}(G_{k})$ of the mod-$n$ Galois representation associated to $E$ and $\alpha$. Indeed, for any $\sigma \in G_{k}$ and $\varphi \in \Aut(E_{\bar{k}})$, we have
	\[
		\bar{\rho}_{E, \alpha}(\sigma) \circ (\alpha \circ \varphi \circ \alpha^{-1}) \circ \bar{\rho}_{E, \alpha}(\sigma)^{-1} = \alpha \circ \sigma \circ \varphi \circ \sigma^{-1} \circ \alpha^{-1}.
	\]
	The composition $\sigma \circ \varphi \circ \sigma^{-1}$ is an automorphism of $E_{\bar{k}}$, and so $g A_{E, \alpha} g^{-1} = A_{E, \alpha}$ for all $g \in \bar{\rho}_{E, \alpha}(G_{k})$. In particular, the product $\bar{\rho}_{E, \alpha}(G_{k}) A_{E, \alpha} \subset \GL{\Z[n]}$ is a group, which we call the \textbf{extended mod-$n$ Galois image associated to $E$ and $\alpha$}, and denote $G_{E, \alpha}$.
\end{definition}

As alluded to above, the extended mod-$n$ Galois image $G_{E, \alpha}$ is invariant under $\bar{k}$-isomorphisms of $E$.

\begin{lemma} \label{thm:modular_curves:extended_galois_image_invariance}
	Let $E$ and $E'$ be two elliptic curves defined over a number field $k$. Fix $n \geq 1$, and let $\alpha$ be a level-$n$ structure on $E$. Let $\varphi : E' \to E$ be a $\bar{k}$-isomorphism of elliptic curves. Then
	\[
		G_{E, \alpha} = G_{E', \alpha \circ \varphi}.
	\]
\end{lemma}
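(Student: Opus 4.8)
The plan is to unwind the definitions $G_{E',\alpha\circ\varphi} = \bar\rho_{E',\alpha\circ\varphi}(G_{k}) \cdot A_{E',\alpha\circ\varphi}$ and $G_{E,\alpha} = \bar\rho_{E,\alpha}(G_{k})\cdot A_{E,\alpha}$ and compare the two factors separately; it suffices to prove the single inclusion $G_{E',\alpha\circ\varphi} \subseteq G_{E,\alpha}$, since the reverse inclusion then follows by applying the same argument to the $\bar{k}$-isomorphism $\varphi^{-1} : E \to E'$ and the level-$n$ structure $\alpha = (\alpha\circ\varphi)\circ\varphi^{-1}$ on $E$. The automorphism part is immediate: $\varphi$ restricts to an isomorphism $E'[n] \to E[n]$ (so that $\alpha\circ\varphi$ is indeed a level-$n$ structure on $E'$), and conjugation $\psi \mapsto \varphi\circ\psi\circ\varphi^{-1}$ is a bijection $\Aut(E'_{\bar{k}}) \to \Aut(E_{\bar{k}})$; substituting this into the definition of $A_{E',\alpha\circ\varphi}$ gives $A_{E',\alpha\circ\varphi} = A_{E,\alpha}$ at once.

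The only subtle point is the Galois part, and it is exactly the place where the cocycle attached to the twist $\varphi$ enters. Since $\varphi$ is defined only over $\bar{k}$, it does not commute with the Galois action: for $\sigma \in G_{k}$, letting ${}^{\sigma}\varphi$ denote the isomorphism obtained by acting by $\sigma$ on the coefficients of $\varphi$ (which is again an isomorphism $E'_{\bar{k}} \to E_{\bar{k}}$ of elliptic curves, as $E$ and $E'$ are defined over $k$), one has the identity $\sigma \circ \varphi = {}^{\sigma}\varphi \circ \sigma$ of maps $E'[n] \to E[n]$. Hence $u_{\sigma} := {}^{\sigma}\varphi \circ \varphi^{-1}$ lies in $\Aut(E_{\bar{k}})$, and rearranging the identity yields $\varphi \circ \sigma \circ \varphi^{-1} = u_{\sigma}^{-1} \circ \sigma$ as automorphisms of $E[n]$. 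Plugging this into the definition of $\bar\rho_{E',\alpha\circ\varphi}$ gives
\[
	\bar\rho_{E',\alpha\circ\varphi}(\sigma) = \alpha\circ\varphi\circ\sigma\circ\varphi^{-1}\circ\alpha^{-1} = \bigl(\alpha\circ u_{\sigma}^{-1}\circ\alpha^{-1}\bigr)\cdot\bar\rho_{E,\alpha}(\sigma),
\]
in which the first factor lies in $A_{E,\alpha}$ and the second in $\bar\rho_{E,\alpha}(G_{k})$. As the product $\bar\rho_{E,\alpha}(G_{k})\cdot A_{E,\alpha}$ is a group (Definition \ref{def:modular_curves:extended_galois_image}), it follows that $\bar\rho_{E',\alpha\circ\varphi}(\sigma) \in G_{E,\alpha}$ for all $\sigma \in G_{k}$.

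Combining the two parts, $G_{E',\alpha\circ\varphi} = \bar\rho_{E',\alpha\circ\varphi}(G_{k})\cdot A_{E',\alpha\circ\varphi} \subseteq G_{E,\alpha}$, and the symmetry remarked above upgrades this to the desired equality. I expect the only genuine obstacle to be correctly identifying and placing the cocycle term $u_{\sigma}$: one must check that conjugating the Galois action on the $n$-torsion by the $\bar{k}$-isomorphism $\varphi$ introduces precisely an automorphism of $E_{\bar{k}}$, which is then absorbed into $A_{E,\alpha}$; everything else is a routine unwinding of definitions.
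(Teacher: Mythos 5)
Your proof is correct and rests on the same key observation as the paper's: a Galois conjugate of the $\bar{k}$-isomorphism $\varphi$ is again an isomorphism $E' \to E$ (since both curves are defined over $k$), so conjugating the Galois action by $\varphi$ perturbs it only by an automorphism of the target curve, which is absorbed into $A_{E,\alpha}$. The paper rewrites a general element $\alpha\circ\sigma\circ\psi\circ\alpha^{-1}$ in one step rather than splitting off the explicit cocycle $u_{\sigma}$, but this is only an organizational difference.
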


\begin{proof}
	Let $\sigma \in G_{k}$ and $\psi \in \Aut(E_{\bar{k}})$. Then, we have
	\[
		\alpha \circ \sigma \circ \psi \circ \alpha^{-1} = (\alpha \circ \varphi) \circ \sigma \circ (\sigma^{-1} \circ \varphi^{-1} \circ \sigma \circ \psi \circ \varphi) \circ (\varphi^{-1} \circ \alpha^{-1}).
	\]
	The composition $\sigma^{-1} \circ \varphi^{-1} \circ \sigma$ is a $\bar{k}$-isomorphism from $E$ to $E'$. Therefore, the composition $\sigma^{-1} \circ \varphi^{-1} \circ \sigma \circ \psi \circ \varphi$ is a $\bar{k}$-automorphism of $E'$. Thus, we obtain that $\alpha \circ \varphi \circ \psi \circ \alpha^{-1} \in G_{E', \alpha \circ \varphi}$, and so $G_{E, \alpha} \subseteq G_{E', \alpha \circ \varphi}$. The reverse inclusion is analogous.
\end{proof}

\begin{remark}
	If $E$ is an elliptic curve defined over a number field $k$ such that $j(E) \notin \{0, 1728\}$, then $\Aut(E_{\bar{k}}) = \{\id, \iota\}$, where $\iota$ denotes the involution mapping $P$ to $-P$. Therefore, for any level-$n$ structure $\alpha$ on $E$, we have
	\[
		A_{E, \alpha} = \{\pm I\} \leq \GL{\Z[n]}.
	\]
	In particular, we have $G_{E, \alpha} = \pm \bar{\rho}_{E, \alpha}(G_{k})$ for all such elliptic curves $E$.
\end{remark}

\subsection{The modular curves \texorpdfstring{$X_{H}$ and $Y_{H}$}{X\_H and Y\_H}} \label{sec:modular_curves:modular_curves}

Equipped with the notion of $H$-level structures on elliptic curves, we can now define the modular curves $X_{H}$. This follows the definition given by Deligne and Rapoport in \cite{deligne1973}.

\begin{definition} \label{def:modular_curves:modular_curve}
	Let $n \geq 1$ be an integer, and let $H \leq \GL{\Z[n]}$ be a subgroup. We define the \textbf{modular curve $Y_{H}$} to be the generic fiber of the coarse moduli space of the algebraic stack $\mathcal{M}_{H}^{0}$ parametrizing elliptic curves with $H$-level structure. Similarly, we define the \textbf{modular curve $X_{H}$} to be the generic fiber of the coarse moduli space of the stack $\mathcal{M}_{H}$ parametrizing generalized elliptic curves with $H$-level structure. For more information on the definitions of the stacks $\mathcal{M}_{H}$ and $\mathcal{M}_{H}^{0}$, we refer the reader to \cite{deligne1973}.

	The modular curve $X_{H}$ is a smooth projective curve over $\Q$, with $Y_{H}$ an affine subscheme of $X_{H}$. The closed points of $X_{H} \setminus Y_{H}$ are the \textbf{cusps} of $X_{H}$, while the closed points of $Y_{H}$ are called \textbf{non-cuspidal}.
\end{definition}

The geometric points of $Y_{H}$ can be described precisely. Let $E, E'$ be two elliptic curves defined over $\Qbar$. Let $[\alpha]_{H}$ be an $H$-level structure on $E$ and $[\alpha']_{H}$ be an $H$-level structure on $E'$. We say that the pairs $(E, [\alpha]_{H})$ and $(E', [\alpha']_{H})$ are equivalent if there exists an isomorphism $\varphi : E \to E'$ over $\Qbar$ such that $[\alpha]_{H} = [\alpha' \circ \varphi]_{H}$. Then, the set $Y_{H}(\Qbar)$ consists of equivalence classes $[(E, [\alpha]_{H})]$ of pairs as above.

The action of the absolute Galois group $G_{\Q}$ on $Y_{H}(\Qbar)$ can also be described explicitly. Let $\sigma \in G_{\Q}$, and $[(E, [\alpha]_{H})] \in Y_{H}(\Qbar)$. The map $\sigma$ induces an isomorphism $\sigma^{-1} : (\sigma E)[n] \to E[n]$ defined by $P \mapsto \sigma^{-1}(P)$. Then, the (left) action of $G_{\Q}$ on $Y_{H}(\Qbar)$ is given by
\[
	\sigma \cdot [(E, [\alpha]_{H})] = [(\sigma E, [\alpha \circ \sigma^{-1}]_{H})].
\]
Note that this action is well-defined, as, if the pairs $(E, [\alpha]_{H})$ and $(E', [\alpha']_{H})$ are equivalent, we have
\begin{align*}
	\exists \, \varphi : E \overset{\sim}{\to} E', h \in H \text{ s.t. } & \alpha = h \circ \alpha' \circ \varphi                                                                         \\
	\implies                                                             & \alpha \circ \sigma^{-1} = h \circ \alpha' \circ \varphi \circ \sigma^{-1}                                     \\
	\implies                                                             & \alpha \circ \sigma^{-1} = h \circ (\alpha' \circ \sigma^{-1}) \circ (\sigma \circ \varphi \circ \sigma^{-1}).
\end{align*}
The composition $\sigma \circ \varphi \circ \sigma^{-1}$ is an isomorphism from $\sigma E$ to $\sigma E'$, and so the above shows that the pairs $\sigma \cdot (E, [\alpha]_{H})$ and $\sigma \cdot (E', [\alpha']_{H})$ are also equivalent, for all $\sigma \in G_{\Q}$.

\begin{definition} \label{def:modular_curves:minimal_representative}
	Let $H$ be a subgroup of $\GL{\Z[n]}$, and $x \in X_{H}$ be a non-cuspidal closed point. We say that a pair $(E, [\alpha]_{H})$ is a \textbf{minimal representative for $x$} if $E$ is an elliptic curve defined over $\Q(j(E))$, $[\alpha]_{H}$ is an $H$-level structure on $E$, and the point $x$ corresponds to the $G_{\Q}$-orbit of $[(E, [\alpha]_{H})] \in Y_{H}(\Qbar)$.
\end{definition}

We note that such minimal representatives are not unique. For instance, any twist of $E$ gives rise to another minimal representative. Minimal representatives do however exist for all non-cuspidal closed points of $X_{H}$, by the following lemma.

\begin{lemma} \label{thm:modular_curves:minimal_representative_existence}
	Let $[(E, [\alpha]_{H})] \in Y_{H}(\Qbar)$ be a geometric point of $Y_{H}$, and let $E'$ be an elliptic curve defined over $\Qbar$ such that $j(E) = j(E')$. Then, there exists an $H$-level structure $[\alpha']_{H}$ on $E'$ such that
	\[
		[(E, [\alpha]_{H})] = [(E', [\alpha']_{H})].
	\]
	In particular, there exists a minimal representative for all non-cuspidal closed points of $X_{H}$.
\end{lemma}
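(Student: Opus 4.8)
The plan is to reduce the statement to a concrete fact about elliptic curves over $\Qbar$, namely that two isomorphic elliptic curves can be intertwined by an isomorphism, and then transport level structures across that isomorphism. First I would recall that if $j(E) = j(E')$, then $E$ and $E'$ become isomorphic over $\Qbar$; that is, there exists a $\Qbar$-isomorphism $\varphi : E' \to E$. This is the standard fact that the $j$-invariant classifies elliptic curves over an algebraically closed field, and I would simply cite it.

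Given such a $\varphi$, the natural move is to define $\alpha' = \alpha \circ \varphi[n]$, where $\varphi[n] : E'[n] \to E[n]$ is the induced isomorphism on $n$-torsion. Since $\alpha : E[n] \to (\Z[n])^{2}$ is an isomorphism of $\Z[n]$-modules and $\varphi[n]$ is too, the composite $\alpha' : E'[n] \to (\Z[n])^{2}$ is again a level-$n$ structure on $E'$. Set $[\alpha']_{H}$ to be the associated $H$-level structure. Then by the very definition of equivalence of pairs given just before Definition \ref{def:modular_curves:minimal_representative}, the pair $(E', [\alpha']_{H})$ is equivalent to $(E, [\alpha]_{H})$: indeed, $\varphi : E' \to E$ is a $\Qbar$-isomorphism with $[\alpha']_{H} = [\alpha \circ \varphi]_{H}$, which is exactly the condition required. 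Hence $[(E, [\alpha]_{H})] = [(E', [\alpha']_{H})]$ in $Y_{H}(\Qbar)$.

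For the final sentence, let $x \in X_{H}$ be a non-cuspidal closed point. By the description of $Y_{H}(\Qbar)$ above, $x$ corresponds to a $G_{\Q}$-orbit of some point $[(E_{0}, [\alpha_{0}]_{H})]$. The $j$-invariant $j(x) \coloneqq j(E_{0})$ lies in the residue field $k(x)$, which is a number field, and any elliptic curve over $\Qbar$ with that $j$-invariant is $\Qbar$-isomorphic to $E_{0}$; in particular one may choose an elliptic curve $E$ defined over the number field $\Q(j(x))$ with $j(E) = j(x)$ (for instance a suitable Weierstrass model). Applying the first part with $E' = E$ produces an $H$-level structure $[\alpha]_{H}$ on $E$ with $[(E, [\alpha]_{H})] = [(E_{0}, [\alpha_{0}]_{H})]$, so $(E, [\alpha]_{H})$ is a minimal representative for $x$ in the sense of Definition \ref{def:modular_curves:minimal_representative}.

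The only mild subtlety — and the step I would be most careful about — is that the level structure $\alpha'$ is only well-defined once $\varphi$ is fixed, and different choices of $\varphi$ differ by an element of $\Aut(E_{\bar k})$, hence change $\alpha'$ only within its $\{\pm 1\}$-orbit (or the $\mu_{4}$, $\mu_{6}$ orbit in the special $j$-invariants), which is absorbed by passing to $H$-equivalence classes provided $-I \in H$; since the statement only claims existence of some minimal representative, any choice of $\varphi$ works and no genuine obstruction arises. Everything else is a direct unwinding of the definitions of level-$n$ structure, $H$-equivalence, and the moduli description of $Y_{H}(\Qbar)$.
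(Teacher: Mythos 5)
Your proof is correct and follows essentially the same route as the paper: transport the level structure along a $\Qbar$-isomorphism furnished by the equality of $j$-invariants (the paper writes $\alpha' = \alpha \circ \varphi^{-1}$ for $\varphi : E \to E'$, which is your construction with the arrow reversed), then verify the equivalence of pairs directly from the definition and specialize to a model of $E$ over $\Q(j(E))$ for the last claim. The closing remark about the choice of $\varphi$ is harmless but unnecessary, since, as you note, only existence is asserted.
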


\begin{proof}
	Since $E$ and $E'$ have the same $j$-invariant, there exists an isomorphism $\varphi : E \to E'$ over $\Qbar$. Consider the level-$n$ structure $\alpha' = \alpha \circ \varphi^{-1}$ on $E'$. By construction, we have $[\alpha]_{H} = [\alpha' \circ \varphi]_{H}$, and so
	\[
		[(E, [\alpha]_{H})] = [(E', [\alpha']_{H})].
	\]
	The second part follows from the fact that there exists an elliptic curve $E'$ defined over $\Q(j(E))$ such that $j(E') = j(E)$.
\end{proof}

Using these minimal representatives, as well as the above description of the Galois action on $Y_{H}(\Qbar)$, we can give a concrete formula for the degrees of the non-cuspidal closed points of $X_{H}$.

\begin{theorem} \label{thm:modular_curves:point_degree}
	Let $x \in X_{H}$ be a non-cuspidal closed point with minimal representative $(E, [\alpha]_{H})$. Then,
	\[
		\deg(x) = [\Q(j(E)) : \Q] [G_{E, \alpha} : G_{E, \alpha} \cap A_{E, \alpha} H].
	\]
\end{theorem}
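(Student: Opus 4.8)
The plan is to compute the degree of $x$ as the size of its $G_\Q$-orbit in $Y_H(\Qbar)$, using the explicit description of the Galois action given above. Let $k = \Q(j(E))$, so $E$ is defined over $k$. The first step is to observe that, since every point in the $G_\Q$-orbit of $[(E,[\alpha]_H)]$ has the same $j$-invariant $j(E)$, and since $\sigma \cdot [(E,[\alpha]_H)] = [(\sigma E, [\alpha \circ \sigma^{-1}]_H)]$ only depends on $\sigma$ through its action relative to $k$ in a sense to be made precise, it is natural to first restrict to the subgroup $G_k \leq G_\Q$ and count the $G_k$-orbit, then multiply by $[G_\Q : G_k] = [k : \Q]$ — but this last multiplication is only valid once one checks that the stabilizer of $x$ in $G_\Q$ is contained in $G_k$, equivalently that $\sigma \cdot x$ has $j$-invariant $\sigma(j(E))$, which forces $\sigma$ to fix $j(E)$ whenever it fixes $x$. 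So Step 1 is: reduce to computing $\deg_k(x) = \#(G_k \cdot [(E,[\alpha]_H)])$ and note $\deg(x) = [k:\Q]\deg_k(x)$.

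The second step is to compute the size of the $G_k$-orbit via the orbit–stabilizer theorem: $\#(G_k \cdot [(E,[\alpha]_H)]) = [G_k : \Stab]$ where $\Stab = \{\sigma \in G_k : \sigma \cdot [(E,[\alpha]_H)] = [(E,[\alpha]_H)]\}$. Here one uses that for $\sigma \in G_k$ we have $\sigma E = E$ (as $E$ is defined over $k$), so $\sigma \cdot [(E,[\alpha]_H)] = [(E, [\alpha \circ \sigma^{-1}]_H)]$, where now $\sigma^{-1}$ is genuinely an automorphism of $E[n]$. Then $\sigma \in \Stab$ iff $(E, [\alpha\circ\sigma^{-1}]_H) \sim (E, [\alpha]_H)$, i.e.\ iff there exists $\varphi \in \Aut(E_{\Qbar})$ and $h \in H$ with $\alpha \circ \sigma^{-1} = h \circ \alpha \circ \varphi$. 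Transporting everything through $\alpha$ (i.e.\ writing $\bar\rho_{E,\alpha}(\sigma) = \alpha\circ\sigma\circ\alpha^{-1}$ and $\alpha\circ\varphi\circ\alpha^{-1} \in A_{E,\alpha}$), this condition becomes $\bar\rho_{E,\alpha}(\sigma)^{-1} \in H \cdot A_{E,\alpha}$, equivalently $\bar\rho_{E,\alpha}(\sigma) \in A_{E,\alpha} H$ (using that $A_{E,\alpha}$ is a group and is normalized by the Galois image, so $H A_{E,\alpha} = A_{E,\alpha} H$ is a group). So Step 2 identifies $\Stab = \bar\rho_{E,\alpha}^{-1}(\bar\rho_{E,\alpha}(G_k) \cap A_{E,\alpha} H)$.

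The third step is bookkeeping with indices. Applying Lemma \ref{thm:group_theory:surjectivity_preserves_index} to the surjection $\bar\rho_{E,\alpha} : G_k \to \bar\rho_{E,\alpha}(G_k)$ and the subgroup $\bar\rho_{E,\alpha}(G_k) \cap A_{E,\alpha} H$ of the target, we get
\[
	[G_k : \Stab] = [\bar\rho_{E,\alpha}(G_k) : \bar\rho_{E,\alpha}(G_k) \cap A_{E,\alpha} H].
\]
Finally one must pass from $\bar\rho_{E,\alpha}(G_k)$ to $G_{E,\alpha} = \bar\rho_{E,\alpha}(G_k) A_{E,\alpha}$. Since $A_{E,\alpha}$ normalizes $\bar\rho_{E,\alpha}(G_k)$, the two subgroups $\bar\rho_{E,\alpha}(G_k)$ and $A_{E,\alpha}$ permute, and one checks $A_{E,\alpha} H$ permutes with both; moreover $\bar\rho_{E,\alpha}(G_k) \cap A_{E,\alpha} = G_{E,\alpha}\cap A_{E,\alpha}\cap \bar\rho_{E,\alpha}(G_k)$, and a short argument shows $\bar\rho_{E,\alpha}(G_k) \cap (A_{E,\alpha}H) \cdot A_{E,\alpha} = \bar\rho_{E,\alpha}(G_k)\cap A_{E,\alpha}H$ as needed, so that Lemma \ref{thm:group_theory:product_preserves_index} (with $N = A_{E,\alpha}$, or more directly with $N$ the relevant factor) yields
\[
	[\bar\rho_{E,\alpha}(G_k) : \bar\rho_{E,\alpha}(G_k)\cap A_{E,\alpha} H] = [G_{E,\alpha} : G_{E,\alpha}\cap A_{E,\alpha} H].
\]
Combining with $\deg(x) = [k:\Q][G_k:\Stab]$ gives the claimed formula.

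The main obstacle I expect is Step 2 together with the final index manipulation: one must be careful that the equivalence relation defining geometric points involves \emph{both} an isomorphism $\varphi$ and an element $h \in H$, and in passing this condition through $\alpha$ one must correctly track that $\alpha \circ \varphi \circ \alpha^{-1}$ ranges exactly over $A_{E,\alpha}$ (this is why the ``extended'' group $A_{E,\alpha}H$ appears rather than just $H$), and that $A_{E,\alpha}H$ is genuinely a subgroup so that the coset/index arithmetic makes sense. The interplay of left versus right translation — the stabilizer condition naturally produces $\bar\rho_{E,\alpha}(\sigma)^{-1}$, and one must use normality of $A_{E,\alpha}$ in the Galois image to turn $H A_{E,\alpha}$-membership into $A_{E,\alpha} H$-membership — is the place where sign/side errors are most likely, and is where the elementary Lemmas \ref{thm:group_theory:product_meet_normalizer} and \ref{thm:group_theory:product_preserves_index} earn their keep.
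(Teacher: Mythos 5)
Your proposal is correct and follows essentially the same route as the paper: orbit--stabilizer on the $G_{\Q}$-orbit, reduction to $G_{\Q(j(E))}$ because the stabilizer fixes $j(E)$, identification of the stabilizer as $\bar{\rho}_{E,\alpha}^{-1}(\bar{\rho}_{E,\alpha}(G_{\Q(j(E))}) \cap A_{E,\alpha}H)$, and then exactly the index bookkeeping via Lemmas \ref{thm:group_theory:surjectivity_preserves_index}, \ref{thm:group_theory:product_meet_normalizer}, \ref{thm:group_theory:modular_law_subsets} and \ref{thm:group_theory:product_preserves_index}. One small correction to Step 2: the passage from $\bar{\rho}_{E,\alpha}(\sigma)^{-1} \in H A_{E,\alpha}$ to $\bar{\rho}_{E,\alpha}(\sigma) \in A_{E,\alpha} H$ does not require $H A_{E,\alpha} = A_{E,\alpha} H$ (which need not hold when $j(E) \in \{0,1728\}$, since $H$ need not normalize $A_{E,\alpha}$, and is not implied by $A_{E,\alpha}$ being normalized by the Galois image); it follows simply because $(H A_{E,\alpha})^{-1} = A_{E,\alpha}^{-1} H^{-1} = A_{E,\alpha} H$, both factors being subgroups.
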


\begin{proof}
	The degree of $x \in Y_{H}$ is equal to the size of the $G_{\Q}$-orbit of the point $[(E, [\alpha]_{H})] \in Y_{H}(\Qbar)$. By the orbit-stabilizer theorem, it follows that
	\[
		\deg(x) = [G_{\Q} : \Stab_{G_{\Q}}([(E, [\alpha]_{H})])].
	\]

	Let $\sigma \in G_{\Q}$. By definition, we know that $\sigma \in \Stab_{G_{\Q}}([(E, [\alpha]_{H})])$ if and only if there exists an isomorphism $\varphi : E \to \sigma E$ over $\Qbar$ such that $[\alpha]_{H} = [\alpha \circ \sigma^{-1} \circ \varphi]_{H}$. An isomorphism $\varphi : E \to \sigma E$ can only exist if $j(E) = j(\sigma E)$, or in other words, if $\sigma \in G_{\Q(j(E))}$. Note that as $E$ is defined over $\Q(j(E))$, we have $\sigma E = E$ for all $\sigma \in G_{\Q(j(E))}$. Therefore, we have that $\sigma \in \Stab_{G_{\Q}}([(E, [\alpha]_{H})])$ if and only if $\sigma \in G_{\Q(j(E))}$ and there exists an automorphism $\varphi \in \Aut(E_{\Qbar})$ such that $[\alpha]_{H} = [\alpha \circ \sigma^{-1} \circ \varphi]_{H}$. We have
	\begin{align*}
		[\alpha]_{H} = [\alpha \circ \sigma^{-1} \circ \varphi]_{H} & \iff \alpha \circ \varphi^{-1} \circ \sigma \circ \alpha^{-1} \in H                         \\
		                                                            & \iff \alpha \circ \varphi^{-1} \circ \alpha^{-1} \circ \bar{\rho}_{E, \alpha}(\sigma) \in H \\
		                                                            & \iff \bar{\rho}_{E, \alpha}(\sigma) \in (\alpha \circ \varphi \circ \alpha^{-1}) H.
	\end{align*}
	Therefore, $\sigma \in \Stab_{G_{\Q}}([(E, [\alpha]_{H})])$ if and only if $\sigma \in G_{\Q(j(E))}$ and $\bar{\rho}_{E, \alpha}(\sigma) \in A_{E, \alpha} H$. Thus, we obtain
	\begin{align*}
		\deg(x) & = [G_{\Q} : G_{\Q(j(E))}] [G_{\Q(j(E))} : \bar{\rho}_{E, \alpha}^{-1}(\bar{\rho}_{E, \alpha}(G_{\Q(j(E))}) \cap A_{E, \alpha} H)] \\
		        & = [\Q(j(E)) : \Q] [G_{\Q(j(E))} : \bar{\rho}_{E, \alpha}^{-1}(\bar{\rho}_{E, \alpha}(G_{\Q(j(E))}) \cap A_{E, \alpha} H)].
	\end{align*}
	Note that, as $\bar{\rho}_{E, \alpha}(G_{\Q(j(E))})$ normalizes $A_{E, \alpha}$, the intersection $\bar{\rho}_{E, \alpha}(G_{\Q(j(E))}) \cap A_{E, \alpha} H$ is a group, by Lemma \ref{thm:group_theory:product_meet_normalizer}. Therefore, by Lemma \ref{thm:group_theory:surjectivity_preserves_index}, we have
	\[
		\deg(x) = [\Q(j(E)) : \Q] [\bar{\rho}_{E, \alpha}(G_{\Q(j(E))}) : \bar{\rho}_{E, \alpha}(G_{\Q(j(E))}) \cap A_{E, \alpha} H].
	\]
	By Lemma \ref{thm:group_theory:modular_law_subsets}, we have $A_{E, \alpha}(\bar{\rho}_{E, \alpha}(G_{\Q(j(E))}) \cap A_{E, \alpha} H) = G_{E, \alpha} \cap A_{E, \alpha} H$. Therefore, by Lemma \ref{thm:group_theory:product_preserves_index}, we have
	\[
		[\bar{\rho}_{E, \alpha}(G_{\Q(j(E))}) : \bar{\rho}_{E, \alpha}(G_{\Q(j(E))}) \cap A_{E, \alpha} H] = [G_{E, \alpha} : G_{E, \alpha} \cap A_{E, \alpha} H],
	\]
	and the result follows.
\end{proof}

\begin{remark} \label{rmk:modular_curves:geometric_components}
	The geometric components of the curve $X_{H}$ can also be described purely in terms of the group $H$. Recall that the Galois group $\Gal(\Q(\zeta_{n}) / \Q)$ is isomorphic to $\Z*[n]$, with $i \in \Z*[n]$ corresponding to the automorphism $\zeta_{n} \mapsto \zeta_{n}^{i}$. Let $H$ be a subgroup of $\GL{\Z[n]}$. Then, by \cite[Corollaire 5.6]{deligne1973}, the Stein factorization of the modular curve $X_{H}$ is
	\[
		X_{H} \to \Spec \! \left(\Q(\zeta_{n})^{\det H}\right) \to \Spec \Q,
	\]
	where $\Q(\zeta_{n})^{\det H}$ denotes the fixed field of the subgroup $\det H$. In particular, $X_{H}$ has $[\Z*[n] : \det H]$ geometric components.
	
	The morphism $X_{H} \to \Spec \! \left(\Q(\zeta_{n})^{\det H}\right)$ can be explicitly described on non-cuspidal geometric points. Recall that, since the Galois group $\Gal(\Q(\zeta_{n})^{\det H} / \Q)$ is isomorphic to the quotient group $\Z*[n] / \det H$, the geometric points of $\Spec(\Q(\zeta_{n})^{\det H})$ are in bijection with the set of cosets of $\det H$ in $\Z*[n]$. Let $E / \Qbar$ be an elliptic curve, and $\alpha$ a level-$n$ structure on $E$. Let $e_{n} : E[n]^{2} \to \mu_{n}$ denote the Weil pairing on $E[n]$. The image
	\[
		e_{n} \! \left(\alpha^{-1}(\left(\begin{smallmatrix} 1 \\ 0 \end{smallmatrix}\right)), \alpha^{-1}(\left(\begin{smallmatrix} 0 \\ 1 \end{smallmatrix}\right)) \right)
	\]
	is a primitive $n$-th root of unity, and so equals $\zeta_{n}^{d(\alpha)}$, for some $d(\alpha) \in \Z*[n]$. The morphism $X_{H} \to \Spec \! \left(\Q(\zeta_{n})^{\det H}\right)$ is then given, on non-cuspidal geometric points, by the map
	\begin{align*}
		Y_{H}(\Qbar) & \to \Z*[n] / \det H \\
		[(E, [\alpha]_{H})] & \mapsto d(\alpha) \det H.
	\end{align*}
	Note that, as the Weil pairing is alternating and bilinear, we have that $d(h \circ \alpha) = d(\alpha) \det(h)$, for all $h \in H$. Therefore, the above map is well-defined.
	
	The structure of the geometric components of $X_{H}$ is also well-known. Let $\Gamma_{H} \leq \SL{\Z}$ be the preimage of the intersection $H \cap \SL{\Z[n]}$ under the reduction mod-$n$ map $\SL{\Z} \to \SL{\Z[n]}$. Then, by \cite[Remark 2.3.3]{rouse2022}, the base change of the geometrically connected curve $X_{H} / \Q(\zeta_{n})^{\det H}$ to $\mathbb{C}$ is isomorphic to the classical modular curve $\Gamma_{H} \backslash \mathbb{H}^{\ast}$, where $\mathbb{H}^{\ast}$ is the extended upper half plane. In particular, the genus of $X_{H} / \Q(\zeta_{n})^{\det H}$ is equal to the genus of $\Gamma_{H} \backslash \mathbb{H}^{\ast}$, which can be computed using \cite[Theorem 3.1.1]{diamond2005}.
\end{remark}

Finally, we define the four standard modular curves $X(1)$, $X(n)$, $X_{0}(n)$ and $X_{1}(n)$.

\begin{notation}
	Throughout, we denote by $X(1)$ the modular curve $X_{\GL{\Z[n]}}$. As we shall see in the following section, the resulting curve does not depend on the choice of $n$. The modular curve $X(1)$ is naturally isomorphic to $\mathbb{P}^{1}$ over $\Q$, with the isomorphism given, on non-cuspidal geometric points, by the map
	\begin{align*}
		Y(1)(\Qbar) & \to \mathbb{A}^{1}_{\Q}(\Qbar) \\
		[(E, [\alpha]_{\GL{\Z[n]}})] & \mapsto j(E).
	\end{align*}
	
	Similarly, we denote by $X(n)$ the modular curve $X_{H}$, where $H = \{\pm I\} \leq \GL{\Z[n]}$. By the previous remark, this is a geometrically disconnected curve, whose geometric components are isomorphic, over $\mathbb{C}$, to the Riemann surface $\Gamma_{n} \backslash \mathbb{H}^{\ast}$. We also denote by $X_{0}(n)$ the modular curve $X_{B_{0}(n)}$, and by $X_{1}(n)$ the modular curve $X_{B_{1}(n)}$.
\end{notation}

\subsection{Morphisms between modular curves} \label{sec:modular_curves:morphisms}

There are a number of morphisms between modular curves which are induced by the underlying moduli interpretation. These will play an important role in understanding isolated points on these modular curves, alongside the results of Section \ref{sec:isolated_divisors}. We define these morphisms here.

\begin{definition}
	Let $H$ and $H'$ be two subgroups of $\GL{\Z[n]}$ such that $H \leq H'$. The \textbf{inclusion morphism} $X_{H} \to X_{H'}$ is the morphism of modular curves given, on non-cuspidal geometric points, by the map
	\begin{align*}
		Y_{H}(\Qbar)      & \to Y_{H'}(\Qbar)           \\
		[(E, [\alpha]_{H})] & \mapsto [(E, [\alpha]_{H'})].
	\end{align*}
	The inclusion morphism is a finite locally free morphism of curves over $\Q$, with degree equal to $[\pm H' : \pm H]$.
\end{definition}

\begin{remark}
	The inclusion morphism is canonical in the sense that, for any three subgroups $H \leq H' \leq H'' \leq \GL{\Z[n]}$, the composition of inclusion morphisms
	\[
		X_{H} \to X_{H'} \to X_{H''}
	\]
	is equal to the inclusion morphism $X_{H} \to X_{H''}$.
\end{remark}

For any subgroup $H$ of $\GL{\Z[n]}$, the inclusion morphism gives rise to a morphism of modular curves $j : X_{H} \to X(1)$, called the $j$-map. Under the identification of $X(1)$ with $\mathbb{P}^{1}$, the $j$-map takes the geometric point $[(E, [\alpha]_{H})] \in Y_{H}(\Qbar)$ to the $j$-invariant $j(E) \in \mathbb{A}^{1}_{\Q}(\Qbar)$. The degree of the $j$-map is equal to the index $[\GL{\Z[n]} : \pm H]$.

The inclusion morphism also gives rise to isomorphisms between different modular curves. Firstly, for all subgroups $H \leq \GL{\Z[n]}$, the inclusion morphism gives an isomorphism $X_{H} \cong X_{\pm H}$. Therefore, when considering properties of the curves $X_{H}$, such as their set of isolated points, we can restrict ourselves to considering subgroups such that $-I \in H$.

Secondly, the inclusion morphism gives an isomorphism between the Stein factorizations of the modular curves corresponding to subgroups with the same intersection in $\SL{\Z[n]}$.
	
\begin{theorem} \label{thm:modular_curves:sl_intersection_defines_geometric_components}
	Let $H \leq H'$ be two subgroups of $\GL{\Z[n]}$ such that $H \cap \SL{\Z[n]} = H' \cap \SL{\Z[n]}$. Let
	\begin{align*}
		X_{H} \overset{\pi}{\to} \Spec \Q(\zeta_{n})^{\det H} \to \Spec \Q & \text{ and} \\
		\qquad X_{H'} \overset{\pi'}{\to} \Spec \Q(\zeta_{n})^{\det H'} \to \Spec \Q &
	\end{align*}
	be the Stein factorizations of $X_{H}$ and $X_{H'}$ respectively. Then, there is a Cartesian square
	\[\begin{tikzcd}
		X_{H} \arrow[d, "f"] \arrow[r, "\pi"] & \Spec \Q(\zeta_{n})^{\det H} \arrow[d, "g"] \\
		X_{H'} \arrow[r, "\pi'"] & \Spec \Q(\zeta_{n})^{\det H'},
	\end{tikzcd}\]
	where $f$ is the inclusion morphism $X_{H} \to X_{H'}$, and $g$ is the morphism of spectra induced by the ring inclusion $\Q(\zeta_{n})^{\det H'} \hookrightarrow \Q(\zeta_{n})^{\det H'}$. In particular, the geometrically integral curve $X_{H} / \Q(\zeta_{n})^{\det H}$ is isomorphic to the base change of the curve $X_{H'} / \Q(\zeta_{n})^{\det H'}$ to the field $\Q(\zeta_{n})^{\det H}$.
\end{theorem}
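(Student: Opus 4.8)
The plan is to exhibit the canonical comparison morphism from $X_{H}$ to the indicated fibre product and to show it is an isomorphism by a degree count; the hypothesis $H \cap \SL{\Z[n]} = H' \cap \SL{\Z[n]}$ will enter only at the end, through a short group-theoretic computation. Write $Z := X_{H'} \times_{\Spec \Q(\zeta_{n})^{\det H'}} \Spec \Q(\zeta_{n})^{\det H}$ for the fibre product along $\pi'$ and $g$ (here $g$ is $\Spec$ of the inclusion $\Q(\zeta_{n})^{\det H'} \hookrightarrow \Q(\zeta_{n})^{\det H}$, which is legitimate since $\det H \subseteq \det H'$), and let $p \colon Z \to X_{H'}$ and $q \colon Z \to \Spec \Q(\zeta_{n})^{\det H}$ be the two projections. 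The square in the statement is Cartesian exactly when the morphism $h \colon X_{H} \to Z$ induced by the pair $(f, \pi)$ is an isomorphism, so it suffices to construct $h$ and prove it is one. For $h$ to exist one needs $\pi' \circ f = g \circ \pi$; since $X_{H}$ is reduced and the targets are separated, this equality of morphisms can be checked on the dense set of non-cuspidal geometric points, where by Remark~\ref{rmk:modular_curves:geometric_components} both composites send $[(E,[\alpha]_{H})]$ to the coset $d(\alpha)\det H' \in \Z*[n]/\det H'$, using that the element $d(\alpha)$ attached to a level-$n$ structure $\alpha$ does not depend on the ambient group. By construction $p \circ h = f$ and $q \circ h = \pi$.

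Next I would compare degrees. The morphism $p$ is the base change of $g$ along $X_{H'} \to \Spec \Q(\zeta_{n})^{\det H'}$, hence finite locally free of degree $[\Q(\zeta_{n})^{\det H} : \Q(\zeta_{n})^{\det H'}] = [\det H' : \det H]$. On the other hand $f$ is finite locally free of degree $[\pm H' : \pm H]$, and I claim these two numbers coincide. First, the hypothesis forces $-I \in H \iff -I \in H'$: the implication $\Rightarrow$ is clear from $H \leq H'$, and if $-I \in H'$ then $-I \in H' \cap \SL{\Z[n]} = H \cap \SL{\Z[n]} \subseteq H$. Consequently $\pm H$ and $\pm H'$ arise from $H$ and $H'$ in the same way (either $-I$ already lies in both, or in neither), so $[\pm H' : \pm H] = [H' : H]$. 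Finally, the determinant sequences $1 \to H \cap \SL{\Z[n]} \to H \to \det H \to 1$ and $1 \to H' \cap \SL{\Z[n]} \to H' \to \det H' \to 1$, together with $H \cap \SL{\Z[n]} = H' \cap \SL{\Z[n]}$, give $[H' : H] = |H'|/|H| = |\det H'|/|\det H| = [\det H' : \det H]$. Hence $\deg f = \deg p$.

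To conclude, note that $p$ is separated and $f = p \circ h$ is finite, so $h$ is finite, hence closed; since $f$ is surjective onto the curve $X_{H'}$, the image of $h$ cannot be a proper (finite) closed subset of the curve $Z$, so $h$ is surjective and therefore dominant between integral schemes. Multiplicativity of degrees in the tower $X_{H} \xrightarrow{h} Z \xrightarrow{p} X_{H'}$ then gives $\deg h = \deg f / \deg p = 1$. A finite, degree-one morphism from an integral scheme onto a normal integral scheme (here $Z$ is smooth over $\Q(\zeta_{n})^{\det H}$, hence normal) is an isomorphism, so $h$ is an isomorphism and the square is Cartesian. The final assertion is then immediate: the relation $q \circ h = \pi$ shows that $h$ is an isomorphism over $\Q(\zeta_{n})^{\det H}$ between $X_{H}$, viewed over $\Q(\zeta_{n})^{\det H}$ via $\pi$, and $Z$, which by definition is the base change of $X_{H'}/\Q(\zeta_{n})^{\det H'}$ to $\Q(\zeta_{n})^{\det H}$.

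I expect the only real content to be the degree comparison $\deg f = \deg p$, and within it the pivotal point is the equivalence $-I \in H \iff -I \in H'$ under the hypothesis, which is precisely what ties $[\pm H' : \pm H]$ to $[\det H' : \det H]$. An alternative to the degree count would be to check directly that $h$ is bijective on $\Qbar$-points, using the moduli description of $Y_{H}$ and $Y_{H'}$ and the Weil-pairing description of $\Spec \Q(\zeta_{n})^{\det H}$ from Remark~\ref{rmk:modular_curves:geometric_components}; this amounts to the observation that $h' \in H'$ with $\det h' \in \det H$ already lies in $H$, but it forces one to treat the cusps of $X_{H}$ and $X_{H'}$ separately, which the degree argument sidesteps entirely.
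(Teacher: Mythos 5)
Your proposal is correct and follows essentially the same route as the paper: construct the comparison morphism to the fibre product by checking commutativity on non-cuspidal geometric points via the Weil-pairing description of $\pi$, then show it has degree one by computing $\deg f = [\pm H' : \pm H] = [\det H' : \det H] = \deg g$ from the hypothesis $H \cap \SL{\Z[n]} = H' \cap \SL{\Z[n]}$. Your write-up merely adds detail the paper leaves implicit (the equivalence $-I \in H \iff -I \in H'$, and the justification that a finite degree-one morphism onto the normal integral scheme $Z$ is an isomorphism).
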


\begin{proof}
	We first show that the above diagram is commutative. To do so, it suffices to check that the compositions $g \circ \pi$ and $\pi' \circ f$ agree for all geometric points of $Y_{H}$, since $Y_{H}$ is a geometrically reduced, dense open subset of $X_{H}$. Let $[(E, [\alpha]_{H})] \in Y_{H}(\Qbar)$ be a geometric point of $Y_{H}$. By Remark \ref{rmk:modular_curves:geometric_components}, we have that $\pi([(E, [\alpha]_{H})]) = d(\alpha) \det H$, where we identify the geometric points of $\Spec \Q(\zeta_{n})^{\det H}$ with the cosets of $\det H$ in $\Z*[n]$. Therefore, we have
	\[
		g(\pi([(E, [\alpha]_{H})])) = g(d(\alpha) \det H) = d(\alpha) \det H'.
	\]
	On the other hand, we know that $f([(E, [\alpha]_{H})]) = [(E, [\alpha]_{H'})]$, and so
	\[
		\pi'(f([(E, [\alpha]_{H})])) = \pi'([(E, [\alpha]_{H'})]) = d(\alpha) \det H'.
	\]
	Therefore, the diagram is commutative, and it remains to show that it forms a Cartesian square.
	
	By the universality of the fiber product, we obtain a commutative diagram
	\[\begin{tikzcd}
		X_{H} \arrow[ddr, bend right, "f"] \arrow[rrd, bend left=15, "\pi"] \arrow[rd, "\varphi"] \\
		& (X_{H'})_{\Q(\zeta_{n})^{\det H}} \arrow[d, "\psi"] \arrow[r] & \Spec \Q(\zeta_{n})^{\det H} \arrow[d, "g"] \\
		& X_{H'} \arrow[r, "\pi'"] & \Spec \Q(\zeta_{n})^{\det H'},
	\end{tikzcd}\]
	Note that, since $X_{H'} / \Q(\zeta_{n})^{\det H'}$ is a geometrically integral curve, the base change $(X_{H'})_{\Q(\zeta_{n})^{\det H}}$ is integral. The degree of $g$ is given by
	\[
		\deg(g) = [\Q(\zeta_{n})^{\det H} : \Q(\zeta_{n})^{\det H'}] = [\det H' : \det H].
	\]
	Therefore, the degree of $\psi$ is also equal to $[\det H' : \det H]$. On the other hand, the degree of $f$ is equal to $[\pm H' : \pm H]$. Since $H \cap \SL{\Z[n]} = H' \cap \SL{\Z[n]}$, we have
	\[
		\deg(f) = [\pm H' : \pm H] = [\det(\pm H') : \det(\pm H)] = [\det H' : \det H] = \deg(\psi).
	\]
	Thus, the map $\varphi$ is an isomorphism of curves, and the claim follows.
\end{proof}

\begin{remark}
	This result, alongside Theorem \ref{thm:isolated_divisors:stein_factorization_isolated}, shows that studying the isolated points on the modular curves $X_{H}$, for all $H \leq \GL{\Z[n]}$, is equivalent to studying the isolated points on the base changes of the modular curves $X_{H'}$, where $H'$ are the minimal subgroups of $\GL{\Z[n]}$ with given intersection in $\SL{\Z[n]}$. This provides an alternative way of understanding isolated points on geometrically disconnected modular curves, which deserves further attention. We leave this for future work.
\end{remark}

Another important class of morphisms between modular curves are the conjugation isomorphisms, which are defined as follows.

\begin{definition}
	Let $H$ be a subgroup of $\GL{\Z[n]}$, and let $g \in \GL{\Z[n]}$. The \textbf{conjugation isomorphism} $X_{H} \to X_{g H g^{-1}}$ is the morphism of modular curves given, on non-cuspidal geometric points, by the map
	\begin{align*}
		Y_{H}(\Qbar)      & \overset{\sim}{\to} Y_{g H g^{-1}}(\Qbar)        \\
		[(E, [\alpha]_{H})] & \mapsto [(E, [g \circ \alpha]_{g H g^{-1}})].
	\end{align*}
	The conjugation isomorphism is an isomorphism of curves over $\Q$. More generally, one can also conjugate by elements of $\GL{\mathbb{A}_{\Q}^{f}}$, see \cite[Proposition 3.19]{deligne1973}.
\end{definition}

Finally, let $n$ and $m$ be integers such that $n$ divides $m$. Let $H \leq \GL{\Z[n]}$ be a subgroup, and $H' = \pi_{n}^{-1}(H) \leq \GL{\Z[m]}$ the preimage of $n$ under the reduction mod-$n$ map $\pi_{n} : \GL{\Z[m]} \to \GL{\Z[n]}$. Then, by \cite[\nopp 3.6]{deligne1973}, there is an isomorphism between the modular curves $X_{H}$ and $X_{H'}$ defined over $\Q$. In particular, the dependence of the definitions on the integer $n$ is somewhat arbitrary, and one can omit its use by working over $\GL{\Zhat}$. We give these definitions in Section \ref{sec:modular_curves:working_over_zhat}. However, working with subgroups of $\GL{\Z[n]}$ is somewhat more amenable to computations, which is why we have chosen to include both perspectives here.

\subsection{Closed points over a fixed \texorpdfstring{$j$}{j}-invariant} \label{sec:modular_curves:closed_points_as_double_cosets}

In future sections, we will rely heavily on understanding the structure of closed points on modular curves lying above a fixed $j$-invariant. In this section, we present a purely group-theoretic description of these closed points, which extends that given in \cite[Section 2.3]{rouse2022}. This yields a remarkably practical way to compute the properties of such points, assuming knowledge of the Galois representation of the elliptic curve. This will be used to great effect in Section \ref{sec:level_7}.

For ease of referencing, we collect these properties in the following, rather large, proposition.

\begin{proposition} \label{thm:modular_curves:closed_points_as_double_cosets}
	Let $E$ be an elliptic curve defined over $\Q(j(E))$, and fix a level-$n$ structure $\alpha : E[n] \to \GL{\Z[n]}$. Let $H$ and $H'$ be two subgroups of $\GL{\Z[n]}$, with $H \leq H'$, and let $h$ be an element of $\GL{\Z[n]}$. Then, the following hold:
	\begin{enumerate}
		\item There is a bijection between the set of elements of $\GL{\Z[n]}$ and the set of level-$n$ structures $\beta : E[n] \to \GL{\Z[n]}$ given by
		      \[
			      g \mapsto g \circ \alpha.
		      \]
		\item For any $g \in \GL{\Z[n]}$, we have
		      \begin{align*}
			      A_{E, g \circ \alpha} = g A_{E, \alpha} g^{-1}, &  & G_{E, g \circ \alpha} = g G_{E, \alpha} g^{-1}.
		      \end{align*}
		\item The set of geometric points in $X_{H}(\Qbar)$ lying over $j(E) \in X(1)(\Qbar)$ is in bijection with the set of double cosets $H g A_{E, \alpha}$, for $g \in \GL{\Z[n]}$.
		\item The set of closed points in $X_{H}$ lying above the closed point of $X(1)$ corresponding to the $G_{\Q}$-orbit of $j(E) \in X(1)(\Qbar)$ is in bijection with the set of double cosets $H g G_{E, \alpha}$, for $g \in \GL{\Z[n]}$.
		\item The degree of the closed point $x \in X_{H}$ corresponding to the double coset $H g G_{E, \alpha}$ is given by
		      \[
			      \deg(x) = [\Q(j(E)) : \Q] [g G_{E, \alpha} g^{-1} : g G_{E, \alpha} g^{-1} \cap (g A_{E, \alpha} g^{-1}) H].
		      \]
		\item The inclusion morphism $X_{H} \to X_{H'}$ maps the closed point of $X_{H}$ corresponding to the double coset $H g G_{E, \alpha}$ to the closed point of $X_{H'}$ corresponding to the double coset $H' g G_{E, \alpha}$.
		\item The conjugation isomorphism $X_{H} \to X_{h H h^{-1}}$ maps the closed point of $X_{H}$ corresponding to the double coset $H g G_{E, \alpha}$ to the closed point of $X_{h H h^{-1}}$ corresponding to the double coset $(h H h^{-1}) h g G_{E, \alpha}$.
	\end{enumerate}
\end{proposition}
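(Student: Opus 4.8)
The plan is to establish the seven assertions in order, with (3) and (4) doing the real work and the remaining parts following formally from them, from Theorem~\ref{thm:modular_curves:point_degree}, and from the explicit descriptions of the inclusion and conjugation morphisms in Section~\ref{sec:modular_curves:morphisms}. Parts (1) and (2) are immediate: for (1), since $\alpha$ is an isomorphism the assignment $g \mapsto g \circ \alpha$ is inverted by $\beta \mapsto \beta \circ \alpha^{-1} \in \Aut((\Z[n])^{2}) = \GL{\Z[n]}$; for (2), substituting $\beta = g \circ \alpha$ into the definitions gives $\beta \circ \varphi \circ \beta^{-1} = g(\alpha \varphi \alpha^{-1})g^{-1}$ for $\varphi \in \Aut(E_{\Qbar})$ and $\bar{\rho}_{E,\beta}(\sigma) = g\,\bar{\rho}_{E,\alpha}(\sigma)\,g^{-1}$ for $\sigma \in G_{\Q}$, whence $A_{E,g\circ\alpha} = gA_{E,\alpha}g^{-1}$, and, taking products (recall $G_{E,\alpha} = \bar{\rho}_{E,\alpha}(G_{\Q(j(E))})A_{E,\alpha}$ since $E$ is defined over $\Q(j(E))$), also $G_{E,g\circ\alpha} = gG_{E,\alpha}g^{-1}$.

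For (3), I would first invoke Lemma~\ref{thm:modular_curves:minimal_representative_existence} to rewrite any geometric point of $X_{H}$ lying over $j(E)$ in the form $[(E, [\beta]_{H})]$ with $\beta$ a level-$n$ structure on $E$ itself, hence, by part (1), in the form $[(E, [g\circ\alpha]_{H})]$ for some $g \in \GL{\Z[n]}$. By the description of geometric points of $Y_{H}$, one has $[(E,[g\circ\alpha]_{H})] = [(E,[g'\circ\alpha]_{H})]$ if and only if there exist $h \in H$ and $\varphi \in \Aut(E_{\Qbar})$ with $g\circ\alpha = h \circ g' \circ \alpha \circ \varphi$; writing $\alpha \circ \varphi = (\alpha \varphi \alpha^{-1})\circ\alpha$ with $\alpha\varphi\alpha^{-1} \in A_{E,\alpha}$ and cancelling $\alpha$, this is equivalent to $g \in Hg'A_{E,\alpha}$, i.e.\ to $HgA_{E,\alpha} = Hg'A_{E,\alpha}$. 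Hence $[(E,[g\circ\alpha]_{H})] \mapsto HgA_{E,\alpha}$ is a well-defined bijection between geometric points over $j(E)$ and double cosets in $H \backslash \GL{\Z[n]} / A_{E,\alpha}$.

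For (4), I would first observe that closed points of $X_{H}$ lying over the closed point of $X(1)$ corresponding to the $G_{\Q}$-orbit of $j(E)$ are in bijection with $G_{\Q(j(E))}$-orbits of geometric points of $X_{H}$ lying over $j(E) \in X(1)(\Qbar)$: a closed point is a $G_{\Q}$-orbit $O$ of geometric points, its image is the $G_{\Q}$-orbit of $j(E)$ so $O$ meets the fiber over $j(E)$, and any $\sigma \in G_{\Q}$ carrying one point of that fiber to another must fix $j(E)$, hence lies in $G_{\Q(j(E))}$. It then remains to compute the $G_{\Q(j(E))}$-action on the double cosets from (3). Since $E$ is defined over $\Q(j(E))$ we have $\sigma E = E$ for $\sigma \in G_{\Q(j(E))}$, so the action formula gives $\sigma \cdot [(E,[g\circ\alpha]_{H})] = [(E,[g\circ\alpha\circ\sigma^{-1}]_{H})] = [(E,[(g\,\bar{\rho}_{E,\alpha}(\sigma^{-1}))\circ\alpha]_{H})]$; thus $\sigma$ sends $HgA_{E,\alpha}$ to $Hg\,\bar{\rho}_{E,\alpha}(\sigma^{-1})A_{E,\alpha}$. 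As $\sigma$ ranges over $G_{\Q(j(E))}$ the orbit of $HgA_{E,\alpha}$ is therefore $\bigcup_{\sigma} Hg\,\bar{\rho}_{E,\alpha}(\sigma)A_{E,\alpha} = Hg\,\bar{\rho}_{E,\alpha}(G_{\Q(j(E))})A_{E,\alpha} = HgG_{E,\alpha}$, which gives the claimed bijection with $H \backslash \GL{\Z[n]} / G_{E,\alpha}$.

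Parts (5)--(7) are then formal consequences. For (5), the geometric point $[(E,[g\circ\alpha]_{H})]$ exhibits $(E, [g\circ\alpha]_{H})$ as a minimal representative of the closed point $x$ attached to $HgG_{E,\alpha}$, so Theorem~\ref{thm:modular_curves:point_degree} gives $\deg(x) = [\Q(j(E)):\Q]\,[G_{E,g\circ\alpha} : G_{E,g\circ\alpha} \cap A_{E,g\circ\alpha}H]$, and substituting the identities of part (2) yields the stated formula. For (6) and (7), one notes that the inclusion morphism sends $[(E,[g\circ\alpha]_{H})]$ to $[(E,[g\circ\alpha]_{H'})]$ and the conjugation isomorphism sends it to $[(E,[(hg)\circ\alpha]_{hHh^{-1}})]$; applying part (4) to $H'$, resp.\ to $hHh^{-1}$ (with the level structure written as $(hg)\circ\alpha$), identifies the images with the closed points attached to $H'gG_{E,\alpha}$, resp.\ $(hHh^{-1})(hg)G_{E,\alpha}$, as claimed. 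The main obstacle throughout is purely organizational: one has to be scrupulous about left versus right cosets and about the inverses appearing in the $G_{\Q}$-action (the $\sigma^{-1}$ in $\alpha \circ \sigma^{-1}$), and one has to check that the passage from $G_{\Q(j(E))}$-orbits to double cosets in step (4) is genuinely well-defined and bijective — in particular that the double cosets $Hg\,\bar{\rho}_{E,\alpha}(\sigma)A_{E,\alpha}$ comprising a single orbit are exactly those contained in $HgG_{E,\alpha}$. Once (3) and (4) are pinned down, nothing else requires real effort.
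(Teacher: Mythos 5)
Your proposal is correct and follows essentially the same route as the paper: reduce to representatives $(E,[g\circ\alpha]_H)$ via Lemma \ref{thm:modular_curves:minimal_representative_existence} and part (1), translate the equivalence of pairs into double-coset equality for (3), pass to $G_{\Q(j(E))}$-orbits for (4), and deduce (5)--(7) from Theorem \ref{thm:modular_curves:point_degree} and the explicit descriptions of the morphisms. The only difference is cosmetic: where the paper disposes of well-definedness and injectivity in (4) by citing "a similar argument as in (3)," you make the Galois action on double cosets explicit ($\sigma$ acts by right multiplication by $\bar{\rho}_{E,\alpha}(\sigma^{-1})$, so an orbit is exactly the set of $A_{E,\alpha}$-double cosets inside $HgG_{E,\alpha}$), which is a valid and arguably cleaner way to fill in that step.
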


\begin{proof}
	The first two statements follow naturally from Definitions \ref{def:modular_curves:level_structure} and \ref{def:modular_curves:extended_galois_image}.

	We know that the set of geometric points in $X_{H}(\Qbar)$ lying over $j(E) \in X(1)(\Qbar)$ is given by the set of geometric points $[(E', [\beta]_{H})] \in X_{H}(\Qbar)$, where $j(E') = j(E)$. By Lemma \ref{thm:modular_curves:minimal_representative_existence}, one can assume that $E' = E$. In other words, the set of geometric points in $X_{H}(\Qbar)$ lying over $j(E) \in X(1)(\Qbar)$ is given by
	\[
		\left\{[(E, [\beta]_{H})] \in X_{H}(\Qbar) : \beta \text{ a level-$n$ structure on $E$}\right\}.
	\]
	By (1), this is equal to the set
	\[
		\Sigma = \left\{[(E, [g \circ \alpha]_{H})] \in X_{H}(\Qbar) : g \in \GL{\Z[n]} \right\}.
	\]
	Consider the map $\psi : \Sigma \to H \backslash {\GL{\Z[n]}} / A_{E, \alpha}$ given by
	\[
		\psi([(E, [g \circ \alpha]_{H})]) = H g A_{E, \alpha}.
	\]
	This map is clearly surjective. Let $g$ and $g'$ be two elements of $\GL{\Z[n]}$. We show that the map $\psi$ is both well-defined and injective in one fell swoop, by showing that $[(E, [g \circ \alpha]_{H})] = [(E, [g' \circ \alpha]_{H})]$ if and only if $H g A_{E, \alpha} = H g' A_{E, \alpha}$. We have
	\begin{align*}
		[(E, {} & [g \circ \alpha]_{H})] = [(E, [g' \circ \alpha]_{H})]                                                        \\
		        & \iff \exists \, \varphi \in \Aut(E) \text{ s.t. } [g \circ \alpha]_{H} = [g' \circ \alpha \circ \varphi]_{H} \\
		        & \iff \exists \, \varphi \in \Aut(E) \text{ s.t. } g \in H (g' \circ \alpha \circ \varphi \circ \alpha^{-1})  \\
		        & \iff g \in H g' A_{E, \alpha}                                                                                \\
		        & \iff H g A_{E, \alpha} = H g' A_{E, \alpha}.
	\end{align*}
	Therefore, the map $\psi$ is a bijection, and (3) follows.

	Let $x \in X_{H}$ be a closed point lying above the closed point corresponding to the $G_{\Q}$-orbit of $j(E) \in X(1)(\Qbar)$. The closed point $x$ corresponds to the $G_{\Q}$-orbit of some geometric point $[(E', [\beta]_{H})] \in X_{H}(\Qbar)$. By construction, $j(E')$ must be in the $G_{\Q}$-orbit of $j(E)$, and so there exists $\sigma \in G_{\Q}$ such that $j(\sigma E') = \sigma(j(E')) = j(E)$. Since $\sigma \cdot [(E', [\beta]_{H})] = [(\sigma E', [\beta \circ \sigma^{-1}]_{H})]$, we can therefore assume that $j(E') = j(E)$. By Lemma \ref{thm:modular_curves:minimal_representative_existence}, we can moreover assume that $E' = E$. Thus, the set of closed points lying above the closed point corresponding to the $G_{\Q}$-orbit of $j(E) \in X(1)(\Qbar)$ is in bijection with the set of orbits
	\[
		\left\{G_{\Q} \cdot [(E, [\beta]_{H})]: \beta \text{ a level-$n$ structure on $E$}\right\}.
	\]
	Let $\beta$ and $\beta'$ be two level-$n$ structures on $E$, and let $\sigma \in G_{\Q}$ be such that
	\[
		\sigma \cdot [(E, [\beta]_{H})] = [(E, [\beta']_{H})].
	\]
	By definition, it follows that there exists a $\Qbar$-isomorphism $\sigma E \cong E$, and so $j(\sigma E) = j(E)$. Therefore, we obtain that $\sigma \in G_{\Q(j(E))}$. Thus, there is a bijection of sets
	\begin{align*}
		 & \left\{G_{\Q} \cdot [(E, [\beta]_{H})]: \beta \text{ a level-$n$ structure on $E$}\right\}                              \\
		 & \qquad \cong \left\{G_{\Q(j(E))} \cdot [(E, [\beta]_{H})]: \beta \text{ a level-$n$ structure on $E$}\right\} =: \Sigma
	\end{align*}
	Consider the map
	\begin{align*}
		H \backslash {\GL{\Z[n]}} / G_{E, \alpha} & \to \Sigma                                              \\
		H g G_{E, \alpha}                         & \mapsto G_{\Q(j(E))} \cdot [(E, [g \circ \alpha]_{H})].
	\end{align*}
	This is a surjection by (1), and is well-defined and injective by a similar argument as was done for (3). Statement (4) follows accordingly.

	Statement (5) follows from Theorem \ref{thm:modular_curves:point_degree}, and the explicit formulation of the bijection given in the proof of (4). Similarly, statements (6) and (7) follow from the proof of (4), and the definitions of the respective maps of modular curves.
\end{proof}

\subsection{Working with \texorpdfstring{$\GL{\Zhat}$}{GL\_2(Zhat)}} \label{sec:modular_curves:working_over_zhat}

As mentioned at the end of Section \ref{sec:modular_curves:morphisms}, while the definition of the modular curves $X_{H}$ requires an integer $n$ and a subgroup $H$ of $\GL{\Z[n]}$, the modular curve $X_{H}$ constructed is independent of $n$, as long as $H$ defines the same open subgroup of $\GL{\Zhat}$. This allows one to define the modular curves $X_{H}$, for $H$ an open subgroup of $\GL{\Zhat}$, as is commonly done in the literature.

This construction is useful when working with modular curves of varying level, as it allows one to work with multiple modular curves without having to keep track of their respective levels. However, in order to streamline the use of this definition throughout the rest of this paper, we will require a description of the moduli interpretation of $X_{H}$ which depends solely on the open subgroup $H$, rather than its image in $\GL{\Z[n]}$. To this end, we present a formulation of the moduli interpretation of $X_{H}$ which, while equivalent to the one presented in Section \ref{sec:modular_curves:modular_curves}, satisfies the aforementioned properties.

To begin, we define the notions of profinite level structures and $H$-level structures, for $H$ an open subgroup of $\GL{\Zhat}$. These generalize the notions of level-$n$ structures and $H$-level structures, for $H$ a subgroup of $\GL{\Z[n]}$, respectively.

\begin{definition}
	Let $k$ be a number field, and $E$ an elliptic curve over $k$. A \textbf{profinite level structure on $E$} is an isomorphism of $\Zhat$-modules
	\[
		\alpha : \varprojlim_{m} E[m] \overset{\sim}{\to} \varprojlim_{m} (\Z[m])^{2} = \Zhat^{2}.
	\]

	Let $H$ be an open subgroup of $\GL{\Zhat}$. We say that two profinite level structures $\alpha, \beta : \varprojlim_{m} E[m] \to \Zhat^{2}$ are \textbf{$H$-equivalent} if there exists an element $h \in H$ such that $\beta = h \circ \alpha$. An \textbf{$H$-level structure on $E$} is an $H$-equivalence class of profinite level structures on $E$. Given a profinite level structure $\alpha$ on $E$, we denote by $[\alpha]_{H}$ the corresponding $H$-level structure.
\end{definition}

Let $\alpha$ be a profinite level structure on an elliptic curve $E$. Then, for all $n \geq 1$, there is a level-$n$ structure $\alpha_{n} : E[n] \to (\Z[n])^{2}$ such that the following diagram commutes:
\[\begin{tikzcd}
	\varprojlim_{m} E[m] \arrow[r, "\alpha"] \arrow[d] & \varprojlim_{m} (\Z[m])^{2} \arrow[d] \\
	E[n] \arrow[r, "\alpha_{n}"] & (\Z[n])^{2}.
\end{tikzcd}\]
This gives a correspondence between the notion of $H$-level structures defined above and the one defined in Section \ref{sec:modular_curves:finite_level_structures}, as the following theorem shows.

\begin{theorem} \label{thm:modular_curves:mod_n_profinite_equivalence}
	Let $k$ be a number field, and $E$ an elliptic curve over $k$. Let $H$ be an open subgroup of $\GL{\Zhat}$, and let $n$ be a multiple of the level of $H$. Then, the map
	\[
		[\alpha]_{H} \mapsto [\alpha_{n}]_{\pi_{n}(H)}
	\]
	is a well-defined bijection between the set of $H$-level structures on $E$ and the set of $\pi_{n}(H)$-level structures on $E$.
\end{theorem}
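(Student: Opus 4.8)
The plan is to reduce the claim to the observation that a profinite level structure $\alpha$ on $E$ is determined, up to the kernel of reduction, by its truncation $\alpha_n$, and to check compatibility of the two equivalence relations. First I would verify well-definedness: if $[\alpha]_H = [\beta]_H$, so $\beta = h \circ \alpha$ for some $h \in H$, then truncating mod $n$ gives $\beta_n = \pi_n(h) \circ \alpha_n$, hence $[\alpha_n]_{\pi_n(H)} = [\beta_n]_{\pi_n(H)}$; this uses only that truncation $\varprojlim_m E[m] \to E[n]$ is compatible with the $\GL$-actions and with the projection $\pi_n$. So the map is well-defined on $H$-equivalence classes.

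Next I would prove surjectivity. Given any level-$n$ structure $\gamma : E[n] \to (\Z/n\Z)^2$, I need a profinite level structure $\alpha$ with $\alpha_n = \gamma$ (then its class maps to $[\gamma]_{\pi_n(H)}$). Fix one profinite level structure $\alpha^{0}$ on $E$ (one exists since $\varprojlim_m E[m] \cong \Zhat^2$ as $\Zhat$-modules — this is the structure theory of the Tate module). Its truncation $\alpha^{0}_n$ is a level-$n$ structure, so $\gamma = g \circ \alpha^{0}_n$ for a unique $g \in \GL{\Z[n]}$ by Proposition \ref{thm:modular_curves:closed_points_as_double_cosets}(1) applied at level $n$. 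Lift $g$ to $\tilde g \in \GL{\Zhat}$ via the surjection $\pi_n : \GL{\Zhat} \to \GL{\Z[n]}$, and set $\alpha = \tilde g \circ \alpha^{0}$; then $\alpha_n = \pi_n(\tilde g) \circ \alpha^{0}_n = g \circ \alpha^{0}_n = \gamma$, as required.

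Finally, injectivity. Suppose $[\alpha_n]_{\pi_n(H)} = [\beta_n]_{\pi_n(H)}$, i.e. $\beta_n = \bar h \circ \alpha_n$ for some $\bar h \in \pi_n(H)$. Writing $\alpha, \beta$ as profinite level structures, $\beta \circ \alpha^{-1}$ is an automorphism of $\Zhat^2$, i.e. an element $g \in \GL{\Zhat}$, and $\pi_n(g) = \beta_n \circ \alpha_n^{-1} = \bar h \in \pi_n(H)$. Since $n$ is a multiple of the level of $H$, we have $H = \pi_n^{-1}(\pi_n(H))$ (this is exactly the characterization of the level recalled in Section \ref{sec:group_theory}), so $g \in H$; thus $\beta = g \circ \alpha$ with $g \in H$, giving $[\alpha]_H = [\beta]_H$. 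This is the step where the hypothesis that $n$ is a multiple of the level is essential, and it is the only genuinely substantive point — the rest is bookkeeping with the compatible truncation maps; the main thing to be careful about is keeping the $\GL$-actions (which are left actions, by the paper's convention) on the correct side throughout, so that "$\beta = h\circ\alpha$" truncates cleanly to "$\beta_n = \pi_n(h)\circ\alpha_n$".
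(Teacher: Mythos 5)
Your proposal is correct and follows essentially the same route as the paper: the paper likewise disposes of surjectivity by lifting a level-$n$ structure to a profinite one, and handles well-definedness and injectivity together via the equivalence $[\alpha]_{H} = [\beta]_{H} \iff \alpha\circ\beta^{-1}\in H \iff \alpha_{n}\circ\beta_{n}^{-1}\in\pi_{n}(H)$, using exactly your key fact that $H = \pi_{n}^{-1}(\pi_{n}(H))$ when $n$ is a multiple of the level. Your version merely spells out the surjectivity lift and separates the two directions of the equivalence.
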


\begin{proof}
	The map is surjective, as for any level-$n$ structure $\alpha'$ on $E$, there exists a profinite level structure $\alpha$ on $E$ such that $\alpha_{n} = \alpha'$. Therefore, it suffices to prove that the map is both well-defined and surjective. To do so, let $\alpha$ and $\beta$ be two profinite level structures on $E$. It suffices to prove that $[\alpha]_{H} = [\beta]_{H}$ if and only if $[\alpha_{n}]_{\pi_{n}(H)} = [\beta_{n}]_{\pi_{n}(H)}$. Note that, since $n$ is a multiple of the level of $H$, we know that $H = \pi_{n}^{-1}(\pi_{n}(H))$. Therefore, we have
	\begin{align*}
		[\alpha]_{H} = [\beta]_{H} & \iff \alpha \circ \beta^{-1} \in H \\
		& \iff \pi_{n}(\alpha \circ \beta^{-1}) \in \pi_{n}(H) \\
		& \iff \alpha_{n} \circ \beta_{n}^{-1} \in \pi_{n}(H) \\
		& \iff [\alpha_{n}]_{\pi_{n}(H)} = [\beta_{n}]_{\pi_{n}(H)}. \qedhere
	\end{align*}
\end{proof}

The notion of profinite level structures allows us to define the adelic Galois representation of an elliptic curve $E$.

\begin{definition}
	Let $E$ be an elliptic curve defined over a number field $k$, and let $\alpha$ be a profinite level structure on $E$. Let $\sigma \in G_{k}$ be an element of the absolute Galois group of $k$. Since $E$ is defined over $k$, one obtains isomorphisms $\sigma : E[n] \to E[n]$ for all $n$, which give rise to an isomorphism $\sigma : \varprojlim_{m} E[m] \to \varprojlim_{m} E[m]$ of $\Zhat$-modules. The \textbf{adelic Galois representation associated to $E$ and $\alpha$} is the map
	\begin{align*}
		\rho_{E, \alpha} : G_{k} & \to \GL{\Zhat}                                 \\
		\sigma                   & \mapsto \alpha \circ \sigma \circ \alpha^{-1}.
	\end{align*}
	We let $A_{E, \alpha}$ denote the subgroup
	\[
		A_{E, \alpha} = \{\alpha \circ \varphi \circ \alpha^{-1} : \varphi \in \Aut(E_{\bar{k}})\} \subset \GL{\Zhat}.
	\]
	As in the case of finite level, the subgroup $A_{E, \alpha}$ is normalized by the image $\rho_{E, \alpha}(G_{k})$ of the adelic Galois representation, and we let the \textbf{extended adelic Galois image associated to $E$ and $\alpha$}, denoted by $G_{E, \alpha}$, be the product $\rho_{E, \alpha}(G_{k}) A_{E, \alpha} \subset \GL{\Zhat}$.
\end{definition}

\begin{remark} \label{rmk:modular_curves:serre_open_image}
	Let $E$ be a non-CM elliptic curve over a number field $k$, and $\alpha$ a profinite level structure on $E$. Then, by Serre's open image theorem, the image $\rho_{E, \alpha}(G_{k})$ of the adelic Galois representation of $E$ is an open subgroup of $\GL{\Zhat}$. It follows that, in this case, the extended adelic Galois image $G_{E, \alpha}$ is also an open subgroup of $\GL{\Zhat}$.
\end{remark}

The correspondence between profinite and mod-$n$ level structures gives rise to a correspondence between extended mod-$n$ and adelic Galois images. Namely, given an elliptic curve $E$ defined over a number field $k$, $\alpha$ a profinite level structure on $E$, and $n$ a positive integer, we have
\[
	G_{E, \alpha_{n}} = \pi_{n}(G_{E, \alpha}).
\]

We can now give the definition of the modular curve $X_{H}$ associated to an open subgroup $H$ of $\GL{\Zhat}$. For the moment, this is defined solely using the existing modular curves from Section \ref{sec:modular_curves:modular_curves}.

\begin{definition}
	Let $H$ be an open subgroup of $\GL{\Zhat}$ of level $n$. Let $m$ be an integer such that $n \mid m$. We define the \textbf{modular curve $X_{H}$} to be the modular curve $X_{\pi_{m}(H)}$, and $Y_{H}$ the affine subscheme $Y_{\pi_{m}(H)}$. By the remarks in the previous section, the curve $X_{H}$ is independent of the choice of $m$.
\end{definition}

The equivalence of level structures given in Theorem \ref{thm:modular_curves:mod_n_profinite_equivalence} gives a natural reinterpretation of the moduli description of the modular curve $X_{H}$ in terms of profinite level structures. We summarize this as follows.

Let $H$ be an open subgroup of $\GL{\Zhat}$. The set $Y_{H}(\Qbar)$ of non-cuspidal geometric points of the modular curve $X_{H}$ is in bijection with the set of equivalence classes of pairs $(E, [\alpha]_{H})$, where $E$ is an elliptic curve defined over $\Qbar$, and $[\alpha]_{H}$ an $H$-level structure on $E$. For each non-cuspidal closed point $x \in X_{H}$, we define a minimal representative of $x$ to be such a pair $(E, [\alpha]_{H})$, where $E$ is defined over $\Q(j(E))$ and the closed point $x$ corresponds to the $G_{\Q}$-orbit of the geometric point $[(E, [\alpha]_{H})] \in Y_{H}(\Qbar)$. Finally, for any automorphism $\sigma \in G_{\Q}$ and any geometric point $[(E, [\alpha]_{H})] \in Y_{H}(\Qbar)$, we have
\[
	\sigma \cdot [(E, [\alpha]_{H})] = [(\sigma E, [\alpha \circ \sigma^{-1}]_{H})].
\]

The degree formula of Theorem \ref{thm:modular_curves:point_degree} can also be re-expressed using adelic Galois representations. This is more delicate than the above, and so we give an explicit proof here. 

\begin{theorem} \label{thm:modular_curves:profinite_point_degree}
	Let $H$ be an open subgroup of $\GL{\Zhat}$, and let $x \in X_{H}$ be a non-cuspidal closed point with minimal representative $(E, [\alpha]_{H})$. Then,
	\[
		\deg(x) = [\Q(j(E)) : \Q] [G_{E, \alpha} : G_{E, \alpha} \cap A_{E, \alpha} H].
	\]
\end{theorem}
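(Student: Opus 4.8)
The plan is to reduce this statement to the finite-level version already established in Theorem \ref{thm:modular_curves:point_degree}, using the equivalence of level structures from Theorem \ref{thm:modular_curves:mod_n_profinite_equivalence} and the compatibility $G_{E, \alpha_{n}} = \pi_{n}(G_{E, \alpha})$ noted just before the statement. First I would fix $n$ to be a multiple of the level of $H$; by definition of the modular curve associated to an open subgroup, $X_{H} = X_{\pi_{n}(H)}$ and the non-cuspidal closed point $x$ corresponds, under the bijection of Theorem \ref{thm:modular_curves:mod_n_profinite_equivalence}, to the non-cuspidal closed point of $X_{\pi_{n}(H)}$ with minimal representative $(E, [\alpha_{n}]_{\pi_{n}(H)})$. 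Applying Theorem \ref{thm:modular_curves:point_degree} at finite level $n$ then gives
\[
	\deg(x) = [\Q(j(E)) : \Q]\,[G_{E, \alpha_{n}} : G_{E, \alpha_{n}} \cap A_{E, \alpha_{n}} \pi_{n}(H)].
\]
So it remains to show that the second index equals $[G_{E, \alpha} : G_{E, \alpha} \cap A_{E, \alpha} H]$, independently of the choice of $n$.

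The key step is therefore a purely group-theoretic comparison of indices under the projection $\pi_{n} : \GL{\Zhat} \to \GL{\Z[n]}$. I would set $G = G_{E, \alpha}$, $A = A_{E, \alpha}$, both open subgroups of $\GL{\Zhat}$ with $\pi_{n}(G) = G_{E, \alpha_{n}}$ and $\pi_{n}(A) = A_{E, \alpha_{n}}$ (the latter because $A$ is finite, generated by $\pm I$ together with possibly extra automorphisms when $j(E) \in \{0, 1728\}$, and these are detected at any level $n \geq 1$). Since $n$ is a multiple of the level of $H$, we have $H = \pi_{n}^{-1}(\pi_{n}(H))$, hence $\ker \pi_{n} \subseteq H$. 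The plan is to observe $\pi_{n}(A H) = \pi_{n}(A)\pi_{n}(H) = A_{E, \alpha_{n}}\pi_{n}(H)$, and $\pi_{n}(G) \cap A_{E, \alpha_{n}}\pi_{n}(H) = \pi_{n}(G \cap AH)$ — this last equality needs a short argument, using that $\ker\pi_{n} \subseteq H \subseteq AH$ so that $AH$ is a union of cosets of $\ker\pi_{n}$, whence $\pi_{n}^{-1}(\pi_{n}(AH)) = AH$ and the preimage of $\pi_{n}(G) \cap A_{E,\alpha_n}\pi_n(H)$ under $\pi_n$ lands inside $G\cdot\ker\pi_n \cap AH = G \cap AH$ after intersecting appropriately; care is needed since $\ker\pi_n$ need not be contained in $G$. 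Concretely I would instead argue: $G \cap AH$ is a subgroup (by Lemma \ref{thm:group_theory:product_meet_normalizer}, since $\rho_{E,\alpha}(G_{\Q(j(E))})$ normalizes $A$), and restricting $\pi_n$ to $G$ gives a surjection $G \to \pi_n(G)$ whose kernel $G \cap \ker\pi_n$ lies inside $H \subseteq AH$, hence inside $G \cap AH$; by Lemma \ref{thm:group_theory:surjectivity_preserves_index} applied to this restricted surjection and the subgroup $\pi_n(G) \cap A_{E,\alpha_n}\pi_n(H) = \pi_n(G \cap AH)$ of $\pi_n(G)$, we get
\[
	[G : G \cap AH] = [\pi_n(G) : \pi_n(G) \cap A_{E,\alpha_n}\pi_n(H)],
\]
which is exactly what is needed.

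The main obstacle is the identity $\pi_n(G \cap AH) = \pi_n(G) \cap \pi_n(A)\pi_n(H)$: the inclusion ``$\subseteq$'' is trivial, but ``$\supseteq$'' requires that any element of $\pi_n(G)$ which equals $\bar a \bar h$ with $\bar a \in \pi_n(A)$, $\bar h \in \pi_n(H)$ can be lifted to an element of $G \cap AH$. This follows because $\ker\pi_n \subseteq H$: given $g \in G$ with $\pi_n(g) = \bar a \bar h$, lift $\bar a$ to $a \in A$ (possible since $\pi_n|_A$ is onto $\pi_n(A)$) and note $a^{-1}g$ has image $\bar h \in \pi_n(H)$, so $a^{-1}g \in \pi_n^{-1}(\pi_n(H)) = H$; thus $g = a(a^{-1}g) \in AH$ and $g \in G \cap AH$. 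Once this lifting is in place the proof is a bookkeeping exercise, and the only genuinely delicate point is making sure the equality $\pi_n(A) = A_{E,\alpha_n}$ and the surjectivity of $\pi_n|_A$ and $\pi_n|_G$ onto their images are invoked correctly; these are immediate from the definitions and from $G_{E,\alpha_n} = \pi_n(G_{E,\alpha})$. I would close by remarking that the independence of $n$ is automatic, since the left-hand side $\deg(x)$ does not depend on $n$.
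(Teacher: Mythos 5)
Your proposal is correct, and the overall strategy is the same as the paper's: reduce to the finite-level formula of Theorem \ref{thm:modular_curves:point_degree} and then establish the purely group-theoretic equality $[G_{E,\alpha_{n}} : G_{E,\alpha_{n}} \cap A_{E,\alpha_{n}}\pi_{n}(H)] = [G_{E,\alpha} : G_{E,\alpha} \cap A_{E,\alpha}H]$. Where you differ is in the mechanics of that equality. You work \emph{downstairs}: you restrict $\pi_{n}$ to the subgroup $G = G_{E,\alpha}$, prove the image identity $\pi_{n}(G \cap A_{E,\alpha}H) = \pi_{n}(G) \cap A_{E,\alpha_{n}}\pi_{n}(H)$ by an explicit lifting argument (using $\ker\pi_{n} \subseteq H$), check that the kernel of $\pi_{n}|_{G}$ lies in $G \cap A_{E,\alpha}H$, and apply Lemma \ref{thm:group_theory:surjectivity_preserves_index} to this restricted surjection. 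The paper instead works \emph{upstairs}: it applies Lemma \ref{thm:group_theory:surjectivity_preserves_index} to the full projection $\pi_{n} : \GL{\Zhat} \to \GL{\Z[n]}$, rewrites both groups as $(\ker\pi_{n})G_{E,\alpha}$ and $(\ker\pi_{n})A_{E,\alpha}H$, and then strips off the $\ker\pi_{n}$ factors using the modular law for subsets (Lemma \ref{thm:group_theory:modular_law_subsets}) and Lemma \ref{thm:group_theory:product_preserves_index}. Both arguments are sound and of comparable length; yours avoids the modular-law machinery at the cost of a hands-on lifting computation, while the paper's version is the one that generalizes verbatim to the remark following the theorem (replacing $\ker\pi_{n}$ by an arbitrary normal subgroup $N \leq H$). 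One small inaccuracy worth noting: your parenthetical justification that $\pi_{n}(A_{E,\alpha}) = A_{E,\alpha_{n}}$ because the extra automorphisms are ``detected at any level $n \geq 1$'' is not quite the right reason (faithfulness is irrelevant and can fail for small $n$); the equality is simply functoriality of $\varphi \mapsto \alpha \circ \varphi \circ \alpha^{-1}$ under reduction mod $n$, which is all you need.
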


\begin{proof}
	Let $n$ be the level of $H$. By Theorem \ref{thm:modular_curves:point_degree}, the degree of $x$ is given by
	\[
		\deg(x) = [\Q(j(E)) : \Q] [G_{E, \alpha_{n}} : G_{E, \alpha_{n}} \cap A_{E, \alpha_{n}} \pi_{n}(H)].
	\]
	As the map $\pi_{n} : \GL{\Zhat} \to \GL{\Z[n]}$ is surjective, by Lemma \ref{thm:group_theory:surjectivity_preserves_index}, we have
	\begin{align*}
		[G_{E, \alpha_{n}} : G_{E, \alpha_{n}} \cap A_{E, \alpha_{n}} \pi_{n}(H)] & = [\pi_{n}^{-1}(G_{E, \alpha_{n}}) : \pi_{n}^{-1}(G_{E, \alpha_{n}} \cap A_{E, \alpha_{n}} \pi_{n}(H))] \\
		& = [\pi_{n}^{-1}(G_{E, \alpha_{n}}) : \pi_{n}^{-1}(G_{E, \alpha_{n}}) \cap \pi_{n}^{-1}(A_{E, \alpha_{n}}) H].
	\end{align*}
	By definition, we have $G_{E, \alpha_{n}} = \pi_{n}(G_{E, \alpha})$ and $A_{E, \alpha_{n}} = \pi_{n}(A_{E, \alpha})$, and so, the above index becomes
	\[
		[(\ker \pi_{n}) G_{E, \alpha} : (\ker \pi_{n}) G_{E, \alpha} \cap (\ker \pi_{n}) A_{E, \alpha} H].
	\]
	Thus, by Lemma \ref{thm:group_theory:modular_law_subsets}, we have
	\[
		(\ker \pi_{n}) G_{E, \alpha} \cap (\ker \pi_{n}) A_{E, \alpha} H = (\ker \pi_{n}) (G_{E, \alpha} \cap (\ker \pi_{n}) A_{E, \alpha} H).
	\]
	Note that $\ker \pi_{n} \leq H$ by definition, and so $(\ker \pi_{n}) A_{E, \alpha} H = A_{E, \alpha} H$. Therefore, by Lemma \ref{thm:group_theory:product_preserves_index}, we have
	\[
		[(\ker \pi_{n}) G_{E, \alpha} : (\ker \pi_{n}) G_{E, \alpha} \cap (\ker \pi_{n}) A_{E, \alpha} H] = [G_{E, \alpha} : G_{E, \alpha} \cap A_{E, \alpha} H],
	\]
	and the result follows.
\end{proof}

\begin{remark}
	More generally, the proof of the above theorem shows that, for any normal subgroup $N \lhd \GL{\Zhat}$ such that $N \leq H$, we have the equality
	\[
		[G_{E, \alpha} : G_{E, \alpha} \cap A_{E, \alpha} H] = [G_{E, \alpha} N : G_{E, \alpha} N \cap A_{E, \alpha} H].
	\]
	In particular, we can calculate the index $[G_{E, \alpha} : G_{E, \alpha} \cap A_{E, \alpha} H]$ knowing only the group $G_{E, \alpha} N$.
\end{remark}

Many of the properties of the modular curves $X_{H}$ can also be described purely in terms of the subgroup $H \leq \GL{\Zhat}$, rather than its image in $\GL{\Z[n]}$. For instance, the geometric components of $X_{H}$ are described as follows.

Let $\Qab = \cup_{n \geq 1} \Q(\zeta_{n})$ be the maximal abelian extension of $\Q$. The Galois group $\Gal(\Qab / \Q)$ is isomorphic to $\Zhat$, and so we obtain an action of $\Zhat$ on $\Qab$. Let $H$ be an open subgroup of $\GL{\Zhat}$. Then, as in the case of finite level, the Stein factorization of $X_{H}$ is given by
\[
	X_{H} \to \Spec \! \left((\Qab)^{\det H}\right) \to \Spec \Q.
\]
In particular, the number of geometric components of $X_{H}$ is equal to the index $[\Zhat* : \det H]$.

The inclusion and conjugation morphisms also have similar moduli interpretations using Theorem \ref{thm:modular_curves:mod_n_profinite_equivalence}. For instance, given two open subgroups $H$ and $H'$ of $\GL{\Zhat}$, with $H \leq H'$, the inclusion morphism $X_{H} \to X_{H'}$ is given, on non-cuspidal geometric points, by the map
\begin{align*}
	Y_{H}(\Qbar)      & \to Y_{H'}(\Qbar)           \\
	[(E, [\alpha]_{H})] & \mapsto [(E, [\alpha]_{H'})].
\end{align*}
Moreover, the morphism $X_{H} \to X_{H'}$ is finite locally free of degree $[\pm H' : \pm H]$.
	\section{Isolated points on modular curves} \label{sec:isolated_points_modular_curves}

Equipped with the moduli interpretation of points on modular curves, we can now apply the results of Section \ref{sec:isolated_divisors} to the specific case of modular curves. This will then allow us to prove the single-sink theorem for isolated points on modular curves, as well as the other results which form the backbone of our method for finding isolated points on modular curves.

Firstly, we reformulate the degree conditions of the mapping theorems, Theorems \ref{thm:isolated_divisors:pullback_isolated_point} and \ref{thm:isolated_divisors:pushforward_isolated_point}, solely in terms of the images of Galois representations of elliptic curves. This will streamline their application throughout the remainder of this section. For Theorem \ref{thm:isolated_divisors:pullback_isolated_point}, we obtain the following.

\begin{theorem} \label{thm:isolated_points_modular_curves:pullback_isolated}
	Let $H \leq H' \leq \GL{\Zhat}$ be open subgroups, and let $f : X_{H} \to X_{H'}$ be the inclusion morphism of modular curves. Let $x \in X_{H}$ be a non-cuspidal closed point with minimal representative $(E, [\alpha]_{H})$. Suppose that $[G_{E, \alpha} \cap A_{E, \alpha} H' : G_{E, \alpha} \cap A_{E, \alpha} H] = [\pm H' : \pm H]$. Then, if $x$ is isolated, so is $f(x)$.
\end{theorem}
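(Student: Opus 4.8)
The plan is to reduce the statement to Theorem~\ref{thm:isolated_divisors:pullback_isolated_point} by verifying its degree hypothesis. Recall that the inclusion morphism $f : X_{H} \to X_{H'}$ is a finite locally free morphism of degree $d := [\pm H' : \pm H]$ between smooth projective curves over $\Q$, and that, on non-cuspidal geometric points, it sends $[(E, [\alpha]_{H})]$ to $[(E, [\alpha]_{H'})]$. Consequently $f$ carries the $G_{\Q}$-orbit of $[(E, [\alpha]_{H})]$ to that of $[(E, [\alpha]_{H'})]$, so $y := f(x)$ is the non-cuspidal closed point with minimal representative $(E, [\alpha]_{H'})$; here $E$ is still defined over $\Q(j(E))$, so this pair is indeed a minimal representative. (One may equivalently invoke Proposition~\ref{thm:modular_curves:closed_points_as_double_cosets}(6).)

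Next I would compute $\deg(x)$ and $\deg(y)$ via Theorem~\ref{thm:modular_curves:profinite_point_degree}, obtaining
\[
	\deg(x) = [\Q(j(E)):\Q]\,[G_{E, \alpha} : G_{E, \alpha} \cap A_{E, \alpha} H], \qquad \deg(y) = [\Q(j(E)):\Q]\,[G_{E, \alpha} : G_{E, \alpha} \cap A_{E, \alpha} H'].
\]
Since $H$ is open in $\GL{\Zhat}$, all the indices appearing here are finite. Because $\rho_{E, \alpha}(G_{\Q(j(E))})$ normalizes $A_{E, \alpha}$, the subgroup $A_{E, \alpha}$ is normal in $G_{E, \alpha}$, so in particular $G_{E, \alpha}$ normalizes $A_{E, \alpha}$; hence Lemma~\ref{thm:group_theory:product_meet_normalizer} shows that both $G_{E, \alpha} \cap A_{E, \alpha} H$ and $G_{E, \alpha} \cap A_{E, \alpha} H'$ are genuine subgroups of $G_{E, \alpha}$, and $H \leq H'$ forces $A_{E, \alpha} H \subseteq A_{E, \alpha} H'$, so the former sits inside the latter. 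Multiplicativity of the index along the tower $G_{E, \alpha} \cap A_{E, \alpha} H \leq G_{E, \alpha} \cap A_{E, \alpha} H' \leq G_{E, \alpha}$ then yields
\[
	\deg(x) = [G_{E, \alpha} \cap A_{E, \alpha} H' : G_{E, \alpha} \cap A_{E, \alpha} H] \cdot \deg(y).
\]

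Finally I would invoke the hypothesis $[G_{E, \alpha} \cap A_{E, \alpha} H' : G_{E, \alpha} \cap A_{E, \alpha} H] = [\pm H' : \pm H] = d$, which gives $\deg(x) = d \cdot \deg(y) = \deg(f) \cdot \deg(f(x))$. Theorem~\ref{thm:isolated_divisors:pullback_isolated_point} then applies verbatim with $k = \Q$: if $x$ is isolated, so is $f(x)$ (and in fact one gets the finer conclusions that \Pone-parametrization and AV-parametrization of $f(x)$ each force the same of $x$). There is no serious obstacle in this argument; the only points that demand care are checking that the products $A_{E, \alpha} H$ meet $G_{E, \alpha}$ in honest subgroups, so that the index bookkeeping is legitimate, and that every index in play is finite — both of which follow from $A_{E, \alpha}$ being normal in $G_{E, \alpha}$ and $H$ being open in $\GL{\Zhat}$.
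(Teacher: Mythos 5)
Your proof is correct and follows essentially the same route as the paper: identify the minimal representative of $f(x)$, apply the degree formula to both points, and deduce $\deg(x) = \deg(f)\cdot\deg(f(x))$ so that Theorem~\ref{thm:isolated_divisors:pullback_isolated_point} applies. The paper's proof is terser, leaving implicit the index multiplicativity along the tower and the fact that the intersections are genuine subgroups, which you verify explicitly.
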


\begin{proof}
	The degree of the inclusion morphism $f : X_{H} \to X_{H'}$ is equal to $[\pm H' : \pm H]$. Moreover, by Theorem \ref{thm:modular_curves:point_degree}, the quotient $\deg(x) / \deg(f(x))$ is equal to
	\[
		\frac{\deg(x)}{\deg(f(x))} = [G_{E, \alpha} \cap A_{E, \alpha} H' : G_{E, \alpha} \cap A_{E, \alpha} H].
	\]
	Therefore, the result follows from Theorem \ref{thm:isolated_divisors:pullback_isolated_point}.
\end{proof}

Similarly, we can reformulate Theorem \ref{thm:isolated_divisors:pushforward_isolated_point} to obtain the following.

\begin{theorem} \label{thm:isolated_points_modular_curves:pushforward_isolated}
	Let $H \leq H' \leq \GL{\Zhat}$ be open subgroups, and let $f : X_{H} \to X_{H'}$ be the inclusion morphism of modular curves. Let $x \in X_{H}$ be a non-cuspidal closed point with minimal representative $(E, [\alpha]_{H})$. Suppose that $[G_{E, \alpha} \cap A_{E, \alpha} H' : G_{E, \alpha} \cap A_{E, \alpha} H] = 1$. Then, if $f(x)$ is isolated, so is $x$.
\end{theorem}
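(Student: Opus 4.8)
The plan is to follow the proof of Theorem \ref{thm:isolated_points_modular_curves:pullback_isolated} essentially verbatim, replacing the appeal to Theorem \ref{thm:isolated_divisors:pullback_isolated_point} by one to Theorem \ref{thm:isolated_divisors:pushforward_isolated_point}. The inclusion morphism $f : X_{H} \to X_{H'}$ is finite locally free, so in order to invoke Theorem \ref{thm:isolated_divisors:pushforward_isolated_point} it suffices to establish the degree equality $\deg(x) = \deg(f(x))$; the conclusion that $x$ is isolated whenever $f(x)$ is then follows immediately.

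First I would produce a minimal representative of $f(x)$. Since $(E, [\alpha]_{H})$ is a minimal representative of $x$, the curve $E$ is defined over $\Q(j(E))$, and by the moduli description of the inclusion morphism, $f$ sends the geometric point $[(E, [\alpha]_{H})]$ to $[(E, [\alpha]_{H'})]$; as the Galois action is compatible with $f$ and $j(f(x)) = j(x) = j(E)$, the pair $(E, [\alpha]_{H'})$ is a minimal representative of $f(x)$. Applying the degree formula of Theorem \ref{thm:modular_curves:profinite_point_degree} to both $x$ and $f(x)$ gives
\[
	\deg(x) = [\Q(j(E)) : \Q]\,[G_{E, \alpha} : G_{E, \alpha} \cap A_{E, \alpha} H], \qquad \deg(f(x)) = [\Q(j(E)) : \Q]\,[G_{E, \alpha} : G_{E, \alpha} \cap A_{E, \alpha} H'].
\]
Because $G_{E, \alpha}$ normalizes $A_{E, \alpha}$, Lemma \ref{thm:group_theory:product_meet_normalizer} shows that $G_{E, \alpha} \cap A_{E, \alpha} H$ and $G_{E, \alpha} \cap A_{E, \alpha} H'$ are subgroups of $G_{E, \alpha}$, and $H \leq H'$ forces $A_{E, \alpha} H \subseteq A_{E, \alpha} H'$, so these fit into a tower $G_{E, \alpha} \cap A_{E, \alpha} H \leq G_{E, \alpha} \cap A_{E, \alpha} H' \leq G_{E, \alpha}$ of finite-index subgroups. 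Multiplicativity of the index along this tower then yields
\[
	\frac{\deg(x)}{\deg(f(x))} = [G_{E, \alpha} \cap A_{E, \alpha} H' : G_{E, \alpha} \cap A_{E, \alpha} H],
\]
which equals $1$ by hypothesis. Hence $\deg(x) = \deg(f(x))$, and Theorem \ref{thm:isolated_divisors:pushforward_isolated_point} applies.

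There is no genuine obstacle here: the argument is a routine adaptation of the pullback case. The only points demanding care are the bookkeeping with minimal representatives --- in particular, confirming that $(E, [\alpha]_{H'})$ qualifies as a minimal representative of $f(x)$, which relies on $E$ being defined over $\Q(j(E)) = \Q(j(f(x)))$ --- and the verification that the relevant subsets of $\GL{\Zhat}$ are actually subgroups, so that the index tower used to compute $\deg(x)/\deg(f(x))$ is legitimate. One could alternatively reduce mod the level of $H'$ and argue entirely inside $\GL{\Z[n]}$ using Theorem \ref{thm:modular_curves:point_degree}, but invoking Theorem \ref{thm:modular_curves:profinite_point_degree} directly is cleaner.
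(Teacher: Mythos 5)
Your proposal is correct and follows essentially the same route as the paper: the paper's own proof simply applies the degree formula to both $x$ and $f(x)$ to obtain $\deg(x)/\deg(f(x)) = [G_{E, \alpha} \cap A_{E, \alpha} H' : G_{E, \alpha} \cap A_{E, \alpha} H]$ and then invokes Theorem \ref{thm:isolated_divisors:pushforward_isolated_point}. The extra details you supply --- identifying $(E, [\alpha]_{H'})$ as a minimal representative of $f(x)$ and justifying the index tower via Lemma \ref{thm:group_theory:product_meet_normalizer} --- are exactly the steps the paper leaves implicit.
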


\begin{proof}
	By Theorem \ref{thm:modular_curves:point_degree}, the quotient $\deg(x) / \deg(f(x))$ is equal to
	\[
		\frac{\deg(x)}{\deg(f(x))} = [G_{E, \alpha} \cap A_{E, \alpha} H' : G_{E, \alpha} \cap A_{E, \alpha} H].
	\]
	Therefore, the result follows from Theorem \ref{thm:isolated_divisors:pushforward_isolated_point}.
\end{proof}

By leveraging both of these results, we can now prove the single-sink theorem for isolated points on modular curves. For a more thorough discussion of this theorem and its importance, see Section \ref{sec:introduction:single_sink}.

\begin{theorem}[Single-sink theorem] \label{thm:isolated_points_modular_curves:galois_image_isolated}
	Let $H \leq \GL{\Zhat}$ be an open subgroup. Let $x \in X_{H}$ be a non-cuspidal, non-CM closed point with minimal representative $(E, [\alpha]_{H})$. Suppose that $x$ is isolated. Then the closed point $y \in X_{G_{E, \alpha}}$ corresponding to the $G_{\Q}$-orbit of the point $[(E, [\alpha]_{G_{E, \alpha}})] \in Y_{G_{E, \alpha}}(\Qbar)$ is also isolated.
\end{theorem}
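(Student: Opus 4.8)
The plan is to route the relationship between $x$ and $y$ through a common ``roof'' modular curve, namely $X_{H \cap G_{E,\alpha}}$, and then apply the pushforward and pullback results of Section~\ref{sec:isolated_divisors:isolated_points}. Since $x$ is non-CM we have $j(E) \notin \{0, 1728\}$, hence $\Aut(E_{\Qbar}) = \{\pm 1\}$ and $A_{E,\alpha} = \{\pm I\}$; moreover, by Serre's open image theorem (Remark~\ref{rmk:modular_curves:serre_open_image}) the group $G_{E,\alpha}$ is open in $\GL{\Zhat}$, so $H \cap G_{E,\alpha}$ is open and $X_{H \cap G_{E,\alpha}}$ is a smooth projective curve over $\Q$. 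Write $G = G_{E,\alpha}$ and $A = A_{E,\alpha} = \{\pm I\}$. We have two inclusion morphisms $\phi \colon X_{H \cap G} \to X_{H}$ and $\psi \colon X_{H \cap G} \to X_{G}$, both finite locally free. Let $z \in X_{H \cap G}$ be the closed point with minimal representative $(E, [\alpha]_{H \cap G})$. By the moduli description of the inclusion morphisms on non-cuspidal geometric points (equivalently, Proposition~\ref{thm:modular_curves:closed_points_as_double_cosets}(6) with $g = I$), we have $\phi(z) = x$ and $\psi(z) = y$.

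Next comes the degree bookkeeping, carried out with Theorem~\ref{thm:modular_curves:profinite_point_degree}. Since $A \subseteq G$ we have $AG = G$, hence $G \cap AG = G$ and $\deg(y) = [\Q(j(E)) : \Q]$. Applying the modular law (Lemma~\ref{thm:group_theory:modular_law_subsets}) with $U = A$ and $S = G$, which satisfies $\langle U \rangle S = G = S$, gives $A(G \cap H) = G \cap AH$; on the other hand $A(H \cap G) = \pm(H \cap G) \subseteq G$, so also $G \cap A(H \cap G) = \pm(H \cap G) = A(G \cap H) = G \cap AH$. Therefore
\[
	\deg(z) = [\Q(j(E)):\Q]\,[G : G \cap A(H \cap G)] = [\Q(j(E)):\Q]\,[G : G \cap AH] = \deg(x).
\]
Finally, the inclusion morphism $\psi$ has degree $[\pm G : \pm(H \cap G)] = [G : \pm(H \cap G)]$ because $-I \in G$, so $\deg(z) = [G : \pm(H \cap G)] \cdot [\Q(j(E)):\Q] = \deg(\psi) \cdot \deg(y)$.

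It now suffices to combine the two transport theorems. Since $\phi(z) = x$, $\deg(z) = \deg(x)$, and $x$ is isolated, Theorem~\ref{thm:isolated_divisors:pushforward_isolated_point} shows $z$ is isolated. Since $\psi(z) = y$, $\deg(z) = \deg(\psi) \cdot \deg(y)$, and $z$ is isolated, Theorem~\ref{thm:isolated_divisors:pullback_isolated_point} shows $y$ is isolated, as required.

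The substantive step is the first one: recognising that the \emph{same} profinite level structure $[\alpha]$ threads through $X_{H \cap G}$, $X_{H}$ and $X_{G}$, so that a single closed point $z$ lies above both $x$ and $y$. After that the degree identities are short, and they work precisely because the non-CM hypothesis pins down $A_{E,\alpha} = \{\pm I\}$ (the non-CM hypothesis is also what makes $X_{G_{E,\alpha}}$ a modular curve in the first place, via openness of $G_{E,\alpha}$).
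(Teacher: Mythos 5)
Your proposal is correct and follows essentially the same route as the paper: the same roof curve $X_{H \cap G_{E,\alpha}}$, the same intermediate point $z$, a pushforward step down to $X_H$ and a pullback step to $X_{G_{E,\alpha}}$, with the same modular-law computation and the same use of $A_{E,\alpha} = \{\pm I\}$. The only cosmetic difference is that you verify the degree hypotheses of Theorems \ref{thm:isolated_divisors:pushforward_isolated_point} and \ref{thm:isolated_divisors:pullback_isolated_point} directly via Theorem \ref{thm:modular_curves:profinite_point_degree}, whereas the paper invokes its group-theoretic reformulations (Theorems \ref{thm:isolated_points_modular_curves:pushforward_isolated} and \ref{thm:isolated_points_modular_curves:pullback_isolated}), which encapsulate the identical computation.
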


\begin{proof}
	By Remark \ref{rmk:modular_curves:serre_open_image}, the extended adelic Galois image $G_{E, \alpha}$ is an open subgroup of $\GL{\Zhat}$. Therefore, we can consider the following maps of modular curves:
	\[\begin{tikzcd}
		X_{H} & X_{H \cap G_{E, \alpha}} \arrow[l, "f"'] \arrow[r, "g"] & X_{G_{E, \alpha}}.
	\end{tikzcd}\]
	Let $z \in X_{H \cap G_{E, \alpha}}$ be the closed point corresponding to the $\Qbar$-orbit of the point $(E, [\alpha]_{G_{E, \alpha}})$. Note that, by definition, we have $f(z) = x$ and $g(z) = y$.
	
	Since $A_{E, \alpha} \leq G_{E, \alpha}$, by Lemma \ref{thm:group_theory:modular_law_subsets}, we have
	\begin{align*}
		[G_{E, \alpha} \cap A_{E, \alpha} H : G_{E, \alpha} \cap A_{E, \alpha} (H \cap G_{E, \alpha})] &= [G_{E, \alpha} \cap A_{E, \alpha} H : G_{E, \alpha} \cap A_{E, \alpha} H] \\
		&= 1.
	\end{align*}
	Thus, by Theorem \ref{thm:isolated_points_modular_curves:pushforward_isolated}, since $x$ is isolated, so is $z$. By assumption, the elliptic curve $E$ does not have complex multiplication. In particular, $j(E) \neq 0$ and $j(E) \neq 1728$, and so $A_{E, \alpha} = \{\pm I\}$. Therefore, we have
	\begin{align*}
		[G_{E, \alpha} \cap A_{E, \alpha} G_{E, \alpha} : G_{E, \alpha} \cap A_{E, \alpha} (H \cap G_{E, \alpha})] &= [G_{E, \alpha} : G_{E, \alpha} \cap \pm H] \\
		&= [\pm G_{E, \alpha} : \pm(G_{E, \alpha} \cap H)].
	\end{align*}
	Therefore, by Theorem \ref{thm:isolated_points_modular_curves:pullback_isolated}, since $z$ is isolated, so is $y$.
\end{proof}

As was discussed in Section \ref{sec:introduction:finding_isolated_points}, our method for finding isolated points on modular curves relies on generalizations of the above single-sink theorem. These generalizations exploit extra information about the subgroup $H$ to show the existence of isolated points on modular curves corresponding to overgroups of the extended adelic Galois image of $E$, as follows. This also provides a generalization to non-branch CM-points.

\begin{theorem} \label{thm:isolated_points_modular_curves:normal_galois_image_isolated}
	Let $H \leq \GL{\Zhat}$ be an open subgroup, and let $N \lhd \GL{\Zhat}$ be a normal open subgroup such that $N \leq H$. Let $x \in X_{H}$ be a non-cuspidal closed point with minimal representative $(E, [\alpha]_{H}) \in Y_{H}(\Qbar)$ and $j(E) \notin \{0, 1728\}$. Suppose that $x$ is isolated. Then the closed point $y \in X_{N G_{E, \alpha}}$ corresponding to the $G_{\Q}$-orbit of the point $[(E, [\alpha]_{N G_{E, \alpha}})] \in Y_{N G_{E, \alpha}}(\Qbar)$ is also isolated.
\end{theorem}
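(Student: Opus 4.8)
The plan is to follow the proof of Theorem~\ref{thm:isolated_points_modular_curves:galois_image_isolated} almost verbatim, with the open subgroup $N G_{E, \alpha}$ in the role that $G_{E, \alpha}$ plays there. Two preliminary observations make this go through. First, $N G_{E, \alpha}$ is an open subgroup of $\GL{\Zhat}$: since $N \lhd \GL{\Zhat}$ the set $N G_{E, \alpha}$ is a subgroup, and it contains the open subgroup $N$ — so, unlike in Theorem~\ref{thm:isolated_points_modular_curves:galois_image_isolated}, we do not invoke Serre's open image theorem and $E$ is allowed to have complex multiplication. Second, since $j(E) \notin \{0, 1728\}$ we have $\Aut(E_{\Qbar}) = \{\pm 1\}$, hence $A_{E, \alpha} = \{\pm I\} \leq G_{E, \alpha} \leq N G_{E, \alpha}$. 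Combined with the hypothesis $N \leq H$ this gives $N \leq H \cap N G_{E, \alpha}$, so $H \cap N G_{E, \alpha}$ is open and we may form the inclusion morphisms
\[\begin{tikzcd}
	X_{H} & X_{H \cap N G_{E, \alpha}} \arrow[l, "f"'] \arrow[r, "g"] & X_{N G_{E, \alpha}}.
\end{tikzcd}\]
Letting $z \in X_{H \cap N G_{E, \alpha}}$ be the closed point with minimal representative $(E, [\alpha]_{H \cap N G_{E, \alpha}})$, the moduli description of the inclusion morphisms gives $f(z) = x$ and $g(z) = y$.

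For the pushforward step I would show that the index appearing in Theorem~\ref{thm:isolated_points_modular_curves:pushforward_isolated} for $f$ equals $1$, i.e.\ that $G_{E, \alpha} \cap A_{E, \alpha} H = G_{E, \alpha} \cap A_{E, \alpha}(H \cap N G_{E, \alpha})$. The inclusion $\supseteq$ is immediate; for $\subseteq$, if $w \in G_{E, \alpha}$ with $w = ah$, $a \in A_{E, \alpha}$, $h \in H$, then $h = a^{-1} w \in A_{E, \alpha} G_{E, \alpha} \subseteq N G_{E, \alpha}$, so $h \in H \cap N G_{E, \alpha}$ and $w \in G_{E, \alpha} \cap A_{E, \alpha}(H \cap N G_{E, \alpha})$. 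Applying Theorem~\ref{thm:isolated_points_modular_curves:pushforward_isolated} to $f$, together with the hypothesis that $x = f(z)$ is isolated, then shows that $z$ is isolated.

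For the pullback step I would verify the degree condition of Theorem~\ref{thm:isolated_points_modular_curves:pullback_isolated} for $g$, namely that
\[
	[G_{E, \alpha} \cap A_{E, \alpha}(N G_{E, \alpha}) : G_{E, \alpha} \cap A_{E, \alpha}(H \cap N G_{E, \alpha})] = [\pm N G_{E, \alpha} : \pm(H \cap N G_{E, \alpha})].
\]
Using $A_{E, \alpha} = \{\pm I\} \leq G_{E, \alpha} \leq N G_{E, \alpha}$ and $-I \in G_{E, \alpha}$, the left numerator collapses to $G_{E, \alpha}$, while Lemma~\ref{thm:group_theory:modular_law_subsets} (with $U = A_{E, \alpha}$, $S = G_{E, \alpha}$) rewrites the left denominator as $A_{E, \alpha}(G_{E, \alpha} \cap H) = G_{E, \alpha} \cap \pm H$; similarly $\pm N G_{E, \alpha} = N G_{E, \alpha}$ and $\pm(H \cap N G_{E, \alpha}) = N G_{E, \alpha} \cap \pm H$. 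Thus the identity to be checked reduces to
\[
	[G_{E, \alpha} : G_{E, \alpha} \cap \pm H] = [N G_{E, \alpha} : N G_{E, \alpha} \cap \pm H],
\]
which is exactly the index-preservation statement in the remark following Theorem~\ref{thm:modular_curves:profinite_point_degree} (applied with $A_{E, \alpha} H = \pm H$); alternatively one applies Lemma~\ref{thm:group_theory:product_preserves_index} to $G_{E, \alpha}$, the subgroup $G_{E, \alpha} \cap \pm H$ and the normal subgroup $N$ — the required hypothesis $G_{E, \alpha} \cap N = (G_{E, \alpha} \cap \pm H) \cap N$ holds because $N \leq H \subseteq \pm H$ — and then uses Lemma~\ref{thm:group_theory:modular_law_subsets} once more to identify $(G_{E, \alpha} \cap \pm H) N$ with $N G_{E, \alpha} \cap \pm H$. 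Theorem~\ref{thm:isolated_points_modular_curves:pullback_isolated} applied to $g$, now that $z$ is known to be isolated, then shows that $y = g(z)$ is isolated.

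The pushforward step is routine and essentially identical to the non-CM case. The genuinely new ingredient — and the step I expect to need the most care — is the degree-matching in the pullback step: in Theorem~\ref{thm:isolated_points_modular_curves:galois_image_isolated} the target curve is $X_{G_{E, \alpha}}$ and the corresponding identity is trivial, whereas here the target $X_{N G_{E, \alpha}}$ is strictly larger, so one must invoke the index-preservation fact above and keep track of the factors of $-I$ carefully, since $-I$ need not lie in $H$.
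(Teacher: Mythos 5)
Your proposal is correct and follows essentially the same route as the paper: form the intermediate curve $X_{H \cap N G_{E,\alpha}}$, apply Theorem~\ref{thm:isolated_points_modular_curves:pushforward_isolated} to descend to $z$ and Theorem~\ref{thm:isolated_points_modular_curves:pullback_isolated} to pass to $y$, with the pullback degree condition reduced via Lemmas~\ref{thm:group_theory:modular_law_subsets} and~\ref{thm:group_theory:product_preserves_index} to $[G_{E,\alpha} : G_{E,\alpha} \cap \pm H] = [N G_{E,\alpha} : N G_{E,\alpha} \cap \pm H]$. Your explicit verification that $G_{E,\alpha} \cap A_{E,\alpha} H = G_{E,\alpha} \cap A_{E,\alpha}(H \cap N G_{E,\alpha})$ is a welcome detail the paper leaves implicit.
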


\begin{proof}
	As $N$ is an open normal subgroup of $\GL{\Zhat}$, the product $N G_{E, \alpha}$ is an open subgroup of $\GL{\Zhat}$. Therefore, we may consider the following maps of modular curves:
	\[\begin{tikzcd}
		X_{H} & X_{H \cap N G_{E, \alpha}} \arrow[l, "f"'] \arrow[r, "g"] & X_{N G_{E, \alpha}}.
	\end{tikzcd}\]
	Let $z \in X_{H \cap N G_{E, \alpha}}$ be the closed point corresponding to the $\Qbar$-orbit of the point $(E, [\alpha]_{H \cap N G_{E, \alpha}}) \in Y_{H \cap N G_{E, \alpha}}(\Qbar)$. By definition, we have $f(z) = x$ and $g(z) = y$.
	
	We have
	\begin{align*}
		[G_{E, \alpha} & \cap A_{E, \alpha} H : G_{E, \alpha} \cap A_{E, \alpha} (H \cap N G_{E, \alpha})] \\
		&= [G_{E, \alpha} \cap A_{E, \alpha} H : G_{E, \alpha} \cap A_{E, \alpha} H] \\
		&= 1.
	\end{align*}
	Thus, by Theorem \ref{thm:isolated_points_modular_curves:pushforward_isolated}, since $x$ is isolated, so is $z$.
	
	By assumption, we have $j(E) \notin \{0, 1728\}$, and so $A_{E, \alpha} = \{\pm I\}$. Therefore, we have
	\begin{align*}
		[G_{E, \alpha} \cap A_{E, \alpha} (N G_{E, \alpha}) : G_{E, \alpha} \cap A_{E, \alpha} (H \cap N G_{E, \alpha})] &= [G_{E, \alpha} : G_{E, \alpha} \cap \pm H].
	\end{align*}
	As $N \leq H$, by Lemmas \ref{thm:group_theory:modular_law_subsets} and \ref{thm:group_theory:product_preserves_index}, we have
	\[
		[G_{E, \alpha} : G_{E, \alpha} \cap \pm H] = [N G_{E, \alpha} : N G_{E, \alpha} \cap \pm H] = [\pm N G_{E, \alpha} : \pm(N G_{E, \alpha} \cap H)].
	\]
	Therefore, by Theorem \ref{thm:isolated_points_modular_curves:pullback_isolated}, since $z$ is isolated, so is $x$.
\end{proof}

In the case that $N = \ker \pi_{n}$, and noting the correspondence between profinite and mod-$n$ level structures, we obtain the following corollary, which was given as Theorem \ref{thm:introduction:mod_n_image_isolated}.

\begin{corollary} \label{thm:isolated_points_modular_curves:mod_n_galois_image_isolated}
	Let $H \leq \GL{\Zhat}$ be an open subgroup of level $n$. Let $x \in X_{H}$ be a non-cuspidal closed point with minimal representative $(E, [\alpha]_{H})$ and $j(E) \notin \{0, 1728\}$. Suppose that $x$ is isolated. Then the closed point $y \in X_{G_{E, \alpha_{n}}}$ corresponding to the $G_{\Q}$-orbit of the point $[(E, [\alpha_{n}]_{G_{E, \alpha_{n}}})] \in Y_{G_{E, \alpha_{n}}}(\Qbar)$ is also isolated.
\end{corollary}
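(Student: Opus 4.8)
The plan is to obtain this as the special case $N = \ker \pi_{n}$ of Theorem~\ref{thm:isolated_points_modular_curves:normal_galois_image_isolated}, where $\pi_{n} : \GL{\Zhat} \to \GL{\Z[n]}$ denotes the reduction map. First I would check that $N = \ker \pi_{n}$ meets the hypotheses of that theorem: it is the kernel of a continuous homomorphism, hence normal, and it is of finite index in $\GL{\Zhat}$, hence open; moreover, since $H$ has level $n$, by definition $H = \pi_{n}^{-1}(\pi_{n}(H))$, so $\ker \pi_{n} \leq H$. The condition $j(E) \notin \{0, 1728\}$ is exactly the one required by Theorem~\ref{thm:isolated_points_modular_curves:normal_galois_image_isolated}. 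Applying that theorem to the isolated point $x$ therefore yields an isolated closed point on the modular curve $X_{N G_{E, \alpha}}$, namely the one corresponding to the $G_{\Q}$-orbit of $[(E, [\alpha]_{N G_{E, \alpha}})] \in Y_{N G_{E, \alpha}}(\Qbar)$.

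It remains to identify $X_{N G_{E, \alpha}}$ with $X_{G_{E, \alpha_{n}}}$, and to match the isolated point just produced with the point $y$ of the statement. On the level of groups, since $\pi_{n}(N) = \{I\}$ we have $N G_{E, \alpha} = (\ker \pi_{n}) G_{E, \alpha} = \pi_{n}^{-1}(\pi_{n}(G_{E, \alpha})) = \pi_{n}^{-1}(G_{E, \alpha_{n}})$, using the equality $\pi_{n}(G_{E, \alpha}) = G_{E, \alpha_{n}}$ recorded just before the definition of $X_{H}$ for open subgroups $H$. By the definition of the modular curve attached to an open subgroup of $\GL{\Zhat}$, the curve $X_{\pi_{n}^{-1}(G_{E, \alpha_{n}})}$ is precisely $X_{G_{E, \alpha_{n}}}$, so the two curves coincide.

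Finally, to match the points I would invoke Theorem~\ref{thm:modular_curves:mod_n_profinite_equivalence}: the level of $N G_{E, \alpha}$ divides $n$, so $n$ is a multiple of it, and the bijection $[\beta]_{N G_{E, \alpha}} \mapsto [\beta_{n}]_{\pi_{n}(N G_{E, \alpha})}$ of level structures on $E$ sends $[\alpha]_{N G_{E, \alpha}}$ to $[\alpha_{n}]_{G_{E, \alpha_{n}}}$, since $\pi_{n}(N G_{E, \alpha}) = G_{E, \alpha_{n}}$. As the $G_{\Q}$-action has the same description $\sigma \cdot [(E, [\gamma])] = [(\sigma E, [\gamma \circ \sigma^{-1}])]$ in both the profinite and the mod-$n$ pictures, this bijection carries the $G_{\Q}$-orbit producing the isolated point over to the $G_{\Q}$-orbit defining $y$, and hence $y$ is isolated. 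The only mildly delicate point — and it is really just bookkeeping, not a genuine obstacle — is confirming that this identification of modular curves respects the $G_{\Q}$-orbits on the nose; everything else is an immediate specialization of the already-proven Theorem~\ref{thm:isolated_points_modular_curves:normal_galois_image_isolated}.
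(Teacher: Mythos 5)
Your proposal is correct and is exactly the route the paper takes: the corollary is stated as the specialization $N = \ker\pi_{n}$ of Theorem \ref{thm:isolated_points_modular_curves:normal_galois_image_isolated}, combined with the identification $(\ker\pi_{n})G_{E,\alpha} = \pi_{n}^{-1}(G_{E,\alpha_{n}})$ and the level-structure correspondence of Theorem \ref{thm:modular_curves:mod_n_profinite_equivalence}. Your verification of the hypotheses ($\ker\pi_{n}$ normal, open, and contained in $H$ because $H$ has level $n$) and of the matching of points is complete and accurate.
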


As explained in Section \ref{sec:introduction:finding_isolated_points}, we also require an analogous result with the roles of $H$ and $G_{E, \alpha}$ interchanged.

\begin{theorem} \label{thm:isolated_points_modular_curves:normal_H_isolated}
	Let $H \leq \GL{\Zhat}$ be an open subgroup. Let $x \in X_{H}$ be a non-cuspidal closed point with minimal representative $(E, [\alpha]_{H})$ and $j(E) \notin \{0, 1728\}$. Let $N \lhd \GL{\Zhat}$ be a normal subgroup such that $N \leq G_{E, \alpha}$. Suppose that $x$ is isolated. Then the closed point $y \in X_{H N}$ corresponding to the $G_{\Q}$-orbit of the point $[(E, [\alpha]_{H N})] \in Y_{H N}(\Qbar)$ is also isolated.
\end{theorem}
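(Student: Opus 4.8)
The plan is to realise $y$ directly as $f(x)$ for a single inclusion morphism $f$ and then to invoke Theorem~\ref{thm:isolated_points_modular_curves:pullback_isolated}. Since $N$ is normal in $\GL{\Zhat}$, the set $HN$ is a subgroup, and it is open because it contains the open subgroup $H$; moreover $H \leq HN$. Hence there is an inclusion morphism $f : X_{H} \to X_{HN}$, which on non-cuspidal geometric points sends $[(E, [\alpha]_{H})]$ to $[(E, [\alpha]_{HN})]$, and therefore sends the closed point $x$ to the closed point $y$. It thus suffices to verify the index hypothesis of Theorem~\ref{thm:isolated_points_modular_curves:pullback_isolated} for the pair $H \leq H' = HN$, namely that
\[
	[G_{E, \alpha} \cap A_{E, \alpha} HN : G_{E, \alpha} \cap A_{E, \alpha} H] = [\pm HN : \pm H].
\]

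This reduces to a purely group-theoretic computation, which I would carry out as follows. Since $j(E) \notin \{0, 1728\}$ we have $A_{E, \alpha} = \{\pm I\}$, so $A_{E, \alpha} H = \pm H$ and, by normality of $N$, $A_{E, \alpha} HN = \pm HN = N(\pm H)$. Applying Lemma~\ref{thm:group_theory:modular_law_subsets} with $U = N$, $S = G_{E, \alpha}$ and $T = \pm H$ --- whose hypothesis $N G_{E, \alpha} \subseteq G_{E, \alpha}$ holds because $N \leq G_{E, \alpha}$ --- gives
\[
	G_{E, \alpha} \cap A_{E, \alpha} HN = G_{E, \alpha} \cap N(\pm H) = N \bigl( G_{E, \alpha} \cap \pm H \bigr) = N \bigl( G_{E, \alpha} \cap A_{E, \alpha} H \bigr).
\]
Applying the second isomorphism theorem to $N \lhd \GL{\Zhat}$ (and using $N \leq G_{E, \alpha}$) then gives
\[
	[G_{E, \alpha} \cap A_{E, \alpha} HN : G_{E, \alpha} \cap A_{E, \alpha} H] = [N : N \cap \pm H] = [\pm HN : \pm H],
\]
with all indices finite because $\pm H$ is open in $\GL{\Zhat}$. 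This establishes the hypothesis of Theorem~\ref{thm:isolated_points_modular_curves:pullback_isolated}.

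Finally, Theorem~\ref{thm:isolated_points_modular_curves:pullback_isolated} applied to $f : X_{H} \to X_{HN}$ shows that, since $x$ is isolated, so is $f(x) = y$, which is the desired conclusion. I do not expect a genuine obstacle here: the argument is shorter than that of Theorem~\ref{thm:isolated_points_modular_curves:normal_galois_image_isolated} precisely because the inclusion $H \leq HN$ already holds, so there is no need to pass through an auxiliary curve. The one place demanding care is the bookkeeping with the modular law for group subsets, which is what lets one rewrite $G_{E, \alpha} \cap A_{E, \alpha} HN$ as $N(G_{E, \alpha} \cap A_{E, \alpha} H)$ and hence collapse both indices to $[N : N \cap \pm H]$; one should also double-check, as above, that all the indices involved are finite.
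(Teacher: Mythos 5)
Your proposal is correct and follows essentially the same route as the paper: form the inclusion morphism $X_{H} \to X_{HN}$, use $A_{E,\alpha} = \{\pm I\}$ and the modular law (Lemma \ref{thm:group_theory:modular_law_subsets}) to rewrite $G_{E,\alpha} \cap A_{E,\alpha}HN$ as $N(G_{E,\alpha} \cap \pm H)$, and collapse both indices to $[N : N \cap \pm H]$ before invoking Theorem \ref{thm:isolated_points_modular_curves:pullback_isolated}. The only cosmetic difference is that you appeal to the second-isomorphism-theorem index identity where the paper uses its Lemma \ref{thm:group_theory:product_preserves_index}, which encodes the same coset bijection.
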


\begin{proof}
	Since $N$ is a normal subgroup of $\GL{\Zhat}$, and $H$ an open subgroup of $\GL{\Zhat}$, the product $H N$ is also an open subgroup of $\GL{\Zhat}$. Therefore, consider the inclusion morphism of modular curves $f : X_{H} \to X_{H N}$. Note that, by definition, we have $f(x) = y$. By assumption, we have $j(E) \notin \{0, 1728\}$, and so $A_{E, \alpha} = \{\pm I\}$. Thus, we have
	\[
		[G_{E, \alpha} \cap A_{E, \alpha} H N : G_{E, \alpha} \cap A_{E, \alpha} H] = [G_{E, \alpha} \cap \pm H N : G_{E, \alpha} \cap \pm H].
	\]
	Since $N \leq G_{E, \alpha}$, by Lemma \ref{thm:group_theory:modular_law_subsets}, we have
	\[
		[G_{E, \alpha} \cap \pm H N : G_{E, \alpha} \cap \pm H] = [N(G_{E, \alpha} \cap \pm H) : G_{E, \alpha} \cap \pm H].
	\]
	By Lemma \ref{thm:group_theory:product_preserves_index}, we obtain that
	\begin{align*}
		[N(G_{E, \alpha} \cap \pm H) : G_{E, \alpha} \cap \pm H] & = [N(G_{E, \alpha} \cap \pm H) : (N \cap \pm H) (G_{E, \alpha} \cap \pm H)] \\
		& = [N : N \cap \pm H],
	\end{align*}
	and similarly,
	\begin{align*}
		[N : N \cap \pm H] = [\pm H N : \pm H (N \cap \pm H)] = [\pm H N : \pm H].
	\end{align*}
	Therefore, we have $[G_{E, \alpha} \cap A_{E, \alpha} H N : G_{E, \alpha} \cap A_{E, \alpha} H] = [\pm H N : \pm H]$. Thus, by Theorem \ref{thm:isolated_points_modular_curves:pullback_isolated}, since $x$ is isolated, so is $y$.
\end{proof}

As before, when $N = \ker \pi_{n}$, we obtain the following corollary, which is an analogue of \cite[Theorem 5.1]{bourdon2019}.

\begin{corollary} \label{thm:isolated_points_modular_curves:mod_n_H_isolated}
	Let $H \leq \GL{\Zhat}$ be an open subgroup. Let $x \in X_{H}$ be a non-cuspidal closed point with minimal representative $(E, [\alpha]_{H})$ and $j(E) \notin \{0, 1728\}$. Let $n$ be the level of the extended adelic Galois image $G_{E, \alpha}$. Suppose that $x$ is isolated. Then the closed point $y \in X_{\pi_{n}(H)}$ corresponding to the $G_{\Q}$-orbit of the point $[(E, [\alpha_{n}]_{\pi_{n}(H)})] \in Y_{\pi_{n}(H)}(\Qbar)$ is also isolated.
\end{corollary}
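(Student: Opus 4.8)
The plan is to deduce this directly from Theorem \ref{thm:isolated_points_modular_curves:normal_H_isolated} by taking $N = \ker \pi_{n}$, where $n$ is the level of $G_{E, \alpha}$. First I would verify the hypotheses of that theorem: the subgroup $\ker \pi_{n}$ is visibly a normal subgroup of $\GL{\Zhat}$, so the only point to check is the containment $\ker \pi_{n} \leq G_{E, \alpha}$. But this is exactly what it means for $G_{E, \alpha}$ to have level (dividing) $n$: by the discussion in Section \ref{sec:group_theory} on the profinite group $\GL{\Zhat}$, any open subgroup of level $n$ contains $\ker \pi_{n}$, and equals $\pi_{n}^{-1}(\pi_{n}(G_{E,\alpha}))$. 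Since by assumption $j(E) \notin \{0, 1728\}$, the remaining hypothesis of Theorem \ref{thm:isolated_points_modular_curves:normal_H_isolated} on $j(E)$ holds as well.

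Next I would apply Theorem \ref{thm:isolated_points_modular_curves:normal_H_isolated} with this choice of $N$. Since $x$ is isolated, the theorem produces an isolated closed point $y \in X_{H \cdot \ker \pi_{n}}$, namely the one corresponding to the $G_{\Q}$-orbit of $[(E, [\alpha]_{H \cdot \ker\pi_{n}})] \in Y_{H \cdot \ker \pi_{n}}(\Qbar)$. It then remains to identify the modular curve $X_{H \cdot \ker \pi_{n}}$ with $X_{\pi_{n}(H)}$, and to match up the relevant closed points under this identification. For the first part, note that $H \cdot \ker \pi_{n} = (\ker \pi_{n}) H = \pi_{n}^{-1}(\pi_{n}(H))$, and by the definition of $X_{H}$ for open subgroups $H \leq \GL{\Zhat}$ given in Section \ref{sec:modular_curves:working_over_zhat}, the curve $X_{\pi_{n}^{-1}(\pi_{n}(H))}$ is precisely $X_{\pi_{n}(H)}$.

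For the identification of the points, I would invoke the correspondence between profinite and mod-$n$ level structures from Theorem \ref{thm:modular_curves:mod_n_profinite_equivalence}: since $n$ is a multiple of the level of $H \cdot \ker \pi_{n} = \pi_{n}^{-1}(\pi_{n}(H))$, the map $[\alpha]_{H \cdot \ker \pi_{n}} \mapsto [\alpha_{n}]_{\pi_{n}(H)}$ is a well-defined bijection between $H \cdot \ker \pi_{n}$-level structures and $\pi_{n}(H)$-level structures on $E$, compatible with the $G_{\Q}$-action. Hence the geometric point $[(E, [\alpha]_{H \cdot \ker \pi_{n}})]$ corresponds under the isomorphism $X_{H \cdot \ker\pi_{n}} \cong X_{\pi_{n}(H)}$ to $[(E, [\alpha_{n}]_{\pi_{n}(H)})]$, and the same holds for their $G_{\Q}$-orbits. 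Thus $y$ is identified with the closed point of $X_{\pi_{n}(H)}$ in the statement, and is isolated, as desired. I do not expect any serious obstacle here: the entire argument is bookkeeping, and the one substantive input—that open subgroups of level $n$ contain $\ker \pi_{n}$—is recorded in Section \ref{sec:group_theory}; the only mild care needed is in confirming that the bijection of Theorem \ref{thm:modular_curves:mod_n_profinite_equivalence} intertwines the Galois actions used to pass from geometric points to closed points.
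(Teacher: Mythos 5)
Your proposal is correct and matches the paper's intended argument exactly: the paper states the corollary as the specialization of Theorem \ref{thm:isolated_points_modular_curves:normal_H_isolated} to $N = \ker \pi_{n}$, and your verification that $\ker \pi_{n} \leq G_{E,\alpha}$ (since $n$ is the level of $G_{E,\alpha}$), together with the identification $X_{H \ker \pi_{n}} = X_{\pi_{n}^{-1}(\pi_{n}(H))} = X_{\pi_{n}(H)}$ and the matching of points via Theorem \ref{thm:modular_curves:mod_n_profinite_equivalence}, supplies precisely the bookkeeping the paper leaves implicit.
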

	\section{Isolated points on curves of level 7} \label{sec:level_7}

We now turn our attention to applications of the method presented in Section \ref{sec:introduction:finding_isolated_points} to specific classes of modular curves. In this section, we aim to compute all of the non-cuspidal, non-CM isolated points with rational $j$-invariant on all modular curves of level 7. The reasons behind the choice of this example are two-fold. Firstly, while there has been much study on the set of isolated points on the modular curves $X_{1}(n)$, much less has been made of other modular curves. This example serves to demonstrate the generality of the techniques developed above, as well as some of the interesting phenomena which can occur when studying more general modular curves.

Secondly, while the calculations and computations required for this example are quite short and straightforward, the example illustrates a general method for finding isolated points which should be applicable to modular curves of higher level, and $j$-invariants of larger degree. In this spirit, we shall try to provide some more refined approaches to computations than might be strictly necessary for this example.

We begin by stating the target result: the classification of all non-cuspidal, non-CM isolated points with rational $j$-invariant on all modular curves of level 7.

\begin{theorem} \label{thm:level_7:level_7_isolated_points}
	Let $H \leq \GL{\Zhat}$ be an open subgroup of level 7 such that $-I \in H$. Let $x \in X_{H}$ be a non-cuspidal, non-CM isolated closed point with minimal representative $(E, [\alpha]_{H})$, such that $j(E) \in \Q$. Then $j(E) = \frac{3^{3} \cdot 5 \cdot 7^{5}}{2^{7}}$, and $H$ and $x$ are as described in Table \ref{tbl:level_7:isolated_points_level_7}.
\end{theorem}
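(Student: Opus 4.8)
The plan is to use the functoriality results of Section~\ref{sec:isolated_points_modular_curves} to reduce the classification first to a question about $\Q$-rational points on modular curves of level $7$, and then to a finite computation over the subgroup lattice of $\GL{\Z[7]}$. \emph{Step 1 (descent to the mod-$7$ image).} Since $j(E)\in\Q$ and $E$ is non-CM, we may take $E$ over $\Q$, so that $A_{E,\alpha}=\{\pm I\}$ and $G_{E,\alpha}=\pm\rho_{E,\alpha}(G_\Q)$ is the extended adelic Galois image. Because $H$ has level $7$, Corollary~\ref{thm:isolated_points_modular_curves:mod_n_galois_image_isolated} applied with $n=7$ shows that the closed point $y\in X_{G}$ lying over $j(E)$, where $G:=G_{E,\alpha_7}=\pm\bar\rho_{E,7}(G_\Q)\leq\GL{\Z[7]}$, is again isolated; and the degree formula of Theorem~\ref{thm:modular_curves:point_degree} gives $\deg(y)=[\Q(j(E)):\Q]\cdot[G:G\cap A_{E,\alpha_7}G]=1$, so $y$ is in fact $\Q$-rational. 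Since the mod-$7$ cyclotomic character is surjective, $\det G=\Z*[7]$, so $X_G$ is geometrically connected; and since quadratic twisting alters $\bar\rho_{E,7}$ only by a sign, the conjugacy class of $G$ depends only on $j(E)$.

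\emph{Step 2 (which $G$ carry a rational isolated point).} On a geometrically connected curve, a $\Q$-rational point $y$ (which has degree $1$) is \Pone-parametrized precisely when the genus is $0$, by Riemann--Roch (compare Theorem~\ref{thm:isolated_divisors:riemann_roch_criterion}); it is automatically AV-isolated when the genus is at least $2$, since the image of the Abel map in degree $1$ is a one-dimensional subvariety of an abelian variety; and when the genus is $1$ it is AV-isolated if and only if $\Jac(X_G)(\Q)$ has rank $0$. Hence $X_G$ carries a rational isolated point only if $g(X_G)\geq 2$, or $g(X_G)=1$ with $\Jac(X_G)(\Q)$ of rank $0$. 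Enumerating the conjugacy classes of subgroups $G\leq\GL{\Z[7]}$ with $-I\in G$ and $\det G=\Z*[7]$ (a finite list), computing $g(X_G)$ from $\Gamma_G$ via the genus formula \cite[Theorem~3.1.1]{diamond2005}, and computing Mordell--Weil ranks in the genus-$1$ cases, one finds that the only such $G$ carrying a non-cuspidal, non-CM rational point is (a conjugate of) the normalizer $C_{ns}^{+}(7)$ of the nonsplit Cartan subgroup: there $X_G=X_{ns}^{+}(7)$ has genus $1$ and Mordell--Weil rank $0$, and its rational points are the cusps, the CM points, and a single non-CM point of $j$-invariant $3^{3}\cdot 5\cdot 7^{5}/2^{7}$. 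This forces $j(E)=3^{3}\cdot 5\cdot 7^{5}/2^{7}$.

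\emph{Step 3 (determining the subgroups $H$).} Fix $j_0=3^{3}\cdot 5\cdot 7^{5}/2^{7}$ and let $E/\Q$ realise it, so that $G=G_{E,\alpha_7}$ is conjugate to $C_{ns}^{+}(7)$. By Proposition~\ref{thm:modular_curves:closed_points_as_double_cosets}, the non-cuspidal points $x$ of $X_H$ over $j_0$ are indexed by double cosets $HgG$, with degrees given by the same proposition, and since $\ker\pi_7\leq H$ all the relevant indices may be computed inside $\GL{\Z[7]}$. For each open $H$ of level $7$ with $-I\in H$ we decide isolation of each such $x$ by transporting it along an inclusion morphism to the known rational isolated point on $X_{gGg^{-1}}$: when $H\leq gGg^{-1}$ and $\deg(x)=1$, Theorem~\ref{thm:isolated_points_modular_curves:pushforward_isolated} shows $x$ is isolated; when $gGg^{-1}\leq H$ and the index condition of Theorem~\ref{thm:isolated_points_modular_curves:pullback_isolated} holds, isolation propagates upward (and, contrapositively, its failure downward); and the remaining $H$ are eliminated because $X_H$ is then $\mathbb{P}^{1}$ or a positive-rank elliptic curve. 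This finite search returns exactly $9$ conjugacy classes of $H$, which together with the associated point data are recorded in Table~\ref{tbl:level_7:isolated_points_level_7}.

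\emph{Main obstacle.} The conceptual content — Step~1 and the genus/rank dichotomy of Step~2 — is short; the real work lies in the finite-but-delicate computations of Steps~2 and~3: assembling the full subgroup lattice of $\GL{\Z[7]}$ together with genus and Mordell--Weil data, and then, for the single value $j_0$, running through all level-$7$ subgroups $H$ with the double-coset bookkeeping and the index conditions of Theorems~\ref{thm:isolated_points_modular_curves:pullback_isolated} and~\ref{thm:isolated_points_modular_curves:pushforward_isolated} to pin down precisely the $9$ classes and rule out all others. This is also where the (unconditional, at level $7$) rational-point input is needed — the rank-$0$ fact for $X_{ns}^{+}(7)$ and the absence of further non-CM rational points on the other relevant modular curves of level $7$.
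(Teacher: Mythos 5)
Your Step 1 (descend to $X_{G_{E,\alpha_7}}$ via Corollary \ref{thm:isolated_points_modular_curves:mod_n_galois_image_isolated} and note the resulting point is rational) matches the paper. Steps 2 and 3, however, have real problems.

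In Step 2 the paper does not re-derive the classification of mod-7 images: it invokes Zywina's theorem that $G_{E,\alpha_7}$ is conjugate to one of eight explicit groups $G_1,\dots,G_8$, of which $G_2,\dots,G_8$ give genus-0 curves, leaving only $G_1$ and hence the single $j$-invariant. Your proposed alternative --- enumerate all $G\leq\GL{\Z[7]}$ with $-I\in G$ and full determinant, compute genera and Mordell--Weil ranks, and ``find'' which carry a non-CM rational point --- quietly subsumes the hard unconditional input: determining the non-cuspidal, non-CM rational points on every level-7 modular curve with full determinant, including curves of genus $\geq 2$ such as the Klein quartic. That is exactly the content of the cited classification and is not a routine finite check. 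You also misidentify the outcome: the relevant group $G_1$ lies in the normalizer of the \emph{split} Cartan, not the nonsplit one, and $X_{ns}^{+}(7)$ has genus $0$, so your stated target curve cannot be the genus-1, rank-0 curve you describe.

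The more serious gap is in Step 3. After Step 2 the only isolated point in hand is the degree-1 rational point $y_{0}$ on $X_{G_1}$. Theorem \ref{thm:isolated_points_modular_curves:pushforward_isolated} transfers isolation from $y_{0}$ only to points $x$ with $\deg(x)=\deg(y_{0})=1$, and Theorem \ref{thm:isolated_points_modular_curves:pullback_isolated} applied with $y_{0}$ as source forces $1=\deg(f)\cdot\deg(f(y_{0}))$, so it yields nothing new. Eight of the nine rows of Table \ref{tbl:level_7:isolated_points_level_7} have degree strictly greater than 1 (up to 18), and in the paper's isolation graph the degree-18 point on $X(7)$ is an \emph{initial} vertex: no edge points into it, so its isolation cannot be deduced from any other point via these transport theorems. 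The paper closes this by a separate explicit computation --- passing to $X(7)/\Q(\zeta_{7})$ via Theorem \ref{thm:isolated_divisors:stein_factorization_isolated}, then using the Halberstadt--Kraus model of the Klein quartic to check that the Riemann--Roch spaces of the relevant degree-3 points are one-dimensional (so they are \Pone-isolated) and that the Jacobian has rank 0 over $\Q(\zeta_{7})$ (so they are AV-isolated). Your proposal contains no substitute for this, so the positive half of the classification (that the nine listed points really are isolated) is unproven. Finally, your elimination of the remaining $H$ (``$X_H$ is $\mathbb{P}^{1}$ or a positive-rank elliptic curve'') is too weak: most candidates live on genus $\geq 2$, often geometrically disconnected, curves and are ruled out in the paper by the bound $\deg(x)>rg$ of Theorem \ref{thm:isolated_divisors:riemann_roch_criterion} applied point by point, not by the geometry of the curve alone.
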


\begin{table}
	\centering
	\label{tbl:level_7:isolated_points_level_7}
	\caption{The non-cuspidal, non-CM isolated points with rational $j$-invariant on modular curves of level 7. Each row corresponds to a set of isolated points, of same degree, on a conjugacy class of modular curves $X_{H}$. A representative of the conjugacy class of $H$, as a subgroup of $\GL{\Z[7]}$, is given, as well as the genus and number of geometric components of $X_{H}$. The degree of the isolated point, as well as the number of isolated points with the same degree on the curve are also given. All points have $j$-invariant equal to $\frac{3^{3} \cdot 5 \cdot 7^{5}}{2^{7}}$.}
	\begin{tabular}{lrrrr}
		$H$ & $g(X_{H})$ & $[\Z*[7] : \det H]$ & $\deg(x)$ & Number of $x$ \\ \toprule
		$\{\pm I\}$ & 3 & 6 & 18 & 56 \\
		$\langle \pm \left( \begin{smallmatrix} 1 & 0 \\ 2 & 6 \end{smallmatrix} \right) \rangle$ & 3 & 3 & 9 & 6 \\
		$\langle \pm \left( \begin{smallmatrix} 2 & 0 \\ 0 & 2 \end{smallmatrix} \right) \rangle$ & 3 & 2 & 6 & 56 \\
		$\langle \pm \left( \begin{smallmatrix} 3 & 6 \\ 6 & 1 \end{smallmatrix} \right) \rangle$ & 3 & 2 & 6 & 2 \\
		$\langle \pm \left( \begin{smallmatrix} 1 & 2 \\ 2 & 5 \end{smallmatrix} \right) \rangle$ & 1 & 6 & 6 & 2 \\
		$\langle \pm \left( \begin{smallmatrix} 1 & 6 \\ 6 & 6 \end{smallmatrix} \right) \rangle$ & 3 & 1 & 3 & 6 \\
		$\langle \pm \left( \begin{smallmatrix} 2 & 2 \\ 2 & 6 \end{smallmatrix} \right), \left( \begin{smallmatrix} 1 & 0 \\ 2 & 6 \end{smallmatrix} \right) \rangle$ & 1 & 3 & 3 & 2 \\
		$\langle \pm \left( \begin{smallmatrix} 2 & 2 \\ 2 & 6 \end{smallmatrix} \right), \left( \begin{smallmatrix} 2 & 0 \\ 0 & 2 \end{smallmatrix} \right) \rangle$ & 1 & 2 & 2 & 2 \\
		$\langle \pm \left( \begin{smallmatrix} 2 & 0 \\ 0 & 1 \end{smallmatrix} \right), \left( \begin{smallmatrix} 0 & 5 \\ 6 & 0 \end{smallmatrix} \right) \rangle$ & 1 & 1 & 1 & 2 \\ \bottomrule
	\end{tabular}
\end{table}

The proof of this result follows the method outlined in Section \ref{sec:introduction:finding_isolated_points}. Firstly, in Section \ref{sec:level_7:determining_j}, we employ suitable generalizations of the single-sink theorem, as well as the classification of images of mod-$\ell$ Galois representations given in \cite{zywina2015}, to show that $j(E)$ must equal $\frac{3^{3} \cdot 5 \cdot 7^{5}}{2^{7}}$.

Secondly, we find a finite subset $S'$ of the set of modular curves of level 7 such that the proof of Theorem \ref{thm:level_7:level_7_isolated_points} reduces to finding the isolated points with the above $j$-invariant on the modular curves of $S'$. This step differs somewhat from the exposition presented in Section \ref{sec:introduction:finding_isolated_points}. Indeed, the method presented in the aforementioned section suggests simply taking $S'$ to be the set of all modular curves of level 7, which is finite. However, in order to reduce the size of $S'$ further, we make direct use of the mapping theorems given in Section \ref{sec:isolated_divisors:isolated_points}. In conjunction with the results of Section \ref{sec:modular_curves:closed_points_as_double_cosets}, this yields a computationally  effective approach to reducing the size of the set $S'$, which is detailed in Section \ref{sec:level_7:restricting_H_and_x}. In this case, we show that the set $S'$ can be taken to consist of a single modular curve, namely the modular curve $X(7)$.

Finally, in Section \ref{sec:level_7:points_are_isolated}, we utilize an explicit model for $X(7)$ given in \cite{halberstadt2003} to determine the isolated points on this modular curve.

\subsection{Determining \texorpdfstring{$j(E)$}{j(E)}} \label{sec:level_7:determining_j}

The first step in the proof of Theorem \ref{thm:level_7:level_7_isolated_points} is to restrict the possible $j$-invariants of isolated points on the modular curves of level 7. As explained in Section \ref{sec:introduction:finding_isolated_points}, this is enabled by generalizations of the single-sink theorem, which give a link between the aforementioned problem and the problem of classifying images of mod-7 Galois representations of elliptic curves. In the case of elliptic curves over $\Q$, this is determined in \cite{zywina2015}. We summarize the relevant information in the following theorem.

\begin{theorem}[{\cite[Theorem 1.5]{zywina2015}}] \label{thm:level_7:mod_7_images}
	Let $E/\Q$ be a non-CM elliptic curve, and $\alpha : E[7] \to (\Z[7])^{2}$ be a level 7 structure on $E$. Then, the extended mod-7 Galois image $G_{E, \alpha}$ is conjugate to one of the following subgroups of $\GL{\Z[7]}$:
	\begin{align*}
		G_1 & := \left\langle \begin{pmatrix} 2 & 0 \\ 0 & 4 \end{pmatrix}, \begin{pmatrix} 0 & 2 \\ 1 & 0 \end{pmatrix}, \begin{pmatrix} -1 & 0 \\ 0 & -1 \end{pmatrix} \right\rangle, \\
		G_2 & := \left\{\begin{pmatrix} a & 0 \\ 0 & b \end{pmatrix}, \begin{pmatrix} 0 & a \\ b & 0 \end{pmatrix} : a \in \Z*[7], b \in \Z*[7]\right\}, \\
		G_3 & := \left\{\pm \begin{pmatrix} 1 & a \\ 0 & b \end{pmatrix} : a \in \Z[7], b \in \Z*[7]\right\}, \\
		G_4 & := \left\{\pm \begin{pmatrix} a & b \\ 0 & 1 \end{pmatrix} : a \in \Z*[7], b \in \Z[7]\right\}, \\
		G_5 & := \left\{\begin{pmatrix} a & b \\ 0 & \pm a \end{pmatrix} : a \in \Z*[7], b \in \Z[7]\right\}, \\
		G_6 & := \left\{\begin{pmatrix} a & -b \\ b & a \end{pmatrix}, \begin{pmatrix} a & b \\ b & -a \end{pmatrix} : (a, b) \in (\Z[7])^{2} \setminus \{(0, 0)\}\right\}, \\
		G_7 & := \left\{\begin{pmatrix} a & b \\ 0 & c \end{pmatrix} : a \in \Z*[7], b \in \Z[7], c \in \Z*[7]\right\}, \\
		G_8 & := \GL{\Z[7]}.
	\end{align*}
	Moreover, if $G_{E, \alpha}$ is conjugate to $G_{1}$, then $j(E) = \frac{3^{3} \cdot 5 \cdot 7^{5}}{2^{7}}$.
\end{theorem}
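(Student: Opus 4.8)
The statement to prove is Theorem~\ref{thm:level_7:mod_7_images}, which is essentially a citation of \cite[Theorem 1.5]{zywina2015}, so the task is really to extract the statement I want from the $\ell=7$ case of Zywina's classification and to re-package it in the notation of the present paper. The plan is as follows. First I would recall Zywina's classification of the possible images of the mod-$7$ Galois representation $\bar\rho_{E,7}(G_\Q)$ for non-CM $E/\Q$, up to conjugacy in $\GL{\Z[7]}$. Zywina lists these subgroups (in his notation, the groups $7\mathrm{Ns}$, $7\mathrm{Nn}$, $7\mathrm{B}$, its variants, $7\mathrm{Cs}$, etc.), together with, in each case, either a rational parametrization of the corresponding modular curve or — crucially for $G_1$ — the finitely many $j$-invariants occurring. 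The subgroup $G_1$ above is precisely (the $\pm$-extension of) the normalizer of a split Cartan intersected with the index-$3$ subgroup of determinant-index condition, i.e. Zywina's sporadic group for which $X_{G_1}$ has only finitely many rational points; he computes that the only non-cuspidal, non-CM one has $j = \tfrac{3^3\cdot 5\cdot 7^5}{2^7}$.

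Next, to pass from $\bar\rho_{E,\alpha}(G_\Q)$ to the \emph{extended} mod-$7$ image $G_{E,\alpha}$, I note that for non-CM $E$ we have $\Aut(E_{\Qbar}) = \{\pm I\}$, so by the Remark following Definition~\ref{def:modular_curves:extended_galois_image}, $G_{E,\alpha} = \pm\,\bar\rho_{E,\alpha}(G_\Q)$. Hence the classification of $G_{E,\alpha}$ up to conjugacy is obtained from Zywina's list by adjoining $-I$ to each group and discarding redundancies. The groups $G_1,\dots,G_8$ displayed in the theorem are exactly the resulting $\pm$-closed subgroups: $G_8 = \GL{\Z[7]}$ is the generic case; $G_7 = B_0(7)$ is the Borel; $G_3, G_4, G_5$ are the $\pm$-closures of the three relevant sub-Borels; $G_2$ is the normalizer of the split Cartan; $G_6$ is the normalizer of the nonsplit Cartan; and $G_1$ is the sporadic group. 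One should check that each of these is genuinely $\pm$-closed and that the list is exhaustive and non-redundant up to conjugacy — this is a finite check, either by hand using Zywina's table or, as the paper does elsewhere, in \texttt{Magma}.

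Finally, for the last sentence — that $G_{E,\alpha}$ conjugate to $G_1$ forces $j(E) = \tfrac{3^3\cdot 5\cdot 7^5}{2^7}$ — I would cite directly Zywina's determination of the rational points on the relevant modular curve: the curve $X_{G_1}$ is a genus-$\leq 1$ (in fact, after quotienting, an elliptic or rational curve) whose non-cuspidal non-CM rational points he enumerates, finding the single value $\tfrac{3^3\cdot 5\cdot 7^5}{2^7}$. (This is the well-known $j$-invariant of the curve with a $7$-isogeny everywhere locally but not globally, cf.\ \cite{sutherland2014}, as remarked in the introduction.)

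\textbf{Main obstacle.} There is no genuine mathematical obstacle here, since everything rests on \cite{zywina2015}; the only real work is bookkeeping: matching Zywina's notation for subgroups of $\GL{\Z[7]}$ and his normalization conventions (he typically works with images up to conjugacy and may or may not include $-I$, and his generators are chosen differently) with the explicit generator presentations $G_1,\dots,G_8$ given above, and verifying that adjoining $-I$ does not merge two of his conjugacy classes or omit one. I expect the delicate point to be confirming that $G_1$ as written — with the extra generator $\left(\begin{smallmatrix}-1 & 0\\0 & -1\end{smallmatrix}\right)$ and the two anti-diagonal-type generators — really is conjugate to the $\pm$-closure of Zywina's sporadic group and not, say, to the full split-Cartan normalizer $G_2$; this is settled by computing the group order (and e.g.\ the determinant image: $\det G_1$ has index $3$ in $\Z*[7]$, which distinguishes it) and invoking Zywina's uniqueness.
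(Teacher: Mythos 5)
Your proposal takes essentially the same route as the paper: the theorem is a restatement of \cite[Theorem 1.5]{zywina2015} for the extended image, using that $A_{E,\alpha}=\{\pm I\}$ for non-CM curves, and the paper itself supplies no proof beyond the citation, so the bookkeeping you describe is exactly what is needed. One factual slip in your closing aside: $\det G_{1}=\Z*[7]$ (the antidiagonal generator has determinant $5$, a primitive root mod $7$), not an index-$3$ subgroup --- indeed the paper's subsequent argument relies on $\det G_{i}=\Z*[7]$ for all $i$ --- so to distinguish $G_{1}$ from $G_{2}$ one must use, e.g., the group order ($36$ versus $72$) rather than the determinant image.
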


This classification allows us to prove the first part of Theorem \ref{thm:level_7:level_7_isolated_points}, as follows.

\begin{theorem} \label{thm:level_7:j_invariant_restriction}
	Let $H \leq \GL{\Zhat}$ be an open subgroup of level 7 such that $-I \in H$. Let $x \in X_{H}$ be a non-cuspidal, non-CM isolated closed point with minimal representative $(E, [\alpha]_{H})$, such that $j(E) \in \Q$. Then $j(E) = \frac{3^{3} \cdot 5 \cdot 7^{5}}{2^{7}}$.
\end{theorem}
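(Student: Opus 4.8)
The plan is to transport the isolated point $x$ down to the modular curve attached to the extended mod-$7$ Galois image of $E$, and then invoke Zywina's classification (Theorem~\ref{thm:level_7:mod_7_images}) to conclude.

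Since $x$ is non-CM, its $j$-invariant lies outside $\{0, 1728\}$, so Corollary~\ref{thm:isolated_points_modular_curves:mod_n_galois_image_isolated} applies with $n = 7$, the level of $H$: it produces a non-cuspidal closed point $y \in X_{G_{E, \alpha_{7}}}$, lying over $j(E)$, which is again isolated. Here the minimal representative $(E, [\alpha]_{H})$ has $E$ defined over $\Q(j(E)) = \Q$, and $G_{E, \alpha_{7}} = \pi_{7}(G_{E, \alpha})$ is precisely the extended mod-$7$ Galois image of the elliptic curve $E/\Q$ appearing in Theorem~\ref{thm:level_7:mod_7_images}. (Using Theorem~\ref{thm:modular_curves:point_degree} and the inclusion $A_{E, \alpha_{7}} \subseteq G_{E, \alpha_{7}}$ one sees that in fact $\deg(y) = [\Q(j(E)) : \Q] = 1$, but below we only use that $y$ has positive degree, which holds for any closed point.)

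By Theorem~\ref{thm:level_7:mod_7_images}, and since $E/\Q$ is non-CM, the group $G_{E, \alpha_{7}}$ is $\GL{\Z[7]}$-conjugate to one of $G_{1}, \dots, G_{8}$. The conjugation isomorphism carries $y$ to an isolated closed point of the corresponding $X_{G_{i}}$, so we may assume $G_{E, \alpha_{7}}$ equals one of $G_{1}, \dots, G_{8}$; it then suffices to rule out the cases $i \in \{2, \dots, 8\}$. In each of these $X_{G_{i}}$ is a classical modular curve of geometric genus $0$: one has $X_{G_{8}} = X(1)$ and $X_{G_{7}} = X_{0}(7)$; the groups $G_{3}, G_{4}, G_{5}$ all satisfy $G_{i} \cap \SL{\Z[7]} = \pm U$, where $U$ is the unipotent upper-triangular subgroup, so the geometric components of $X_{G_{i}}$ coincide with those of $X_{1}(7)$; and $G_{2}$, respectively $G_{6}$, is the normalizer of a split, respectively non-split, Cartan subgroup of $\GL{\Z[7]}$, so that $X_{G_{2}} = X_{\mathrm{s}}^{+}(7)$ and $X_{G_{6}} = X_{\mathrm{ns}}^{+}(7)$. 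That each of $X(1)$, $X_{0}(7)$, $X_{1}(7)$, $X_{\mathrm{s}}^{+}(7)$, $X_{\mathrm{ns}}^{+}(7)$ has genus $0$ is classical, and can be checked directly from Remark~\ref{rmk:modular_curves:geometric_components} together with the genus formula of \cite[Theorem 3.1.1]{diamond2005}.

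Theorem~\ref{thm:isolated_divisors:riemann_roch_criterion}, applied with $g = 0$ (so that the threshold $rg$ equals $0$), now shows that every closed point of positive degree on a curve of geometric genus $0$ is \Pone-parametrized, hence not isolated. This contradicts the existence of the isolated point $y$ on $X_{G_{i}}$ for $i \in \{2, \dots, 8\}$, so $G_{E, \alpha_{7}}$ must be conjugate to $G_{1}$, and the last clause of Theorem~\ref{thm:level_7:mod_7_images} then gives $j(E) = \frac{3^{3} \cdot 5 \cdot 7^{5}}{2^{7}}$. The argument has no serious obstacle: the only substantive external input is Zywina's classification, and the remaining work is the routine bookkeeping that identifies each $X_{G_{i}}$ with $i \neq 1$ as a genus-$0$ modular curve — in particular, verifying that $G_{2}$ and $G_{6}$ really are the full split and non-split Cartan normalizers of $\GL{\Z[7]}$ — after which Theorem~\ref{thm:isolated_divisors:riemann_roch_criterion} closes everything out.
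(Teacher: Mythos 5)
Your proposal is correct and follows essentially the same route as the paper: apply Corollary~\ref{thm:isolated_points_modular_curves:mod_n_galois_image_isolated} to obtain an isolated point on $X_{G_{E,\alpha_{7}}}$, invoke Zywina's classification, observe that $X_{G_{i}}$ has genus $0$ for $i \geq 2$, and conclude via Theorem~\ref{thm:isolated_divisors:riemann_roch_criterion}. The only difference is cosmetic — you spell out the identifications of the $X_{G_{i}}$ with classical modular curves, where the paper simply computes the genera from Remark~\ref{rmk:modular_curves:geometric_components}.
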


\begin{proof}
	Since $H$ has level 7, by Corollary \ref{thm:isolated_points_modular_curves:mod_n_galois_image_isolated}, we know that the closed point $y \in X_{G_{E, \alpha_{7}}}$ corresponding to the $G_{\Q}$-orbit of the point $[(E, [\alpha_{7}]_{G_{E, \alpha_{7}}})] \in X_{G_{E, \alpha_{7}}}$ is isolated.
	
	By Theorem \ref{thm:level_7:mod_7_images}, we know that the extended mod-7 Galois image $G_{E, \alpha_{7}}$ must be conjugate to one of the groups $G_{1}, \dots G_{8}$. Since $\det(G_i) = \Z*[7]$ for all $i$, the modular curves $X_{G_{i}}$ are all geometrically connected. Moreover, using Remark \ref{rmk:modular_curves:geometric_components}, it is straightforward to compute that the modular curves $X_{G_{i}}$, for $2 \leq i \leq 8$, are curves of genus 0. Therefore, by Theorem \ref{thm:isolated_divisors:riemann_roch_criterion}, there are no \Pone-isolated points on these modular curves. Since $y \in X_{G_{E, \alpha_{7}}}$ is isolated, it follows that $G_{E, \alpha_{7}}$ is conjugate to $G_{1}$. Thus, by Theorem \ref{thm:level_7:mod_7_images}, we obtain that $j(E) = \frac{3^{3} \cdot 5 \cdot 7^{5}}{2^{7}}$.
\end{proof}

\subsection{Restricting \texorpdfstring{$H$ and $x$}{H and x}} \label{sec:level_7:restricting_H_and_x}

We have now shown that the $j$-invariant of any non-cuspidal, non-CM isolated point with rational $j$-invariant on a modular curve of level 7 must be equal to $\frac{3^{3} \cdot 5 \cdot 7^{5}}{2^{7}}$. The second step is to reduce the proof of Theorem \ref{thm:level_7:level_7_isolated_points} to the computation of the isolated point with the given $j$-invariant on a finite set of modular curves. While the set of all modular curves of level 7 is finite, we aim to restrict the size of this set as much as possible, to simplify later computations. To do so, we rely on the results from Section \ref{sec:modular_curves:closed_points_as_double_cosets}, which allow us to compute all of the closed points on modular curves of level 7 lying above the given $j$-invariant, along with some of their properties. These properties will moreover allow us to show that many of these closed points are not isolated. In particular, we shall show the following.

\begin{theorem} \label{thm:level_7:restricting_H_and_x}
	Let $H \leq \GL{\Zhat}$ be an open subgroup of level 7 such that $-I \in H$. Let $x \in X_{H}$ be a non-cuspidal, non-CM isolated closed point with minimal representative $(E, [\alpha]_{H})$, with $j(E) = \frac{3^{3} \cdot 5 \cdot 7^{5}}{2^{7}}$. Then $H$ and $x$ must be as in Table \ref{tbl:level_7:isolated_points_level_7}.
\end{theorem}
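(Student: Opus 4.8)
The plan is to convert the classification into a finite computation inside $\GL{\Z[7]}$ and then use the Riemann--Roch bound of Theorem \ref{thm:isolated_divisors:riemann_roch_criterion} as the principal tool for discarding non-isolated points. Since the $j$-invariant is now pinned down to $\frac{3^{3}\cdot 5\cdot 7^{5}}{2^{7}}$, Theorem \ref{thm:level_7:mod_7_images} shows that the extended mod-$7$ Galois image $G_{E,\alpha_{7}}$ is conjugate to the explicit group $G_{1}$. After replacing $\alpha$ by $g\circ\alpha$ for a suitable $g$ and conjugating $H$ accordingly (Proposition \ref{thm:modular_curves:closed_points_as_double_cosets}(2)), we may assume $G_{E,\alpha_{7}}=G_{1}$; as the classification is only up to conjugacy this is harmless. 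Because $H$ has level $7$, it is determined by $\overline{H}:=\pi_{7}(H)$, a proper subgroup of $\GL{\Z[7]}$ containing $-I$, and it suffices to let $\overline{H}$ run over representatives of the finitely many conjugacy classes of such subgroups, enumerated with \texttt{Magma}.

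First I would, for each $\overline{H}$, list the closed points of $X_{H}$ above $j(E)$: by Proposition \ref{thm:modular_curves:closed_points_as_double_cosets}(4) these correspond to the double cosets $\overline{H}gG_{1}$ in $\overline{H}\backslash\GL{\Z[7]}/G_{1}$, and by part (5) --- using $\Q(j(E))=\Q$ and $A_{E,\alpha}=\{\pm I\}$ since $E$ is non-CM --- the point $x$ attached to $\overline{H}gG_{1}$ has degree $\deg(x)=[\,gG_{1}g^{-1}:gG_{1}g^{-1}\cap(\pm\overline{H})\,]$. Simultaneously I would record the number of geometric components $r=[\Z*[7]:\det\overline{H}]$ of $X_{H}$ and the genus $g$ of the geometrically connected curve $X_{H}/\Q(\zeta_{7})^{\det\overline{H}}$, computed from the congruence subgroup $\Gamma_{\overline{H}}$ as in Remark \ref{rmk:modular_curves:geometric_components}. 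Theorem \ref{thm:isolated_divisors:riemann_roch_criterion} then discards every pair $(\overline{H},x)$ with $\deg(x)>rg$, since such an $x$ is \Pone-parametrized and hence not isolated. One then checks that the surviving pairs are precisely the rows of Table \ref{tbl:level_7:isolated_points_level_7} (all of which, notably, sit at the extremal value $\deg(x)=rg$).

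For any residual pairs with $\deg(x)\le rg$ that are not of the stated form --- should they arise --- I would invoke the morphism results of Section \ref{sec:isolated_points_modular_curves}. Concretely, whenever $\overline{H}\le\overline{H'}\lneq\GL{\Z[7]}$ and the inclusion morphism $f\colon X_{H}\to X_{H'}$ satisfies the degree hypothesis $\deg(x)=[\pm\overline{H'}:\pm\overline{H}]\deg(f(x))$ of Theorems \ref{thm:isolated_divisors:pullback_isolated_point}/\ref{thm:isolated_points_modular_curves:pullback_isolated}, the point $x$ is isolated only if $f(x)$ is; taking $\overline{H'}=\GL{\Z[7]}$ as the base case, $f(x)$ is the rational point $j(E)$ of $X(1)\cong\mathbb{P}^{1}$, which is \Pone-parametrized, and the argument propagates downward through intermediate groups. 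Alternatively, Theorem \ref{thm:isolated_points_modular_curves:galois_image_isolated} reduces isolatedness of $x$ to that of the point of $X_{G_{1}}$ corresponding to $[(E,[\alpha]_{G_{1}})]$, which is $\Q$-rational, and whose status can be read off from the explicit model for $X(7)$ and its quotients from \cite{halberstadt2003} used in the last part of this section. These tools leave exactly the entries of Table \ref{tbl:level_7:isolated_points_level_7}, which are confirmed to be genuinely isolated there.

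The main obstacle is precisely this handful of pairs with $\deg(x)\le rg$: the Riemann--Roch criterion is silent there, so ruling such a point out requires exhibiting it as AV-parametrized rather than \Pone-parametrized, which cannot be seen from the numerical invariants alone and forces one to track the morphisms to larger modular curves carefully (and, in the extreme, to appeal to the explicit geometry of $X_{G_{1}}$). The rest of the argument is bookkeeping --- enumerating conjugacy classes of subgroups of $\GL{\Z[7]}$, their double-coset decompositions, genera, and component counts --- which is routine but voluminous, and is where the \texttt{Magma} computation does its work.
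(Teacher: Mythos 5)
Your proposal is correct and follows essentially the same route as the paper: fix $\alpha$ with $G_{E,\alpha_7}=G_1$, enumerate conjugacy classes of subgroups of $\GL{\Z[7]}$ containing $-I$, list the closed points above $j(E)$ as double cosets via Proposition \ref{thm:modular_curves:closed_points_as_double_cosets}, and eliminate all but the nine table entries with the Riemann--Roch bound of Theorem \ref{thm:isolated_divisors:riemann_roch_criterion} (your fallback via the morphism theorems turns out not to be needed). The only organizational difference is that the paper packages the computation as a quotient of an ``isolation graph'' under conjugation automorphisms; this is not required for the present theorem but is reused afterwards to reduce the verification that the nine surviving classes are isolated to a single degree-18 point on $X(7)$.
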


We begin by enumerating the possible subgroups $H$ and closed points $x$. Throughout, we fix an elliptic curve $E/\Q$ with $j$-invariant $\frac{3^{3} \cdot 5 \cdot 7^{5}}{2^{7}}$, and a level-7 structure $\alpha$ on $E$ such that the extended Galois image $G_{E, \alpha}$ is equal to the subgroup $G_{1}$ given in Theorem \ref{thm:level_7:mod_7_images}. A precise description of $E$ and $\alpha$ is unnecessary, as, by Theorem \ref{thm:modular_curves:closed_points_as_double_cosets}, the structure of the closed points lying above the given $j$-invariant is determined solely by $G_{E, \alpha}$.

Using \texttt{Magma}, we find that there are 998 subgroups of $\GL{\Z[7]}$ containing $-I$, distributed amongst 53 conjugacy classes. For each such subgroup $H$, the closed points of $X_{H}$ lying above the $j$-invariant $\frac{3^{3} \cdot 5 \cdot 7^{5}}{2^{7}}$ are in bijection with the double cosets $HgG_{E, \alpha}$, by Theorem \ref{thm:modular_curves:closed_points_as_double_cosets}. We enumerate these using \texttt{Magma} as well, and find that there are 12690 closed points lying above the modular curves corresponding to these 998 subgroups of $\GL{\Z[7]}$.

This number is computationally tractable, and it would be possible to iterate through all 12690 closed points and compute whether each is possibly isolated. However, in order to make the computations in the next section lighter, we use a more enlightening approach. This relies on the notion of an isolation graph, which we define now.

\begin{definition} \label{def:level_7:isolation_graph}
	Let $\Omega$ be a set of curves over a number field $k$. Let $\Sigma$ be a set of closed points $x \in C$, where $C$ belongs to $\Omega$, and $\Lambda$ a set of finite locally free maps $f : C \to C'$ between elements of $\Omega$. The \textbf{isolation graph associated to $\Omega$, $\Sigma$ and $\Lambda$} is the directed graph whose vertices are the closed points of $\Sigma$, and whose edges are given as follows: for two closed points $x$ and $y \in \Sigma$, lying on the curves $C$ and $C' \in \Omega$ respectively, there is an edge $x \to y$ if either
	\begin{itemize}
		\item there exists a map $f : C \to C'$ in $\Lambda$ such that $f(x) = y$ and $\deg_{k}(x) = \deg(f) \cdot \deg_{k}(y)$, or
		\item there exists a map $f : C' \to C$ in $\Lambda$ such that $f(y) = x$ and $\deg_{k}(x) = \deg_{k}(y)$.
	\end{itemize}
\end{definition}

An isolation graph encodes the relationships between isolated points which were proved in Theorems \ref{thm:isolated_divisors:pullback_isolated_point} and \ref{thm:isolated_divisors:pushforward_isolated_point}. Namely, if an isolation graph contains an edge $x \to y$, then if the closed point $x$ is isolated, so is the closed point $y$.

In this case, we let $\Omega$ be the set of modular curves $X_{H}$, where $H$ is a subgroup of $\GL{\Z[7]}$ containing $-I$. The set $\Sigma$ is the set of closed points on these modular curves with $j$-invariant equal to $\frac{3^{3} \cdot 5 \cdot 7^{5}}{2^{7}}$. Finally, the set $\Lambda$ is the set of inclusion morphisms $X_{H} \to X_{H'}$, where $H$ is a maximal subgroup of $H'$. We can restrict ourselves to considering maximal subgroups, since the composition of two inclusion morphisms is itself an inclusion morphism.

The isolation graph associated to $\Omega$, $\Sigma$ and $\Lambda$ is then straightforward to compute. The vertices of the graph can be labeled by pairs $(H, H g G_{E, \alpha})$, where $H$ is the subgroup of $\GL{\Z[7]}$ defining the modular curve, and $H g G_{E, \alpha}$ the double coset representing the closed point on $X_{H}$. To compute the edges of the isolation graph, we iterate through all of the pairs of subgroups $H$ and $H'$, where $H$ is a maximal subgroup of $H'$, as well as all the closed points $(H, H g G_{E, \alpha})$ on $X_{H}$. By Theorem \ref{thm:modular_curves:closed_points_as_double_cosets}, the inclusion morphism $f : X_{H} \to X_{H'}$ maps the point $(H, H g G_{E, \alpha})$ to the point $(H', H' g G_{E, \alpha})$. The degrees of the two closed points can also be readily computed using Theorem \ref{thm:modular_curves:closed_points_as_double_cosets}, while the degree of $f$ is given simply by the index $[\pm H' : \pm H]$. Thus, we can quickly check whether either of the two required conditions hold, and if so, add the appropriate edge to the isolation graph.

Implementing this procedure, we obtain a directed graph with 12690 vertices and 71235 edges. Once again, this is certainly computationally tractable. However, with an eye on future applications, we provide a simplification which drastically reduces the size of the isolation graph.

One may note that we have only used the inclusion morphisms at present, while there also exist conjugation isomorphisms between modular curves. Given a subgroup $H$ of $\GL{\Z[7]}$ and an element $h \in \GL{\Z[7]}$, the conjugation isomorphism $f : X_{H} \to X_{h H h^{-1}}$ maps the point $(H, H g G_{E, \alpha})$ to the point $(h H h^{-1}, (h H h^{-1}) h g G_{E, \alpha})$, by Theorem \ref{thm:modular_curves:closed_points_as_double_cosets}. The morphism $f$ is an isomorphism, and so, if one of these two points is isolated, so is the other. Thus, the conjugation morphisms allow us to identify closed points with the same isolated-ness.

It is possible to keep track of these morphisms by extending the set $\Lambda$ to contain all conjugation morphisms between modular curves of level 7. The sets of closed points with the same isolated-ness are then represented by the strongly connected components of this new isolation graph. One can now compute the condensation of the isolation graph, that is to say, the graph obtained by contracting each strongly connected component to a single vertex. Each vertex in this new graph then represents a set of closed points with the same isolated-ness, while the edges of the graph still encode the relations of Theorems \ref{thm:isolated_divisors:pullback_isolated_point} and \ref{thm:isolated_divisors:pushforward_isolated_point}.

While this condensation is a much smaller graph, computing it as above still requires creating the full isolation graph with 12690 vertices. To sidestep this, we provide an alternative understanding of the conjugation morphisms, which allows us to directly compute this condensation.

\begin{lemma} \label{thm:level_7:conjugation_graph_automorphism}
	Let $\Omega$, $\Sigma$ and $\Lambda$ be as defined above, and let $h$ be an element of $\GL{\Z[7]}$. Then the conjugation map defined by
	\[
		(H, H g G_{E, \alpha}) \mapsto (h H h^{-1}, (h H h^{-1}) h g G_{E, \alpha}),
	\]
	for all subgroups $H$ of $\GL{\Z[7]}$ containing $-I$ and all $g \in \GL{\Z[7]}$, is an automorphism of the isolation graph associated to $\Omega$, $\Sigma$ and $\Lambda$.
\end{lemma}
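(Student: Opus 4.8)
The plan is to show that the conjugation map $\phi_{h}$ of the statement is a well-defined bijection on the vertex set of the isolation graph, with inverse $\phi_{h^{-1}}$, and that it carries edges to edges; since $\phi_{h^{-1}}$ has the same form, it will then follow formally that $\phi_{h}$ is a graph automorphism. For well-definedness, note first that $-I$ is central in $\GL{\Z[7]}$, so $h H h^{-1}$ again contains $-I$ and $X_{h H h^{-1}}$ again lies in $\Omega$; and if $H g G_{E, \alpha} = H g' G_{E, \alpha}$, then writing $g' = h_{0} g a$ with $h_{0} \in H$ and $a \in G_{E, \alpha}$ gives $h g' = (h h_{0} h^{-1})(h g) a$, so that $(h H h^{-1})(h g) G_{E, \alpha} = (h H h^{-1})(h g') G_{E, \alpha}$. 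Thus $\phi_{h}$ does not depend on the chosen double coset representative, and a direct computation shows $\phi_{h^{-1}} \circ \phi_{h} = \phi_{h} \circ \phi_{h^{-1}} = \id$. (This is really just Proposition \ref{thm:modular_curves:closed_points_as_double_cosets}(7) applied vertex by vertex: $\phi_{h}$ is the map on closed points induced by the conjugation isomorphisms $X_{H} \to X_{h H h^{-1}}$.)

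Next I would check that $\phi_{h}$ preserves edges. An edge $x \to y$ of the isolation graph, with $x \in C$ and $y \in C'$, is witnessed either by a morphism $f : C \to C'$ in $\Lambda$ with $f(x) = y$ and $\deg_{k}(x) = \deg(f) \deg_{k}(y)$, or by a morphism $f : C' \to C$ in $\Lambda$ with $f(y) = x$ and $\deg_{k}(x) = \deg_{k}(y)$. In either case $C = X_{H}$ and $C' = X_{H'}$ with $f$ an inclusion morphism ($H$ maximal in $H'$, or vice versa) — and if $\Lambda$ is taken to also include conjugation isomorphisms, those are handled identically and more easily, since conjugations compose. The key points are then: (i) conjugation by $h$ is an automorphism of the subgroup lattice of $\GL{\Z[7]}$, so it preserves the maximal-subgroup relation and the inclusion morphism $X_{H} \to X_{H'}$ has a companion inclusion morphism $X_{h H h^{-1}} \to X_{h H' h^{-1}}$ again in $\Lambda$; (ii) by Proposition \ref{thm:modular_curves:closed_points_as_double_cosets}(6) this companion morphism sends $\phi_{h}(x)$ to $\phi_{h}(y)$; (iii) its degree is $[\pm h H' h^{-1} : \pm h H h^{-1}] = [\pm H' : \pm H] = \deg(f)$; and (iv) $\deg_{k}(\phi_{h}(x)) = \deg_{k}(x)$ and $\deg_{k}(\phi_{h}(y)) = \deg_{k}(y)$, since conjugating both the representative $g$ (to $h g$) and the subgroup $H$ (to $h H h^{-1}$) in the degree formula of Proposition \ref{thm:modular_curves:closed_points_as_double_cosets}(5) conjugates every group inside the relevant index by $h$, leaving the index unchanged. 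Combining (i)--(iv), the degree conditions defining the edge are preserved, so $\phi_{h}(x) \to \phi_{h}(y)$ is an edge of the same type; the other edge type is treated symmetrically.

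Applying the same reasoning to $\phi_{h^{-1}}$ shows that it too preserves edges, so $\phi_{h}$ preserves edges and non-edges alike and is therefore an automorphism of the isolation graph. I expect the only point requiring genuine care to be (iv): one must write out the index in Proposition \ref{thm:modular_curves:closed_points_as_double_cosets}(5) for both $x$ and $\phi_{h}(x)$ and observe the cancellation of the conjugating factor $h$ from numerator and denominator. Everything else in the argument is bookkeeping with double cosets, together with the elementary fact that inner automorphisms of $\GL{\Z[7]}$ respect the combinatorial data ($\Omega$, maximal-subgroup relations, indices) on which the isolation graph is built.
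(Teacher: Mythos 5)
Your proposal is correct and follows essentially the same route as the paper's proof: check bijectivity via the inverse $\phi_{h^{-1}}$, then verify that conjugation preserves the maximal-subgroup relation, the image of a point under the inclusion morphism, the degree of the morphism, and the degrees of the closed points. The paper states the degree-preservation checks as "straightforward"; your point (iv), cancelling the conjugating factor in the index formula of Proposition \ref{thm:modular_curves:closed_points_as_double_cosets}(5), is exactly the verification it leaves implicit.
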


\begin{proof}
	The conjugation map is a bijection on the set of vertices of the isolation graph, as it has inverse given by conjugation by $h^{-1}$. Therefore, it suffices to show that, for any edge $x \to y$ in the isolation graph, there is an edge $h x h^{-1} \to h y h^{-1}$. Let $(H, H g G_{E, \alpha}) \to (H', H' g' G_{E, \alpha})$ be an edge in the isolation graph. By definition of the set $\Lambda$, one of $H$ and $H'$ must be a subgroup of the other, and we can take $g = g'$. Conjugation by $h$ preserves these properties, and it is straightforward to check that it also preserves the degree of the closed points and the degree of the morphism between the two modular curves. Thus, we must also have an edge $(h H h^{-1}, (h H h^{-1}) h g G_{E, \alpha}) \to (h H h^{-1}, (h H' h^{-1}) h g' G_{E, \alpha})$ in the isolation graph, as required.
\end{proof}

This result gives an action of $\GL{\Z[7]}$ on the isolation graph by automorphisms. As in the previous discussion, the existence of conjugation isomorphisms between the modular curves shows that two vertices in the same orbit under these automorphisms must have the same isolated-ness. Therefore, rather than computing the condensation of a larger isolation graph, one can instead compute the quotient of this isolation graph under these automorphisms. As before, each vertex in the quotient graph represents a set of closed points which share the same isolated-ness, while the edges still represent the relations of Theorems \ref{thm:isolated_divisors:pullback_isolated_point} and \ref{thm:isolated_divisors:pushforward_isolated_point}.

In order to compute this quotient graph directly, we first give an explicit description of the vertices of the graph.

\begin{lemma} \label{thm:level_7:vertices_quotient_graph}
	Let $\Omega$, $\Sigma$ and $\Lambda$ be as defined above, and consider the quotient of the isolation graph associated to $\Omega$, $\Sigma$ and $\Lambda$ described above. Then, there is a bijection between the vertices of this quotient graph and the pairs
	\[
		(H, N_{H} g G_{E, \alpha}),
	\]
	where $H$ ranges through a set of representatives for the conjugacy classes of subgroups of $\GL{\Z[7]}$ containing $-I$, $N_{H}$ is the normalizer of $H$ in $\GL{\Z[7]}$, and $g \in \GL{\Z[7]}$.
\end{lemma}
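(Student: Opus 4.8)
The plan is to unwind the definition of the quotient graph. Its vertices are the orbits of the vertex set of the isolation graph under the $\GL{\Z[7]}$-action of Lemma~\ref{thm:level_7:conjugation_graph_automorphism}. A vertex of the original isolation graph is a pair $(H, HgG_{E,\alpha})$ with $-I \in H \leq \GL{\Z[7]}$ and $g \in \GL{\Z[7]}$, and the action sends it to $(hHh^{-1}, (hHh^{-1})hgG_{E,\alpha})$. So I want to produce a set of representatives for these orbits. First I would split the orbit computation into two stages, corresponding to how the action moves the two coordinates: an element $h$ first replaces $H$ by a conjugate $hHh^{-1}$, and then (once we have fixed the conjugacy class of $H$) the remaining freedom is to act by the stabilizer of $H$, namely the normalizer $N_{H}$, on the set of double cosets $H\backslash \GL{\Z[7]} / G_{E,\alpha}$.

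**The two-stage argument.**
For the first stage, fix once and for all a set $\mathcal{R}$ of representatives for the conjugacy classes of subgroups of $\GL{\Z[7]}$ containing $-I$. Every vertex $(H, HgG_{E,\alpha})$ is in the same orbit as one whose first coordinate lies in $\mathcal{R}$: pick $h_0$ with $h_0 H h_0^{-1} = H' \in \mathcal{R}$ and apply the automorphism indexed by $h_0$. Two vertices with first coordinates $H_1, H_2 \in \mathcal{R}$ can only lie in the same orbit if $H_1$ and $H_2$ are conjugate, hence (since $\mathcal{R}$ has one representative per class) if $H_1 = H_2 =: H$. So it remains, for each fixed $H \in \mathcal{R}$, to describe the orbits of the residual action on the fibre, i.e.\ on pairs $(H, HgG_{E,\alpha})$ with $g$ ranging over $\GL{\Z[7]}$, under those $h$ with $hHh^{-1} = H$, that is, $h \in N_{H}$. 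For the second stage, the claim is that the orbits of $N_{H}$ acting on the double cosets $HgG_{E,\alpha}$ (by $HgG_{E,\alpha} \mapsto H(hg)G_{E,\alpha}$, using $hHh^{-1}=H$ so $hH = Hh$) are in bijection with the \emph{triple} cosets $N_{H} g G_{E,\alpha} = \bigcup_{h \in N_H} H h g G_{E,\alpha}$ — this is immediate since the union of the $H$-double cosets in an $N_H$-orbit is exactly the set $N_H g G_{E,\alpha}$, and conversely $N_H g G_{E,\alpha}$ is a union of $H$-double cosets because $H \leq N_H$. Combining the two stages, the vertices of the quotient graph are in bijection with pairs $(H, N_H g G_{E,\alpha})$ with $H \in \mathcal{R}$ and $g \in \GL{\Z[7]}$, which is the asserted statement.

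**The main obstacle and bookkeeping.**
The one point requiring genuine care — and where I expect the main subtlety to lie — is verifying that the two stages really do decompose the orbit relation cleanly, i.e.\ that no ``cross terms'' arise: an element $h$ that changes the first coordinate and \emph{then} an element $h'$ that changes it back could in principle act nontrivially on the fibre in a way not captured by $N_H$ alone. The resolution is that if $h H h^{-1} = H$ at the end of the day then the composite $h' h$ (or whichever group element realizes the net identification of first coordinates) normalizes $H$, so the net action on the fibre is by an element of $N_H$; concretely, the set of $h \in \GL{\Z[7]}$ with $hHh^{-1} = H$ is by definition $N_H$, so the stabilizer of the first coordinate under the full action is precisely $N_H$, and the orbit–stabilizer bookkeeping goes through. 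One should also note $-I \in hHh^{-1}$ automatically whenever $-I \in H$, so the conjugation action indeed preserves the vertex set $\Sigma$ and no vertices are lost. With these observations in place the bijection is a formal consequence of the orbit decomposition, and I would present it in exactly the two-stage form above.
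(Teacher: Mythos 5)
Your proposal is correct and follows essentially the same route as the paper: first reduce to conjugacy class representatives for the first coordinate, then observe that the residual identifications on the fiber over a fixed $H$ are given precisely by the stabilizer $N_{H}$, whose orbits on the double cosets $H g G_{E, \alpha}$ correspond to the cosets $N_{H} g G_{E, \alpha}$ since $H \leq N_{H}$. The point you flag as the main subtlety is resolved exactly as you say (and as the paper implicitly uses): since the orbit relation for a group action is witnessed by a single group element, any $h$ identifying two vertices with the same first coordinate $H$ must satisfy $h H h^{-1} = H$, i.e.\ $h \in N_{H}$.
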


\begin{proof}
	Let $H$ and $H'$ be two conjugate subgroups of $\GL{\Z[7]}$, and consider a vertex $(H, H g G_{E, \alpha})$ of the inclusion graph. Since $H$ and $H'$ are conjugate, there exists $h \in \GL{\Z[7]}$ such that $h H h^{-1} = H'$. The conjugation by $h$ automorphism then sends the vertex $(H, H g G_{E, \alpha})$ to the vertex $(H', H' h g G_{E, \alpha})$. Thus, the orbits of the vertices of the inclusion graph are in bijection with the orbits of the vertices $(H, H g G_{E, \alpha})$, where $H$ ranges through a set of representatives for the conjugacy classes of subgroups of $\GL{\Z[7]}$ containing $-I$.
	
	Fix a subgroup $H$ of $\GL{\Z[7]}$ containing $-I$, and let $g$ and $g'$ be two elements of $\GL{\Z[7]}$. The two points $(H, H g G_{E, \alpha})$ and $(H, H g' G_{E, \alpha})$ are in the same orbit if and only if there exists $h \in \GL{\Z[7]}$ such that $h H h^{-1} = H$ and
	\[
		H g G_{E, \alpha} = H h g' G_{E, \alpha}.
	\]
	The second condition can be rewritten as $g \in H h g' G_{E, \alpha}$, and so the two aforementioned points are in the same orbit if and only if there exists $h \in N_{H}$ such that $g \in H h g' G_{E, \alpha}$. Since the normalizer $N_{H}$ contains $H$, this is equivalent to requiring that $g \in N_{H} g' G_{E, \alpha}$, or equivalently,
	\[
		N_{H} g G_{E, \alpha} = N_{H} g' G_{E, \alpha}.
	\]
	This gives the desired bijection.
\end{proof}

This result gives a simple method for computing the vertices of the quotient graph. The edges of the quotient graph can also be computed directly. Since we are taking quotients by graph automorphisms, the edges of the quotient graph starting at a given vertex are in bijection with the edges of the original isolation graph starting at a fixed lift of this vertex. Thus, for any vertex $(H, N_{H} g G_{E, \alpha})$ of the quotient graph, one can consider the lift $(H, H g G_{E, \alpha})$ in the isolation graph, and determine the edges starting from this vertex using the procedure described earlier. Then, for each edge $(H, H g G_{E, \alpha}) \to (H', H' g G_{E, \alpha})$ found, it suffices to find the image of $(H', H' g G_{E, \alpha})$ in the quotient graph, which can be done using the description in the proof of Lemma \ref{thm:level_7:vertices_quotient_graph}.

This procedure is implemented in the file \texttt{level\char`_7\char`_quotient\char`_graph.m}. In this case, we obtain a graph with 252 vertices and 779 edges, a marked reduction from the original isolation graph. This reduction comes primarily from the need to only consider 53 conjugacy classes of subgroups, rather than all 998 subgroups of $\GL{\Z[7]}$ containing $-I$.

\begin{figure}
	\centering
	\includegraphics[width=330pt]{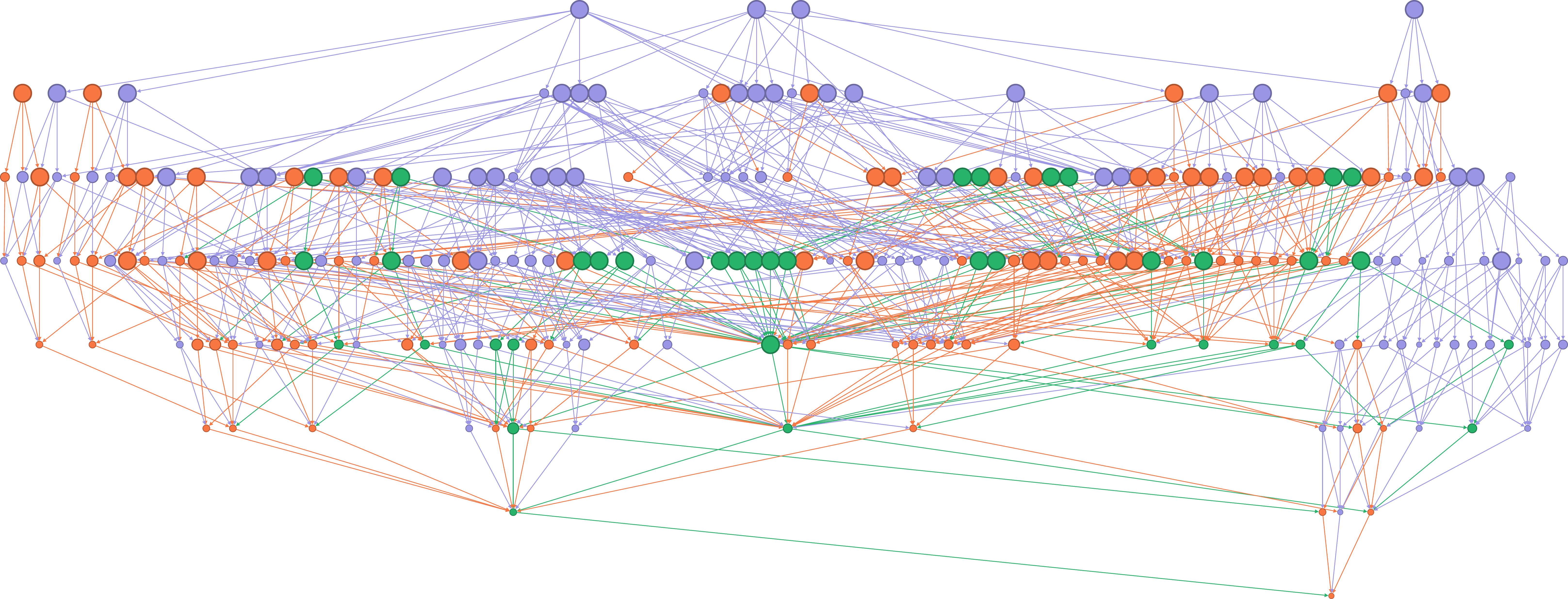}
	\caption{The quotient of the isolation graph for modular curves of level 7 and closed points with $j$-invariant $\frac{3^{3} \cdot 5 \cdot 7^{5}}{2^{7}}$. Each vertex represents a set of closed points on such modular curves which have the same isolated-ness, while edges represent the relations of Theorems \ref{thm:isolated_divisors:pullback_isolated_point} and \ref{thm:isolated_divisors:pushforward_isolated_point}. The size of the nodes represents the degree of the closed points, while the color of the nodes indicates the genus of the underlying modular curve. The edges are colored according to the color of their source vertex. The graph is topologically sorted, so all edges go from top to bottom.}
	\label{fig:level_7:classes_isolation_graph}
\end{figure}

The resulting graph is given in Figure \ref{fig:level_7:classes_isolation_graph}. The graph is topologically sorted, such that all the edges go from top to bottom. This presentation immediately draws attention to a number of facts. Firstly, we note that the graph contains a unique terminal vertex. This vertex corresponds to the rational points of $X_{G_{E, \alpha}}$ with $j$-invariant equal to $\frac{3^{3} \cdot 5 \cdot 7^{5}}{2^{7}}$, and its existence is a direct consequence Corollary \ref{thm:isolated_points_modular_curves:mod_n_galois_image_isolated}. Secondly, the genus of the underlying curves does not monotonically decrease as the graph is traversed. This stems from the interplay between Theorem \ref{thm:isolated_divisors:pullback_isolated_point}, which maps isolated points on higher genus curves to points on lower genus curves, and Theorem \ref{thm:isolated_divisors:pushforward_isolated_point}, which works in the other direction. However, the degree of the closed points does decrease monotonically from top to bottom, which is a consequence of the degree conditions in these two theorems.

Equipped with this quotient graph, we now aim to show that many of the vertices in this graph correspond to closed points which are not isolated. Consider a vertex $(H, N_{H} g G_{E, \alpha})$ of the quotient graph. This vertex corresponds to a set of closed points with the same isolated-ness, containing, by the proof of Theorem \ref{thm:level_7:vertices_quotient_graph}, the closed point of $x \in X_{H}$ corresponding to the double coset $H g G_{E, \alpha}$. The degree of this closed point can be computed using Theorem \ref{thm:modular_curves:closed_points_as_double_cosets}, while the number and genus of the connected components of $X_{H}$ can be computed using Remark \ref{rmk:modular_curves:geometric_components}. If the degree of the closed point is strictly greater than the product of the number and genus of the connected components of $X_{H}$, then the closed point $x \in X_{H}$ is not \Pone-isolated, by Theorem \ref{thm:isolated_divisors:riemann_roch_criterion}.

\begin{figure}
	\centering
	\includegraphics[width=330pt]{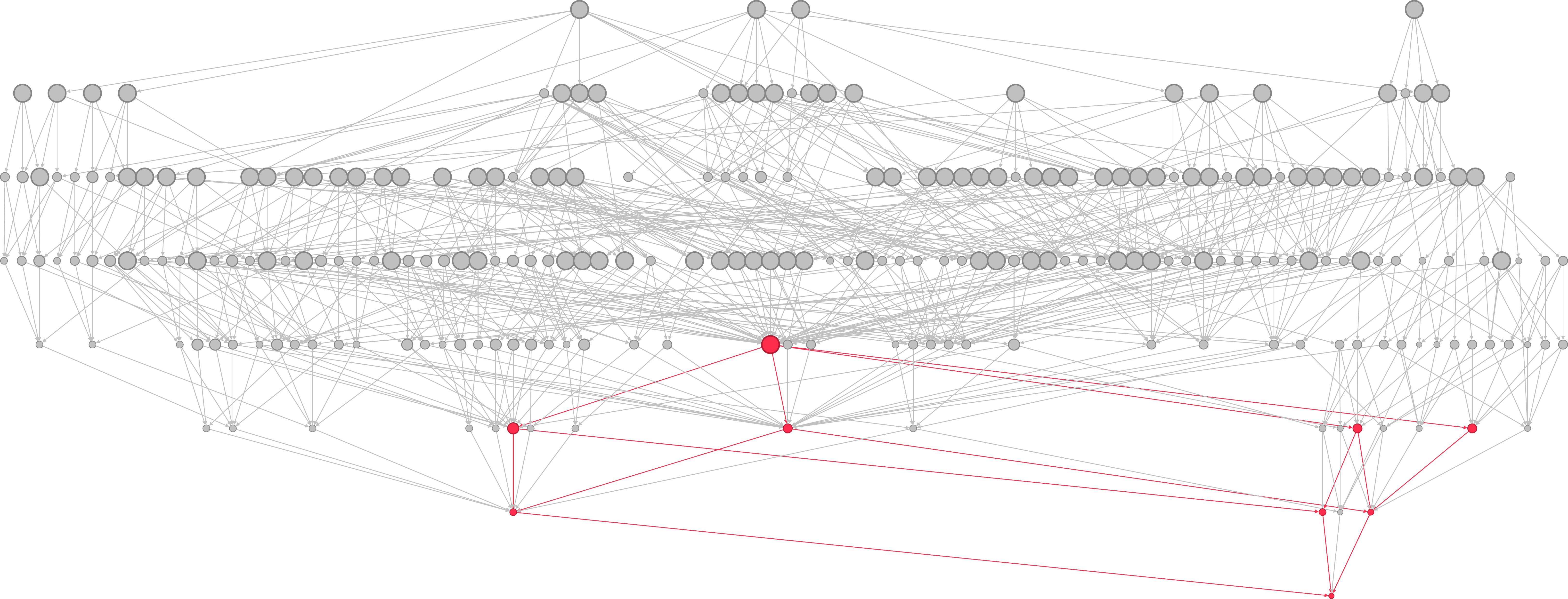}
	\caption{The quotient of the isolation graph for modular curves of level 7 and closed points with $j$-invariant $\frac{3^{3} \cdot 5 \cdot 7^{5}}{2^{7}}$, as in Figure \ref{fig:level_7:classes_isolation_graph}. The subgraph induced by the possibly isolated vertices is highlighted in red, while all other vertices are not isolated by Theorem \ref{thm:isolated_divisors:riemann_roch_criterion}.}
	\label{fig:level_7:classes_isolation_graph_possibly_isolated}
\end{figure}

Iterating through all the vertices in the quotient graph and applying the above, we obtain Figure \ref{fig:level_7:classes_isolation_graph_possibly_isolated}. The vertices for which Theorem \ref{thm:isolated_divisors:riemann_roch_criterion} applies are in gray, while the subgraph induced by the remaining vertices is highlighted in red. We note that, out of the 252 vertices in the quotient graph, 243 can be shown to correspond to non-isolated closed points using the aforementioned theorem. The nine remaining vertices correspond to modular curves $X_{H}$ and closed points $x \in X_{H}$ precisely as described in Table \ref{tbl:level_7:isolated_points_level_7}, thus proving Theorem \ref{thm:level_7:restricting_H_and_x}.

In addition to proving that most of the modular curves of level 7 do not contain an isolated point with $j$-invariant equal to $\frac{3^{3} \cdot 5 \cdot 7^{5}}{2^{7}}$, the isolation graph can also be used to deduce a minimal set of points which whose isolated-ness must be determined. Indeed, the remaining nine vertices described above form a single connected subgraph of the quotient graph. Moreover, this subgraph contains a unique initial vertex. This vertex corresponds to the degree 18 points on the modular curve $X_{\{\pm I\}} = X(7)$ with $j$-invariant $\frac{3^{3} \cdot 5 \cdot 7^{5}}{2^{7}}$. By the construction of the isolation graph, if any of these degree 18 points are isolated, the closed points corresponding to the other eight vertices of the subgraph are also isolated. Therefore, we obtain the following.

\begin{lemma} \label{thm:level_7:unique_initial_closed_point}
	Suppose that one of the degree 18 closed points with $j$-invariant $\frac{3^{3} \cdot 5 \cdot 7^{5}}{2^{7}}$ on the modular curve $X(7)$ is isolated. Then, all closed points described in Table \ref{tbl:level_7:isolated_points_level_7} are isolated.
\end{lemma}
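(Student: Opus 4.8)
The plan is to exploit the structure of the quotient isolation graph computed in Section \ref{sec:level_7:restricting_H_and_x}, together with the two directions of isolated-ness propagation recorded in Theorems \ref{thm:isolated_divisors:pullback_isolated_point} and \ref{thm:isolated_divisors:pushforward_isolated_point}. Recall that an edge $u \to v$ in the isolation graph was, by construction, placed precisely when one of the degree conditions of those theorems holds, so that $u$ being isolated forces $v$ to be isolated; and that, by Lemma \ref{thm:level_7:conjugation_graph_automorphism} together with the fact that conjugation isomorphisms of modular curves preserve isolated-ness, all closed points represented by a single vertex of the quotient graph are simultaneously isolated or simultaneously non-isolated. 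Hence every edge $u \to v$ of the quotient graph in Figure \ref{fig:level_7:classes_isolation_graph_possibly_isolated} still records the implication that if the closed points of $u$ are isolated, then so are those of $v$.

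First I would check that the degree $18$ closed points on $X(7)$ with $j$-invariant $\frac{3^{3} \cdot 5 \cdot 7^{5}}{2^{7}}$ all lie in the single orbit corresponding to the unique initial vertex of the red subgraph. This is immediate from Lemma \ref{thm:level_7:vertices_quotient_graph}: since $H = \{\pm I\}$ is normal in $\GL{\Z[7]}$, its normalizer $N_{H}$ is all of $\GL{\Z[7]}$, so the pairs $(H, N_{H} g G_{E, \alpha})$ for varying $g$ collapse to a single vertex, namely the initial vertex identified in the discussion preceding the lemma. Consequently the hypothesis ``one of the degree $18$ points is isolated'' is equivalent to ``the initial vertex corresponds to isolated points''.

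Next I would invoke the elementary combinatorial fact that in a finite directed acyclic graph whose underlying undirected graph is connected, every vertex is reachable by a directed path from a vertex with no incoming edges, and that if there is a unique such source then every vertex is reachable from it: given any vertex, repeatedly follow an incoming edge backwards, which by finiteness and acyclicity terminates at a source, which must then be the unique one. Applying this to the red subgraph of $9$ vertices, which is connected with a unique initial vertex, every one of the $9$ vertices is reachable from the initial vertex along directed paths. Propagating the hypothesis along these paths, each step being one of the implications of Theorems \ref{thm:isolated_divisors:pullback_isolated_point} and \ref{thm:isolated_divisors:pushforward_isolated_point}, shows that every closed point represented by the $9$ vertices is isolated. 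By the identification established in the proof of Theorem \ref{thm:level_7:restricting_H_and_x}, these are exactly the closed points listed in Table \ref{tbl:level_7:isolated_points_level_7}.

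There is no serious obstacle, as the substantive work has already been carried out in constructing the quotient graph; the only points requiring care are the bookkeeping observations that the edge relation of the quotient graph faithfully transports the isolated-ness implications of Theorems \ref{thm:isolated_divisors:pullback_isolated_point} and \ref{thm:isolated_divisors:pushforward_isolated_point} (which rests on Lemma \ref{thm:level_7:conjugation_graph_automorphism} and the preservation of isolated-ness under isomorphism) and the purely combinatorial reachability claim above. Both are routine, and the lemma follows.
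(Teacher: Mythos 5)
Your proposal is correct and follows essentially the same route as the paper, which records this lemma as the conclusion of the discussion of the quotient isolation graph: the degree~18 points on $X(7)$ form the unique initial vertex of the induced subgraph on the nine possibly-isolated vertices, and isolated-ness propagates along every directed edge, so reachability from that source (automatic in a finite acyclic graph with a unique source) yields the claim. Your write-up merely makes explicit the two bookkeeping points the paper leaves implicit — that conjugation collapses all degree~18 points on $X(7)$ to one vertex, and the reachability argument — both of which are fine.
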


Therefore, the proof of Theorem \ref{thm:level_7:level_7_isolated_points} has been reduced to determining whether a single closed point on a single modular curve is isolated. We tackle the problem of confirming that these degree 18 points are indeed isolated in the next section. In the meantime, we conclude this section with one final remark.

\begin{figure}
	\centering
	\includegraphics[width=330pt]{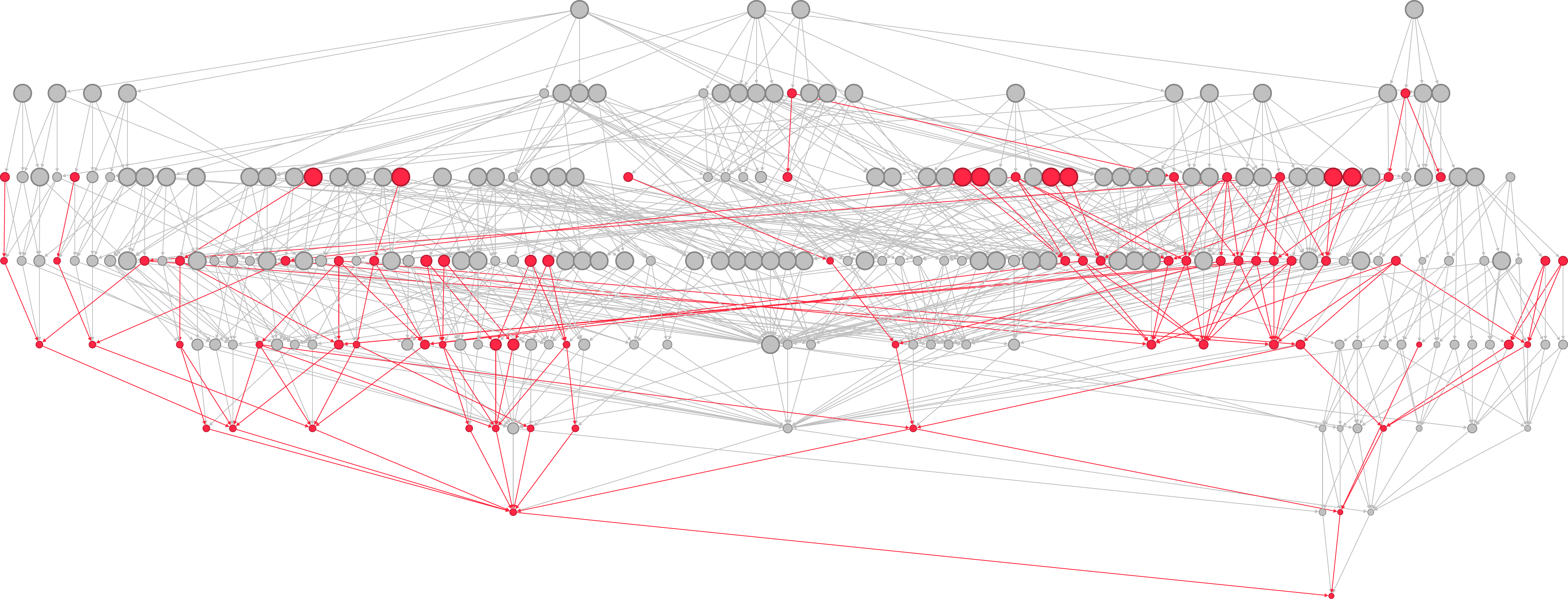}
	\caption{The quotient of the isolation graph for modular curves of level 7 and closed points with $j$-invariant $\frac{3^{3} \cdot 5 \cdot 7^{5}}{2^{7}}$, as in Figure \ref{fig:level_7:classes_isolation_graph}. The subgraph induced by the closed points on geometrically connected modular curves is highlighted in red.}
	\label{fig:level_7:classes_isolation_graph_det_index_1}
\end{figure}

\begin{remark}
Figure \ref{fig:level_7:classes_isolation_graph_det_index_1} again shows the quotient of the isolation graph discussed previously. The subgraph induced by the closed points lying on geometrically connected modular curves is highlighted in red. We note that this subgraph does not contain a unique terminal vertex; rather, there are three such vertices. In particular, for the two new terminal vertices, Corollary \ref{thm:isolated_points_modular_curves:mod_n_galois_image_isolated} cannot be proven if we restrict ourselves to maps between geometrically connected curves. This shows the importance of considering geometrically disconnected curves, even if one is solely interested in understanding isolated points on geometrically connected curves.
\end{remark}

\subsection{Isolated points on \texorpdfstring{$X(7)$}{X(7)}} \label{sec:level_7:points_are_isolated}

Using Theorems \ref{thm:level_7:j_invariant_restriction} and \ref{thm:level_7:restricting_H_and_x} and Lemma \ref{thm:level_7:unique_initial_closed_point}, we have now reduced the proof of Theorem \ref{thm:level_7:level_7_isolated_points} to showing that one of the degree 18 points on the modular curve $X(7)$ with $j$-invariant $\frac{3^{3} \cdot 5 \cdot 7^{5}}{2^{7}}$ is isolated. We devote this section to the proof of this final statement.

\begin{theorem} \label{thm:level_7:x7_is_isolated}
	Let $x \in X(7)$ be a degree 18 closed point with $j(x) = \frac{3^{3} \cdot 5 \cdot 7^{5}}{2^{7}}$. Then $x$ is isolated.
\end{theorem}

The proof of this result is quite ad-hoc but mostly straightforward: we construct the desired degree 18 points on an explicit model of $X(7)$, which we then use to check that the points are both \Pone-isolated and AV-isolated. One small difficulty stems from the fact that the modular curve $X(7)$ is geometrically disconnected. To circumvent this, we use Theorem \ref{thm:isolated_divisors:stein_factorization_isolated}, which allows us to work over the Stein factorization of $X(7)$. The latter is a geometrically integral curve, but is defined over the larger number field $\Q(\zeta_{7})$ instead.

\begin{proof}[Proof of Theorem \ref{thm:level_7:x7_is_isolated}]
	Since $\det(\pm I) = 1$, the Stein factorization of the modular curve $X(7)$ is
	\[
		X(7) \to \Spec \Q(\zeta_{7}) \to \Spec \Q.
	\]
	By Theorem \ref{thm:isolated_divisors:stein_factorization_isolated}, it suffices to show that the degree 3 closed point on $X(7) / \Q(\zeta_{7})$ corresponding to the degree 18 closed point $x \in X(7) / \Q$ is isolated. Let $X_{\text{arith}}(7)$ be the modular curve $X_{K}$, where
	\[
		K = \{\pm \left(\begin{smallmatrix}1 & 0 \\ 0 & a\end{smallmatrix}\right) : a \in \Z*[7]\} \leq \GL{\Z[7]}.
	\]
	Since $K \cap \SL{\Z[7]} = \pm I$, we obtain, by Theorem \ref{thm:modular_curves:sl_intersection_defines_geometric_components}, a Cartesian square
	\[\begin{tikzcd}
		X(7) \arrow[d, "f"] \arrow[r] & \Spec \Q(\zeta_{7}) \arrow[d] \\
		X_{\text{arith}}(7) \arrow[r] & \Spec \Q,
	\end{tikzcd}\]
	where $f$ is the inclusion morphism $X(7) \to X_{\text{arith}}(7)$. The morphism $X_{\text{arith}}(7) \to \Spec \Q$ factors through the $j$-map $j : X_{\text{arith}}(7) \to X(1)$. Noting that $X(1) \cong \mathbb{P}^{1}_{\Q}$, we can therefore rewrite the above commutative diagram as
	\[\begin{tikzcd}
		X(7) \arrow[d, "f"] \arrow[r] & \mathbb{P}^{1}_{\Q(\zeta_{7})} \arrow[d] \arrow[r] & \Spec \Q(\zeta_{7}) \arrow[d] \\
		X_{\text{arith}}(7) \arrow[r, "j"] & \mathbb{P}^{1}_{\Q} \arrow[r] & \Spec \Q,
	\end{tikzcd}\]
	where both squares are Cartesian. Since the composition $j \circ f$ is precisely the $j$-map $X(7) \to X(1)$, it follows that the closed points of $X(7)$ with $j$-invariant $\frac{3^{3} \cdot 5 \cdot 7^{5}}{2^{7}}$ are the closed points of $X(7)$ lying above $\frac{3^{3} \cdot 5 \cdot 7^{5}}{2^{7}} \in \mathbb{P}^{1}_{\Q(\zeta_{7})}(\Q(\zeta_{7}))$.
	
	We know by \cite[Equation 3-1]{halberstadt2003} that a model for the modular curve $X_{\text{arith}}(7)$ over $\Q$ is given by the Klein quartic $X^3 Y + Y^3 Z + Z^3 X = 0$. Moreover, Halberstadt and Kraus give an explicit equation for the degree-168 $j$-map $j : X_{\text{arith}}(7) \to X(1)$; see \cite[Equation 3-10]{halberstadt2003}. By base change to $\Q(\zeta_{7})$, this therefore gives an explicit model for $X(7) / \Q(\zeta_{7})$, as well as the map $X(7) \to \mathbb{P}^{1}_{\Q(\zeta_{7})}$. Using \texttt{Magma}, we can use this to explicitly compute the degree 3 points on $X(7) / \Q(\zeta_{7})$ lying above $\frac{3^{3} \cdot 5 \cdot 7^{5}}{2^{7}} \in \mathbb{P}^{1}_{\Q(\zeta_{7})}(\Q(\zeta_{7}))$. Moreover, we can compute the dimension of the Riemann-Roch spaces of these points. In all cases, we find that the dimension of the Riemann-Roch space is always 1. Thus, all of the desired degree 3 points on $X(7) / \Q(\zeta_{7})$ are \Pone-isolated.
	
	Finally, we compute the Mordell-Weil rank of the Jacobian of $X(7) / \Q(\zeta_{7})$. We know, by \cite[Section 8]{halberstadt2003}, that the Jacobian of $X_{\text{arith}}(7)$ is isogenous over $\Q(\zeta_7)$ to $A^3$, where $A$ is the elliptic curve
	\[
		A : y^2 + xy = x^3 - x^2 - 2x - 1.
	\]
	Using \texttt{Magma}, we compute that the Mordell-Weil rank of $A_{\Q(\zeta_7)}$ is 0. Therefore, the Mordell-Weil rank of the Jacobian of $X(7) / \Q(\zeta_{7})$ is also 0. It follows that all closed points of $X(7) / \Q(\zeta_{7})$ are AV-isolated, thus completing the proof.
\end{proof}
	\section{Isolated points on the modular curves \texorpdfstring{$X_{0}(n)$}{X\_0(n)}} \label{sec:isolated_points_x0}

As a second application of our method, we now aim to give a conjectural classification of the non-cuspidal, non-CM isolated points with rational $j$-invariant on the modular curves $X_{0}(n)$. This problem is closely related to the problem of determining the non-cuspidal, non-CM isolated points with rational $j$-invariant on the modular curves $X_{1}(n)$, which has attracted much attention in recent literature. While the methods for tackling the first problem should be applicable to this second problem, the computations necessary for the first problem can already be found in the literature, while those for the second have not been carried out previously. Therefore, we limit ourselves to studying $X_{0}(n)$ at present, and leave $X_{1}(n)$ for future work.

As was done for the modular curves of level 7, we employ the method outlined in Section \ref{sec:introduction:finding_isolated_points}. However, the first step differs significantly from the classification of the isolated points on modular curves of level 7. In the latter case, we were able to use Corollary \ref{thm:isolated_points_modular_curves:mod_n_galois_image_isolated} in order to restrict the possible $j$-invariants of the isolated points, as each modular curve had the same level. However, as the level of the modular curves $X_{0}(n)$ is unbounded, the same strategy cannot be applied here. Instead, we rely on the notion of agreeable closures, which was first defined in \cite{zywina2024}.

\begin{definition}
	Let $G$ be an open subgroup of $\GL{\Zhat}$, and let $n$ be the level of the commutator subgroup $[G, G] \leq \SL{\Zhat}$. Recall that $\GL{\Zhat} = \prod_{p} \GL{\Z_{p}}$. We define the \textbf{agreeable closure of $G$} to be the subgroup
	\[
		\mathcal{G} = \Zhat* \cdot G \cdot \left( \prod_{p \nmid n} \GL{\Z_{p}} \right),
	\]
	where $\Zhat*$ denotes the subgroup of scalar matrices of $\GL{\Zhat}$. The agreeable closure $\mathcal{G}$ is an open subgroup of $\GL{\Zhat}$ whose level is only divisible by primes dividing $n$.
\end{definition}

Our aim is now to prove a generalization of the single-sink theorem, Theorem \ref{thm:isolated_points_modular_curves:galois_image_isolated}, to agreeable closures. To do so, we first prove a stronger version of Corollary \ref{thm:isolated_points_modular_curves:mod_n_H_isolated} in the case of $X_{0}(n)$, which will also be helpful in the second step of the method.

\begin{lemma} \label{thm:isolated_points_x0:x0_sl_level_isolated}
	Let $n \geq 1$, and $x \in X_{0}(n)$ be a non-cuspidal, non-CM isolated closed point with minimal representative $(E, [\alpha]_{B_{0}(n)})$. Let $m$ be the level of $[G_{E, \alpha}, G_{E, \alpha}] \leq \SL{\Zhat}$. Then the closed point $y \in X_{0}((n, m))$ corresponding to the $G_{\Q}$-orbit of the geometric point $[(E, [\alpha]_{B_{0}((n, m))})] \in X_{0}((n, m))(\Qbar)$ is isolated.
\end{lemma}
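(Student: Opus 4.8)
The plan is to deduce this directly from Theorem~\ref{thm:isolated_points_modular_curves:normal_H_isolated}, applied with $H = B_0(n)$ and with $N = \SL{\Zhat} \cap \ker\pi_m$, the kernel of reduction modulo $m$ on $\SL{\Zhat}$. First I would verify the hypotheses of that theorem. Since $x$ is non-CM, we have $j(E) \notin \{0, 1728\}$. The subgroup $N$ is normal in $\GL{\Zhat}$: for $h \in \GL{\Zhat}$ and $g \in N$ the conjugate $hgh^{-1}$ has determinant $1$ and satisfies $hgh^{-1} \equiv I \pmod m$, so $hNh^{-1} = N$. And $N \leq G_{E,\alpha}$, because $m$ is by definition the level of the open subgroup $G_{E,\alpha} \cap \SL{\Zhat}$ of $\SL{\Zhat}$, so that subgroup contains $\ker(\SL{\Zhat} \to \SL{\Z[m]}) = N$. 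Theorem~\ref{thm:isolated_points_modular_curves:normal_H_isolated} then gives that the closed point of $X_{B_0(n)N}$ with minimal representative $(E, [\alpha]_{B_0(n)N})$ --- equivalently, the image of $x$ under the inclusion morphism $X_0(n) \to X_{B_0(n)N}$ --- is isolated.

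It remains to identify $B_0(n)N$ with $B_0((n,m))$, and I would prove the two containments separately. Since $N$ is normal, $B_0(n)N$ is a subgroup. As $(n,m) \mid n$, the reduction $\pi_{(n,m)}$ sends $B_0(n)$ onto the upper-triangular subgroup of $\GL{\Z[(n,m)]}$, and as $(n,m) \mid m$ it sends $N$ to the trivial subgroup; hence $\pi_{(n,m)}(B_0(n)N)$ is the upper-triangular subgroup of $\GL{\Z[(n,m)]}$, so $B_0(n)N \subseteq B_0((n,m))$. Conversely, take $h \in B_0((n,m))$; since $\pi_{(n,m)}$ maps $B_0(n)$ onto the upper-triangular subgroup of $\GL{\Z[(n,m)]}$, there is $b \in B_0(n)$ with $\pi_{(n,m)}(b) = \pi_{(n,m)}(h)$, whence $b^{-1}h \in \ker\pi_{(n,m)}$. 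By Lemma~\ref{thm:group_theory:kernel_product_gl2_zhat} we have $\ker\pi_{(n,m)} = (\ker\pi_n)(\ker\pi_m)$, and $\ker\pi_n \subseteq B_0(n)$, so it is enough to show $\ker\pi_m \subseteq B_0(n)N$; granting this, $b^{-1}h \in B_0(n)N$ and $h \in B_0(n) \cdot B_0(n)N = B_0(n)N$.

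The only genuine computation is the inclusion $\ker\pi_m \subseteq B_0(n)N$, and it is short. Given $g \in \ker\pi_m$, set $u = \det g$, so $u \equiv 1 \pmod m$, and let $b = \left(\begin{smallmatrix} 1 & 0 \\ 0 & u \end{smallmatrix}\right)$; this is diagonal, hence upper triangular, so $b \in B_0(n)$. Then $b^{-1}g$ has determinant $1$ and, as $g \equiv I$ and $b^{-1} \equiv I \pmod m$, also satisfies $b^{-1}g \equiv I \pmod m$, so $b^{-1}g \in N$ and $g = b(b^{-1}g) \in B_0(n)N$. Combining the containments gives $B_0(n)N = B_0((n,m))$, so the isolated point produced by Theorem~\ref{thm:isolated_points_modular_curves:normal_H_isolated} is precisely the point $y \in X_0((n,m))$ with minimal representative $(E, [\alpha]_{B_0((n,m))})$, as required. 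The main point of care is that products like $B_0(n)N$ must be treated as honest subgroups --- legitimate here only because $N$ is normal --- and that the decomposition $g = b\cdot(b^{-1}g)$, which splits the determinant off $g$, is the one mildly non-obvious step; everything else is coset and level bookkeeping.
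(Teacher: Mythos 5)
Your proposal is correct and follows essentially the same route as the paper: both apply Theorem \ref{thm:isolated_points_modular_curves:normal_H_isolated} with $N = \ker(\SL{\Zhat} \to \SL{\Z[m]})$ and then reduce the identification $B_0(n)N = B_0((n,m))$ to the containment $\ker\pi_m \subseteq B_0(n)N$ via the same determinant-splitting trick $g = \left(\begin{smallmatrix} 1 & 0 \\ 0 & \det g \end{smallmatrix}\right)\cdot\bigl(\left(\begin{smallmatrix} 1 & 0 \\ 0 & \det g \end{smallmatrix}\right)^{-1}g\bigr)$ together with Lemma \ref{thm:group_theory:kernel_product_gl2_zhat}. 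The only difference is bookkeeping: the paper establishes $NB_0(n) = (\ker\pi_m)B_0(n)$ and concludes in one line from the kernel-product lemma, whereas you argue the two containments separately.
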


\begin{proof}
	By definition, $m$ is the smallest integer such that the commutator subgroup $[G_{E, \alpha}, G_{E, \alpha}]$ contains the kernel of the map $\pi'_{m} : \SL{\Zhat} \to \SL{\Z[m]}$. Denote by $N \leq \SL{\Zhat}$ the kernel of this map, and note that $N$ is a normal subgroup of $\GL{\Zhat}$ contained in $G_{E, \alpha}$. Therefore, by Theorem \ref{thm:isolated_points_modular_curves:normal_H_isolated}, the closed point $z \in X_{N B_{0}(n)}$ corresponding to the geometric point $[(E, [\alpha]_{N B_{0}(n)})] \in X_{N B_{0}(n)}(\Qbar)$ is isolated. It remains to show that $N B_{0}(n) = B_{0}((n, m))$.
	
	The subgroup $B_{0}(n) \leq \GL{\Zhat}$ contains the subgroup
	\[
		\Sigma = \left\{\left(\begin{smallmatrix}
			1 & 0 \\ 0 & 1 + am
		\end{smallmatrix}\right) \in \GL{\Zhat} : a \in \Zhat, 1 + am \in \Zhat*\right\}.
	\]
	Since $\Sigma \subset \ker \pi_{m}$, it follows that $N \Sigma \subset \ker \pi_{m}$. Let $A \in \ker \pi_{m}$. Since $A \in \GL{\Zhat}$, we have that $\det(A) \in \Zhat*$. Moreover, by construction, $\det(A)$ is congruent to 1 modulo $m$, and so is $\det(A)^{-1}$ as well. Thus, $\left(\begin{smallmatrix}
		1 & 0 \\ 0 & \det(A)^{-1}
	\end{smallmatrix}\right) \in \Sigma$. Since
	\[
		A \left(\begin{smallmatrix}
			1 & 0 \\ 0 & \det(A)^{-1}
		\end{smallmatrix}\right) \in N,
	\]
	we obtain that $\ker \pi_{m} \subset N \Sigma$. Therefore, we have
	\[
		N B_{0}(n) = N \Sigma B_{0}(n) = (\ker \pi_{m}) B_{0}(n).
	\]
	Since $B_{0}(n)$ has level $n$, it contains the kernel $\ker \pi_{n}$. Hence, by Lemma \ref{thm:group_theory:kernel_product_gl2_zhat}, we obtain that
	\[
		(\ker \pi_{m}) B_{0}(n) = (\ker \pi_{m}) (\ker \pi_{n}) B_{0}(n) = (\ker \pi_{(n, m)}) B_{0}(n) = B_{0}((n, m)).
	\]
	Thus, we have $N B_{0}(n) = B_{0}((n, m))$, and the result follows.
\end{proof}

Using this result, we may now prove the aforementioned generalization of the single-sink theorem.

\begin{theorem} \label{thm:isolated_points_x0:agreeable_closure_isolated}
	Let $n \geq 1$, and $x \in X_{0}(n)$ be a non-cuspidal, non-CM isolated closed point with minimal representative $(E, [\alpha]_{B_{0}(n)})$. Let $\mathcal{G} \leq \GL{\Zhat}$ be the agreeable closure of $G_{E, \alpha}$. Then the closed point $y \in X_{\mathcal{G}}$ corresponding to the geometric point $[(E, [\alpha]_{\mathcal{G}})] \in X_{\mathcal{G}}(\Qbar)$ is isolated.
\end{theorem}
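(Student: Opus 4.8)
The plan is to reduce via Lemma~\ref{thm:isolated_points_x0:x0_sl_level_isolated} and then run the ``push down, then pull up'' argument from the proof of Theorem~\ref{thm:isolated_points_modular_curves:galois_image_isolated}, applied to a suitable span of inclusion morphisms. First I would apply Lemma~\ref{thm:isolated_points_x0:x0_sl_level_isolated}: letting $m$ be the level of $G_{E, \alpha} \cap \SL{\Zhat}$ and $n' = (n, m)$, the image $y'$ of $x$ on $X_{0}(n')$ is a non-cuspidal, non-CM isolated closed point with minimal representative $(E, [\alpha]_{B_{0}(n')})$, and since the agreeable closure depends only on $G_{E, \alpha}$, the group $\mathcal{G}$ remains the agreeable closure in this reduced setting. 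The point of the reduction is that $n' \mid m$, so every prime dividing $n'$ divides $m$; hence $\prod_{p \nmid m} \GL{\Z_{p}} \subseteq \ker \pi_{n'} \subseteq B_{0}(n')$, and, since scalar matrices are upper triangular, the scalar subgroup $\Zhat* \subseteq B_{0}(n')$ as well. Writing $N = \Zhat* \cdot \prod_{p \nmid m} \GL{\Z_{p}}$, which is a normal open subgroup of $\GL{\Zhat}$, we therefore have $N \subseteq B_{0}(n')$ and $\mathcal{G} = N \cdot G_{E, \alpha}$.

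Next I would introduce the span of inclusion morphisms
\[
X_{0}(n') \xleftarrow{f} X_{B_{0}(n') \cap \mathcal{G}} \xrightarrow{g} X_{\mathcal{G}},
\]
and take $z \in X_{B_{0}(n') \cap \mathcal{G}}$ to be the closed point with minimal representative $(E, [\alpha]_{B_{0}(n') \cap \mathcal{G}})$, so that $f(z) = y'$ and $g(z) = y$ by the moduli description of inclusion morphisms. Since $E$ is non-CM, $A_{E, \alpha} = \{\pm I\} \subseteq \mathcal{G}$, and as $G_{E, \alpha} \subseteq \mathcal{G}$ one computes $G_{E, \alpha} \cap A_{E, \alpha}(B_{0}(n') \cap \mathcal{G}) = G_{E, \alpha} \cap \pm B_{0}(n') \cap \mathcal{G} = G_{E, \alpha} \cap A_{E, \alpha} B_{0}(n')$. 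Thus $[G_{E, \alpha} \cap A_{E, \alpha} B_{0}(n') : G_{E, \alpha} \cap A_{E, \alpha}(B_{0}(n') \cap \mathcal{G})] = 1$, and Theorem~\ref{thm:isolated_points_modular_curves:pushforward_isolated} applied to $f$ shows that $z$ is isolated because $y'$ is.

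To finish I would apply Theorem~\ref{thm:isolated_points_modular_curves:pullback_isolated} to $g$, for which I must verify
\[
[G_{E, \alpha} \cap A_{E, \alpha} \mathcal{G} : G_{E, \alpha} \cap A_{E, \alpha}(B_{0}(n') \cap \mathcal{G})] = [\pm \mathcal{G} : \pm(B_{0}(n') \cap \mathcal{G})].
\]
Using $A_{E, \alpha} = \{\pm I\} \subseteq \mathcal{G}$ again, the left-hand side equals $[G_{E, \alpha} : G_{E, \alpha} \cap \pm B_{0}(n')]$ and the right-hand side equals $[\mathcal{G} : \mathcal{G} \cap \pm B_{0}(n')]$, so the crux is that these two indices coincide. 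For this I would invoke Lemma~\ref{thm:group_theory:product_preserves_index} with the normal subgroup $N$: since $N \subseteq \pm B_{0}(n')$ one has $G_{E, \alpha} \cap N = (G_{E, \alpha} \cap \pm B_{0}(n')) \cap N$, while the modular law (Lemma~\ref{thm:group_theory:modular_law_subsets}) together with $\mathcal{G} = N G_{E, \alpha}$ gives $N(G_{E, \alpha} \cap \pm B_{0}(n')) = \mathcal{G} \cap \pm B_{0}(n')$; hence $[G_{E, \alpha} : G_{E, \alpha} \cap \pm B_{0}(n')] = [\mathcal{G} : \mathcal{G} \cap \pm B_{0}(n')]$. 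Therefore $z$ isolated implies $y$ isolated.

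I expect the main obstacle to be precisely this last index identity, which encodes the fact that $B_{0}(n')$ is already ``as large as possible away from $m$'', so that replacing $G_{E, \alpha}$ by its agreeable closure $\mathcal{G}$ does not alter the relevant index — and it is exactly to arrange this that one first passes from $n$ to $(n, m)$ via Lemma~\ref{thm:isolated_points_x0:x0_sl_level_isolated}. The remaining verifications — that $f(z) = y'$, $g(z) = y$, that $(E, [\alpha]_{B_{0}(n') \cap \mathcal{G}})$ is a minimal representative for $z$, and that all the modular curves involved are well-defined (using that $G_{E, \alpha}$, and hence $\mathcal{G}$, is open by Remark~\ref{rmk:modular_curves:serre_open_image}) — are routine consequences of the moduli interpretation of inclusion morphisms.
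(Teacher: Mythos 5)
Your argument is correct and follows the same overall strategy as the paper: reduce to $X_{0}((n,m))$ via Lemma~\ref{thm:isolated_points_x0:x0_sl_level_isolated}, then transfer isolatedness to $X_{\mathcal{G}}$ through a normal subgroup $N$ satisfying $N \leq B_{0}((n,m))$ and $N G_{E,\alpha} = \mathcal{G}$. The one inaccuracy is your claim that $N = \Zhat* \cdot \prod_{p \nmid m} \GL{\Z_{p}}$ is an \emph{open} normal subgroup of $\GL{\Zhat}$: it is closed and normal but in general not open, since it contains no $\ker \pi_{j}$ (at a prime $p \mid m$ its elements are scalar, whereas $\ker\pi_{j}$ contains non-scalar unipotents at $p$). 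This is exactly the point the paper's proof is built around: to invoke Theorem~\ref{thm:isolated_points_modular_curves:normal_galois_image_isolated}, whose hypotheses demand an open normal subgroup, the paper enlarges $N$ to $(\ker \pi_{k})\cdot \Zhat* \cdot \prod_{p \nmid m} \GL{\Z_{p}}$, where $k$ is the level of $G_{E,\alpha}$; this costs nothing because $\ker\pi_{k} \leq \ker\pi_{(n,m)} \leq B_{0}((n,m))$ and $G_{E,\alpha}\ker\pi_{k} = G_{E,\alpha}$. Your proposal instead inlines the span argument (which is precisely the proof of Theorem~\ref{thm:isolated_points_modular_curves:normal_galois_image_isolated}), and there openness of $N$ is never used — only openness of $\mathcal{G}$ and of $B_{0}((n,m)) \cap \mathcal{G}$, which follows from Serre's open image theorem — so the slip is harmless. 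Your two index computations, via Lemmas~\ref{thm:group_theory:modular_law_subsets} and~\ref{thm:group_theory:product_preserves_index}, are correct and coincide with those in the paper's proof of Theorem~\ref{thm:isolated_points_modular_curves:normal_galois_image_isolated}.
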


\begin{proof}
	Let $m$ be the level of $[G_{E, \alpha}, G_{E, \alpha}]$, and $k$ the level of $G_{E, \alpha}$. In particular, note that $m$ divides $k$. By Lemma \ref{thm:isolated_points_x0:x0_sl_level_isolated}, the closed point $z \in X_{0}((n, m))$ corresponding to the $G_{\Q}$-orbit of the $\Qbar$-point $[(E, [\alpha]_{B_{0}((n, m))})] \in X_{0}((n, m))(\Qbar)$ is isolated. The subgroup $B_{0}((n, m))$ has level $(n, m)$, and so contains both the product $\prod_{p \nmid m} \GL{\Z_{p}}$ and the kernel $\ker \pi_{k}$. Moreover, $B_{0}((n, m))$ contains the subgroup of scalar matrices, which we denote $\Zhat*$ by abuse of notation. All three of these subgroups are normal subgroups of $\GL{\Zhat}$, and hence so is their product $(\ker \pi_{k})\cdot \Zhat* \cdot \prod_{p \nmid m} \GL{\Z_{p}}$. This product is also an open subgroup of $\GL{\Zhat}$, since the kernel $\ker \pi_{k}$ is open by definition. Note that we have
	\[
		G_{E, \alpha} \Big((\ker \pi_{k}) \cdot \Zhat* \cdot \prod_{p \nmid m} \GL{\Z_{p}}\Big) = G_{E, \alpha} \cdot \Zhat* \cdot \prod_{p \nmid m} \GL{\Z_{p}} = \mathcal{G}.
	\]
	Therefore, since $z \in X_{0}((n, m))$ is isolated, so is $y \in X_{\mathcal{G}}$, by Theorem \ref{thm:isolated_points_modular_curves:normal_galois_image_isolated}.
\end{proof}

In order to carry through the first step of our method, in Section \ref{sec:level_7:determining_j}, we required a classification of the extended mod-7 Galois images of elliptic curves over $\Q$. In the same way, we now require a classification of the agreeable closures $\mathcal{G}$ of the extended adelic Galois images of elliptic curves over $\Q$. A conjectural classification of such agreeable closures $\mathcal{G}$ has been computed by Zywina in \cite{zywina2024}. We summarize these computations as follows.

\begin{conjecture}[{\cite[Theorem 1.9, Section 14.3]{zywina2024}}] \label{thm:isolated_points_x0:exceptional_j_invariants}
	Let $E$ be a non-CM elliptic curve over $\Q$ and $\alpha$ a profinite level structure on $E$. Let $\mathcal{G}$ be the agreeable closure of the extended adelic Galois image $G_{E, \alpha} \leq \GL{\Zhat}$. Then, one of the following hold:
	\begin{itemize}
		\item $X_{\mathcal{G}}$ is a geometrically connected curve of genus 0
		\item $X_{\mathcal{G}}$ is a geometrically connected curve of genus 1, and $X_{\mathcal{G}}(\Q)$ is infinite
		\item $X_{\mathcal{G}}(\Q)$ is finite, and the group $\mathcal{G}$ and the $j$-invariant of $E$ are given in Table \ref{tbl:isolated_points_x0:exceptional_j_invariants}
	\end{itemize}
\end{conjecture}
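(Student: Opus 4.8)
The plan is to reduce the statement to the explicit classification of agreeable subgroups of $\GL{\Zhat}$ carried out in \cite{zywina2024}, proving directly only the ``soft'' part. First I would check that $\mathcal{G}$ is itself an agreeable subgroup: by construction it contains the scalars $\Zhat*$ and the factor $\GL{\Z_{p}}$ for every prime $p$ not dividing the level $n$ of $G_{E, \alpha} \cap \SL{\Zhat}$, and one verifies that the level of $\mathcal{G} \cap \SL{\Zhat}$ again equals the level of $\mathcal{G}$. Since $E$ is defined over $\Q$, the cyclotomic character $\det \circ \rho_{E, \alpha} \colon G_{\Q} \to \Zhat*$ is surjective, so $\det G_{E, \alpha} = \Zhat*$ and hence $\det \mathcal{G} = \Zhat*$; by the description of the Stein factorization in Section \ref{sec:modular_curves:working_over_zhat}, the curve $X_{\mathcal{G}}$ therefore has $[\Zhat* : \det \mathcal{G}] = 1$ geometric component, i.e.\ it is geometrically connected. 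I would then observe that $E$ itself produces a rational point on $X_{\mathcal{G}}$: as $\rho_{E, \alpha}(G_{\Q}) \subseteq G_{E, \alpha} \subseteq \mathcal{G}$ and $A_{E, \alpha} = \{\pm I\} \subseteq \mathcal{G}$ (because $E$ is non-CM), we have $A_{E, \alpha}\mathcal{G} = \mathcal{G} \supseteq G_{E, \alpha}$, so Theorem \ref{thm:modular_curves:profinite_point_degree} gives $\deg(y) = [\Q(j(E)) : \Q]\,[G_{E, \alpha} : G_{E, \alpha} \cap A_{E, \alpha}\mathcal{G}] = 1$ for the closed point $y \in X_{\mathcal{G}}$ attached to $[(E, [\alpha]_{\mathcal{G}})]$. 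Thus $X_{\mathcal{G}}(\Q) \neq \varnothing$.

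With these facts in hand the trichotomy is forced by the genus of the geometrically connected curve $X_{\mathcal{G}}$. If $X_{\mathcal{G}}$ has genus $0$, then having a rational point it is isomorphic to $\mathbb{P}^{1}_{\Q}$, so $X_{\mathcal{G}}(\Q)$ is infinite; this is the first alternative. If $X_{\mathcal{G}}$ has genus $1$, then $(X_{\mathcal{G}}, y)$ is an elliptic curve over $\Q$, which either has positive Mordell--Weil rank, giving the second alternative, or rank $0$, in which case $X_{\mathcal{G}}(\Q)$ is finite. If $X_{\mathcal{G}}$ has genus at least $2$, then embedding $X_{\mathcal{G}}$ into its Jacobian via $y$ and applying Faltings's theorem (Theorem \ref{thm:isolated_divisors:faltings}, noting a curve of genus $\geq 2$ contains no translate of a positive-dimensional abelian subvariety) shows $X_{\mathcal{G}}(\Q)$ is finite. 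In both of the finite subcases, what remains is to verify that the pair $(\mathcal{G}, j(E))$ appears in Table \ref{tbl:isolated_points_x0:exceptional_j_invariants}.

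That last verification is where the genuine content lies --- and the reason the statement is phrased as a \emph{Conjecture} --- and it is what I would import wholesale from \cite{zywina2024}. Two ingredients are needed. The first is that only finitely many agreeable subgroups $\mathcal{G}$ arise as the agreeable closure of $G_{E, \alpha}$ for a non-CM $E/\Q$, equivalently that the primes dividing the level of such $\mathcal{G}$ are uniformly bounded; Serre's open image theorem makes each $G_{E, \alpha}$ open, but the uniform bound across all $E/\Q$ is Serre's uniformity conjecture, which remains open in general (though it is settled for many prime levels, and the composite-level structure is understood), and this is the sole conditional input. The second, and genuinely hard, step is that, granting the resulting finite list of candidate subgroups, one must for each $\mathcal{G}$ with $X_{\mathcal{G}}$ of genus $\geq 2$ or of genus $1$ with trivial rank compute an explicit model of $X_{\mathcal{G}}$, determine its genus and, where relevant, the Mordell--Weil rank of its Jacobian, pin down $X_{\mathcal{G}}(\Q)$ exactly by point search combined with the Mordell--Weil sieve and (quadratic) Chabauty, and finally push the rational points forward along $j \colon X_{\mathcal{G}} \to X(1)$, discarding cusps and the $j$-invariants $0$ and $1728$, to read off the exceptional list. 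This is precisely the extended computation of \cite[Theorem 1.9, Section 14.3]{zywina2024}; the main obstacle is genuinely this computation, and the role of the present statement is only to record its output in the form needed for the classification of isolated points on $X_{0}(n)$.
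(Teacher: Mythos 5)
The statement you were asked to prove is labelled a \emph{Conjecture} in the paper and is imported verbatim from \cite[Theorem 1.9, Section 14.3]{zywina2024}; the paper itself supplies no proof, so there is nothing internal to compare your argument against. Your proposal is an accurate account of the statement's structure and status. The ``soft'' reductions you carry out are all correct: $\det \rho_{E,\alpha}$ is the cyclotomic character, hence $\det \mathcal{G} = \Zhat*$ and $X_{\mathcal{G}}$ is geometrically connected; the pair $(E, [\alpha]_{\mathcal{G}})$ yields a degree-one point $y$ by Theorem \ref{thm:modular_curves:profinite_point_degree} since $G_{E,\alpha} \subseteq \mathcal{G}$ and $A_{E,\alpha} = \{\pm I\} \subseteq \Zhat* \subseteq \mathcal{G}$; and the trichotomy then follows from the genus of $X_{\mathcal{G}}$ together with Faltings in genus at least $2$. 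You also correctly locate the genuine content --- and the reason the paper can only state this as a conjecture --- in the two external inputs: the uniform finiteness of the set of agreeable closures arising from non-CM curves over $\Q$ (essentially Serre's uniformity, still open), and the explicit determination of $X_{\mathcal{G}}(\Q)$ for each candidate $\mathcal{G}$, both of which are the computational substance of \cite{zywina2024} and cannot be reproduced from the material in this paper. In short, your proposal does not (and could not) prove the conjecture, but it correctly isolates what is provable, proves it, and attributes the remainder to the cited source exactly as the paper does.
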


\bgroup
\def\arraystretch{1.2}
\begin{longtable}{lrrrr}
	\caption{The known non-CM elliptic curves $E/\Q$ such that the agreeable closure $\mathcal{G} \leq \GL{\Zhat}$ of $G_{E, \alpha}$ gives rise to a modular curve $X_{\mathcal{G}}$ with finitely many rational points. The first column lists the $j$-invariant of $E$. The level and index of $\mathcal{G}$, as well as the genus of the modular curve $X_{\mathcal{G}}$, are indicated in columns 2-4. The level of the commutator subgroup $[G_{E, \alpha}, G_{E, \alpha}]$ is also listed.} \\
	\label{tbl:isolated_points_x0:exceptional_j_invariants}
	$j$-invariant & Level of $\mathcal{G}$ & $[\GL{\Zhat} : \mathcal{G}]$ & $g(X_{\mathcal{G}})$ & Level of $[G_{E, \alpha}, G_{E, \alpha}]$\\ \toprule
	\endfirsthead
	$j$-invariant & Level of $\mathcal{G}$ & $[\GL{\Zhat} : \mathcal{G}]$ & $g(X_{\mathcal{G}})$ & Level of $[G_{E, \alpha}, G_{E, \alpha}]$\\ \toprule
	\endhead
	$-13824$ & $12$ & $12$ & $1$ & $12$ \\
	$-82944$ & $12$ & $24$ & $1$ & $12$ \\
	$-138240$ & $12$ & $24$ & $1$ & $12$ \\
	$-\frac{35937}{4}$ & $12$ & $32$ & $1$ & $12$ \\
	$\frac{109503}{64}$ & $12$ & $32$ & $1$ & $12$ \\
	$\frac{3375}{64}$ & $12$ & $48$ & $1$ & $12$ \\
	$\frac{130 \cdot 442^3}{3^{13}}$ & $13$ & $91$ & $3$ & $26$ \\
	$-\frac{143 \cdot 1040^3}{3^{13}}$ & $13$ & $91$ & $3$ & $26$ \\
	$\frac{12077 \cdot 1957713745728^3}{305^{13}}$ & $13$ & $91$ & $3$ & $26$ \\
	$550731776$ & $14$ & $42$ & $1$ & $14$ \\
	$-\frac{61084010175^3}{61^{15}}$ & $15$ & $60$ & $2$ & $30$ \\
	$2048$ & $16$ & $24$ & $1$ & $16$ \\
	$78608$ & $16$ & $24$ & $1$ & $16$ \\
	$\frac{16974593}{256}$ & $16$ & $24$ & $1$ & $16$ \\
	$\frac{4097^3}{16}$ & $16$ & $24$ & $1$ & $16$ \\
	$-\frac{3 \cdot 18249920^3}{17^{16}}$ & $16$ & $64$ & $2$ & $16$ \\
	$-\frac{7 \cdot 1723187806080^3}{79^{16}}$ & $16$ & $64$ & $2$ & $16$ \\
	$-\frac{297756989}{2}$ & $17$ & $18$ & $1$ & $34$ \\
	$-\frac{882216989}{131072}$ & $17$ & $18$ & $1$ & $34$ \\
	$110592$ & $18$ & $18$ & $1$ & $18$ \\
	$-44789760$ & $18$ & $54$ & $2$ & $18$ \\
	$\frac{1026895}{1024}$ & $20$ & $24$ & $1$ & $20$ \\
	$-\frac{1723025}{4}$ & $20$ & $24$ & $1$ & $20$ \\
	$-36$ & $20$ & $40$ & $2$ & $20$ \\
	$-\frac{64278657}{1024}$ & $20$ & $40$ & $2$ & $20$ \\
	$-\frac{5 \cdot 2805^3}{2^{10}}$ & $20$ & $60$ & $3$ & $20$ \\
	$\frac{30081024}{3125}$ & $20$ & $120$ & $6$ & $20$ \\
	$\frac{4543847424}{3125}$ & $20$ & $120$ & $6$ & $20$ \\
	$\frac{1906624}{729}$ & $24$ & $72$ & $2$ & $12$ \\
	$\frac{82881856}{27}$ & $24$ & $72$ & $2$ & $24$ \\
	$-\frac{50836^3}{75^6}$ & $24$ & $72$ & $2$ & $24$ \\
	$\frac{247084^3}{45^6}$ & $24$ & $72$ & $2$ & $24$ \\
	$\frac{697317169440^3}{23^{24}}$ & $24$ & $96$ & $3$ & $24$ \\
	$\frac{5 \cdot 34800^3}{7^5}$ & $25$ & $75$ & $2$ & $50$ \\
	$\frac{351}{4}$ & $28$ & $64$ & $3$ & $28$ \\
	$-\frac{13 \cdot 1437^3}{2^{14}}$ & $28$ & $64$ & $3$ & $28$ \\
	$4096$ & $30$ & $36$ & $1$ & $30$ \\
	$3376^3$ & $30$ & $36$ & $1$ & $30$ \\
	$-1273201875$ & $30$ & $120$ & $7$ & $30$ \\
	$\frac{919425^3}{496^4}$ & $32$ & $96$ & $3$ & $32$ \\
	$-\frac{857985^3}{62^8}$ & $32$ & $96$ & $3$ & $16$ \\
	$-12 \cdot 19755^3$ & $36$ & $108$ & $6$ & $36$ \\
	$-9317$ & $37$ & $38$ & $2$ & $74$ \\
	$-7 \cdot 285371^3$ & $37$ & $38$ & $2$ & $74$ \\
	$-\frac{1315 \cdot 1942940^3}{7^{20}}$ & $40$ & $40$ & $2$ & $40$ \\
	$-5000$ & $40$ & $60$ & $3$ & $10$ \\
	$-121$ & $44$ & $24$ & $2$ & $44$ \\
	$-24729001$ & $44$ & $24$ & $2$ & $44$ \\
	$-\frac{13 \cdot 1542120^3}{43^{11}}$ & $44$ & $110$ & $4$ & $22$ \\
	$4913$ & $48$ & $72$ & $3$ & $48$ \\
	$16974593$ & $48$ & $72$ & $3$ & $48$ \\
	$\frac{52113^3}{112^6}$ & $48$ & $72$ & $3$ & $48$ \\
	$\frac{930927^3}{14^{12}}$ & $48$ & $72$ & $3$ & $24$ \\
	$\frac{2268945}{128}$ & $56$ & $112$ & $5$ & $14$ \\
	$432$ & $60$ & $40$ & $2$ & $60$ \\
	$-316368$ & $60$ & $40$ & $2$ & $60$ \\
	$\frac{59971704^3}{11^{15}}$ & $60$ & $90$ & $4$ & $30$ \\
	$-216$ & $72$ & $36$ & $3$ & $18$ \\
	$4374$ & $72$ & $54$ & $2$ & $18$ \\
	$419904$ & $72$ & $54$ & $2$ & $18$ \\
	$9 \cdot 1206^3$ & $72$ & $54$ & $2$ & $18$ \\
	$51 \cdot 7884^3$ & $72$ & $54$ & $2$ & $18$ \\
	$-21024576$ & $72$ & $144$ & $11$ & $36$ \\
	$189812888^3$ & $84$ & $126$ & $4$ & $42$ \\
	$90 \cdot 1150^3$ & $100$ & $100$ & $4$ & $50$ \\
	$-\frac{25}{2}$ & $120$ & $48$ & $3$ & $30$ \\
	$-\frac{121945}{32}$ & $120$ & $48$ & $3$ & $30$ \\
	$\frac{46969655}{32768}$ & $120$ & $48$ & $3$ & $30$ \\
	$-\frac{349938025}{8}$ & $120$ & $48$ & $3$ & $30$ \\
	$\frac{1720060^3}{11^{15}}$ & $120$ & $60$ & $2$ & $30$ \\
	$\frac{1331}{8}$ & $120$ & $72$ & $3$ & $30$ \\
	$-\frac{1680914269}{32768}$ & $120$ & $72$ & $3$ & $30$ \\
	$\frac{1839651^3}{2^{15}}$ & $120$ & $90$ & $4$ & $30$ \\
	$-\frac{1852491^3}{2^{15}}$ & $120$ & $90$ & $4$ & $30$ \\
	$\frac{3375}{2}$ & $168$ & $64$ & $3$ & $42$ \\
	$-\frac{140625}{8}$ & $168$ & $64$ & $3$ & $42$ \\
	$-\frac{9 \cdot 505^3}{2^{21}}$ & $168$ & $64$ & $3$ & $42$ \\
	$-\frac{5745^3}{2^7}$ & $168$ & $64$ & $3$ & $42$ \\
	$40561972^3$ & $168$ & $126$ & $4$ & $42$ \\
	$64$ & $360$ & $108$ & $7$ & $90$ \\
	$-2876^3$ & $360$ & $108$ & $7$ & $90$ \\ \bottomrule
\end{longtable}
\egroup

In tandem with Theorem \ref{thm:isolated_points_x0:agreeable_closure_isolated}, these computations strongly restrict the possible $j$-invariants of non-cuspidal, non-CM isolated points with rational $j$-invariant on the modular curves $X_{0}(n)$.

\begin{theorem} \label{thm:isolated_points_x0:j_invariant_is_exceptional}
	Assume that Conjecture \ref{thm:isolated_points_x0:exceptional_j_invariants} holds. Let $n \geq 1$, and $x \in X_{0}(n)$ be a non-cuspidal, non-CM isolated closed point with minimal representative $(E, [\alpha]_{B_{0}(n)})$, with $j(E) \in \Q$. Then the $j$-invariant $j(E)$ is given in Table \ref{tbl:isolated_points_x0:exceptional_j_invariants}.
\end{theorem}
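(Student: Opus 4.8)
The plan is to push the isolated point forward to the modular curve attached to the agreeable closure of the Galois image, and then to read off the conclusion from Conjecture~\ref{thm:isolated_points_x0:exceptional_j_invariants} after discarding its two ``generic'' alternatives. So, I would let $\mathcal{G} \leq \GL{\Zhat}$ be the agreeable closure of $G_{E, \alpha}$. Since $x$ is a non-cuspidal, non-CM isolated point of $X_{0}(n)$, Theorem~\ref{thm:isolated_points_x0:agreeable_closure_isolated} shows that the closed point $y \in X_{\mathcal{G}}$ corresponding to the $G_{\Q}$-orbit of $[(E, [\alpha]_{\mathcal{G}})] \in Y_{\mathcal{G}}(\Qbar)$ is also isolated. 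Next I would check that $y$ is a $\Q$-rational point. As $E$ is non-CM we have $A_{E, \alpha} = \{\pm I\}$, and $G_{E, \alpha} \leq \mathcal{G}$ by construction, so $A_{E, \alpha} \mathcal{G} = \mathcal{G}$ and hence $G_{E, \alpha} \cap A_{E, \alpha} \mathcal{G} = G_{E, \alpha}$. Since $j(E) \in \Q$, the degree formula of Theorem~\ref{thm:modular_curves:profinite_point_degree} applied to the minimal representative $(E, [\alpha]_{\mathcal{G}})$ of $y$ gives
\[
	\deg(y) = [\Q(j(E)) : \Q] \, [G_{E, \alpha} : G_{E, \alpha} \cap A_{E, \alpha} \mathcal{G}] = 1,
\]
so $y \in X_{\mathcal{G}}(\Q)$.

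Now I would invoke Conjecture~\ref{thm:isolated_points_x0:exceptional_j_invariants} for $E$ and eliminate its first two cases. If $X_{\mathcal{G}}$ is a geometrically connected curve of genus $0$, then Theorem~\ref{thm:isolated_divisors:riemann_roch_criterion}, applied with $r = 1$ and $g = 0$, shows that any closed point of positive degree---in particular $y$, which has $\deg(y) = 1 > 0$---is \Pone-parametrized, contradicting the isolation of $y$. If instead $X_{\mathcal{G}}$ is a geometrically connected curve of genus $1$ with $X_{\mathcal{G}}(\Q)$ infinite, then, $y$ being a rational point, $X_{\mathcal{G}}$ is an elliptic curve and $\PPic^{0}_{X_{\mathcal{G}}/\Q}$ has positive rank; since on a geometrically integral genus-$1$ curve every invertible sheaf of degree $1$ is effective, the image $\W^{y}_{X_{\mathcal{G}}/\Q}$ coincides with the full connected component $\PPic^{\O_{X_{\mathcal{G}}}(y)}_{X_{\mathcal{G}}/\Q}$, and therefore
\[
	\AAbel_{X_{\mathcal{G}}/\Q}(y) + \PPic^{0}_{X_{\mathcal{G}}/\Q} = \PPic^{\O_{X_{\mathcal{G}}}(y)}_{X_{\mathcal{G}}/\Q} = \W^{y}_{X_{\mathcal{G}}/\Q} \subseteq \W_{X_{\mathcal{G}}/\Q}
\]
exhibits $y$ as AV-parametrized, again contradicting isolation. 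Hence the third alternative of Conjecture~\ref{thm:isolated_points_x0:exceptional_j_invariants} must hold for $E$, and in particular $j(E)$ occurs in Table~\ref{tbl:isolated_points_x0:exceptional_j_invariants}.

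The proof is short precisely because the substantive content is already packaged into Theorem~\ref{thm:isolated_points_x0:agreeable_closure_isolated} (whose own proof chains together Theorems~\ref{thm:isolated_points_modular_curves:normal_H_isolated}, \ref{thm:isolated_points_modular_curves:normal_galois_image_isolated} and Lemma~\ref{thm:isolated_points_x0:x0_sl_level_isolated}) and into Zywina's conjectural list of agreeable closures. The most delicate steps are the verification that $\deg(y) = 1$---which is what makes $y$ a genuine $\Q$-point and also keeps the Riemann--Roch bound applicable (if only just) in the genus-$0$ case---together with the standard fact that on a genus-$1$ curve the degree-$1$ Abel map is surjective onto its Picard component, which is exactly what converts a rational point on a positive-rank genus-$1$ curve into an AV-parametrized divisor. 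I do not anticipate any serious difficulty beyond these points.
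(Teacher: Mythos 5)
Your proposal is correct and follows essentially the same route as the paper: push $x$ to the agreeable-closure curve $X_{\mathcal{G}}$ via Theorem~\ref{thm:isolated_points_x0:agreeable_closure_isolated}, verify $\deg(y)=1$ from the degree formula, and rule out the genus-$0$ and positive-rank genus-$1$ alternatives of Conjecture~\ref{thm:isolated_points_x0:exceptional_j_invariants} exactly as the paper does. Your spelled-out justification that $\W^{y}_{X_{\mathcal{G}}/\Q}$ equals the full Picard component in the genus-$1$ case is a slightly more explicit version of the paper's one-line remark, but the argument is the same.
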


\begin{proof}
	Let $\mathcal{G} \leq \GL{\Zhat}$ be the agreeable closure of the extended adelic Galois image $G_{E, \alpha}$. By Theorem \ref{thm:isolated_points_x0:agreeable_closure_isolated}, the closed point $y \in X_{\mathcal{G}}$ corresponding to the $G_{\Q}$-orbit of the $\Qbar$-point $[(E, [\alpha]_{\mathcal{G}})] \in X_{\mathcal{G}}(\Qbar)$ is isolated. Note that the agreeable closure $\mathcal{G}$ contains the extended adelic Galois image $G_{E, \alpha}$, and that $j(E) \in \Q$. Therefore, by Theorem \ref{thm:modular_curves:profinite_point_degree}, the closed point $y$ has degree 1.
	
	Since the point $y \in X_{\mathcal{G}}$ is isolated, by Theorem \ref{thm:isolated_divisors:riemann_roch_criterion}, the geometrically connected modular curve $X_{\mathcal{G}}$ cannot have genus 0. Moreover, if the modular curve $X_{\mathcal{G}}$ has genus 1, the image $\W_{X_{\mathcal{G}}/k}^{y}$ is a translate of $\PPic_{X_{\mathcal{G}}/k}^{0}$, itself isomorphic to $X_{\mathcal{G}}$. Since $y$ is isolated, it follows that $X_{\mathcal{G}}$ must have rank 0. Therefore, by Conjecture \ref{thm:isolated_points_x0:exceptional_j_invariants}, the $j$-invariant of $E$ is given in Table \ref{tbl:isolated_points_x0:exceptional_j_invariants}.
\end{proof}

We have now shown that the $j$-invariant of every non-cuspidal, non-CM isolated closed point with rational $j$-invariant on a modular curve $X_{0}(n)$ must lie in the explicit finite set given in Table \ref{tbl:isolated_points_x0:exceptional_j_invariants}. Following the strategy outlined in Section \ref{sec:introduction:finding_isolated_points}, for each such $j$-invariant $j$ we now determine a finite set $S_{j}$ of modular curves $X_{0}(n)$ such that there exists an isolated point with $j$-invariant $j$ on any modular curve $X_{0}(n)$ only if there exists such a point on a modular curve in the set $S_{j}$. Such a finite set can be constructed directly from Theorem \ref{thm:isolated_points_x0:x0_sl_level_isolated}. However, we may use similar techniques to those employed in Section \ref{sec:level_7:restricting_H_and_x} to restrict this set further, and show that most $j$-invariants in Table \ref{tbl:isolated_points_x0:exceptional_j_invariants} cannot occur.

\begin{theorem}
	Assume that Conjecture \ref{thm:isolated_points_x0:exceptional_j_invariants} holds. Let $n \geq 1$, and $x \in X_{0}(n)$ be a non-cuspidal, non-CM isolated closed point with minimal representative $(E, [\alpha]_{B_{0}(n)})$, with $j(E) \in \Q$. Then the $j$-invariant $j(E)$ is given in Table \ref{tbl:isolated_points_x0:isolated_points_x0}.
\end{theorem}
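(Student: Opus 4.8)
The plan is to take the finite list of $j$-invariants from Table~\ref{tbl:isolated_points_x0:exceptional_j_invariants}, provided by Theorem~\ref{thm:isolated_points_x0:j_invariant_is_exceptional}, and for each such $j$-invariant determine which modular curves $X_{0}(n)$ could possibly carry a non-cuspidal, non-CM isolated point with that $j$-invariant. Since each row of the table records the level $k$ of the agreeable closure $\mathcal{G}$ of $G_{E, \alpha}$, and Theorem~\ref{thm:isolated_points_x0:agreeable_closure_isolated} guarantees an isolated point on $X_{\mathcal{G}}$, we first restrict attention to the levels $n$ for which $X_{0}(n)$ can map usefully to (or from) curves built from $\mathcal{G}$. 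The key structural input is that, by Lemma~\ref{thm:isolated_points_x0:x0_sl_level_isolated}, an isolated point on $X_{0}(n)$ with this $j$-invariant forces an isolated point on $X_{0}((n, m))$, where $m$ is the level of $G_{E, \alpha} \cap \SL{\Zhat}$ --- a quantity also recorded in the table. Hence it suffices to check, for each $j$-invariant, the finitely many divisors $n$ of $m$ (equivalently, the curves $X_{0}(d)$ for $d \mid m$), which reduces the problem to a finite computation.

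The main steps, in order, are as follows. First, for each $j$-invariant $j_0$ in Table~\ref{tbl:isolated_points_x0:exceptional_j_invariants}, fix an elliptic curve $E/\Q$ with $j(E) = j_0$ and compute (or read off from Zywina's data in \cite{zywina2024}) the extended adelic Galois image $G_{E, \alpha}$, or at least enough of it --- for instance $G_{E, \alpha} N$ for a suitable normal $N$, as in the remark following Theorem~\ref{thm:modular_curves:profinite_point_degree} --- to compute degrees of points via Theorem~\ref{thm:modular_curves:profinite_point_degree}. Second, for each divisor $d$ of $m$, compute the degree of the closed point $x_d \in X_{0}(d)$ above $j_0$, together with the genus $g(X_{0}(d))$; whenever $\deg(x_d) > g(X_{0}(d))$, the point $x_d$ is \Pone-parametrized by Theorem~\ref{thm:isolated_divisors:riemann_roch_criterion} (note $X_{0}(d)$ is geometrically connected, so $r = 1$), and hence not isolated, so by Lemma~\ref{thm:isolated_points_x0:x0_sl_level_isolated} the original point $x \in X_{0}(n)$ is not isolated either. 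Third, for the remaining low-degree candidates, one must work harder: either exhibit another divisor $D' \neq x_d$ in the same Riemann--Roch space (showing \Pone-parametrization directly), or compute the Mordell--Weil rank of $\Jac(X_{0}(d))$ over $\Q$ --- if it is zero, the point is AV-isolated, and one then checks \Pone-isolation via an explicit Riemann--Roch computation on a model of $X_{0}(d)$, exactly as was done for $X(7)$ in Section~\ref{sec:level_7:points_are_isolated}. Fourth, to \emph{confirm} that a surviving candidate genuinely is isolated on $X_{0}(n)$ for the specific $n$ in question (and not merely on some $X_{0}(d)$ with $d \mid m$), apply Theorem~\ref{thm:isolated_points_modular_curves:pushforward_isolated} along the inclusion $X_{0}(n) \to X_{0}(d)$ when the relevant index is $1$, or Theorem~\ref{thm:isolated_points_modular_curves:pullback_isolated} in the other direction. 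Finally, collect the surviving $j$-invariants; these are precisely the entries of Table~\ref{tbl:isolated_points_x0:isolated_points_x0}.

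I expect the main obstacle to be the third step: ruling out the borderline cases where the Riemann--Roch genus bound does not immediately apply. Here one genuinely needs explicit models of the relevant $X_{0}(d)$ (available in the literature, e.g.\ via the LMFDB or \cite{rouse2022}), rank computations for Jacobians over $\Q$ --- which may require descent arguments or assume BSD-type input in the few cases where the rank is not provably zero by a direct $2$-descent --- and careful bookkeeping of which $d \mid m$ actually arise as levels of the curves $X_{0}(n)$ one cares about. A secondary subtlety is that the same rational $j$-invariant can appear for several curves $X_{0}(n)$ of different levels, so one must be careful to track, for each $j$-invariant, the full set of $n$ for which an isolated point exists, rather than merely deciding the yes/no question of whether the $j$-invariant appears in the final list. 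The computations themselves are routine given the machinery already assembled, and are carried out in \texttt{Magma}; the conceptual content is entirely contained in the reductions provided by Lemma~\ref{thm:isolated_points_x0:x0_sl_level_isolated}, Theorem~\ref{thm:isolated_points_x0:agreeable_closure_isolated}, and Theorem~\ref{thm:isolated_divisors:riemann_roch_criterion}.
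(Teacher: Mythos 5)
Your proposal matches the paper's proof of this statement: reduce to the finite list of Table \ref{tbl:isolated_points_x0:exceptional_j_invariants}, then for each $j$-invariant to be excluded use Lemma \ref{thm:isolated_points_x0:x0_sl_level_isolated} to pass to the curves $X_{0}(d)$ with $d \mid m$ and rule out isolation by comparing the degrees of all closed points above that $j$-invariant (computed from Zywina's Galois image data via the double-coset description) with the genus of $X_{0}(d)$, invoking Theorem \ref{thm:isolated_divisors:riemann_roch_criterion}. Your steps 3--5 (Jacobian rank computations, explicit models, and confirming isolation on a specific $X_{0}(n)$) turn out to be unnecessary for this exclusion statement --- the paper reports that the degree-versus-genus bound suffices in every case that must be ruled out --- and belong instead to the companion theorem verifying that the surviving $j$-invariants genuinely yield isolated points.
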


\begin{proof}
	By Theorem \ref{thm:isolated_points_x0:j_invariant_is_exceptional}, the $j$-invariant $j(E)$ must be one of the 81 $j$-invariants listed in Table \ref{tbl:isolated_points_x0:exceptional_j_invariants}. Therefore, suppose that the $j$-invariant of $E$ is given in Table \ref{tbl:isolated_points_x0:exceptional_j_invariants}, but not given in Table \ref{tbl:isolated_points_x0:isolated_points_x0}. The image $G_{E, \alpha}$ of the Galois representation of the elliptic curve $E/\Q$ is computed in \cite{zywina2024}, for some fixed profinite level structure $\alpha$. In particular, we can compute the level $m$ of the commutator subgroup $[G_{E, \alpha}, G_{E, \alpha}]$, which is also recorded in Table \ref{tbl:isolated_points_x0:exceptional_j_invariants}.
	
	By Lemma \ref{thm:isolated_points_x0:x0_sl_level_isolated}, since the point $x \in X_{0}(n)$ is isolated, so is the point $y \in X_{0}((n, m))$ corresponding to the $G_{\Q}$-orbit of the geometric point $[(E, [\alpha]_{B_{0}((n, m))})] \in X_{0}((n, m))(\Qbar)$. Therefore, it suffices to show that the modular curves $X_{0}(n)$ do not contain an isolated closed point with $j$-invariant equal to $j(E)$, for all divisors $n$ of $m$.
	
	For each such divisor $n$, the closed points of $X_{0}(n)$ with $j$-invariant equal to $j(E)$ are in bijection with the double cosets $B_{0}(n) g G_{E, \alpha_{n}} \subset \GL{\Z[n]}$, by Theorem \ref{thm:modular_curves:closed_points_as_double_cosets}. As the extended adelic Galois image $G_{E, \alpha}$ of $E$ is known, we may compute the degree of each of these closed points using the aforementioned theorem. For each such closed point, this degree exceeds the genus of the modular curve $X_{0}(n)$, and so the closed point cannot be isolated, by Theorem \ref{thm:isolated_divisors:riemann_roch_criterion}.
	
	These computations are implemented in the file \texttt{isolated\char`_points\char`_x0.m}. As an example, let $E$ be the elliptic curve \href{https://www.lmfdb.org/EllipticCurve/Q/61347.bb1}{61347.bb1} with $j$-invariant $-\frac{160855552000}{1594323}$. The extended adelic Galois image $G_{E, \alpha}$ has level 78, and is given by the preimage of the subgroup
	\begin{align*}
		G_{E, \alpha_{78}} = & \left\langle \left(\begin{smallmatrix} 27 & 14 \\ 64 & 39 \end{smallmatrix}\right), \left(\begin{smallmatrix} 27 & 26 \\ 52 & 27 \end{smallmatrix}\right), \left(\begin{smallmatrix} 31 & 52 \\ 9 & 47 \end{smallmatrix}\right), \left(\begin{smallmatrix} 73 & 38 \\ 44 & 71 \end{smallmatrix}\right), \right. \\
		& \quad \left(\begin{smallmatrix} 53 & 0 \\ 0 & 53 \end{smallmatrix}\right), \left. \left(\begin{smallmatrix} 1 & 0 \\ 52 & 1 \end{smallmatrix}\right), \left(\begin{smallmatrix} 14 & 39 \\ 13 & 53 \end{smallmatrix}\right), \left(\begin{smallmatrix} 65 & 72 \\ 48 & 65 \end{smallmatrix}\right) \right\rangle \leq \GL{\Z[78]}.
	\end{align*}
	The commutator subgroup $[G_{E, \alpha}, G_{E, \alpha}]$ has level 26, and so it suffices to show that there are no isolated points with the given $j$-invariant on the modular curves $X_{0}(n)$, where $n \in \{1, 2, 13, 26\}$. The modular curves $X_{0}(1)$, $X_{0}(2)$ and $X_{0}(13)$ all have genus 0, and so there are no isolated points on these modular curves by Theorem \ref{thm:isolated_divisors:riemann_roch_criterion}. The modular curve $X_{0}(26)$ however, has genus 2.
	
	The extended mod-26 Galois image $G_{E, \alpha_{26}}$ of $E$ is given by the reduction modulo 26 of $G_{E, \alpha_{78}}$. For each conjugate $g G_{E, \alpha_{26}} g^{-1}$ of $G_{E, \alpha_{26}}$, we compute the index $[g G_{E, \alpha_{26}} g^{-1} : g G_{E, \alpha_{26}} g^{-1} \cap \pm B_{0}(26)]$, and find that it is either 18 or 24. Therefore, by Theorem \ref{thm:modular_curves:closed_points_as_double_cosets}, the closed points of $X_{0}(26)$ with $j$-invariant $-\frac{160855552000}{1594323}$ must have degree 18 or 24. In both cases, this degree exceeds the genus of $X_{0}(26)$, and so, by Theorem \ref{thm:isolated_divisors:riemann_roch_criterion}, these closed points cannot be isolated.
\end{proof}

\begin{table}
	\centering
	\caption{The $j$-invariants of the non-cuspidal, non-CM isolated closed points with rational $j$-invariant on the modular curves $X_{0}(n)$. For each $j$-invariant, the smallest modular curve $X_{0}(n)$ for which there exists an isolated closed point $x \in X_{0}(n)$ with the given $j$-invariant is listed. The genus of the modular curve $X_{0}(n)$ and the degree of the closed point $x$ are also given.}
	\label{tbl:isolated_points_x0:isolated_points_x0}
	\bgroup
	\def\arraystretch{1.2}
	\begin{tabular}{llrr}
		$j(E)$ & $X_{0}(n)$ & $g(X_{0}(n))$ & $\deg(x)$ \\ \toprule
		$-121$ & \multirow{2}*{$X_{0}(11)$} & \multirow{2}*{1} & 1 \\
		$-24729001$ & & & 1 \\ \midrule
		$-\frac{25}{2}$ & \multirow{4}*{$X_{0}(15)$} & \multirow{4}*{1} & 1 \\
		$-\frac{121945}{32}$ & & & 1 \\
		$\frac{46969655}{32768}$ & & & 1 \\
		$-\frac{349938025}{8}$ & & & 1 \\ \midrule
		$-\frac{297756989}{2}$ & \multirow{2}*{$X_{0}(17)$} & \multirow{2}*{1} & 1 \\
		$-\frac{882216989}{131072}$ & & & 1 \\ \midrule
		$\frac{3375}{2}$ & \multirow{4}*{$X_{0}(21)$} & \multirow{4}*{1} & 1 \\
		$-\frac{140625}{8}$ & & & 1 \\
		$-\frac{1159088625}{2097152}$ & & & 1 \\
		$-\frac{189613868625}{128}$ & & & 1 \\ \midrule
		$-9317$ & \multirow{2}*{$X_{0}(37)$} & \multirow{2}*{2} & 1 \\
		$-162677523113838677$ & & & 1 \\ \bottomrule
	\end{tabular}
	\egroup
\end{table}

It remains to verify the rest of Table \ref{tbl:isolated_points_x0:isolated_points_x0}, namely that for each $j$-invariant listed in the table, there exists an isolated closed point on the corresponding modular curve $X_{0}(n)$. As in Section \ref{sec:level_7:points_are_isolated}, this is a finite computation, and each case can be handled separately using ad-hoc methods.

\begin{theorem}
	Let $j(E)$ and $X_{0}(n)$ be as given in Table \ref{tbl:isolated_points_x0:isolated_points_x0}. Then, there exists an isolated closed point $x \in X_{0}(n)$ with $j$-invariant equal to $j(E)$. Moreover, $n$ is the smallest integer such that $X_{0}(n)$ contains an isolated closed point with the given $j$-invariant.
\end{theorem}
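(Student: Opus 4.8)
The plan is to prove the two assertions — existence of an isolated closed point on $X_{0}(n)$ with the prescribed $j$-invariant, and minimality of $n$ — separately, reducing each to the (in)finitude of rational points on small modular curves. Fix a row of Table \ref{tbl:isolated_points_x0:isolated_points_x0}, a non-CM elliptic curve $E/\Q$ with the listed $j$-invariant, and a profinite level structure $\alpha$ realising the extended adelic Galois image $G_{E, \alpha}$ computed in \cite{zywina2024}; recall $A_{E, \alpha} = \{\pm I\}$ since $j(E) \notin \{0, 1728\}$. As $X_{0}(n)$ is geometrically integral, a relative effective Cartier divisor of degree $1$ on $X_{0}(n)/\Q$ is the same as a rational point of $X_{0}(n)$, and two such divisors are algebraically equivalent exactly when their degrees agree.

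\emph{Existence.} Each $j$-invariant in Table \ref{tbl:isolated_points_x0:isolated_points_x0} occurs as the $j$-invariant of a non-cuspidal rational point of the corresponding $X_{0}(n)$ — classically for $n \in \{11, 15, 17\}$ (where $X_{0}(n)$ is an elliptic curve with explicitly known Mordell--Weil group), for $n = 21$ by a short computation, and for $n = 37$ by Mazur's determination of $X_{0}(37)(\Q)$. Equivalently, by Proposition \ref{thm:modular_curves:closed_points_as_double_cosets}(5) one finds a double coset $B_{0}(n)\, g\, G_{E, \alpha_{n}}$ with $g\, G_{E, \alpha_{n}}\, g^{-1} \subseteq \pm B_{0}(n)$, i.e. $E$ has a rational cyclic $n$-isogeny. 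Either way there is a closed point $x \in X_{0}(n)$ with $j(x) = j(E)$ and $\deg(x) = 1$. To conclude that $x$ is isolated it suffices, by Theorem \ref{thm:isolated_divisors:non_isolated_infinite} and the identification of degree-$1$ divisors with rational points, to show that $X_{0}(n)(\Q)$ is finite. For $n = 37$ this is Faltings's theorem, $X_{0}(37)$ having genus $2$. For $n \in \{11, 15, 17, 21\}$ the curve $X_{0}(n)$ has genus $1$ and a rational cusp, hence is canonically an elliptic curve, isomorphic to its Jacobian $\PPic^{0}_{X_{0}(n)/\Q}$; these are the elliptic curves of conductor $11, 15, 17, 21$, all of Mordell--Weil rank $0$ (the smallest conductor of a positive-rank elliptic curve over $\Q$ being $37$), so $X_{0}(n)(\Q)$ is again finite. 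Thus $x$ is isolated in every case.

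\emph{Minimality.} Suppose $X_{0}(n')$ contains an isolated closed point with $j$-invariant $j(E)$. Writing $m$ for the level of $G_{E, \alpha} \cap \SL{\Zhat}$, which is listed in Table \ref{tbl:isolated_points_x0:exceptional_j_invariants}, Lemma \ref{thm:isolated_points_x0:x0_sl_level_isolated} yields an isolated closed point with the same $j$-invariant on $X_{0}((n', m))$; as $(n', m) \mid m$ and $(n', m) \le n'$, it is enough to check that no $X_{0}(d)$ with $d \mid m$ and $d < n$ carries an isolated closed point over $j(E)$. Since $n \mid m$ (read off Table \ref{tbl:isolated_points_x0:exceptional_j_invariants}), this forces $n' \ge n$, and together with Existence it gives minimality. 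For the rows with $n \in \{11, 15, 17, 37\}$, every divisor $d \mid m$ with $d < n$ has $g(X_{0}(d)) = 0$, so $X_{0}(d)$ has no isolated closed points at all by Theorem \ref{thm:isolated_divisors:riemann_roch_criterion}. For the rows with $n = 21$ (where $m = 42$), the only such $d$ with $g(X_{0}(d)) > 0$ is $d = 14$, of genus $1$; there the curves $E$ in question have no rational cyclic $2$-isogeny, so by Proposition \ref{thm:modular_curves:closed_points_as_double_cosets} the closed point of $X_{0}(14)$ over $j(E)$ has degree at least $2 > 1 = g(X_{0}(14))$ and is \Pone-parametrized by Theorem \ref{thm:isolated_divisors:riemann_roch_criterion}. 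These index computations are the ones carried out in \texttt{isolated\char`_points\char`_x0.m}, exactly as in the proof of the previous theorem.

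The main obstacle — the single input not supplied by the Riemann--Roch criterion — is the finiteness of $X_{0}(n)(\Q)$ in the genus-$1$ cases, equivalently the rank-$0$ statement for the elliptic curves $X_{0}(n)$ with $n \in \{11, 15, 17, 21\}$; this is classical. A related subtlety in the minimality argument is that one must genuinely verify, for the four rows with $n = 21$, that $E$ has no rational cyclic $2$-isogeny: otherwise $X_{0}(14)$, whose Jacobian $X_{0}(14)$ again has rank $0$, would itself host a rational — hence isolated — point over $j(E)$, and $n = 21$ would fail to be minimal.
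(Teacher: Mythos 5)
Your proposal is correct and follows the same overall strategy as the paper: for existence, exhibit a rational point on $X_{0}(n)$ above $j(E)$ (via the known extended adelic image being conjugate into $B_{0}(n)$), show $X_{0}(n)(\Q)$ is finite (rank $0$ in the genus-one cases, Faltings for $n=37$), and conclude isolatedness from Theorem \ref{thm:isolated_divisors:non_isolated_infinite}; for minimality, reduce via Lemma \ref{thm:isolated_points_x0:x0_sl_level_isolated} to divisors of the level $m$ of $G_{E,\alpha}\cap\SL{\Zhat}$ and rule out smaller levels with Theorem \ref{thm:isolated_divisors:riemann_roch_criterion}. The one substantive difference is in the minimality step, and it is to your credit. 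The paper asserts that the tabulated $n$ is the smallest divisor of $m$ for which $X_{0}(n)$ has positive genus, and lets Riemann--Roch do the rest; but for the four rows with $n=21$ and $m=42$ this assertion fails, since $14\mid 42$ and $X_{0}(14)$ has genus $1$. Your argument correctly identifies $d=14$ as the only problematic divisor and disposes of it by noting that the relevant curves (which admit rational $21$-isogenies) have no rational $2$-isogeny, hence no degree-one point on $X_{0}(14)$, so every closed point there above $j(E)$ has degree at least $2>g(X_{0}(14))$ and is \Pone-parametrized. This extra verification is genuinely needed, so your write-up is in fact more complete than the paper's at this point; the remaining inputs (rank $0$ for $X_{0}(11)$, $X_{0}(15)$, $X_{0}(17)$, $X_{0}(21)$, which the paper sources from the LMFDB and you source from the classical fact that $37$ is the smallest conductor of a positive-rank elliptic curve over $\Q$) are equivalent.
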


\begin{proof}
	Fix a $j$-invariant from Table \ref{tbl:isolated_points_x0:isolated_points_x0}, and let $n$ be the smallest integer such that $X_{0}(n)$ contains an isolated closed point with the given $j$-invariant. By Lemma \ref{thm:isolated_points_x0:x0_sl_level_isolated}, $n$ must divide the level of $[G_{E, \alpha}, G_{E, \alpha}]$, for any elliptic curve $E/\Q$ with the given $j$-invariant. Note that this level is given in Table \ref{tbl:isolated_points_x0:exceptional_j_invariants}.
	
	The integer $n$ given in Table \ref{tbl:isolated_points_x0:isolated_points_x0} is the smallest integer dividing the level of $[G_{E, \alpha}, G_{E, \alpha}]$ such that the modular curve $X_{0}(n)$ has genus greater than 0. Therefore, the second part of the result follows from the first part, by Theorem \ref{thm:isolated_divisors:riemann_roch_criterion}.
	
	Consider now the modular curve $X_{0}(n)$ given in Table \ref{tbl:isolated_points_x0:isolated_points_x0}. The extended adelic Galois image $G_{E, \alpha}$ is known by \cite{zywina2024}, for some elliptic curve $E/\Q$ with the given $j$-invariant and some profinite level structure $\alpha$ on $E$. Using this, we compute that the extended mod-$n$ Galois image $G_{E, \alpha_{n}}$ of $E$ is conjugate to a subgroup of $B_{0}(n) \leq \GL{\Z[n]}$. Therefore, by Theorem \ref{thm:modular_curves:closed_points_as_double_cosets}, there exists a rational point $x \in X_{0}(n)$ with the given $j$-invariant.
	
	If the curve $X_{0}(n)$ has genus 1, the explicit models of $X_{0}(n)$ in the LMFDB \cite{lmfdb} show that $X_{0}(n)$ has rank 0, and so the set $X_{0}(n)(\Q)$ is finite. If $n=37$ however, then the curve $X_{0}(n)$ has genus 2, and so, by Faltings's theorem, the set $X_{0}(n)(\Q)$ is also finite. Therefore, by Theorem \ref{thm:isolated_divisors:non_isolated_infinite}, any rational point on the curve $X_{0}(n)$ is isolated. In particular, there exists an isolated closed point $x \in X_{0}(n)$ with the given $j$-invariant.
\end{proof}

\begin{remark} \label{rmk:isolated_points_x0:higher_levels}
	Table \ref{tbl:isolated_points_x0:isolated_points_x0} solely lists the minimal integer $n$ such that the modular curve $X_{0}(n)$ contains an isolated closed point with the given $j$-invariant. The existence of other isolated closed points with the same $j$-invariant on the modular curves $X_{0}(m)$, where $m$ is a multiple of $n$, is still an open problem. In fact, it is unknown whether the set of modular curves $X_{0}(m)$ containing such an isolated point is finite or infinite.
\end{remark}
	
	\appendix
	\section{Divisor and Picard schemes of geometrically reducible varieties} \label{sec:divisor_picard_schemes}

The study of the divisor and Picard schemes of a geometrically integral variety is well-established, however much less is known in the context of geometrically reducible varieties. In this appendix, we show how the Stein factorization of a geometrically reducible variety can be used to give a link between these two settings. In particular, we show that the divisor and Picard schemes of a geometrically reducible variety are given by Weil restrictions, and deduce some consequences on the structure of the Abel map between the two schemes.

We use the notation of Section \ref{sec:isolated_divisors} throughout. Namely, let $X$ be a smooth, projective integral scheme over a number field $k$. We denote by $\DDiv_{X/k}$ the divisor scheme of $X$, by $\PPic_{X/k}$ the Picard scheme of $X$, and by $\AAbel_{X/k} : \DDiv_{X/k} \to \PPic_{X/k}$ the Abel map between the two. Let
\[
	X \overset{\pi'}{\to} \Spec K \overset{\eta}{\to} \Spec k
\]
be the Stein factorization of the structure morphism $\pi : X \to \Spec k$, where $K/k$ is a finite extension of number fields. In particular, $\eta$ is \'etale and $\pi'$ is smooth, projective and geometrically integral. We use analogous notation to denote the divisor and Picard schemes of $X/K$.

Throughout this appendix, and the body of the main paper, we will often rely on the properties of Weil restrictions under \'etale morphisms. While many such properties have already been made explicit in the literature, we have been unable to find a reference for the following statement on surjectivity.

\begin{theorem} \label{thm:divisor_picard_schemes:weil_restriction_surjectivity}
	Let $S' \to S$ be a finite \'etale morphism of schemes. Let $f' : X' \to Y'$ be a surjective morphism of $S'$-schemes, and assume that the Weil restrictions $\Res_{S'/S} X'$ and $\Res_{S'/S} Y'$ exist as schemes. Then $\Res_{S'/S}(f') : \Res_{S'/S}(X') \to \Res_{S'/S}(Y')$ is surjective. In particular, if the structure morphism $X' \to S'$ is surjective, then so is the morphism $\Res_{S'/S}(X') \to S$.
\end{theorem}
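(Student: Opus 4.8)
The plan is to reduce to an fpqc-local statement on the base and then invoke the well-understood local structure of a finite étale morphism. Since surjectivity of a morphism of schemes can be checked after a faithfully flat base change, and more conveniently after passing to an fpqc (or even finite étale surjective) cover of $S$, I would first argue that there exists a finite étale surjective morphism $S'' \to S$ which splits $S' \to S$; that is, such that $S' \times_S S'' \cong \coprod_{i=1}^{d} S''$ as $S''$-schemes, where $d$ is the (locally constant, hence after possibly decomposing $S$ into open-and-closed pieces, constant) degree of $S' \to S$. This is the standard fact that a finite étale morphism becomes trivial after a finite étale base change; one may even take $S''$ to be a connected component of the $d$-fold fiber product $S' \times_S \cdots \times_S S'$, or appeal to \cite{spec} for the precise statement. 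I would then use the base-change compatibility of Weil restriction along étale morphisms: for the Cartesian diagram defining $S'' \to S$, one has a canonical isomorphism $(\Res_{S'/S} X')_{S''} \cong \Res_{(S' \times_S S'')/S''}(X' \times_{S'} (S' \times_S S''))$, and similarly for $Y'$, compatibly with the morphism induced by $f'$. References for this are \cite[Proposition A.5.1]{conrad2015} and \cite[Propositions 6.2.9, 6.2.10]{spec}.

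Next, over the split base $S''$, the Weil restriction along the trivial cover $\coprod_{i=1}^{d} S'' \to S''$ is simply the $d$-fold fiber product: writing $X'' = X' \times_{S'} (S' \times_S S'')$ and decomposing it as $\coprod_i X''_i$ according to the components of $S' \times_S S''$ (and likewise $Y'' = \coprod_i Y''_i$ with $f''$ restricting to a surjection $f''_i : X''_i \to Y''_i$ on each factor), one has
\[
	\Res_{(S'\times_S S'')/S''}(X'') \;\cong\; X''_1 \times_{S''} \cdots \times_{S''} X''_d,
\]
and similarly for $Y''$, with $\Res(f'')$ identified with $f''_1 \times \cdots \times f''_d$. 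A finite product of surjective morphisms of $S''$-schemes is surjective: a product of surjections $\prod_i f''_i$ is surjective because each $f''_i$ is surjective and the fiber product of the target is, set-theoretically, covered by images of fiber products of points in the sources — more precisely, surjectivity of morphisms is stable under arbitrary base change and under composition, so $\prod_i f''_i$ factors as a composite of base changes of the individual $f''_i$, each surjective. Hence $\Res_{S'/S}(f')$ becomes surjective after the faithfully flat base change $S'' \to S$, and therefore is itself surjective.

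Finally, the last sentence of the statement is the special case $Y' = S'$, $f'$ the structure morphism: one has $\Res_{S'/S}(S') \cong S$ canonically, so surjectivity of $\Res_{S'/S}(X') \to \Res_{S'/S}(S') = S$ follows immediately. The main obstacle I anticipate is not any one deep input but rather assembling the compatibility diagrams carefully: one must check that the base-change isomorphism for Weil restrictions genuinely intertwines $\Res_{S'/S}(f')$ with the product morphism $\prod_i f''_i$ after splitting, and that the locally constant degree is handled by first reducing to the case where $S$ is connected (or by working over each open-and-closed piece separately). Both are routine given the cited functoriality of Weil restriction, but they are where the bookkeeping lives; the genuinely geometric content — surjectivity is fpqc-local on the target-of-structure-map and stable under products — is elementary.
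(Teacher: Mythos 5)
Your proof is correct, and its core idea — base change until the finite \'etale cover $S' \to S$ splits, identify the Weil restriction along a split cover with a fiber product, observe that a fiber product of surjections is surjective, and descend — is the same as the paper's. The difference is in how the reduction is implemented. The paper base changes to each geometric point $\Spec k \to S$ (where $S'$ automatically becomes $\Spec k^d$) and then checks surjectivity on $k$-points of the fibers; this is shorter but tacitly conflates surjectivity of a morphism of $k$-schemes with surjectivity on $k$-points, which strictly speaking requires a finite-type or Jacobson hypothesis. Your version instead splits the cover globally by a finite \'etale surjective $S'' \to S$, uses the base-change compatibility of Weil restriction, and descends surjectivity along the surjective morphism $S'' \to S$; this costs you the bookkeeping you acknowledge (constancy of the degree on clopen pieces, and checking that the base-change isomorphism intertwines $\Res_{S'/S}(f')$ with $f_1'' \times \cdots \times f_d''$), but it stays entirely scheme-theoretic, so the step "a product of surjections is surjective" is justified cleanly by writing the product as a composite of base changes. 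Both arguments are fine; yours is marginally more robust, the paper's marginally more economical.
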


\begin{proof}
	Let $X = \Res_{S'/S}(X')$, $Y = \Res_{S'/S}(Y')$ and $f = \Res_{S'/S}(f') : X \to Y$. To show that $f$ is surjective, it suffices to check this over each geometric fiber of $Y \to S$. As Weil restriction commutes with base change, we may therefore assume that $S = \Spec k$ for an algebraically closed field $k$.
	
	Since $S' \to S$ is finite \'etale, $S'$ is isomorphic to $\Spec k^{d}$, for some natural number $d$. By definition, the map $f : X(k) \to Y(k)$ equals the map $f'^{d} : X'(k)^{d} \to Y'(k)^{d}$. Since $f'$ is surjective and $k$ is algebraically closed, the latter map is also surjective. Therefore, $f$ is surjective.
	
	The last statement follows from the fact that the morphism $\Res_{S'/S} X' \to S$ is the Weil restriction of the structure morphism $X' \to S'$, and the above.
\end{proof}

Firstly, we show that the Picard scheme $\PPic_{X/k}$ is isomorphic to the Weil restriction of the Picard scheme $\PPic_{X/K}$. This fact is stated, without proof, in much greater generality by Grothendieck in \cite[Proposition 6.1]{fga5}. However, we are only able to provide a proof when the extension $K/k$ is separable. The separable case is also treated in \cite[Lemma 3.7]{konstantinou2024}. We give a different proof, as the specifics of the isomorphism will be used later.

\begin{theorem} \label{thm:divisor_picard_schemes:picard_weil_restriction}
	Let $X, k$ and $K$ be as above. Then there exists an isomorphism of group schemes
	\[
		\varphi : \PPic_{X/k} \cong \Res_{K/k} \PPic_{X/K}.
	\]
\end{theorem}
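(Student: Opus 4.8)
The plan is to construct the isomorphism $\varphi$ functorially, using the universal properties of the Picard scheme and of Weil restriction. The starting point is the following general fact: for any $k$-scheme $T$, giving a $T$-point of $\Res_{K/k} \PPic_{X/K}$ is the same as giving a $T_K := T \times_k \Spec K$-point of $\PPic_{X/K}$, which in turn (via the fppf-sheafification description of the Picard scheme) is an fppf-local-on-$T_K$ lift of an isomorphism class of invertible sheaves on $X_{T_K} = X_T \times_K$-stuff$= X \times_k T_K$. On the other hand, a $T$-point of $\PPic_{X/k}$ is an fppf-local-on-$T$ lift of an isomorphism class of invertible sheaves on $X_T = X \times_k T$. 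The crucial observation is that since $X \to \Spec K$ is the Stein factorization, the base change $X_T \to T$ factors through $T_K = T \times_k \Spec K$, and $X_T \cong X \times_K T_K$; so invertible sheaves on $X_T$ as a $K$-scheme are literally the same as invertible sheaves on $X_T$ as a $T$-scheme. First I would make this comparison precise at the level of the naive relative Picard functors $\Pic_{X/k}$ and $\Res_{K/k}\Pic_{X/K}$, checking that the identification $\Pic(X_{T_K}) = \Pic(X_T)$ is compatible with quotienting by pullbacks from the base: here one uses that $\eta : \Spec K \to \Spec k$ is \'etale, so $\Pic(T_K)$ is built from $\Pic(T)$ in a controlled way and the two "trivial twist" subgroups match up after sheafification.

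Next I would promote this to an isomorphism of the sheafifications. The key point is that sheafification commutes with Weil restriction along a finite \'etale morphism: if $\mathcal{F}$ is a presheaf on $(\mathrm{Sch}/K)_{\mathrm{fppf}}$ with fppf-sheafification $\mathcal{F}^+$, then $\Res_{K/k}(\mathcal{F}^+)$ is the fppf-sheafification of $\Res_{K/k}(\mathcal{F})$. This follows because $\Res_{K/k}$ is right adjoint to base change $(-)_K$, base change sends fppf covers to fppf covers, and for a finite \'etale $K/k$ the functor $\Res_{K/k}$ also preserves fppf covers (one can check this using that finite \'etale morphisms are, fppf-locally on the base, just finite disjoint unions of copies of the base). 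Granting this, applying $\Res_{K/k}$ to the natural transformation $\Pic_{X/K} \to \PPic_{X/K}$ and using representability of $\PPic_{X/K}$ (Grothendieck, \cite[Corollaire 6.6]{fga5}), together with the presheaf-level identification $\Pic_{X/k} \cong \Res_{K/k}\Pic_{X/K}$ established above, yields a natural isomorphism of fppf-sheaves $\PPic_{X/k} \cong \Res_{K/k}\PPic_{X/K}$, hence by Yoneda an isomorphism of schemes. That it is a homomorphism of group schemes is automatic since every comparison was made at the level of the group-valued functors $\Pic$, whose group law (tensor product of line bundles) is visibly preserved; I would spell out the explicit description of $\varphi$ on $T$-points $[\mathcal{L}] \mapsto [\mathcal{L}]$ (viewing an invertible sheaf on $X_T$ as one on $X_{T_K}$) since, as noted in the statement, the precise form of $\varphi$ is reused in the proof of Theorem \ref{thm:isolated_divisors:stein_factorization_isolated}.

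The main obstacle I anticipate is the bookkeeping around sheafification: one must be careful that the presheaf-level isomorphism $\Pic_{X/k} \cong \Res_{K/k}\Pic_{X/K}$ is genuinely an isomorphism of presheaves and not merely something that becomes an isomorphism after sheafifying, and that the two sheafification processes (one on $(\mathrm{Sch}/k)_{\mathrm{fppf}}$, one on $(\mathrm{Sch}/K)_{\mathrm{fppf}}$ followed by $\Res_{K/k}$) really do agree. The cleanest way around this is probably to avoid comparing the naive Picard functors directly and instead verify the universal property of the Weil restriction for $\PPic_{X/k}$: i.e., show that for every $k$-scheme $T$ there is a natural bijection $\PPic_{X/k}(T) \cong \PPic_{X/K}(T_K)$, which reduces to checking (a) $\Hom_k(T,\PPic_{X/k}) \cong \Hom_K(T_K, \PPic_{X/K})$ by fppf-descent of morphisms and the identification of the two functors on affine test schemes, and (b) that these bijections are compatible with the $T$-variable. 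For (a) one uses that both sides, evaluated on an affine $T = \Spec A$, compute $\Pic$ of $X_A$ modulo the image of $\Pic$ of the base, \emph{after fppf-localization}, and the finite \'etale hypothesis on $K/k$ ensures fppf-localization on $T$ and on $T_K$ are interchangeable. This route sidesteps the abstract "sheafification commutes with $\Res$" lemma at the cost of a slightly more hands-on descent argument. Either way, the separability (indeed finite \'etaleness) of $\eta$ is used essentially — it is what makes $\Res_{K/k}$ well-behaved with respect to the fppf topology — which is precisely why the argument does not extend to the inseparable case of Grothendieck's more general claim.
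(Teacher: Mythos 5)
Your overall strategy coincides with the paper's: both proofs establish the isomorphism by exhibiting functorial bijections $\varphi_{T} : \PPic_{X/k}(T) \to \PPic_{X/K}(T_{K})$ for all $k$-schemes $T$, and both use the finite \'etaleness of $K/k$ in an essential way. However, your proposal leaves unproven exactly the step that constitutes the technical content of the paper's argument, and one of your intermediate claims is false as stated.

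The false claim first: there is no presheaf-level isomorphism $\Pic_{X/k} \cong \Res_{K/k}\Pic_{X/K}$. For a $k$-scheme $T$ one has $\Pic_{X/k}(T) = \Pic(X_{T})/\pi_{T}^{\ast}\Pic(T)$ and $(\Res_{K/k}\Pic_{X/K})(T) = \Pic(X_{T_{K}})/\pi_{T_{K}}^{\ast}\Pic(T_{K})$; the underlying Picard groups agree since $X_{T} = X_{T_{K}}$, but the subgroups quotiented out satisfy only $\pi_{T}^{\ast}\Pic(T) \subseteq \pi_{T_{K}}^{\ast}\Pic(T_{K})$, and the inclusion is strict in general. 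So the natural map is merely a surjection at the presheaf level and only becomes injective after fppf-sheafification. You flag this worry yourself, and your fallback route avoids it, so this is recoverable — but route (a) as written does not work.

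The genuine gap is in the inverse direction of the bijection. The map $\PPic_{X/k}(T) \to \PPic_{X/K}(T_{K})$ is easy (an invertible sheaf on $X_{T} \times_{T} T'$ for an fppf-cover $T' \to T$ is literally one on $X_{T_{K}} \times_{T_{K}} T'_{K}$). The hard direction is: given a class in $\PPic_{X/K}(T_{K})$ represented by an invertible sheaf on $X_{T_{K}} \times_{T_{K}} T'$ for an fppf-cover $T' \to T_{K}$, produce a class in $\PPic_{X/k}(T)$. Although $T' \to T$ is again an fppf-cover, the cocycle-type compatibility one is given holds only over $T' \times_{T_{K}} T'$, which is a proper clopen subscheme of $T' \times_{T} T'$; the descent datum relative to $T_{K}$ is strictly weaker than one relative to $T$. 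Your proposal disposes of this with the assertion that ``fppf-localization on $T$ and on $T_{K}$ are interchangeable'' (or, in route (a), that sheafification commutes with $\Res_{K/k}$ for finite \'etale extensions), but this is precisely what must be proved. The paper does it by hand: it passes to the Galois closure $F$ of $K/k$, uses $K \otimes_{k} F \cong F^{n}$ to split $T_{K} \times_{T} T_{F}$ into $n$ copies of $T_{F}$, and replaces $T'$ by the fiber product $\prod_{j=1}^{n} U_{j}$ of the resulting covers, which \emph{is} an fppf-cover of $T$ over which the required compatibility can be checked. Without this (or an actual proof of your ``sheafification commutes with $\Res_{K/k}$'' lemma), the bijectivity of $\varphi_{T}$ is not established.
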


\begin{proof}
	Since $\PPic_{X/k}$ exists as a scheme, it suffices to establish an isomorphism $\varphi_{T} : \PPic_{X/k}(T) \to \PPic_{X/K}(T_{K})$ for all $k$-schemes $T$, which is functorial in $T$.
	
	Let $T$ be a $k$-scheme. Define an fppf-cover $T' \to T$ of $T$ to be the morphism
	\[
		T' := \bigsqcup_{i \in I} T_{i} \to T,
	\]
	where $\{T_{i} \to T\}_{i \in I}$ is a covering in the fppf topology. As is discussed in \cite[253]{kleiman2005}, the set $\PPic_{X/k}(T)$ consists of invertible sheaves $\mathcal{L} \in \Pic(X_{T} \times_{T} T')$, where $T'$ is an fppf-cover of $T$, and such that there exists an fppf-cover $T'' \to T' \times_{T} T'$ such that the pullbacks of $\mathcal{L}$ with respect to $T'' \to T' \times_{T} T' \to T'$, for both projections $T' \times_{T} T' \to T'$, are isomorphic. Moreover, two such sheaves $\mathcal{L}_{1} \in \Pic(X_{T} \times_{T} T_{1})$ and $\mathcal{L}_{2} \in \Pic(X_{T} \times_{T} T_{2})$ define the same point of $\PPic_{X/k}(T)$ if and only if there exists an fppf-cover $T' \to T_{1} \times_{T} T_{2}$ such that the pullbacks of $\mathcal{L}_{1}$ and $\mathcal{L}_{2}$ to $X_{T'}$ are isomorphic. A similar description holds for $\PPic_{X/K}(T_{K})$.
	
	Consider a point $x \in \PPic_{X/k}(T)$ represented by an invertible sheaf $\mathcal{L} \in \Pic(X_{T} \times_{T} T')$. Since $X_{T} \times_{T} T' = X_{T_{K}} \times_{T_{K}} T'_{K}$, we can consider $\mathcal{L}$ as an element of $\Pic(X_{T_{K}} \times_{T_{K}} T'_{K})$. Since $T' \to T$ is an fppf-cover, so is the base change $T'_{K} \to T_{K}$. It is straightforward to check that $\mathcal{L} \in \Pic(X_{T_{K}} \times_{T_{K}} T'_{K})$ represents a point of $\PPic_{X/K}(T_{K})$, and that the latter does not depend on the choice of representative of $x$. This defines the group homomorphism $\varphi_{T} : \PPic_{X/k}(T) \to \PPic_{X/K}(T_{K})$.
	
	Let $f : S \to T$ be a morphism of $k$-schemes, and let $x \in \PPic_{X/k}(T)$ be a point represented by an invertible sheaf $\mathcal{L} \in \Pic(X_{T} \times_{T} T')$. The morphism $\PPic_{X/k}(T) \to \PPic_{X/k}(S)$ maps the point $x$ to the point $y \in \PPic_{X/k}(S)$ represented by the pullback of $\mathcal{L}$ with respect to the morphism $X_{S} \times_{S} (T' \times_{T} S) \to X_{T} \times_{T} T'$. This latter morphism is the same as the morphism $X_{S_{K}} \times_{S_{K}} (T'_{K} \times_{T} S_{K}) \to X_{T_{K}} \times_{T_{K}} T'_{K}$, since both correspond to the base change of the morphism $X_{S} \to X_{T}$ with respect to $T' \to T$. This establishes the functoriality of $\varphi_{T}$ in $T$.
	
	Consider now a point of $\PPic_{X/K}(T_{K})$ represented by an invertible sheaf $\mathcal{L} \in \Pic(X_{T} \times_{T_{K}} T')$. Let $F$ be the Galois closure of the field extension $K/k$. Since $K/k$ is a separable extension, we have $K \otimes_{k} F \cong F^{n}$, where $n = [K : k]$. Therefore, we obtain that $T_{K} \times_{T} T_{F} = \bigsqcup_{i = 1}^{n} T_{F}$. Let $U$ be the fiber product $T' \times_{T_{K}} \bigsqcup_{i = 1}^{n} T_{F}$, and denote by $U_{i}$ its fiber over the $i$-th point $\iota_{i} : T_{F} \to \bigsqcup_{i = 1}^{n} T_{F}$, so that $U = \bigsqcup_{i = 1}^{n} U_{i}$. Note that, since the morphism $T' \to T_{K}$ is an fppf-cover, so too is $U_{i} \to T_{F}$.
	
	Consider the product $\prod_{j = 1}^{n} U_{j} \to T_{F}$, which, by the above, is an fppf-cover of $T_{F}$. Since the morphism $T_{F} \to T$ is an fppf-cover, it follows that $\prod_{j = 1}^{n} U_{j}$ is also an fppf-cover of $T$. By construction, we have
	\[
		\left(\prod_{j = 1}^{n} U_{j}\right) \times_{T} T_{K} = \left(\prod_{j = 1}^{n} U_{j}\right) \times_{T_{F}} \left(\bigsqcup_{i = 1}^{n} T_{F}\right) = \bigsqcup_{i = 1}^{n} \left(\prod_{j = 1}^{n} U_{j}\right).
	\]
	The projection morphism $\bigsqcup_{i = 1}^{n} \left(\prod_{j = 1}^{n} U_{j}\right) \to \bigsqcup_{i = 1}^{n} T_{F}$ factors through $U$, with the morphism $\bigsqcup_{i = 1}^{n} \left(\prod_{j = 1}^{n} U_{j}\right) \to U$ given by the projection map $\prod_{j = 1}^{n} U_{j} \to U_{i}$ over the $i$-th point $\iota_{i} : T_{F} \to \bigsqcup_{i = 1}^{n} T_{F}$. Note that this morphism is an fppf-cover. Consider the invertible sheaf $\mathcal{L}' \in \Pic(X_{T} \times_{T} \prod_{j = 1}^{n} U_{j})$ given by the pullback of $\mathcal{L}$ with respect to the composition $\bigsqcup_{i = 1}^{n} \left(\prod_{j = 1}^{n} U_{j}\right) \to U \to T'$. By applying the above construction repeatedly, one can show that $\mathcal{L}'$ defines a point of $\PPic_{X/k}(T)$,  and that this point does not depend on the choice of $T'$ and $\mathcal{L}'$. Therefore, we obtain a map $\widetilde{\varphi}_{T} : \PPic_{X/K}(T_{K}) \to \PPic_{X/k}(T)$.
	
	The above construction can also be summarized with the following commutative diagram, in which all squares are Cartesian. We use fppf to indicate fppf-covers.
	\[\begin{tikzcd}[sep=tiny]
		& X_{T} \times_{T} \prod_{j = 1}^{n} U_{j} \arrow[rr] \arrow[dd] & & \bigsqcup_{i = 1}^{n} \left( \prod_{j = 1}^{n} U_{j} \right) \arrow[rr] \arrow[dd, "\text{fppf}"] & & \prod_{j = 1}^{n} U_{j} \arrow[dddd, "\text{fppf}"] \\
		& & & & U_{i} \arrow[dl] \arrow[dd, "\text{fppf}"] \\
		& X_{T_{K}} \times_{T_{K}} U \arrow[rr] \arrow[dl] \arrow[dd] & & U \arrow[dl, "\text{fppf}"] \arrow[dd] \\
		X_{T_{K}} \times_{T_{K}} T' \arrow[dd] \arrow[rr, crossing over] & & T' & & T_{F} \arrow[dl, "\iota_{i}"] \\
		& X_{T} \times_{T} T_{F} \arrow[ld] \arrow[rr] & & \bigsqcup_{i = 1}^{n} T_{F} \arrow[ld] \arrow[rr] & & T_{F} \arrow[ld, "\text{fppf}"] \\
		X_{T} = X_{T_{K}} \arrow[rr] & & T_{K} \arrow[rr] \arrow[from=uu, crossing over, "\text{fppf}", near start] & & T
	\end{tikzcd}\]
	
	It remains to show that $\varphi_{T}$ and $\widetilde{\varphi}_{T}$ are inverses. Throughout, we use the following fact, which follows from the explicit description of $\PPic_{X/k}(T)$ given above: let $\mathcal{L} \in \Pic(X_{T} \times T')$ be a representative for a point $x \in \PPic_{X/k}(T)$, and let $f: T'' \to T'$ be an fppf-cover of $T'$. Then the pullback $f_{X}^{\ast} \mathcal{L} \in \Pic(X_{T} \times_{T} T'')$ of $\mathcal{L}$ with respect to $f_{X} : X_{T} \times_{T} T'' \to X_{T} \times_{T} T'$ is also a representative of $x$.
	
	Let $x \in \PPic_{X/K}(T_{K})$ be a point represented by an invertible sheaf $\mathcal{L} \in \Pic(X_{T_{K}} \times_{T_{K}} T')$. Using the notation in the diagram above, the point $\varphi_{T}(\widetilde{\varphi}_{T}(x))$ is represented by the pullback of $\mathcal{L}$ with respect to the map $\bigsqcup_{i = 1}^{n} \left(\prod_{j = 1}^{n} U_{j}\right) \to T'$. By the above fact, this is precisely $x$, and so the composition $\varphi_{T} \circ \widetilde{\varphi}_{T}$ is the identity.
	
	On the other hand, let $x \in \PPic_{X/k}(T)$ be a point represented by an invertible sheaf $\mathcal{L} \in \Pic(X_{T} \times_{T} T')$. Again, we employ the notation in the diagram above, where $T'$ is replaced by $T'_{K}$. Then, the point $\widetilde{\varphi}_{T}(\varphi_{T}(x))$ is represented by the pullback of $\mathcal{L}$ with respect to the map $\bigsqcup_{i = 1}^{n} \left(\prod_{j = 1}^{n} U_{j}\right) \to T'_{K}$. By construction, we have
	\[
		U = T'_{K} \times_{T_{K}} \bigsqcup_{i = 1}^{n} T_{F} = (T' \times_{T} T_{F}) \times_{T_{F}} \bigsqcup_{i = 1}^{n} T_{F} = \bigsqcup_{i = 1}^{n} (T' \times_{T} T_{F}).
	\]
	Therefore, $U_{i} = T' \times_{T} T_{F}$ for all $i$, and so the morphism $\bigsqcup_{i = 1}^{n} \left(\prod_{j = 1}^{n} U_{j}\right) \to U$ is equal to the pullback of the morphism $\prod_{j = 1}^{n} U_{j} \to U_{1}$ with respect to the morphism $T_{K} \to T$, up to an isomorphism of $\bigsqcup_{i = 1}^{n} \left(\prod_{j = 1}^{n} U_{j}\right)$. Therefore, the point $\widetilde{\varphi}_{T}(\varphi_{T}(x))$ is represented by the pullback of $\mathcal{L}$ with respect to the map $\prod_{j = 1}^{n} U_{j} \to U_{1} \to T'$. By the above fact, it follows that $\widetilde{\varphi}_{T}(\varphi_{T}(x)) = x$, and so the composition $\widetilde{\varphi}_{T} \circ \varphi_{T}$ is the identity.
	
	Thus, $\varphi_{T}$ is an isomorphism for all $k$-schemes $T$, completing the proof.
\end{proof}

Similarly, the divisor scheme $\DDiv_{X/k}$ is isomorphic to the Weil restriction of the divisor scheme $\DDiv_{X/K}$. Moreover, this isomorphism is compatible with the isomorphism of Picard schemes just described, as the following theorem shows.

\begin{theorem} \label{thm:divisor_picard_schemes:divisor_weil_restriction}
	Let $X, k$ and $K$ be as above. Then there exists an isomorphism of schemes
	\[
		\psi : \DDiv_{X/k} \cong \Res_{K/k} \DDiv_{X/K}.
	\]
	Moreover, there is a commutative diagram
	\[\begin{tikzcd}
		\DDiv_{X/k} \arrow[d, "\psi"] \arrow[rr, "\AAbel_{X/k}"] & & \PPic_{X/k} \arrow[d, "\varphi"] \\
		\Res_{K/k} \DDiv_{X/K} \arrow[rr, "\Res_{K/k}\AAbel_{X/K}"] & & \Res_{K/k} \PPic_{X/K}.
	\end{tikzcd}\]
\end{theorem}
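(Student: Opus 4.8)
The plan is to show that, for every $k$-scheme $T$, the functor of points of $\DDiv_{X/k}$ agrees with that of $\Res_{K/k}\DDiv_{X/K}$, i.e.\ that $\DDiv_{X/k}(T) = \DDiv_{X/K}(T_K)$ naturally in $T$, where $T_K = T \times_k \Spec K$; this both shows that $\Res_{K/k}\DDiv_{X/K}$ is representable and supplies the isomorphism $\psi$. The starting point is the base-change identity $X_{T_K} := X \times_K T_K \cong X \times_k T =: X_T$, which follows from a routine check on functors of points using that $X$ is a $K$-scheme via $\pi'$: a $Z$-point of $X \times_K T_K$ is the same as a compatible pair consisting of a $Z$-point of $X$ and a $Z$-point of $T$, the map $Z \to \Spec K$ being forced to factor through $\pi'$. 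Under this identification the composite $X_{T_K} \to T_K \to T$ is the structure map $X_T \to T$.

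Because the ambient schemes are canonically identified, a relative effective Cartier divisor on $X_T/T$ and one on $X_{T_K}/T_K$ are the same closed subscheme; the only difference is the flatness requirement, over $T$ versus over $T_K$. So the crux is the claim that an effective Cartier divisor $D \subset X_T$ is flat over $T$ if and only if it is flat over $T_K$. The morphism $T_K \to T$ is the base change of $\eta : \Spec K \to \Spec k$, which is finite \'etale, hence $T_K \to T$ is finite \'etale, in particular faithfully flat; composing $D \to T_K \to T$ gives the ``only if'' direction. For the converse, if $D \to T$ is flat, consider the graph $D \to D \times_T T_K$ of the structure map $D \to T_K$: it is the base change of the diagonal $T_K \to T_K \times_T T_K$, which is an open immersion since $T_K \to T$ is unramified, so the graph is an open immersion and in particular flat; post-composing with the projection $D \times_T T_K \to T_K$, which is flat as a base change of $D \to T$, shows $D \to T_K$ is flat. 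Naturality of the bijection $\DDiv_{X/k}(T) \cong \DDiv_{X/K}(T_K)$ in $T$ is immediate, since pullback of divisors along $X_S \to X_T$ and along $X_{S_K} \to X_{T_K}$ is induced by the same morphism. This gives $\psi$.

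Finally I plan to check commutativity of the square on $T$-points, for all $k$-schemes $T$. Here $\psi_T$ sends a relative effective Cartier divisor $D$ to the literally same divisor on $X_{T_K}/T_K$; $\AAbel_{X/k}(T)$ sends $D$ to the class of $\O_{X_T}(D)$ and $\AAbel_{X/K}(T_K)$ sends $D$ to the class of $\O_{X_{T_K}}(D)$; and, by the explicit description of $\varphi$ in the proof of Theorem \ref{thm:divisor_picard_schemes:picard_weil_restriction}, the map $\varphi_T : \PPic_{X/k}(T) \to \PPic_{X/K}(T_K)$ carries the class of an invertible sheaf on $X_T \times_T T'$ to the class of the same sheaf on the identical scheme $X_{T_K}\times_{T_K}T'_K$. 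Since $\O_{X_T}(D)$ and $\O_{X_{T_K}}(D)$ are the same invertible sheaf on the same scheme, $\varphi_T(\AAbel_{X/k}(T)(D)) = \AAbel_{X/K}(T_K)(\psi_T(D))$, and as $(\Res_{K/k}\AAbel_{X/K})(T) = \AAbel_{X/K}(T_K)$ by definition of the Weil restriction of a morphism, the square commutes. The one step that genuinely requires care is the equivalence of the two flatness conditions --- specifically the ``if'' direction via the graph being an open immersion over the \'etale base $T_K/T$; everything else is bookkeeping with functors of points and with the explicit $\varphi$ already constructed.
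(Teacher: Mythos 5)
Your proposal is correct and follows essentially the same route as the paper: identify $X_T$ with $X_{T_K}$, observe that a relative effective Cartier divisor is literally the same closed subscheme in both settings with only the flatness condition differing, prove the two flatness conditions equivalent using that $T_K \to T$ is flat and unramified, and verify commutativity of the square on $T$-points via the explicit description of $\varphi$. The only differences are cosmetic — you spell out the graph/diagonal argument that the paper leaves implicit when it invokes unramifiedness, and your labels ``if''/``only if'' for the two flatness directions are swapped relative to what you actually prove, though both directions are correctly established.
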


\begin{proof}
	To prove the first part of the theorem, it suffices to give an isomorphism $\psi_{T} : \DDiv_{X/k}(T) \to \DDiv_{X/K}(T_{K})$ for all $k$-schemes $T$, which is functorial in $T$.
	
	Let $T$ be a $k$-scheme. By definition, $\DDiv_{X/k}(T)$ is the set of relative effective Cartier divisors on $X_{T} / T$, that is to say, the set of effective Cartier divisors on $X_{T}$ which are flat over $T$. Similarly, $\DDiv_{X/K}(T_{K})$ is the set of effective Cartier divisors on $X_{T_{K}}$ which are flat over $T_{K}$. 
	
	Let $D \in \DDiv_{X/k}(T)$ be a relative effective Cartier divisor on $X_{T} / T$. As the schemes $X_{T}$ and $X_{T_{K}}$ are equal, $D$ is also an effective Cartier divisor on $X_{T_{K}}$. By definition, the composition $D \hookrightarrow X_{T} \to T_{K} \to T$ is flat. Moreover, since $K/k$ is a finite separable field extension, the morphism $T_{K} \to T$ is unramified. Therefore, the composition $D \hookrightarrow X_{T} \to T_{K}$ is also flat. In other words, $D$ is flat over $T_{K}$, and so is a relative effective Cartier divisor on $X_{T_{K}} / T_{K}$. This defines the map $\psi_{T} : \DDiv_{X/k}(T) \to \DDiv_{X/K}(T_{K})$.
	
	Let $f : S \to T$ be a morphism of $k$-schemes. The map $\DDiv_{X/k}(T) \to \DDiv_{X/k}(S)$ is given by the pullback with respect to the morphism $X_{S} \to X_{T}$. Similarly, the map $\DDiv_{X/K}(T_{K}) \to \DDiv_{X/K}(S_{K})$ is given by the pullback with respect to the morphism $X_{S_{K}} \to X_{T_{K}}$. Since these two morphisms are equal, and pullback of relative effective Cartier divisors does not depend on the base, the functoriality of the map $\psi_{T}$ follows.
	
	By construction, the map $\psi_{T}$ is injective. Moreover, since $K/k$ is a field extension, the morphism $T_{K} \to T$ is flat. Therefore, any relative effective Cartier divisor on $X_{T_{K}} / T_{K}$ is also flat over $T$. Thus, the map $\psi_{T}$ is also surjective, and hence is an isomorphism.
	
	To show the second part of the theorem, it suffices to show that the following diagram commutes for all $k$-schemes T.
	\[\begin{tikzcd}
		\DDiv_{X/k}(T) \arrow[d, "\psi_{T}"] \arrow[r, "\AAbel_{X/k}"] & \PPic_{X/k}(T) \arrow[d, "\varphi_{T}"] \\
		\DDiv_{X/K}(T_{K}) \arrow[r, "\AAbel_{X/K}"] & \PPic_{X/K}(T_{K}).
	\end{tikzcd}\]
	Let $T$ be a $k$-scheme, and $D \in \DDiv_{X/k}(T)$ be a relative effective Cartier divisor on $X_{T} / T$. By definition, the point $\AAbel_{X/k}(D) \in \PPic_{X/k}(T)$ is represented by the invertible sheaf $\O_{X_{T}}(D) \in \Pic(X_{T})$. It follows that the point $\varphi_{T}(\AAbel_{X/k}(D))$ is represented by the invertible sheaf $\O_{X_{T}}(D) \in \Pic(X_{T_{K}})$. On the other hand, since $\psi_{T}$ maps $D$ to itself, the point $\AAbel_{X/K}(\psi_{T}(D))$ is represented by the invertible sheaf $\O_{X_{T_{K}}}(D) \in \Pic(X_{T_{K}})$.  As the schemes $X_{T}$ and $X_{T_{K}}$ coincide, the result follows.
\end{proof}

This identification between the divisor and Picard schemes of a geometrically reducible variety and the Weil restriction of the corresponding schemes of the Stein factorization allows us to carry some of the results for the geometrically integral case to the geometrically reducible case. For instance, the Abel map is proper in both settings, as the following theorem shows.

\begin{theorem} \label{thm:divisor_picard_schemes:abel_proper}
	Let $X, k$ and $K$ be as above. Then the Abel map $\AAbel_{X/k} : \DDiv_{X/k} \to \PPic_{X/k}$ is proper.
\end{theorem}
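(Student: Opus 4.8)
The plan is to transfer the statement along the Weil restriction and thereby reduce to the geometrically integral case. By Theorem~\ref{thm:divisor_picard_schemes:divisor_weil_restriction} there is a commutative square whose vertical maps $\psi \colon \DDiv_{X/k} \xrightarrow{\sim} \Res_{K/k}\DDiv_{X/K}$ and $\varphi \colon \PPic_{X/k} \xrightarrow{\sim} \Res_{K/k}\PPic_{X/K}$ are isomorphisms and whose horizontal maps are $\AAbel_{X/k}$ and $\Res_{K/k}\AAbel_{X/K}$. Hence $\AAbel_{X/k}$ is proper if and only if $\Res_{K/k}\AAbel_{X/K}$ is, and it suffices to show (i) that $\AAbel_{X/K}$ is proper, and (ii) that Weil restriction along $\eta \colon \Spec K \to \Spec k$ preserves properness.

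For (i), the morphism $\pi' \colon X \to \Spec K$ is smooth, projective and geometrically integral, and in this setting the properness of the Abel map is classical: for a proper flat morphism with $f_{\ast}\O_{X} = \O_{S}$ holding universally, the functor $\DDiv$ is representable and the natural morphism to $\PPic$ is proper; I would cite \cite{kleiman2005} for this. A self-contained proof is also possible via the valuative criterion over a discrete valuation ring $R$: the schematic closure in the regular scheme $X_{R}$ of a relative effective Cartier divisor on the generic fibre is again a relative effective Cartier divisor, which gives existence of the lift, while uniqueness needs a small fppf-local argument because $\PPic_{X/K}(R)$ need not coincide with $\Pic(X_{R})$ when $X$ has no $K$-rational point.

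For (ii), since $k$ has characteristic zero the finite extension $K/k$ is separable, so $\eta$ is finite \'etale; by \cite[Proposition A.5.2]{conrad2015} Weil restriction along a finite \'etale (indeed finite locally free) morphism sends proper morphisms to proper morphisms, the relevant Weil restrictions $\Res_{K/k}\DDiv_{X/K} = \DDiv_{X/k}$ and $\Res_{K/k}\PPic_{X/K} = \PPic_{X/k}$ existing as schemes by Theorems~\ref{thm:divisor_picard_schemes:divisor_weil_restriction} and~\ref{thm:divisor_picard_schemes:picard_weil_restriction}. Combining (i) and (ii) shows that $\Res_{K/k}\AAbel_{X/K}$ is proper, hence so is $\AAbel_{X/k}$. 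The substantive input is (i), namely pinning down or reproving the properness of the Abel map in the geometrically integral case; step (ii) is routine and relies only on Weil-restriction facts already used elsewhere in the paper.
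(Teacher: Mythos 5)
Your proposal is correct and follows essentially the same route as the paper: reduce via the commutative square of Theorem \ref{thm:divisor_picard_schemes:divisor_weil_restriction} to the properness of $\Res_{K/k}\AAbel_{X/K}$, invoke the classical properness of the Abel map for the smooth projective geometrically integral scheme $X/K$ (the paper cites \cite[Exercise 4.12]{kleiman2005}), and use that Weil restriction along the finite \'etale morphism $\Spec K \to \Spec k$ preserves properness (the paper cites \cite[Proposition 6.2.9]{spec} rather than \cite{conrad2015}, but this is the same standard fact). Your additional sketch of a valuative-criterion argument for step (i) is not needed but does not affect correctness.
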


\begin{proof}
	By Theorem \ref{thm:divisor_picard_schemes:divisor_weil_restriction}, it suffices to show that the Weil restriction $\Res_{K/k} \AAbel_{X/K}$ is proper. As the structure map $\pi' : X \to \Spec K$ is smooth and projective, with geometrically integral fibers, the Abel map $\AAbel_{X/K}$ is proper by \cite[Exercise 4.12]{kleiman2005}. Moreover, the morphism $\eta : \Spec K \to \Spec k$ is finite and \'etale, and so the Weil restriction $\Res_{K/k} \AAbel_{X/K}$ is proper by \cite[Proposition 6.2.9]{spec}.
\end{proof}

In much the same way, the description of the Abel map in terms of Weil restrictions leads to a concrete description of the fibers of the Abel map in the case of geometrically reducible varieties.

\begin{theorem} \label{thm:divisor_picard_schemes:abel_fiber}
	Let $X, k$ and $K$ be as above. Let $\mathcal{L}$ be an invertible sheaf on $X$, and $[\mathcal{L}] \in \PPic_{X/k}(k)$ the corresponding point of the Picard scheme. Then, the fiber $\AAbel_{X/k}^{-1}([\mathcal{L}])$ is isomorphic to $\Res_{K/k}(\mathbb{P}_{K}^{n-1})$, where $n = \dim_{K} H^{0}(X, \mathcal{L})$.
\end{theorem}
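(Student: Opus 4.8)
The plan is to reduce the statement to the classical description of the fibers of the Abel map in the geometrically integral case, using the identification of $\AAbel_{X/k}$ with a Weil restriction of $\AAbel_{X/K}$ provided by Theorem~\ref{thm:divisor_picard_schemes:divisor_weil_restriction}. First I would invoke that theorem, which gives a commutative square whose vertical arrows are isomorphisms $\psi \colon \DDiv_{X/k} \cong \Res_{K/k} \DDiv_{X/K}$ and $\varphi \colon \PPic_{X/k} \cong \Res_{K/k} \PPic_{X/K}$, intertwining $\AAbel_{X/k}$ with $\Res_{K/k} \AAbel_{X/K}$. Consequently $\psi$ restricts to an isomorphism from $\AAbel_{X/k}^{-1}([\mathcal{L}])$ onto the fiber of $\Res_{K/k} \AAbel_{X/K}$ over $\varphi([\mathcal{L}])$. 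By the explicit construction of $\varphi$ in the proof of Theorem~\ref{thm:divisor_picard_schemes:picard_weil_restriction}, where $\varphi_{T}$ sends an invertible sheaf on $X_{T} = X_{T_{K}}$ to itself, the point $\varphi([\mathcal{L}])$ is, under the canonical identification $(\Res_{K/k} \PPic_{X/K})(k) = \PPic_{X/K}(K)$, exactly the class $\ell \in \PPic_{X/K}(K)$ of $\mathcal{L}$ regarded as an invertible sheaf on the $K$-scheme $X$; write $\tilde{\ell} \in (\Res_{K/k}\PPic_{X/K})(k)$ for the corresponding $k$-point.

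Next I would show that this fiber commutes with Weil restriction, which is cleanest on functors of points. For a $k$-scheme $T$, a $T$-point of $(\Res_{K/k} \AAbel_{X/K})^{-1}(\tilde{\ell})$ amounts to a $T_{K}$-point $D$ of $\DDiv_{X/K}$ — that is, a relative effective Cartier divisor on $X_{T_{K}}/T_{K}$ — such that $\AAbel_{X/K}(D)$ equals the pullback $\ell_{T_{K}}$ of $\ell$. Hence the functor of points of this fiber is $T \mapsto \AAbel_{X/K}^{-1}(\ell)(T_{K})$, which is precisely the functor of points of $\Res_{K/k}\bigl(\AAbel_{X/K}^{-1}(\ell)\bigr)$. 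Since $\AAbel_{X/K}^{-1}(\ell)$ is a closed subscheme of $\DDiv_{X/K}$ (being a fiber over a $K$-rational point of the separated $K$-scheme $\PPic_{X/K}$), and $\eta \colon \Spec K \to \Spec k$ is finite étale, this Weil restriction exists as a scheme, and I conclude $\AAbel_{X/k}^{-1}([\mathcal{L}]) \cong \Res_{K/k}\bigl(\AAbel_{X/K}^{-1}(\ell)\bigr)$.

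Finally I would identify $\AAbel_{X/K}^{-1}(\ell)$. Since $\pi' \colon X \to \Spec K$ is smooth, projective and geometrically integral, this fiber is the complete linear system of $\mathcal{L}$ on $X$, which is classically the projective space $\mathbb{P}\bigl(H^{0}(X, \mathcal{L})\bigr) \cong \mathbb{P}^{n-1}_{K}$ attached to the $K$-vector space $H^{0}(X, \mathcal{L})$ of dimension $n$ — in particular, when $n = 0$ it is the empty scheme $\mathbb{P}^{-1}_{K}$; see, e.g., \cite[Section~4]{kleiman2005}. Combining this with the previous step and the functoriality of $\Res_{K/k}$ yields $\AAbel_{X/k}^{-1}([\mathcal{L}]) \cong \Res_{K/k}(\mathbb{P}^{n-1}_{K})$. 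I expect the only genuinely delicate point to be the compatibility of $\varphi$ with the identification $(\Res_{K/k}\PPic_{X/K})(k) = \PPic_{X/K}(K)$ — i.e.\ that $\varphi([\mathcal{L}]) = \tilde{\ell}$ — but this follows at once from the formula for $\varphi_{T}$ recalled in the first paragraph.
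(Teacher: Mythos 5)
Your proposal is correct and follows essentially the same route as the paper: reduce via the isomorphisms of Theorem \ref{thm:divisor_picard_schemes:divisor_weil_restriction}, identify $\varphi([\mathcal{L}])$ with the class of $\mathcal{L}$ over $K$, commute the fiber with the Weil restriction, and then quote the classical description of the fiber of $\AAbel_{X/K}$ as the complete linear system $\mathbb{P}^{n-1}_{K}$. The only difference is bookkeeping: you verify the compatibility of fibers with Weil restriction directly on functors of points where the paper cites \cite[Propositions A.5.2 and A.5.7]{conrad2015}, and you cite Kleiman for the linear-system step where the paper uses \cite[Proposition 8.2.7]{bosch2012} together with cohomological flatness over a field.
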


\begin{proof}
	By Theorem \ref{thm:divisor_picard_schemes:divisor_weil_restriction}, we have a commutative diagram
	\[\begin{tikzcd}
		\DDiv_{X/k} \arrow[d, "\psi"] \arrow[rr, "\AAbel_{X/k}"] & & \PPic_{X/k} \arrow[d, "\varphi"] \\
		\Res_{K/k} \DDiv_{X/K} \arrow[rr, "\Res_{K/k}\AAbel_{X/K}"] & & \Res_{K/k} \PPic_{X/K}.
	\end{tikzcd}\]
	As both $\varphi$ and $\psi$ are isomorphisms, the fiber $\AAbel_{X/k}^{-1}([\mathcal{L}])$ is isomorphic to the fiber $(\Res_{K/k} \AAbel_{X/K})^{-1}(\varphi([\mathcal{L}]))$. By \cite[Proposition A.5.2 (3)]{conrad2015} and \cite[Proposition A.5.7]{conrad2015}, the fiber $(\Res_{K/k} \AAbel_{X/K})^{-1}(\varphi([\mathcal{L}]))$ is isomorphic to the Weil restriction $\Res_{K/k}(\AAbel_{X/K}^{-1}(x))$, where $x \in \PPic_{X/K}(K)$ is the point corresponding to $\varphi([\mathcal{L}])$ under the isomorphism $\Res_{K/k} \PPic_{X/K}(k) \cong \PPic_{X/K}(K)$. In particular, using the explicit description of $\varphi$, we have $x = [\mathcal{L}] \in \PPic_{X/K}(K)$. Therefore, we have
	\[
		\AAbel_{X/k}^{-1}([\mathcal{L}]) \cong \Res_{K/k}(\AAbel_{X/K}^{-1}([\mathcal{L}])).
	\]
	
	By \cite[Proposition 8.2.7]{bosch2012}, since $X/K$ is geometrically integral, the fiber $\AAbel_{X/K}^{-1}([\mathcal{L}])$ is represented by $\mathbb{P}(\mathcal{F})$, for some $\O_{\Spec K}$-module $\mathcal{F}$. Moreover, every $\O_{X}$-module is cohomologically flat over $\Spec K$ in dimension zero, as $K$ is a field. Therefore, the aforementioned result also shows that $\mathcal{F}$ is locally free, and is isomorphic to the dual of $\pi_{\ast}(\mathcal{L})$, where $\pi : X \to \Spec K$ is the structure map. Thus, we obtain that $\mathbb{P}(\mathcal{F}) \cong \mathbb{P}^{n-1}_{K}$, where $n$ is the dimension of the $K$-vector space $\pi_{\ast}(\mathcal{L})$. By \cite[Corollary II.5.2]{mumford1970}, it follows that $n = \dim_{K} H^{0}(X, \mathcal{L})$, as required.
\end{proof}

Finally, one can give a description of the image $\AAbel_{X/k}(\DDiv_{X/k}(k))$ as a subset of $\PPic_{X/k}(k)$. While not as explicit as in the case of geometrically integral varieties, this is description still proves to be very useful in Section \ref{sec:isolated_divisors}.

\begin{theorem} \label{thm:divisor_picard_schemes:abel_rational_image}
	Let $X, k$ and $K$ be as above. Then there exists a subgroup $G \leq \PPic_{X/k}(k)$ such that
	\[
		\AAbel_{X/k}(\DDiv_{X/k}(k)) = \AAbel_{X/k}(\DDiv_{X/k})(k) \cap G.
	\]
	Moreover, the quotient group $\PPic_{X/k}(k) / G$ is torsion.
\end{theorem}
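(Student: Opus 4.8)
The plan is to take $G$ to be the image of the natural homomorphism $\Pic(X) = \Pic_{X/k}(k) \to \PPic_{X/k}(k)$ obtained by evaluating the natural transformation $\Pic_{X/k} \to \PPic_{X/k}$ at $\Spec k$; note $\Pic_{X/k}(k) = \Pic(X)/\pi^{\ast}\Pic(\Spec k) = \Pic(X)$ since $\Pic(\Spec k) = 0$. With this choice, two things must be verified: the displayed equality, and the fact that $\PPic_{X/k}(k)/G$ is torsion. I would treat these in turn.

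For the equality, write $\W_{X/k} = \AAbel_{X/k}(\DDiv_{X/k})$ as in the body. The inclusion $\AAbel_{X/k}(\DDiv_{X/k}(k)) \subseteq \W_{X/k}(k) \cap G$ is immediate: for $D \in \DDiv_{X/k}(k)$ the class $\AAbel_{X/k}(D) = [\O_{X}(D)]$ lies in $G$ by construction and in $\W_{X/k}(k)$ tautologically. For the reverse inclusion, take $z \in \W_{X/k}(k) \cap G$; since $z \in G$ we may write $z = [\mathcal{L}]$ for an invertible sheaf $\mathcal{L}$ on $X$. By Theorem \ref{thm:divisor_picard_schemes:abel_proper} the Abel map is proper, hence closed, so $\W_{X/k}$ is set-theoretically the image of $\AAbel_{X/k}$ and the induced map $\DDiv_{X/k} \to \W_{X/k}$ is surjective; in particular the (scheme-theoretic) fiber over $z$ is non-empty. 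By Theorem \ref{thm:divisor_picard_schemes:abel_fiber} this fiber is isomorphic to $\Res_{K/k}(\mathbb{P}^{n-1}_{K})$ with $n = \dim_{K} H^{0}(X, \mathcal{L})$, and non-emptiness forces $n \geq 1$. Then $\Res_{K/k}(\mathbb{P}^{n-1}_{K})(k) = \mathbb{P}^{n-1}_{K}(K) \neq \emptyset$, so the fiber has a $k$-point, i.e.\ there is $D \in \DDiv_{X/k}(k)$ with $\AAbel_{X/k}(D) = z$, giving $z \in \AAbel_{X/k}(\DDiv_{X/k}(k))$.

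For the torsion statement, I would use the low-degree exact sequence of the Leray spectral sequence $H^{p}(\Spec k, R^{q}\pi_{\ast}\mathbb{G}_{m}) \Rightarrow H^{p+q}(X, \mathbb{G}_{m})$ for the structure morphism $\pi : X \to \Spec k$. Since $\pi$ is proper, flat base change along étale $k$-algebras identifies $\pi_{\ast}\mathbb{G}_{m}$ with $\Res_{K/k}\mathbb{G}_{m,K}$, where $K = H^{0}(X, \O_{X})$; moreover, because the Picard group of every finite separable $k$-algebra vanishes, $\PPic_{X/k}(k) = H^{0}(\Spec k, R^{1}\pi_{\ast}\mathbb{G}_{m})$ and the map $\Pic(X) \to \PPic_{X/k}(k)$ above is the corresponding edge map. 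The exact sequence then reads
\[
	0 \to H^{1}(\Spec k, \Res_{K/k}\mathbb{G}_{m}) \to \Pic(X) \to \PPic_{X/k}(k) \to H^{2}(\Spec k, \Res_{K/k}\mathbb{G}_{m}),
\]
and the degeneration of Leray for the finite morphism $\Spec K \to \Spec k$ identifies the outer terms with $H^{1}(\Spec K, \mathbb{G}_{m}) = \Pic(\Spec K) = 0$ and $H^{2}(\Spec K, \mathbb{G}_{m}) = \Br(K)$. Hence $\Pic(X) \to \PPic_{X/k}(k)$ is injective with image $G$, and $\PPic_{X/k}(k)/G$ embeds into $\Br(K)$, which is torsion since $K$ is a field (being a filtered colimit of cohomology groups of finite Galois groups, each killed by its order). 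This gives the claim. Alternatively, one may first apply the isomorphism $\varphi$ of Theorem \ref{thm:divisor_picard_schemes:picard_weil_restriction} to reduce to the geometrically integral curve $X/K$, where $\pi'_{\ast}\mathbb{G}_{m} = \mathbb{G}_{m,\Spec K}$ and the same exact sequence is the standard one (e.g.\ \cite[\S9]{kleiman2005}); in that route one must additionally check that $\varphi$ carries $G$ onto the image of $\Pic(X) \to \PPic_{X/K}(K)$, which follows from the explicit description of $\varphi$ at $T = \Spec k$.

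The inputs "the Brauer group of a field is torsion" and "$\Res_{K/k}(\mathbb{P}^{n-1}_{K})$ has a $k$-point when $n \geq 1$" are routine, as is the first inclusion in the equality. The main obstacle is the bookkeeping in the last paragraph: identifying $\PPic_{X/k}(k)$ with $H^{0}(\Spec k, R^{1}\pi_{\ast}\mathbb{G}_{m})$, verifying that the relevant edge map is exactly the natural transformation $\Pic_{X/k} \to \PPic_{X/k}$ evaluated at $\Spec k$, and computing $\pi_{\ast}\mathbb{G}_{m} = \Res_{K/k}\mathbb{G}_{m,K}$ — all of which hinge on the interplay between sheafification, the Stein factorization $K = H^{0}(X,\O_X)$, and the vanishing of Picard groups of étale $k$-algebras.
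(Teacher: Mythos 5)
Your proof is correct, and your $G$ coincides with the paper's: the paper takes $G = \varphi_{k}^{-1}(\Pic_{X/K}(K))$, which under the explicit description of $\varphi_{k}$ at $T = \Spec k$ is exactly the image of $\Pic(X) \to \PPic_{X/k}(k)$. The core of both arguments is the same — the obstruction to a $k$-point of $\PPic_{X/k}$ being represented by an honest line bundle lies in $\Br(K)$, hence is torsion, and for honest line-bundle classes the fiber of the Abel map is a (Weil restriction of a) projective space, which has a $k$-point as soon as it is non-empty. Where you differ is the base over which the work is done. The paper transports everything to the geometrically integral $X/K$ via the isomorphisms of Theorem \ref{thm:divisor_picard_schemes:divisor_weil_restriction}, quotes the exact sequence $0 \to \Pic_{X/K}(K) \to \PPic_{X/K}(K) \to \Br(K) \to \Br(X)$ from the geometrically integral literature, proves the displayed equality over $K$, and then pulls back; this last step requires Theorem \ref{thm:divisor_picard_schemes:weil_restriction_surjectivity} to identify $\varphi^{-1}\!\left(\Res_{K/k}\W_{X/K}\right)$ with $\W_{X/k}$. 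You instead stay over $k$ throughout: the equality follows directly from Theorem \ref{thm:divisor_picard_schemes:abel_fiber} (so no Weil-restriction bookkeeping for $\W_{X/k}$ is needed), and the torsion claim is re-derived from the Leray sequence for $\pi : X \to \Spec k$ together with $\pi_{\ast}\Gm = \Res_{K/k}\Gm$ and Shapiro's lemma — in effect a direct proof of the cited exact sequence in the geometrically disconnected setting. Your route gives a cleaner equality argument at the cost of justifying the identifications $\pi_{\ast}\Gm = \Res_{K/k}\Gm$ (cohomological flatness in degree zero) and the edge-map compatibility, which you correctly flag as the delicate points; the paper's route outsources exactly these to the geometrically integral case.
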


\begin{proof}
	By Theorem \ref{thm:divisor_picard_schemes:divisor_weil_restriction}, we have the following commutative diagram
	\[\begin{tikzcd}
		\DDiv_{X/k}(k) \arrow[d, "\psi_{k}"] \arrow[r, "\AAbel_{X/k}"] & \PPic_{X/k}(k) \arrow[d, "\varphi_{k}"] \\
		\DDiv_{X/K}(K) \arrow[r, "\AAbel_{X/K}"] & \PPic_{X/K}(K),
	\end{tikzcd}\]
	where both $\varphi_{k}$ and $\psi_{k}$ are isomorphisms. Therefore, we have
	\[
		\AAbel_{X/k}(\DDiv_{X/k}(k)) = \varphi_{k}^{-1}(\AAbel_{X/K}(\DDiv_{X/K}(K))). \tag{$\ast$}
	\]
	Since $X/K$ is geometrically integral, by \cite[Proposition 8.1.4]{bosch2012}, we obtain the following exact sequence
	\[
		0 \to \Pic_{X/K}(K) \to \PPic_{X/K}(K) \to \Br(K) \to \Br(X).
	\]
	In particular, the group of $K$-points of the relative Picard functor $\Pic_{X/K}(K)$ embeds into the group of $K$-points of the Picard scheme $\PPic_{X/K}(K)$. Moreover, the quotient group $\PPic_{X/K}(K) / \Pic_{X/K}(K)$ is a subgroup of the Brauer group $\Br(K)$ of $K$, and hence is a torsion group.
	
	Since the Abel map $\AAbel_{X/K} : \DDiv_{X/K} \to \PPic_{X/K}$ factors through the relative Picard functor $\Pic_{X/K}$, we have
	\[
		\AAbel_{X/K}(\DDiv_{X/K}(K)) \subseteq \AAbel_{X/K}(\DDiv_{X/K})(K) \cap \Pic_{X/K}(K).
	\]
	Let $x \in \AAbel_{X/K}(\DDiv_{X/K})(K) \cap \Pic_{X/K}(K)$, and consider the fiber $\AAbel_{X/K}^{-1}(x)$. Since $x \in \Pic_{X/K}(K)$, $x$ represents the class of an invertible sheaf $\mathcal{L}$ on $X$. By Theorem \ref{thm:divisor_picard_schemes:abel_fiber}, the fiber $\AAbel_{X/K}^{-1}(x)$ is isomorphic to $\mathbb{P}_{K}^{n-1}$, for some $n \geq 0$. Moreover, since $x \in \AAbel_{X/K}(\DDiv_{X/K})(K)$, the fiber $\AAbel_{X/K}^{-1}(x)$ is not empty, and so $n \geq 1$. In particular, the set $\AAbel_{X/K}^{-1}(x)(K)$ is non-empty, and so $x \in \AAbel_{X/K}(\DDiv_{X/K}(K))$. Therefore, we have shown that
	\[
		\AAbel_{X/K}(\DDiv_{X/K}(K)) = \AAbel_{X/K}(\DDiv_{X/K})(K) \cap \Pic_{X/K}(K).
	\]
	Using $(\ast)$, since $\varphi_{k}^{-1}$ is an isomorphism, we have
	\[
		\AAbel_{X/k}(\DDiv_{X/k}(k)) = \varphi_{k}^{-1}(\AAbel_{X/K}(\DDiv_{X/K})(K)) \cap \varphi_{k}^{-1}(\Pic_{X/K}(K)).
	\]
	Moreover, denoting by $G$ the subgroup $\varphi_{k}^{-1}(\Pic_{X/K}(K))$ of $\PPic_{X/k}(k)$, the quotient group $\PPic_{X/k}(k) / G \cong \PPic_{X/K}(K) / \Pic_{X/K}(K)$ is torsion.
	
	By construction, we have $\AAbel_{X/K}(\DDiv_{X/K})(K) = \Res_{K/k}(\AAbel_{X/K}(\DDiv_{X/K}))(k)$ and
	\begin{align*}
		\varphi_{k}^{-1} \! \left(\AAbel_{X/K}(\DDiv_{X/K})(K)\right) & = \varphi^{-1} \! \left(\Res_{K/k}(\AAbel_{X/K}(\DDiv_{X/K}))(k)\right) \\
		& = \varphi^{-1} \! \left(\Res_{K/k}(\AAbel_{X/K}(\DDiv_{X/K}))\right) \! (k),
	\end{align*}
	where $\varphi : \PPic_{X/k} \to \Res_{K/k} \PPic_{X/K}$ is the isomorphism constructed in Theorem \ref{thm:divisor_picard_schemes:divisor_weil_restriction}. Since $K/k$ is a finite separable extension of fields, by Theorem \ref{thm:divisor_picard_schemes:weil_restriction_surjectivity} it follows that
	\[
		\Res_{K/k}(\AAbel_{X/K}(\DDiv_{X/K})) = \Res_{K/k} \AAbel_{X/K}(\Res_{K/k} \DDiv_{X/K}).
	\]
	Therefore, using Theorem \ref{thm:divisor_picard_schemes:divisor_weil_restriction} once more, we obtain that
	\[
		\varphi^{-1} \! \left(\Res_{K/k}(\AAbel_{X/K}(\DDiv_{X/K}))\right) = \AAbel_{X/k}(\DDiv_{X/k}).
	\]
	Thus, we have
	\[
		\AAbel_{X/k}(\DDiv_{X/k}(k)) = \AAbel_{X/k}(\DDiv_{X/k})(k) \cap G,
	\]
	where $\PPic_{X/k}(k)/G$ is torsion, as required.
\end{proof}
	
	\printbibliography
\end{document}